\newsavebox\tempbox
\let\svwidetilde\widetilde
\renewcommand\widetilde[1]{\sbox\tempbox{$#1$}\svwidetilde{\usebox{\tempbox}}}
\numberwithin{figure}{section}
\numberwithin{equation}{section}
\newtheorem{theorem}{Theorem}[section]
\newtheorem{corollary}[theorem]{Corollary}
\newtheorem{lemma}[theorem]{Lemma}
\newtheorem{proposition}[theorem]{Proposition}
\theoremstyle{definition}
\newtheorem{definition}[theorem]{Definition}
\newtheorem{definition*}{Definition}
\theoremstyle{remark}
\newtheorem{remark}[theorem]{Remark}
\newcommand{\defin}[1]{\textit{#1}}
\newcommand{\Ind}{\operatorname{Ind}}
\newcommand{\codim}{\operatorname{codim}}
\newcommand{\R}{\mathbb{R}}
\newcommand{\C}{\mathbb{C}}
\newcommand{\Q}{\mathbb{Q}}
\newcommand{\CP}{\mathbb{CP}}
\newcommand{\Z}{\mathbb{Z}}
\newcommand{\A}{\mathcal{A}}
\newcommand{\pb}{{}^{*}}
\renewcommand{\d}{{\,d}}
\newcommand{\M}{\mathcal{M}}
\newcommand{\U}{\mathcal{U}}
\newcommand{\V}{\mathcal{V}}
\newcommand{\MM}{{\M }}
\newcommand{\e}{\operatorname{e}}
\newcommand{\loc}{\text{loc}}
\newcommand{\Hom}{{\operatorname{Hom}}}
\newcommand{\pt}{\text{pt}}
\newcommand{\CZ}{\operatorname{CZ}}
\newcommand{\SM}{\operatorname{SM}}
\newcommand{\tensor}{\otimes}
\DeclareMathOperator{\Th}{Th}             %
\DeclareMathOperator{\Crit}{Crit}             %
\DeclareMathOperator{\ind}{ind}             %
\DeclareMathOperator{\Aut}{Aut}             %
\DeclareMathOperator{\coker}{coker}             %
\DeclareMathOperator{\Sp}{Sp}                %
\newcommand{\J}{\mathcal J}
\newcommand{\ev}{\operatorname{ev}}
\newcommand{\wc}[1]{\widecheck {#1}}            %
\newcommand{\wh}[1]{\widehat {#1}}              %
\newcommand{\Nabla}{\widetilde{\nabla}}
\begin{document}

\title{Morse--Bott split symplectic homology}
\date{\today}
\author{Lu\'is Diogo}
\author{Samuel T. Lisi}

\begin{abstract}

We introduce a chain complex associated to a Liouville domain $(\overline{W}, d\lambda)$ whose boundary
$Y$ admits a Boothby--Wang contact form (i.e.~is a prequantization space). The
differential counts Floer cylinders with cascades in the completion $W$ of
$\overline{W}$, in the spirit of 
Morse--Bott homology \citelist{\cite{BourgeoisThesis}
\cite{Frauenfelder_Arnold_Givental_conjecture} \cite{BOSymplecticHomology}}.
The homology of this complex is the symplectic homology of $W$
\cite{DiogoLisiComplements}.

Let $X$ be obtained from $\overline{W}$ by collapsing the boundary $Y$ along
Reeb orbits, giving a codimension 2 symplectic submanifold $\Sigma$.
Under monotonicity assumptions on $X$ and $\Sigma$, we show that for
generic data, the differential in our chain complex counts elements of moduli spaces of cascades 
that are transverse. Furthermore, by some index estimates, we show that very few
combinatorial types of cascades can appear in the differential.

\end{abstract}

\maketitle
\tableofcontents

\section{Introduction and statement of main results}

In this paper we define Morse--Bott \textit{split} symplectic homology theory for
Liouville manifolds $W$ of finite-type whose boundary $Y = \partial W$ is a
prequantization space. 
This is inspired by the construction of Bourgeois and Oancea for positive symplectic homology, $SH^+$ 
\cite{BOExactSequence}. 
Our main result is that we obtain transversality for all the
relevant moduli spaces and thus have a well-defined theory. 
This is obtained by
means of generic choice of the geometric data, as opposed to using abstract perturbations. 
This is an important preliminary step in the computation of this chain complex 
in \cite{DiogoLisiComplements}.
Furthermore, in \cite{DiogoLisiComplements}, we justify that the homology of
this complex is indeed the symplectic homology of $W$.

    The main purpose of this definition of a split version of symplectic homology is
    to enable computations in certain examples. For instance, we 
    expect to be able to compute symplectic homology 
    for completions $W$ of $X\setminus \Sigma$, where
   both $X$ and $\Sigma$ are smooth projective complete intersections. Many of these
    examples fit in our framework and additionally enough is known about their
Gromov--Witten invariants for the computation to be possible.

In the following, we consider a $2n$--dimensional Liouville 
domain $(\overline{W}, d\lambda)$ with $\partial \overline{W} = Y$. 
Denoting by $\alpha = \lambda|_Y$ the contact form on the boundary induced by the 
Liouville form $\lambda$, we require that $\alpha$ is
a Boothby--Wang contact form, i.e.~its Reeb vector field induces a free $S^1$
action.  Such a contact manifold is also called a prequantization space.
We let $\Sigma^{2n-2}$ denote the quotient by the Reeb vector field, and we
note that $d\alpha$ descends to a symplectic form $\omega_\Sigma$ on the
quotient. 
It follows that there exists a closed symplectic manifold $(X,
\omega)$ for which $\Sigma$ is a codimension 2 symplectic submanifold,
Poincar\'e dual to a multiple of $\omega$, with
the property that $(X \setminus \Sigma, \omega)$ is symplectomorphic to
$(\overline{W} \setminus Y, d\lambda)$. 
See Proposition \ref{prop:X} for
a more detailed description of this, or see
\cite{Giroux_remarks_Donaldson}*{Proposition 5}. 
We can think of $X$ as the symplectic cut of $\overline W$ along $Y$. 

We let $(W, d\lambda)$ be the completion of $\overline{W}$, obtained by
attaching a cylindrical end $\R^+ \times Y$, and take a Hamiltonian $H \colon
\R\times Y \to \R$ with a growth condition (see Definition \ref{def:J_shaped}
for details). This Hamiltonian will have Morse--Bott families of 1-periodic
orbits, and we then use the formalism of cascades 
(similar in style to the Morse--Bott theories of \citelist{\cite{BourgeoisThesis} \cite{Frauenfelder_Arnold_Givental_conjecture} \cite{Hutchings_Nelson_S1_Morse_Bott}}) 
in order to construct a
Morse--Bott Floer homology associated to $H$, but we also consider
configurations that interact between $\R \times Y$ and $W$, as in \cite{BOExactSequence}.

In Section \ref{S:chain complex} we introduce the chain complex for split
symplectic homology, and in Section \ref{S:split moduli} we describe the moduli spaces that are relevant for the
differential.  The chain complex will be generated by critical points of
auxiliary Morse functions associated to our Morse--Bott manifolds of orbits.
The differential will be obtained from moduli spaces of Floer cylinders with
cascades
together with asymptotic boundary conditions given in terms of the auxiliary Morse functions.

We then prove three main results.  
The first is a transversality theorem for ``simple cascades''. 
These are elements of the relevant moduli spaces that are also somewhere
injective when projected to $\Sigma$. This result holds without any
monotonicity assumptions.
A more precise formulation is provided in Proposition \ref{cascades form manifold}:
\begin{theorem} \label{thm:simple Floer are transverse} 
    Simple Floer cylinders with cascades in $W$ and $\R \times Y$ are transverse for a
    co-meagre set of compatible, Reeb-invariant and cylindrical almost complex structures on $(W,d\lambda)$.
\end{theorem}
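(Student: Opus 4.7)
The plan is to apply the standard Sard--Smale paradigm to a universal moduli space, modified to accommodate the Reeb-invariance constraint on the almost complex structure $J$.

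First, I would introduce the Banach manifold $\J$ of compatible, Reeb-invariant, cylindrical almost complex structures on $(W,d\lambda)$, realised à la Floer with a $C^\varepsilon$-norm so that perturbations form a Banach manifold while remaining dense in the $C^\infty$ topology. Over $\J$ I would form the universal moduli space
\[
\widetilde{\M} = \{(u, J) : u \text{ is a simple Floer cylinder with cascades for } J\},
\]
stratified by combinatorial type (number of Floer cylinder pieces in $W$, $J$-holomorphic cylinder pieces in the symplectization $\R\times Y$, connecting gradient flow segments on the Morse--Bott critical manifolds of orbits, and prescribed asymptotics). The matching conditions between components along the auxiliary Morse flows would be incorporated directly into the setup so that the whole configuration appears as the zero set of a single Fredholm section.

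The heart of the argument is to show that, at every $(u,J) \in \widetilde{\M}$, the linearization
\[
\mathcal{D}_{(u,J)} \colon T_u \mathcal{B} \times T_J \J \to \mathcal{E}_u
\]
of the combined Floer/Cauchy--Riemann operator is surjective. I would argue by contradiction with a nonzero cokernel element $\eta$ and treat each component of $u$ separately. For Floer cylinder components lying in the bounded part of $W$, variations of $J$ are unconstrained and the usual Floer--Hofer--Salamon argument produces a $J'$ violating the cokernel equation at a somewhere injective point. For Floer cylinders in the cylindrical end $\R \times Y$, or $J$-holomorphic cylinders in the symplectization portion of the cascade, the variation $J'$ must be both $\R$-invariant and invariant under the Reeb action, hence descends to an infinitesimal variation of a compatible almost complex structure on $\Sigma$. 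This is precisely where the simplicity hypothesis enters: at a point of $u$ whose projection to $\Sigma$ is injective, the desired $J'$ can be constructed downstairs on $\Sigma$, supported in a small neighborhood of that point, and pulled back equivariantly to $\R \times Y$. Unique continuation for the formal adjoint of the linearized operator then forces $\eta \equiv 0$, so $\widetilde{\M}$ is a smooth Banach manifold. Sard--Smale applied to the Fredholm projection $\widetilde{\M} \to \J$ yields a comeagre set of regular values, and a Taubes-style diagonal trick transfers this comeagreness from the $C^\varepsilon$ completion to the $C^\infty$ topology.

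The main obstacle is this somewhere injective argument in the $S^1$-equivariant setting, where variations of $J$ cannot be chosen freely but are dictated by Reeb-invariance; verifying in detail that the construction interacts properly with each combinatorial type of cascade (in particular with components that may cross between $W$ and $\R\times Y$, and with the matching conditions imposed at the ends) is what I expect to occupy most of the work. A secondary subtlety is ensuring that evaluation maps at matching Reeb orbits are transverse to the gradient flows of the auxiliary Morse functions on the Morse--Bott orbit manifolds; this can be handled either by viewing it as an additional finite-dimensional transversality problem to be incorporated into $\widetilde{\M}$, or by allowing generic perturbations of the Morse data alongside the variations of $J$.
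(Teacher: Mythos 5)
Your general architecture (Banach manifold of Reeb-invariant structures, universal moduli space, Sard--Smale, Taubes diagonal trick) is sound, but the cokernel argument has a concrete gap. An admissible $J_Y$ on $\R\times Y$ is block-diagonal for the splitting $T(\R\times Y)\cong(\R\partial_r\oplus\R R)\oplus\xi$: the $(\R\partial_r\oplus\R R)$--block is the fixed standard complex structure, and only the $\xi$--block can be varied (via $J_\Sigma$ downstairs). So an equivariant variation $Y$ of $J$ yields a deformation term $\tfrac12 Y(\tilde v)\,d\tilde v\circ j$ valued entirely in the $w^*T\Sigma$--summand of the target. Your somewhere-injective perturbation therefore only kills the $w^*T\Sigma$--component of a putative cokernel element; it says nothing about the component valued in the complex line $(\R\partial_r\oplus\R R)^*$, and unique continuation alone does not force that component to vanish. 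The paper closes exactly this gap by a structural argument: it shows (Lemma \ref{L:linearizationFloer}) that $D_{\tilde v}$ is upper triangular in this splitting, with diagonal blocks $D^\C_{\tilde v}$ and $\dot D^\Sigma_w$, and then proves (Lemma \ref{L:verticalOperatorTransverse}) that the line-bundle block $D^\C_{\tilde v}$ is \emph{automatically} surjective by the Wendl criterion --- necessarily so, since no available perturbation of $J$ reaches it. Only after this reduction does transversality for $\dot D^\Sigma_w$ become a genuine perturbation problem in $\Sigma$, which is where your somewhere-injective argument (and McDuff--Salamon) applies.

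Two secondary points you elide are also nontrivial. First, the components of a cascade in $W$ correspond, under $\psi$, to $J_X$-spheres in $X$ with a prescribed order of contact with $\Sigma$, and the admissible perturbations of $J$ must be supported away from the tubular neighbourhood of $\Sigma$; showing that the evaluation at the tangency point is then a submersion onto $\Sigma$ requires extending the Cieliebak--Mohnke jet-transversality machinery (Proposition \ref{prop:jet_condition_evaluation}). Second, the paper does \emph{not} need to perturb the auxiliary Morse data to make the asymptotic evaluation maps transverse to the flow-diagonals in $Y\times Y$: instead it exploits the $S^1$-rotation by the Reeb flow on each cylinder together with the fact (Lemma \ref{different multiplicities}) that a non-trivial simple cylinder necessarily has different Hamiltonian multiplicities at its two ends, which gives enough independent vertical directions in the image of the linearized evaluation map (Proposition \ref{prop:TransverseLiftedEvaluation}). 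Your alternative of perturbing the Morse data could in principle be made to work, but it is a genuinely different route and would require its own justification that the resulting complex is independent of those extra choices.
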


Theorem \ref{thm:simple Floer are transverse} 
builds on a transversality theorem for moduli spaces of spheres
in $X$ and in $\Sigma$, which may be of independent interest. 
This construction is somewhat analogous to the strings of pearls that Biran
and Cornea study in the Lagrangian case \cite{BiranCornea}. Our result builds essentially on results from \cite{McDuffSalamon}. 
A more precise formulation is given in Proposition \ref{necklaces are
regular}:
\begin{theorem}
    For a generic compatible almost complex structure, 
    moduli spaces of somewhere injective spheres in $\Sigma$ and of somewhere injective spheres in $X$ with order-of-contact constraints at $\Sigma$, connected by gradient trajectories, are transverse assuming no two spheres have the same image.
\end{theorem}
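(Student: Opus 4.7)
The plan is to realize the moduli space as the zero set of a Fredholm section and run a universal moduli space / Sard--Smale argument. Fix the combinatorial type of a configuration: somewhere injective spheres $v_1,\dots,v_k \colon S^2 \to \Sigma$ (with respect to $J_\Sigma := J|_\Sigma$), somewhere injective spheres $u_1,\dots,u_\ell \colon S^2 \to X$ with prescribed orders of contact with $\Sigma$ at prescribed marked points, and gradient trajectories of a Morse function $f$ on $\Sigma$ joining evaluation images of these spheres on $\Sigma$. Let $\mathcal J$ denote the space of compatible almost complex structures on $X$ preserving $T\Sigma$. We build the universal moduli space over $\mathcal J \times \{\text{Morse } f\}$, and show that its linearization is surjective at every solution; then Sard--Smale gives a comeagre set of regular values.

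\textbf{Transversality of the sphere components.} For the $v_j$ we apply the standard McDuff--Salamon argument: at a somewhere injective point $z_j \in S^2$ of $v_j$ whose image lies outside the union of the other spheres' images (possible by the hypothesis that no two spheres share an image together with the usual analysis of multiple cover points), variations of $J_\Sigma$ supported near $v_j(z_j)$ fill out the cokernel of the linearized Cauchy--Riemann operator on $\Sigma$. Variations of $J_\Sigma$ arise by restricting variations of $J \in \mathcal J$ (since $\mathcal J$ surjects onto compatible almost complex structures on $\Sigma$ via restriction), so this is achievable within $\mathcal J$. For the $u_i$, we first deal with the order-of-contact condition: because $\Sigma$ is $J$-holomorphic, the constraint is cut out cleanly and the linearized operator at a constrained curve is Fredholm (cf.~Cieliebak--Mohnke type arguments). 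By somewhere injectivity we can find an injective point $w_i \in S^2$ of $u_i$ whose image lies in $X \setminus \Sigma$ and avoids the other curves. There, the constraint that variations of $J$ preserve $T\Sigma$ is vacuous, so all compatible perturbations are available, and the usual argument closes the cokernel. A unique continuation / linear algebra step is needed to ensure that any cokernel element vanishing near $w_i$ actually vanishes identically.

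\textbf{Matching the pearls.} For generic $f$, the stable/unstable manifolds of $f$ are transverse (standard Morse theory) and the evaluation maps at the marked points where spheres meet gradient trajectories are submersions onto $\Sigma$. The submersion property at an evaluation point $p$ follows by combining the preceding paragraph with the observation that variations of $J$ supported near $p$ (away from any injective point of the sphere in question) generate enough tangent directions at $p$ via the linearization; alternatively, one varies the location of the marked point on the domain. One then obtains transversality of the fibered product of evaluation maps with the space of broken gradient trajectories in the standard way, by additionally perturbing $f$.

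\textbf{Main obstacle.} The delicate point is the interaction between the order-of-contact constraints at $\Sigma$ and the restriction that $J$ preserve $T\Sigma$. At points of $u_i$ on $\Sigma$ (the constrained marked points) the allowed $J$-perturbations are tangent to the subvariety $\mathcal J \subset \mathcal J(X)$ of $\Sigma$-preserving structures, which cuts the available directions roughly in half. We therefore cannot hope to achieve surjectivity by perturbing near $\Sigma$; the argument must route all perturbations through an injective point located in $X \setminus \Sigma$ (for the $u_i$) or at a point of $\Sigma$ disjoint from every $u_i$-image (for the $v_j$). The hypothesis that no two spheres have the same image is exactly what ensures such a point exists for each sphere; confirming this carefully, and checking that unique continuation then propagates surjectivity to the cokernel of the constrained operator, is the heart of the proof.
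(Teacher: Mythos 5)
Your high-level strategy matches the paper's: realize the configuration space as a fibre product of universal moduli spaces of constrained pseudoholomorphic spheres over the evaluation maps to $\Sigma$, show transversality of the universal problem, and invoke Sard--Smale (together with Taubes' trick to pass from $C^r$ to $C^\infty$). Your treatment of the sphere components' linearized Cauchy--Riemann operators (perturbing $J_\Sigma$ near an injective point for the $v_j$, and perturbing $J$ away from $\Sigma$ near an injective point in $X\setminus\Sigma$ for the $u_i$) is also the right route, and corresponds to the paper's appeal to McDuff--Salamon and Cieliebak--Mohnke respectively.

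There is, however, a genuine gap in the step you label ``Matching the pearls,'' precisely at the point you flag as ``the heart of the proof.'' To form the fibre product with gradient trajectories, you need the evaluation map $\ev_{X,\Sigma} \colon \mathcal{M}^*_{X}(B_0;\mathcal{J}^r_W) \to X \times \Sigma$ sending $f$ to $(f(0), f(\infty))$ to be a \emph{submersion}, in particular in the $\Sigma$ factor at the order-of-contact point $\infty$. This is strictly more than surjectivity of the constrained linearized Cauchy--Riemann operator (which Cieliebak--Mohnke already give), and neither of the two routes you propose works. Perturbing $J$ near $p = f(\infty) \in \Sigma$ is not available: by construction of the class $\mathcal{J}_W$, $J$ is \emph{fixed} (a bundle almost complex structure) on the whole tubular neighbourhood $\varphi(\mathcal{U})$ of $\Sigma$, so there are no allowed perturbations supported near $p$ at all, not merely a restricted half-dimensional family. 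Moving the marked point on the domain only contributes a $2$-parameter family, while $T_{f(\infty)}\Sigma$ has dimension $2n-2$; and in fact the position of the contact point on the domain is essentially determined (it is the unique preimage of $\Sigma$ under $f$), so even those two directions are not freely available. The ``unique continuation'' observation closes the cokernel of the constrained operator but does not produce tangent vectors $\xi$ with arbitrary prescribed value $\xi(\infty) \in T_{f(\infty)}\Sigma$ \emph{and} satisfying the jet condition $d^l\xi(\infty) \in T_{f(\infty)}\Sigma$.

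What the paper does instead, in Section~4.4, is an induction on the order of contact $k$, extending the Cieliebak--Mohnke framework: given a tangent vector attaining a prescribed value at $\infty$ and satisfying the order-$(k-1)$ jet condition, one corrects it by adding an explicit local section $\tilde\xi(z) = -\frac{z^k}{k!}\beta(z)\,\pi_\C\bigl(\tfrac{\partial^k}{\partial s^k}\xi\bigr)(0)$ supported away from $\varphi(\mathcal{U})$, then applies Cieliebak--Mohnke's inhomogeneous solvability lemma (their Lemma~6.6) to restore the Cauchy--Riemann equation, and finally uses an elliptic bootstrap lemma (Lemma~\ref{bootstrap} in the paper) to upgrade the jet condition from order $k-1$ to order $k$. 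This inductive construction is what produces the submersion at the contact point, and it has no analogue in your proposal. Without it, the fibre product over $\Sigma$ at those points cannot be shown transverse, so the argument does not close.

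Two smaller remarks: (i) the paper's transversality to flow diagonals $\Delta_{f_\Sigma}$ (its Lemmas~\ref{L:flow_diagonal_0}--\ref{L:flow_diagonal} and the induction in Lemma~\ref{L:non_trivial_pearls_submersion}) requires care because constant spheres with marked points can appear in the chain of pearls; the informal theorem statement says ``somewhere injective,'' so you are not strictly wrong to ignore this, but the machinery does need it. (ii) Perturbing the Morse function $f_\Sigma$ is not part of the paper's scheme; $f_\Sigma$ and $Z_\Sigma$ are fixed once and for all, satisfying Morse--Smale, and all the genericity is drawn from the almost complex structure. Your suggestion to ``additionally perturb $f$'' is not incorrect, but is not how the paper's coherence of choices is organized.
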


The third result builds on these two to describe the moduli spaces that are relevant 
under suitable monotonicity assumptions on $X$ and $\Sigma$. 
In particular, the differential is computed from only four types of simple
cascades. See Propositions \ref{P:Floer_in_R_times_Y} and \ref{P:Floer_also_in_W} for more details. 
\begin{theorem}
    Assume that $(X, \omega)$ is spherically monotone with monotonicity
    constant $\tau_X$ and assume that $\Sigma \subset X$ is Poincar\'e dual to
    $K \omega$ with $\tau_X > K > 0$.

    Then, the split Floer homology of $W$ is well-defined, and does not depend
    on the choice of Hamiltonian or of compatible, Reeb--invariant and cylindrical almost complex
    structure on $W$ (in a co-meagre set of such almost complex structures).
    
    The only moduli spaces of split Floer cylinders with cascades that count in the
    differential are the following: 
    \begin{enumerate}[(1)]
        \item[(0)] Morse trajectories in $Y$ or in $W$;
        \item Floer cylinders in $\R \times Y$, projecting to non-trivial
            spheres in $\Sigma$, and with asymptotics constrained by
            descending/ascending manifolds of critical points in $Y$;
        \item holomorphic planes in $W$ that converge to a generic Reeb orbit
            in $Y$;
        \item holomorphic planes in $W$ constrained to have a marked point in
            the descending manifold of a Morse function in $W$ and whose
            asymptotic limit is constrained by the auxiliary Morse function on
            the manifold of orbits in $Y$.
    \end{enumerate}
\end{theorem}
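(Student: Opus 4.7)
The plan is to combine the transversality result for simple cascades (Theorem \ref{thm:simple Floer are transverse}) with a dimension-count argument powered by the monotonicity of $(X,\omega)$ and of $\Sigma$. For the differential to be well-defined and independent of choices, one needs to verify three things: the relevant zero- and one-dimensional moduli spaces are transversely cut out and compact modulo breaking; only simple cascades of the four listed types appear in these dimensions; and the resulting homology does not depend on $(H,J)$. Transversality is already provided by Theorem \ref{thm:simple Floer are transverse} once we know that only simple cascades contribute, which is the main content to establish.

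First I would write down the Fredholm index of a general cascade as a sum over its constituents: Floer cylinders in $\R \times Y$, finite-energy planes in $W$, gradient flow segments along the auxiliary Morse functions on the Morse--Bott manifolds of orbits, and any sphere bubbles in $X$ or $\Sigma$ appearing in limits. Under spherical monotonicity of $X$ with constant $\tau_X$, a sphere in $X$ of symplectic area $A$ contributes $2\tau_X A$ to the index; the relation $[\Sigma] = K[\omega]$ together with the adjunction formula gives $c_1(T\Sigma) = (\tau_X - K)\omega|_\Sigma$, so a sphere of area $A$ in $\Sigma$ contributes $2(\tau_X - K)A$. Similarly, a Floer cylinder in $\R \times Y$ whose projection to $\Sigma$ represents a non-trivial class, and a plane in $W$ whose asymptotic orbit wraps $\Sigma$ with multiplicity $k$, each pick up strictly positive index contributions proportional to $\tau_X - K$ or $\tau_X$, respectively. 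Because $\tau_X > K > 0$, every non-trivial Floer, sphere, or plane piece adds a definite positive amount to the total index.

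These estimates imply that once the virtual dimension of the cascade is fixed at one (so the reduced space contributes to the differential), the total symplectic area is bounded and the number of non-trivial Floer cylinders $k$ and planes $\ell$ satisfies $k + \ell \leq 1$. A case analysis, keeping track of the auxiliary Morse index constraints already set up in Propositions \ref{P:Floer_in_R_times_Y} and \ref{P:Floer_also_in_W}, then yields precisely the list (0)--(3): the pure Morse trajectories, a single Floer cylinder in $\R \times Y$, a single plane in $W$ with generic asymptotic, and a single plane in $W$ with a marked-point constraint and Morse-constrained asymptotic. The main obstacle is ruling out multiple covers of simple cascades, since Theorem \ref{thm:simple Floer are transverse} handles only the somewhere injective case directly: a $d$-fold cover of a simple cascade has the same image but $d$ times the area, so its Fredholm index exceeds that of the underlying simple cascade by $(d-1)$ times a positive multiple of the area, contradicting the virtual dimension bound unless $d=1$. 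Finally, invariance under changes of $H$ and $J$ follows from the usual continuation argument along a generic path, to which the same monotonicity-based index estimates apply to keep the one-parameter moduli spaces compact and transversely cut out.
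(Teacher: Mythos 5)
Your overall strategy — combine transversality for simple cascades with index estimates that exploit monotonicity — is indeed the structure of the paper's argument (Propositions \ref{P:Floer_in_R_times_Y} and \ref{P:Floer_also_in_W}). But several of the estimates you appeal to are either stated too strongly or do not exist in the form you invoke, and without the precise bookkeeping the case analysis does not close.

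First, your claim that a plane in $W$ ``picks up a strictly positive index contribution'' is false, and, importantly, has to be false for the theorem to be true: by Lemma \ref{L:indexAugmentationNonNeg} an augmentation plane in class $B$ has Fredholm index $2\bigl((\tau_X-K)\omega(B)-1\bigr) \ge 0$, and index-zero augmentation planes do occur (this is exactly Case~2). Since those planes add nothing to the index, the bound ``$k+\ell\le 1$'' does not follow from a crude positivity argument; in fact Cases~2 and~3 each have two non-trivial holomorphic pieces (a Floer cylinder in $\R\times Y$ and a plane in $W$), so a literal reading of $k+\ell\le 1$ would erroneously exclude them. The actual engine of the classification is an inequality of the form
\begin{equation*}
    1 \ \ge\ \bigl(i(\widetilde p)-i(\widetilde q)+1\bigr) + N_1 + k + (k-N_0) + \sum_j |\gamma_j|_0,
\end{equation*}
where $N_1$ and $N_0$ are the numbers of non-constant and constant projections to $\Sigma$, $k$ is the number of augmentation punctures, and $i(\widetilde p)-i(\widetilde q)+1\ge 0$ records the fibrewise Morse index difference. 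The positivity you need is scattered among these terms: the $2k$ moduli of the marked points, the transversality lower bound $I_\Sigma \ge 2N_1 + 2k$ for the projected chain of pearls (Proposition \ref{necklaces are regular}), and crucially the \emph{stability condition} $N_0\le k$ (constant spheres must carry a marked point), which you never mention. Without this last constraint you cannot rule out, say, a constant-projection Floer cylinder with no augmentation puncture, which contributes nothing to your area count.

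Second, your treatment of multiple covers does not work as stated. A cascade is a tuple, not a map, so ``a $d$-fold cover of a simple cascade'' is not a meaningful object; the actual issue is that \emph{individual components} may be multiply covered, and different components may be covered to different degrees. The paper handles this by projecting the cascade to a chain of pearls in $\Sigma$ (or with a sphere in $X$), passing to the underlying simple chain of pearls, and using monotonicity to argue that the index can only drop, together with the precise estimate in Lemma \ref{L:indexAugmentationNonNeg} that a $d$-fold covered augmentation plane has index at least $2(d-1)$. Your area-based heuristic does not give this, because index is not a linear function of area for multiply covered pieces unless you invoke the monotonicity relation directly, and it does not cover the mixed case where only one sphere in the chain is multiply covered.

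Finally, your invariance paragraph is fine in spirit, and indeed the paper only sketches it, but note that the well-definedness of the differential also hinges on the lacunarity observation and the compactness analysis, which you do not address.
\medskip

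In short: the skeleton is right, but the claimed positivity of individual index contributions is false, the stability condition and fibrewise Morse index terms are essential and missing, and the multiple-cover reduction needs to go through the projection to $\Sigma$ rather than through a direct ``covered cascade'' comparison.
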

We remark that Cases (2) and (3) are non-trivial
cascades, but their components in $\R \times Y$ lie in fibres of $\R
\times Y \to \Sigma$.
This is formulated more precisely in Propositions \ref{P:Floer_in_R_times_Y}
and \ref{P:Floer_also_in_W}. See Figures \ref{case_1_fig}, \ref{case_2_fig} and 
\ref{case_3_fig} for a depiction of Cases (1)-(3). 

The paper concludes with a discussion of orientations of the moduli spaces of cascades contributing 
to the differential, in Section \ref{sec:orientations}. 

\subsection*{Acknowledgements}
We would like to thank Yasha Eliashberg, Paul Biran and Dusa McDuff for many
helpful conversations, guidance and ideas. We also thank Jean-Yves Welschinger
and Felix Schm\"aschke for helping us to understand coherent
orientations better. We also thank Fr\'ed\'eric Bourgeois, Joel Fish, Richard Siefring
and Chris Wendl for helpful suggestions. Finally, we thank the referee for their
careful reading of our paper and for many helpful suggestions for improvement.

S.L.~was partially supported by the ERC Starting Grant of Fr\'ed\'eric Bourgeois
StG-239781-ContactMath and also by Vincent Colin's ERC Grant geodycon.

L.D.~thanks Stanford University, ETH Z\"urich, Columbia University and Uppsala University 
for excellent working conditions. L.D.~was partially supported by the Knut and Alice Wallenberg Foundation.  

\section{Set--up}

\label{S:setup}

We now provide details of the classes of Liouville manifolds for which we prove
transversality in split Floer homology.

We begin by summarizing some constructions from \cite{DiogoLisiComplements}, specifically 

\begin{proposition}[\cite{DiogoLisiComplements}*{Lemma 2.2}] \label{prop:X}
    Let $(\overline{W}, d\lambda)$ be a Liouville domain
    with boundary $Y = \partial \overline{W}$. Assume that 
    $\alpha \coloneqq \lambda|_Y$ has a Reeb
    vector field generating a free $S^1$ action.

    Then, if $\Sigma$ is the quotient of $Y$ by the $S^1$ action, and
    $\omega_\Sigma$ is the symplectic form induced from $d\alpha$, there exists a
    symplectic manifold $(X, \omega)$ with $\Sigma \subset X$ so that
    $\omega|_{\Sigma} = \omega_\Sigma$, with the following properties:
    \begin{enumerate}[(i)]
        \item $(X\setminus \Sigma, \omega)$ is symplectomorphic to
            $(\overline{W} \setminus \partial{\overline W}, d\lambda)$;
        \item $[\Sigma] \in H_{2n-2}(X; \Q)$ is Poincar\'e dual to 
            $[K \omega] \in H^2(X; \Q)$  for some $K > 0$;
        \item if $N\Sigma$ denotes the symplectic normal bundle to $\Sigma$ in
            $X$, equipped with a Hermitian structure (and hence symplectic
            structure),  there exist
            a neighbourhood $\U$ of the $0$-section in
            $N\Sigma$ and a symplectic embedding $\varphi \colon \U \to X$.
            By shrinking $\U$ as necessary, we may arrange that $\varphi$ extends
            to an embedding of $\overline \U$.
    \end{enumerate}
    \qedhere
\end{proposition}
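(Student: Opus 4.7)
The plan is to realize $X$ as a symplectic gluing of $\overline W$ with a standard symplectic disk bundle over $\Sigma$ along $Y$, and then to derive the Poincar\'e duality statement in (ii) from the exactness of $\omega$ on the complement of $\Sigma$. I begin by setting up the local model. By the Boothby--Wang theorem, the freeness of the Reeb $S^1$-action on $Y$ makes $\pi\colon Y \to \Sigma$ into a principal $S^1$-bundle with connection 1-form $\alpha$ satisfying $d\alpha = \pi^*\omega_\Sigma$. Let $L \to \Sigma$ be the associated Hermitian complex line bundle, so that $Y$ is identified with its unit circle bundle. Writing $r$ for the fibrewise radial coordinate on $L$ and extending $\alpha$ to $L\setminus\Sigma$ via the radial projection to $Y$, one checks in local polar coordinates on a fibre that $\tfrac{r^2}{2}\alpha$ extends smoothly across the zero section, and hence the form $\omega_L \coloneqq d\bigl(\tfrac{r^2}{2}\alpha\bigr) = r\,dr\wedge\alpha + \tfrac{r^2}{2}\pi^*\omega_\Sigma$ is symplectic on the closed unit disk bundle $\overline{D(L)}$ and restricts to $\omega_\Sigma$ on the zero section $\Sigma$.

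Next, I glue. A Liouville-collar neighbourhood of $Y$ in $\overline W$ is symplectomorphic to $((-\epsilon, 0]\times Y, d(e^t \alpha))$; after the change of variable $r = e^{t/2}$, together with a linear rescaling of the radial coordinate to match normalizations, a corresponding collar of $\partial\overline{D(L)} = Y$ takes the same form. Gluing via this identification produces a smooth closed symplectic manifold $(X,\omega)$ containing $\Sigma$ as the zero section of $L \cong N\Sigma$. Property (iii) then follows by taking $\U$ to be a small open subdisk bundle of $L = N\Sigma$, with $\varphi$ the inclusion of $\U$ into $X$ through the gluing map. For (i), observe that $X\setminus\Sigma = \overline W \cup_Y (\overline{D(L)}\setminus \Sigma)$, and applying the Liouville flow to reparametrize this extended collar yields a symplectomorphism with $\overline W \setminus \partial\overline W$.

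For (ii), since $(X\setminus\Sigma,\omega)$ is symplectomorphic to the exact symplectic manifold $(\overline W \setminus \partial\overline W, d\lambda)$, the restriction of $[\omega]$ to $H^2(X\setminus\Sigma;\R)$ vanishes. The long exact sequence of the pair $(X,X\setminus\Sigma)$ then shows $[\omega]$ lifts to a class in $H^2(X,X\setminus\Sigma;\R)$, which by excision and the Thom isomorphism for the oriented rank-2 normal bundle $L$ is isomorphic to $H^0(\Sigma;\R)$. Assuming $\Sigma$ is connected (as is forced when $Y$ is; otherwise one argues componentwise over $\Q$), this group is $\R$, and the image of its generator under the connecting map to $H^2(X;\R)$ is $\mathrm{PD}([\Sigma])$. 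Thus $[\omega] = c\,\mathrm{PD}([\Sigma])$ for some $c \in \R$, equivalently $\mathrm{PD}([\Sigma]) = K[\omega]$ with $K = c^{-1}$. Pairing both sides with $[\Sigma]$ gives $\int_\Sigma \omega_\Sigma = c\,([\Sigma]\cdot[\Sigma]) = c\int_\Sigma c_1(N\Sigma)$; since Boothby--Wang makes $c_1(N\Sigma) = c_1(L)$ a positive multiple of $[\omega_\Sigma]$, the right-hand side is a positive multiple of $c$, forcing $c > 0$ and therefore $K > 0$.

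The main obstacle is the gluing step: one needs a careful relative Moser / Liouville-collar identification to ensure the two symplectic forms agree on a full neighbourhood of the common boundary, so that the resulting $X$ is genuinely smooth and symplectic. Once this model is correctly fitted to $\overline W$, statements (i) and (iii) are essentially immediate from the construction, and (ii) reduces to the Thom-isomorphism computation sketched above.
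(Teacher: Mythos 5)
Your overall strategy — realize $X$ as a symplectic gluing of $\overline W$ with a disk bundle over $\Sigma$ (equivalently, a symplectic cut of $\overline W$ along $Y$) and then deduce (ii) from the Thom isomorphism — is the standard and correct one, and it is the same route as in Giroux's note and in \cite{DiogoLisiComplements}. Your argument for (ii) is fine: exactness of $\omega$ on $X\setminus\Sigma$ together with the Thom isomorphism for $N\Sigma$ forces $[\omega]=c\,\mathrm{PD}([\Sigma])$, and pairing with $[\Sigma]$ and using positivity of $\int_\Sigma\omega_\Sigma$ gives $K=c^{-1}>0$, once one notes $c\neq 0$ because $X$ is closed.

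However, there is a genuine error in your local model. You set $\omega_L \coloneqq d\bigl(\tfrac{r^2}{2}\alpha\bigr) = r\,dr\wedge\alpha + \tfrac{r^2}{2}\pi^*\omega_\Sigma$ and claim this restricts to $\omega_\Sigma$ on the zero section. It does not: both summands carry a factor of $r$ (respectively $r^2$) and vanish identically when pulled back to $\{r=0\}=\Sigma$, so $\omega_L|_\Sigma=0$. As written the form fails the very condition $\omega|_\Sigma=\omega_\Sigma$ required by the proposition. The correct Lerman-type local model is
\[
\omega_L \;=\; \pi^*\omega_\Sigma \;-\; d\!\left(\tfrac{\rho^2}{2}\alpha\right)\;=\;\left(1-\tfrac{\rho^2}{2}\right)\pi^*\omega_\Sigma\;-\;\rho\,d\rho\wedge\alpha,
\]
which on $L\setminus\Sigma$ equals $d\bigl((1-\tfrac{\rho^2}{2})\alpha\bigr)$ and does restrict to $\omega_\Sigma$ at $\rho=0$. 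This also changes the collar identification: one should set $e^t = 1-\tfrac{\rho^2}{2}$, so that $\rho\in[0,R]$ for small $R$ matches the collar $t\in[\log(1-\tfrac{R^2}{2}),0]$ of $\overline W$, and one glues the small disk bundle $D_R(L)$ in place of that collar rather than attaching the unit disk bundle to $\partial\overline W$ outright. Your change of variable $r=e^{t/2}$ sends $\Sigma$ (i.e.\ $r=0$) to $t=-\infty$, i.e.\ compactifies the \emph{concave} end of the symplectization, which is not where $\Sigma$ sits; no linear rescaling can rescue this, because no rescaling makes $d(\tfrac{r^2}{2}\alpha)$ nondegenerate along the zero section. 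With the corrected model, (i), (iii) and the rest of your argument go through as you sketched them.
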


\begin{definition}
Let $X, \omega, \Sigma, \omega_\Sigma$ and $N\Sigma$ be as in Proposition
\ref{prop:X}. 

Fix a Hermitian line bundle structure 
on the symplectic normal bundle $\pi \colon N\Sigma \to \Sigma$.
A Hermitian connection on $N\Sigma$ can be encoded in terms of a
connection 1-form $\Theta \in \Omega^1(N\Sigma \setminus \Sigma)$ 
with the property that $d\Theta = -K \pi^*d\omega_\Sigma$. 
Fix such a Hermitian connection 1-form $\Theta$. 

Since $N\Sigma$ is a Hermitian line bundle, we have the action of
$U(1)$ on the bundle by rotation in the $\C$--fibres. The infinitesimal
generator of this action is a vector field on which $\Theta$ evaluates to $1$.

Let $\rho \colon N\Sigma
\to [0, +\infty)$ denote the norm in $N\Sigma$ measured with respect to
the Hermitian metric.  Then, for each $x \in N\Sigma \setminus \Sigma$,
let $\xi_x = \left( \ker \Theta_x \cap \ker d\rho \right) \subset T_x
N\Sigma$.  We may then extend the distribution $\xi_x$ smoothly to the $0$-section by
definining $\xi_x = T_x \Sigma$ if $x \in \Sigma$.

Notice that this gives a splitting $T_x N\Sigma = \ker d\pi \oplus \xi_x$ 
and $d\pi|_{\xi} \colon \xi_x \to T_{\pi(x)} \Sigma$ is a symplectic
isomorphism. 

Any almost complex structure $J_\Sigma$ on $\Sigma$ then can be lifted to an
almost complex structure on $N\Sigma$ by taking it to be the linearization 
of the bundle complex structure on $\ker d\pi$ and to be the pull-back 
of $J_\Sigma$
to $\xi_x$
by the isomorphism $d\pi \colon \xi_x \to T_{\pi(x)} \Sigma$.

We refer to any almost complex structure obtained in this way as a
\defin{bundle almost complex structure} on $N\Sigma$. 
\end{definition}

Define the open set $\V = X \setminus \overline{ \varphi (\U)}$. 
We will later perturb our almost complex structures in $\V$.

\begin{proposition}[\cite{DiogoLisiComplements}*{Lemma 2.6}]
\label{prop:Xacs}

Let $\overline{W}$, $Y$, $\lambda$, $\alpha$, $X$, $\Sigma$ and $\varphi
\colon \U \to X$ be as in Proposition \ref{prop:X}.

Then, there exists a diffeomorphism $\psi \colon W \to X \setminus \Sigma$
with the following properties:
\begin{enumerate}[(i)]
    \item if $J_X$ in a compatible almost complex structure on $X$ that restricts to
        $\varphi(\U)$ as the push-forward by $\varphi$ of a bundle almost complex
        structure, then $\psi^*J_X$ is a compatible almost complex structure
        on $W$ that is cylindrical and Reeb--invariant on $W \setminus \overline{W}$;

    \item if $J_W$ is a compatible almost complex structure on $W$, cylindrical and Reeb invariant on $W \setminus \overline{W}$, then the pushforward $\psi_* J_W$ extends to an almost complex structure $J_X$ on $X$ and
    $J_\Sigma \coloneqq J_X|_\Sigma$ is also given by restricting $J_W$ to a
        parallel copy of $\{ c \} \times Y$ for some $c > 0$, and taking the
        quotient by the Reeb $S^1$ action.
    \end{enumerate}
    Furthermore, $\psi$ may be taken to be radial, in the sense that for 
        $(r,y) \in [r_0, +\infty) \times Y$ in the cylindrical end of $W$, 
        $r_0$ sufficiently large, then $\psi(r, y) = (\rho(r), y) 
        \in N\Sigma \setminus \Sigma$. 
        
    \qedhere
\end{proposition}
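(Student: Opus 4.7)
The plan is to build $\psi$ explicitly, extending the symplectomorphism of Proposition \ref{prop:X}(i) by a radial formula on the cylindrical end of $W$, then to verify (i) and (ii) by a local computation in a fibre of $N\Sigma \to \Sigma$.

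First, using the Boothby--Wang structure, identify $Y$ with the unit circle bundle of $N\Sigma$, so that the Reeb flow on $Y$ becomes (up to rescaling by the Reeb period) the restriction to $Y$ of the $U(1)$-rotation on $N\Sigma$. On the cylindrical end $[0, \infty) \times Y \subset W$, define $\psi(r, y) = \rho(r) \cdot y \in N\Sigma \setminus \Sigma$, where $\rho \colon [0, \infty) \to (0, \rho_0]$ is a smooth strictly decreasing diffeomorphism with $\rho(r) \to 0$ as $r \to \infty$; thus going to infinity in $W$ corresponds to approaching $\Sigma$ in $X$. Glue this to the symplectomorphism of Proposition \ref{prop:X}(i) on the rest of $W$; the gluing is smooth because the symplectomorphism, composed with $\varphi^{-1}$ near $Y$, is itself radial in $N\Sigma$, with the Liouville vector field on $\overline{W}$ matching (up to sign) the radial vector field on $N\Sigma$. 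The specific choice $\rho(r) = \rho_0 e^{-r/a}$ for $r$ large, with $a > 0$ determined by the Reeb period and the $U(1)$-normalization, is what will make $\psi$ intertwine the cylindrical and bundle structures.

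To verify (i), let $J_X$ restrict to a bundle almost complex structure on $\varphi(\U)$. At a point $(r, y)$ in the cylindrical end, $\psi_* \partial_r = \rho'(r) \partial_\rho$ and $\psi_* R$ is a positive constant multiple of the $U(1)$-rotational vector field $\partial_\theta$ on $N\Sigma$. The bundle complex structure satisfies $\partial_\rho \mapsto \rho^{-1} \partial_\theta$ in each fibre, so $(\psi^* J_X) \partial_r$ is a $\rho'(r)/\rho(r)$-multiple of $R$; the exponential form of $\rho$ makes this ratio constant in $r$, and it is normalized to $1$ by the choice of $a$. On the contact distribution $\xi \subset TY$, which $\psi_*$ sends to the horizontal distribution $\xi_x = \ker \Theta \cap \ker d\rho$, the bundle almost complex structure is the $\pi$-pullback of $J_\Sigma$, hence independent of $r$ and $U(1)$-invariant, and so Reeb-invariant after pullback. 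This is exactly cylindricity and Reeb-invariance of $\psi^* J_X$.

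For property (ii), the argument reverses: a cylindrical Reeb-invariant $J_W$ on $W \setminus \overline{W}$ pushes forward to an almost complex structure on $\varphi(\U) \setminus \Sigma$ whose vertical part is the standard Hermitian complex structure in each fibre of $N\Sigma$ and whose horizontal part is $\pi$-pulled back from an almost complex structure on $\Sigma$; the latter is precisely the quotient of $J_W|_\xi$ at any level $\{c\} \times Y$ by the Reeb $S^1$-action, well-defined by Reeb-invariance and independent of the level by cylindricity. The main obstacle is the smooth extension of $\psi_* J_W$ across $\Sigma$: cylindricity and Reeb-invariance force $J_W$ on the cylindrical end into precisely the rigid form of a bundle almost complex structure, and such structures extend smoothly across the zero section of $N\Sigma$ because the fibrewise Hermitian complex structure and the complex structure $J_\Sigma$ on $\Sigma$ are globally defined.
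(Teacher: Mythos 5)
The paper does not prove this proposition here: it is stated with a \qedhere and is cited verbatim from the companion paper \cite{DiogoLisiComplements}*{Lemma 2.6}, so there is no in-paper proof to compare against. Your blind proof takes the natural route and correctly identifies the essential point: $\psi$ must be radial at the end with an exponential profile, since the bundle complex structure sends $\partial_\rho \mapsto \rho^{-1}\partial_\theta$ in the fibre, so $(\psi^*J_X)\partial_r$ is a $\rho'(r)/\rho(r)$-multiple of $R$ and cylindricity forces that ratio to be constant. The verification of (ii) (the rigidity coming from cylindricity $+$ Reeb-invariance, and the smooth extension across the zero section) is also sound and matches the paper's own Definition of bundle almost complex structures.

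Two points deserve more care. First, the ``gluing'' of the radial map on $[0,\infty)\times Y$ to ``the symplectomorphism of Proposition \ref{prop:X}(i) on the rest of $W$'' is not literally possible as written: that symplectomorphism $\Phi\colon \operatorname{Int}(\overline{W}) \to X\setminus\Sigma$ is already a bijection onto all of $X\setminus\Sigma$, so $\psi$ cannot restrict to $\Phi$ on $\operatorname{Int}(\overline{W})$ while sending the end of $W$ into the same target. The clean statement is that $\psi = \Phi\circ s$ for a ``shrinking'' diffeomorphism $s\colon W\to\operatorname{Int}(\overline{W})$, with $s$ chosen (using that $\Phi\circ\varphi^{-1}$ is radial on the collar, as you observe) so that the composite $\psi$ is radial with the required exponential profile for $r$ large; equivalently, extend your radial formula onto a collar $(-\epsilon,\infty)\times Y$ and glue to $\Phi$ restricted to $\overline{W}$ minus that collar. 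Second, the signs should be tracked: with $\rho$ decreasing and the paper's normalization $d\Theta = -K\pi^*\omega_\Sigma$, the Reeb field $R$ corresponds to a \emph{negative} multiple of $\partial_\theta$, and the two signs cancel to give $(\psi^*J_X)\partial_r$ a \emph{positive} multiple of $R$, as required for compatibility. As written your proof asserts $\psi_*R$ is a positive multiple of $\partial_\theta$, which would combine with $\rho'<0$ to produce the wrong sign.
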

Note that the diffeomorphism $\psi$ is not symplectic. 

In Section \ref{sec:index_inequalities}, we impose additional
conditions of monotonicity. These will also be relevant for the grading given
in Definition \ref{def:grading}.
\begin{definition} \label{def:monotone triple}
    Given a symplectic manifold $(X^{2n},\omega)$ and a
    codimension-2 symplectic submanifold $\Sigma^{2n-2} \subset X$, say that 
    $(X, \Sigma, \omega)$ is a \defin{monotone triple} if 
\begin{enumerate}[(i)]
    \item $X$ is spherically monotone: there exists a constant $\tau_X > 0$ so
        that for each spherical homology class
        $A \in H_2^S(X)$, $\omega(A) = \tau_X \langle c_1(TX), A \rangle$;
    \item $[\Sigma] \in H_{2n-2}(X; \Q)$ is Poincar\'e
        dual to $[K\omega]$ for some $K > 0$ with $\tau_X > K$. \label{enum_monotone_degree}
\end{enumerate}
In this case, we write $\omega_\Sigma = \omega|_\Sigma$.
    \end{definition}
Observe that Condition \eqref{enum_monotone_degree} is automatically verified
if $[\Sigma]$ is Poincar\'e dual to $[K \omega]$ and 
is non--trivially spherically monotone. In that case, 
$\tau_X - K$ is then
the monotonicity constant of $(\Sigma, \omega_\Sigma)$, which we denote by
$\tau_\Sigma$.

We then obtain the following useful characterization of $J_W$--holomorphic
planes in $W$ (see also 
    \cite{HoferKriener}):
\begin{lemma}[\cite{DiogoLisiComplements}*{Lemma 2.6}]\label{planes = spheres}
    Under the diffeomorphism of Proposition \ref{prop:Xacs}, 
    finite energy $J_W$--holomorphic planes in $W$ correspond to
    $J_X$--holomorphic spheres in $X$ with a single intersection with
    $\Sigma$. The order of contact gives the multiplicity of the Reeb
    orbit to which the plane converges.
\end{lemma}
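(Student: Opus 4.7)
The plan is to transport finite energy $J_W$-holomorphic planes across the radial diffeomorphism $\psi\colon W\to X\setminus\Sigma$ from Proposition \ref{prop:Xacs} and then apply Gromov's removable singularity theorem at $\Sigma$. Given a finite Hofer energy plane $u\colon \C\to W$, the composition $v\coloneqq \psi\circ u$ is $J_X$-holomorphic by part (ii) of Proposition \ref{prop:Xacs}, and takes values in $X\setminus\Sigma$. By Hofer's asymptotic analysis, $u$ converges at $|z|\to\infty$ to a Reeb orbit $\gamma$ that is a $k$-fold cover of some simple orbit $\gamma_0$. Since $Y\to\Sigma$ is the quotient by the Reeb $S^1$-action, $\gamma$ projects to a single point $p\in\Sigma$; combined with radiality of $\psi$ (so that $\psi(r,y)=(\rho(r),y)$ with $\rho(r)\to 0$ as $r\to\infty$), this forces $v(z)\to p$ as $|z|\to\infty$.

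I would then extend $v$ to $\bar v\colon S^2\to X$ by declaring $\bar v(\infty)=p$. The symplectic $\omega$-area of $v$ over a punctured neighbourhood of $\infty$ in $S^2$ is finite, since it is bounded in terms of the Hofer energy of $u$ and the action of $\gamma$. Gromov's removable singularity theorem then promotes $\bar v$ to an honest $J_X$-holomorphic sphere. Because $v(\C)\subset X\setminus\Sigma$, the image $\bar v(S^2)$ meets $\Sigma$ only at $p$. To identify the local intersection multiplicity at $p$ with the covering number $k$, I would use the exponential convergence of $u$ to $\gamma$ in the cylindrical end together with the fact that the Reeb $S^1$-action on $Y$ corresponds under $\psi$ to $U(1)$-fibre rotation on $N\Sigma$ (encoded by the connection 1-form $\Theta$): in a local Hermitian trivialization of $N\Sigma$ near $p$ and in the coordinate $w=1/z$ near $\infty$, the sphere is modelled to leading order by a holomorphic map with a zero of order $k$ at $w=0$, giving local intersection number $k$.

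The converse direction reverses this argument. Given a $J_X$-holomorphic sphere $\bar v\colon S^2\to X$ meeting $\Sigma$ at a single point $p$ of order $k$, restrict to $S^2\setminus\{\infty\}\cong\C$ and pull back along $\psi^{-1}$ to obtain a $J_W$-holomorphic map $u\colon\C\to W$; the same local normal form at $\infty$ shows that $u$ has finite Hofer energy and asymptotes to a $k$-fold cover of the simple Reeb orbit lying over $p$.

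The main technical point is the asymptotic identification of multiplicity with order of contact: this requires combining Hofer's exponential decay estimates for finite energy planes with the explicit dictionary between the Reeb-invariant cylindrical structure on the end of $W$ and the bundle structure on $N\Sigma$ coming from $\Theta$. Once the asymptotic normal form is in hand, everything else reduces to standard Gromov compactness and SFT-style asymptotic analysis near a symplectic divisor.
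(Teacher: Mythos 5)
Your argument is essentially the paper's: transport the plane across the radial diffeomorphism $\psi$, bound the $\omega$-area of the image by the Hofer energy of the original plane, and apply Gromov's removal of singularities to extend to a $J_X$-holomorphic sphere meeting $\Sigma$ only at the removed point. The one stylistic difference is in how the order of contact is matched with the Reeb multiplicity: the paper dispatches this by observing that both equal the winding around $\Sigma$ of a small loop near the puncture, whereas you reach the same conclusion via exponential decay estimates and a local normal form — the topological winding argument is a little cleaner since it bypasses the need for asymptotic analysis at that step.
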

\begin{proof}
    Under the map $\psi$, a finite energy $J_W$--plane in $W$ gives a 
    $J_X$--plane in $X \setminus \Sigma$. 
    By the fact that $\psi$ is radial, the restriction of $\psi^*\lambda$ to
    the cylindrical end $[r_0, +\infty) \times Y$ takes the form
    $g(r)\alpha$ for some function $g$ with $g' > 0$ and $\lim_{r\to \infty}
    g(r) = 1$.  It follows then that the integral of $\psi^*d\lambda$ over the
    plane is dominated by its Hofer energy as given in
    \cite{SFTcompactness}*{Section 5.3}.  Hence, the image of the plane under
    the map is a plane in $X \setminus \Sigma$ with finite $\omega$--area.

    It follows then that the singularity at $\infty$ is removable by Gromov's removal of
    singularities
    theorem, and thus the plane admits an extension to a $J_X$--holomorphic sphere. 
    The order of contact follows from considering the winding around
    $\Sigma$ of a loop near the puncture.
\end{proof}

We now formalize the class of almost complex structures that 
will be relevant for this paper.

\begin{definition} \label{def:admissible J_W}
    An \defin{admissible} almost complex structure $J_X$ on $X$ is compatible with
$\omega$ and its restriction to $\varphi(\U)$ is the push-forward by $\varphi$
of a bundle almost complex structure on $N\Sigma$. 

An almost complex structure $J_W$ on $(W, d\lambda)$ is \defin{admissible}
if $J_W = \psi^*J_X$ for an admissible $J_X$. 
In particular, such almost complex structures are
cylindrical and Reeb-invariant on $W \setminus \overline{W}$.

A compatible almost complex structure $J_Y$ on the symplectization
$\R \times Y$ is \defin{admissible} if $J_Y$ is cylindrical and
Reeb--invariant.

\end{definition}

In the following, we will identify $W$ with $X \setminus \Sigma$ by means of
the diffeomorphism $\psi$ and identify the corresponding almost complex
structures. By an abuse of notation, we will both write $\pi_\Sigma \colon Y \to \Sigma$ to
denote the quotient map that collapses the Reeb fibres, and 
$\pi_\Sigma \colon \R \times Y \to \Sigma$ to denote the composition of this
projection with the projection to $Y$.

\begin{definition} \label{def:J_W}
  Denote the space of almost complex structures in $\Sigma$ that are
  compatible with $\omega_\Sigma$ by $\J_\Sigma$.
  
  Let $\J_Y$ denote the space of admissible almost complex structures on $\R
  \times Y$. Then, the projection $d\pi_\Sigma$ induces a diffeomorphism between $\J_Y$
  and $\J_\Sigma$.

  Let $\J_W$ denote the space of admissible almost complex structures on $W$.
  By Proposition \ref{prop:Xacs}, for any $J_W \in \J_W$, we obtain an almost
  complex structure $J_\Sigma$. 

  Denote this map by $P \colon \J_W \to \J_\Sigma$. This map is surjective and
  open by Proposition \ref{prop:Xacs}. 
  For any given $J_\Sigma \in \J_\Sigma$,
  $P^{-1}(J_\Sigma)$ consist of almost complex structures on $W$ that differ in
  $\overline{W}$, or equivalently, can be identified (using $\psi$)
  with almost complex structures on $X$ that differ in $\V = X \setminus
  \varphi(\U)$.
\end{definition}

\section{The chain complex} \label{S:chain complex}

We will now describe the chain complex for the split symplectic homology
associated to $W$.

\begin{definition} \label{def:J_shaped}
Let $h \colon (0, +\infty) \to \R$ be a smooth function with the following properties:
\begin{enumerate}[(i)]
\item $h(\rho) = 0$ for $\rho \le 2$;
\item $h'(\rho) > 0$ for $\rho > 2$;
\item $h'(\rho) \to +\infty$ as $\rho \to \infty$;
\item $h''(\rho) > 0$ for $\rho > 2$; 
\end{enumerate}

An \defin{admissible Hamiltonian} $H \colon \R \times Y \to \R$ is given by 
$H(r, y) = h(\e^r)$.
Since this Hamiltonian $H(r, y) = 0$ for all $r \le \ln 2$, we will also take $H
= 0$ on $W$ when considering cascades with components in $W$. 
See Figure \ref{F:Jshaped}.

\end{definition}

\begin{figure}[h]
\begin{tikzpicture}[domain=0:6]
    \draw[->] (-0.2,0) -- (3.1,0) node[right] {$\e^r = \rho$};
    \draw[->] (0,-1.7) -- (0,2.7) node[above] {$h(\e^r)$};%
    \draw[line width=1.5pt, color=black] (0,0) -- (1,0);
    \draw[line width=1.5pt,domain=1:2.7,smooth,variable=\x,black] plot ({\x},{(\x-1)*(\x-1)});
    \draw[line width=0.5pt, color=black] (0,-1.25) -- (3,1.75);
    \draw [dashed, color=black] (1.5,-0.1) -- (1.5,0.25);
    \node at (1.65,-.3) {$b_k$};
    \node at (-.65,-1.25) {$-\A(\gamma_k)$};
\end{tikzpicture}
\caption{An admissible Hamiltonian and the graphical procedure for computing the action of a periodic orbit}
\label{F:Jshaped}
\end{figure}
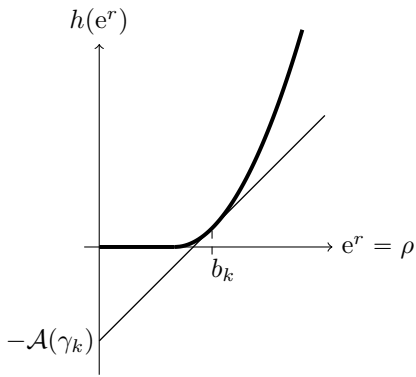

Since $\omega = d(\e^r \alpha)$ on $\R_+\times Y$,  the Hamiltonian vector
field associated to $H$ is $h'(\e^r)R$, where $R$ is the Reeb vector field
associated to $\alpha$. 
The fibres of $Y \to \Sigma$ are periodic Reeb orbits. Their minimal periods
are $T_0 \coloneqq \int_{\pi_\Sigma^{-1}(p)} \alpha$, for $p\in \Sigma$.
The 1-periodic orbits of $H$ are thus of two types:
\begin{enumerate}
    \item constant orbits: one for each point in $W$ and at each point in
        $(-\infty, \log 2] \times Y \subset \R \times Y$;
 \item non-constant orbits: for each $k\in \Z_+$, there is a $Y$-family of 1-periodic $X_H$-orbits, contained in the level set $Y_k
 \coloneqq \{b_k\}\times Y$, for the unique $b_k>\log 2$ such that $h'(\e^{b_k}) = kT_0$. Each point in $Y_k$ is the starting point of one such orbit.
\end{enumerate}

\begin{remark}
Notice that these Hamiltonians are Morse--Bott 
non-degenerate \textit{except} at $\{\log 2\} \times Y$. 
These orbits will not play a role because they can never arise as boundaries
of relevant moduli spaces (see \cite{DiogoLisiComplements}*{Lemma 4.8}). 

Recall that a family of periodic Hamiltonian orbits for a time-dependent
Hamiltonian vector field is said to be Morse--Bott non-degenerate
if the connected components of the space of parametrized 1-periodic orbits form manifolds, and the
tangent space of the family of orbits at a point is given by the
eigenspace of $1$ for the corresponding Poincar\'e return map.
(Morse non-degeneracy requires the return map not to have 1 as an
eigenvalue and hence such periodic orbits must be isolated.)

\end{remark}

We also fix some auxiliary data, consisting of Morse functions and
vector fields.  Fix throughout a Morse function $f_\Sigma \colon
\Sigma \to \R$ and a gradient-like vector field $Z_\Sigma\in\mathfrak X(\Sigma)$, which means that 
$\frac{1}{c} |df_\Sigma|^2 \le df_\Sigma(Z_\Sigma)\le c |df_\Sigma|^2$ for some constant $c>0$.
Denote the time-$t$ flow
of $Z_\Sigma$ by $\varphi^t_{Z_\Sigma}$. 
Given $p\in \Crit (f_\Sigma)$, its stable and unstable manifolds (or ascending and descending manifolds, respectively) are
\begin{equation}\label{(un)stable}
W^s_{\Sigma}(p) := \left\{ q\in \Sigma | \lim_{t\to \infty} \varphi_{Z_\Sigma}^{-t}(q) = p \right\}
 \text{,} \,
W^u_{\Sigma}(p) := \left\{ q\in \Sigma | \lim_{t\to -\infty} \varphi_{Z_\Sigma}^{-t}(q) = p \right\}.
\end{equation}
Notice the sign of time in the flow, so that these are the stable/unstable
manifolds for the flow of the negative gradient. 
We further require that $(f_\Sigma,Z_\Sigma)$ be a Morse--Smale pair, i.e.~that all stable and unstable
manifolds of $Z_\Sigma$ intersect transversally.

The contact distribution $\xi$ defines an Ehresmann connection on
the circle bundle $S^1 \hookrightarrow Y\to \Sigma$. Denote the
horizontal lift of $Z_\Sigma$ by $\pi_\Sigma^*Z_\Sigma\in \mathfrak
X(Y)$.  We fix a Morse function $f_Y \colon Y \to \R$ and a
gradient-like vector field $Z_Y\in \mathfrak X(Y)$ such that
$(f_Y,Z_Y)$ is a Morse--Smale pair and the vector field
$Z_Y-\pi_\Sigma^*Z_\Sigma$ is vertical (tangent to the $S^1$-fibres).
Under these assumptions, flow lines of $Z_Y$ project under $\pi_\Sigma$
to flow lines of $Z_\Sigma$.

Observe that critical points of $f_Y$ must lie in the fibres above
the critical points of $f_\Sigma$ (and these are zeros of $Z_Y$ and
$Z_\Sigma$ respectively).  For notational simplicity, we suppose
that $f_Y$ has two critical points in each fibre.  In the following,
given a critical point for $f_\Sigma$, $p \in \Sigma$, we denote
the two critical points in the fibre above $p$ by $\widehat p$ and
$\widecheck p$, the fibrewise maximum and fibrewise minimum of
$f_Y$, respectively.

We will denote by $M(p)$ the Morse index of a critical point $p \in
\Sigma$ of $f_\Sigma$, and by $\tilde M (\tilde p) = M(p) + i(\tilde p)$
the Morse index of the critical point $\tilde p = \widehat p$ or $\tilde
p = \widecheck p$ of $f_Y$. The fibrewise index has $i(\widehat p) = 1$
and $i(\widecheck p) = 0$.

Fix also a Morse function $f_W$ and a gradient-like vector field $Z_W$ on $W$, such
that $(f_W,Z_W)$ is a Morse--Smale pair and $Z_W$ restricted to $[0,\infty)\times Y$ is the constant
vector field $\partial_r$, where $r$ is the coordinate function on the
first factor. 
We denote also by $(f_W,Z_W)$ the Morse--Smale pair that is induced on $X\setminus\Sigma$ by the diffeomorphism 
in Lemma \ref{planes = spheres}.
Denote by $M(x)$ the Morse index of $x\in \Crit(f_W)$ with respect to $f_W$.

We now define the Morse--Bott symplectic chain complex of $W$ and
$H$. Recall that for every $k>0$, each point in $Y_k \coloneqq \{ b_k \} \times Y \subset \R^+ \times Y$ is the starting point of a 
1-periodic orbit of $X_H$, which covers $k$ times its underlying Reeb orbit. 

For each critical
point $\tilde p = \widehat p$ or $\tilde p = \widecheck p$ of $f_Y$,
there is a generator corresponding to the pair $(k, \tilde p)$. We will
denote this generator by $\tilde p_k$.  The complex is then given by:
\begin{equation} \label{eqn:chain complex}
SC_*(W,H) = \left(\bigoplus_{k>0} \,
        \bigoplus_{p\in \text{Crit}(f_\Sigma)}\Z
            \langle \widecheck p_k, \widehat p_k \rangle\right) 
        \oplus \left(\bigoplus_{x\in \text{Crit}(f_W)}\Z\langle x \rangle\right) 
\end{equation}

\begin{definition} \label{def:symplectic action}
The \defin{Hamiltonian action} of a loop $\gamma \colon S^1 \to \R \times Y$ is 
$$\A( \gamma ) = \int \gamma^*( \lambda - H dt).$$ 
\end{definition}

In particular, $\A(\gamma ) = 0$ for any constant orbit $\gamma$, and for any orbit $\gamma_k \in Y_k$ we have 
\begin{equation} \label{E:HamiltonianAction}
\A( \gamma_k) = \e^{b_k} h'(\e^{b_k}) - h( \e^{b_k})>0,
\end{equation}
where $b_k$ is the unique solution to the equation 
$h'(\e^{b_k}) = kT_0$, as above.  
The action of $\gamma_k$ is the negative of the
$y$-intercept of the tangent line to the graph of $h$ at $\e^{b_k}$. 
See
Figure \ref{F:Jshaped}. The convexity of $h$ implies that
$\A(\gamma_k)$ is monotone increasing in $k$.

\subsection{Gradings} \label{sec:gradings}
We will now define the gradings of the generators. 
For this, we will assume that $(X, \Sigma, \omega)$ is a monotone triple as in
Definition \ref{def:monotone triple}.

\begin{definition} \label{def:grading}
For a critical point $\widetilde p$ of $f_Y$, and a multiplicity $k$, we define
\begin{equation} \label{eqn:grading Y}
| \widetilde p_k | = \tilde M( \widetilde p) + 1-n + 2\frac{\tau_X - K}{K} k \in \R, 
\end{equation}
where we recall that $\tau_X$ is the monotonicity constant of $X$ and $c_1(N\Sigma) = [K \omega_\Sigma]$.

For a critical point $x$ of $f_W$, we define
\begin{equation} \label{eqn:grading W}
|x| = n - M(x).
\end{equation}

Finally, for convenience, we introduce an index similar to the SFT grading
for the Reeb orbits to which a split Floer cylinder converges at augmentation
punctures. 
If $\gamma$ is such a Reeb orbit, it is a $k$-fold cover of a fibre of $Y \to
\Sigma$ for some $k$. We then define its index to be:
\begin{equation} \label{E:grading_Reeb_orbits}
    | \gamma |_0 
    = -2 + 2\left ( \frac{ \tau_X - K}{K} \right ) k .
\end{equation}
\end{definition}

The justifications of these gradings comes from analyzing the Conley--Zehnder
indices of the 1-periodic Hamiltonian orbits. 
These are defined for Morse non-degenerate Hamiltonian/Reeb
orbits, using a trivialization of $TW$ or of $T(\R \times Y)$ over the orbit. 
See Definition \ref{D:CZ} for the Morse--Bott analogue, and also
\citelist{
\cite{AbreuMacarini_dynamically_convex_elliptic}*{Section 3}
\cite{Gutt_generalized_CZ}
}. 
The first key observation is that the Conley--Zehnder index of an orbit only
depends on the trivialization of the complex line bundle $L \coloneqq \Lambda^{n}_\C TW$ 
over the orbit.

For a constant orbit, we may take a constant trivialization, and applying
Definition \ref{D:CZ}, we obtain the Conley--Zehnder index of the constant 
orbit to be $-n+ (2n-M(x)) = n - M(x)$.

A non-constant orbit $\gamma$ in $\R \times Y$ projects to a point in $\Sigma$.
From this, we may take a ``constant'' trivialization of $\gamma^*\xi$ by taking
the horizontal lift of a constant trivialization of $T_{\pi(\gamma)}\Sigma$. 
Then, by considering the linearized Hamiltonian flow in the vertical direction,
we obtain the Conley--Zehnder index of the corresponding generator
$\widetilde{p}_k$ to be 
\begin{equation} \label{eqn:constant trivialization CZ index}
    \CZ_0(\widetilde p_k) = \tilde M( \widetilde p) + 1 - n.
\end{equation}
The Conley--Zehnder index is computed by using the splitting of $T(\R \times
Y) = (\R \oplus \R R) \oplus \xi$. The contribution to the index is given by
$i(\widetilde p)$ in the vertical $\R \oplus \R R$ direction, and by $M(p) + 1-n$
in the horizontal $\xi$ direction. Notice that this index does not explicitly
depend on the covering multiplicity $k$ of the orbit.

Notice also that $Y$ may be capped off by the normal disk bundle over $\Sigma$,
and each orbit bounds a disk fibre. The trivialization induced by the fibre
differs from the constant trivialization only in a $k$-fold winding in the $\R
\partial_r \oplus \R R$ direction. The resulting  Conley--Zehnder index of
$\widetilde{p}_k$ for this trivialization induced by the disk fibre 
is then $\tilde M( \widetilde p) +1 - n - 2k$. We refer to this trivialization
as the \defin{normal bundle trivialization}.

Now, 
suppose that $\gamma_k$ is the $k$--fold cover a simple Reeb orbit $\gamma$,
and suppose it is contractible in $W$. 
Denote by $\dot B$ a disk in $W$ whose boundary is $\gamma_k$. 
As we pointed out, $\gamma_k$ is also the boundary
of a $k$-fold cover of a fibre of the normal bundle to $\Sigma$ in $X$.
This cover of a fibre can be
concatenated with $\dot B$ to produce a spherical homology class $B\in
H_2^S(X)$ such that the intersection $B\bullet \Sigma = k>0$.  Conversely, note that any
$B\in H_2^S(X)$ such that $B\bullet \Sigma = k$ gives rise to a disk
$\dot B$ bounding $\gamma_k$.  The complex line bundle $L|_{\dot B}$
is trivial, since $\dot B$ is a disk.  This induces a trivialization of
$L$ over $\gamma_k$, which can be identified with a trivialization of
$L^{\otimes k}$ over $\gamma$. We refer to this as the \defin{trivialization induced
by $\dot B$}.

The relative winding of the trivialization of $L$ over $\gamma_k$ induced by $\dot B$ and the normal
bundle trivialization considered above is given by 
$\langle c_1(L), B \rangle$,
since this represents the obstruction to extending the trivialization
of $L$ over $\dot B$ to all of $B$.  Recall that $c_1(L) = c_1(TX)$.

Putting this together, we obtain that the Conley--Zehnder index of the orbit 
with respect to the trivialization induced by the disk $\dot B$ is given by 
\begin{equation} \label{eqn:CZcap}
\CZ_H^W( \widetilde p_k) = \tilde M( \widetilde p) +1 - n - 2k + 2 \langle
c_1(TX), B \rangle.
\end{equation}
Finally, we obtain the grading from Equation \eqref{eqn:grading Y} by using the
spherical monotonicity of $X$ and the fact that 
$k = B \bullet [\Sigma] = K \omega(B)$.
\begin{align*}
    \CZ_H^W(\widetilde p_k) &= \tilde M( \widetilde p) +1 - n + 2 (\tau_X - K)\,\omega(B)\\
                            &= \tilde M( \widetilde p) +1 - n + 2 \left (
\frac{\tau_X - K}{K} \right ) k.
\end{align*}
Note that this expression does not depend on the choice of spherical class $B$.

This formula holds when $k \in \omega( \pi_2(X) )$, and we extend it as a
fractional grading for all $k\in \Z$. 
(This corresponds to 
the fractional SFT grading from \cite{EGH}*{Section 2.9.1}.)

\

Finally, we compare our gradings with those described by
Seidel \cite{SeidelBiasedView} and generalized by McLean
\cite{McLean_discrepancy} (the latter considers Reeb orbits, but there is an
analogous construction for Hamiltonian orbits) in the case that 
$c_1(TW) \in H_2(W;\Z)$ is torsion, so $Nc_1(TW) = 0$ for a suitable choice of $N>0$.
Note that in our setting, this holds if $X$ is monotone (and not just
spherically monotone) and $\Sigma$ is Poincar\'e dual to a multiple of $\omega$.

First, we describe the Seidel--McLean approach. 
Recall that $L = \Lambda^n TW$.
We choose a global trivialization of $L^{\otimes N}$ over $W$. This exists
since $c_1(L)$ is $N$-torsion in $H^2(W; \Z)$. 
Then, for any orbit $\gamma$, we consider the (complex) rank $nN$ bundle
over $\gamma$ given by $\gamma^*TW \oplus \dots \oplus \gamma^*TW$. Denote
this by $\gamma^*( TW^N)$. Notice that the determinant of this bundle is
precisely $\gamma^*(L^{\otimes N})$ and thus has a trivialization already
chosen. 
We now choose any trivialization of $TW^N$ whose determinant matches 
the trivialization of $\gamma^*(L^{\otimes N})$. 
For any such trivialization, 
the linearized flow $d\phi_t \oplus d\phi_t \oplus \dots \oplus
d\phi_t \colon TW^N \to TW^N$ gives a path of symplectic matrices.
This gives a a Conley--Zehnder index, which we denote by
$\CZ_{L^{\otimes N}}(\gamma)$.
Notice that this Conley--Zehnder index 
depends only on the trivialization of
$L^{\otimes N}$ and not on the further trivialization of $TW^N$.  
The Seidel--McLean grading is
then defined to be
\[
    \SM(\gamma) = \frac{1}{N} \CZ_{L^{\otimes N}}( \gamma).
\]

We now observe some immediate consequences of this construction.
First of all, given a null-homologous orbit in $W$, a capping
surface induces a trivialization of $L$ over the
orbit, unique up to homotopy, namely a trivialization that extends across the
surface. 
This implies that there is a homotopically unique trivialization of 
$L^{\otimes N}$ over the orbit, hence the Seidel--McLean
grading matches the Conley--Zehnder index induced by the trivialization
coming from the capping surface, for null-homologous orbits.
(Notice that because $c_1(L)$ is torsion, it
doesn't matter which surface we use.) 
Thus, if $\gamma$ is an orbit such that $\gamma_k$ 
bounds a disk $\dot B$, 
(as discussed above, see notably Equation \eqref{eqn:CZcap}), 
\[
    \CZ_H^W(\widetilde p_k) = \SM(\widetilde p_k).
\]

We now consider an orbit $\gamma_m$ that is an $m$-fold cover of a simple
orbit $\gamma$. We may trivialize $TW$ over $\gamma$ by the constant trivialization
discussed previously. 
This induces a trivialization of $L$ over $\gamma$. By taking the $N$-fold
tensor power, we obtain a trivialization of $L^{\otimes N}$ over $\gamma$.
This has some winding $w \in \Z$ relative to the reference 
trivialization of $L^{\otimes N}$ over all of $W$ (used above to define the Seidel--McLean grading). 
Under iteration of the orbit, we obtain a relative winding of $mw$ between the
constant trivialization and the reference trivialization.
From this, we obtain 
\[
    N \CZ_0( \widehat p_m) = \CZ_{L^{\otimes N}}(\gamma_m) + 2mw
\]
Now, it follows from this that
\begin{align*}
    \CZ_H^W(\widetilde p_m) &= \CZ_0(\widetilde p_m) +2 \left ( \frac{\tau_X -K}{K} \right ) m   
    \\ 
    &= \SM(\widetilde p_m) + 2 \left ( \frac{\tau_X -K}{K} + \frac{w}{N} \right ) m .
\end{align*}
As previously observed, $\CZ_H^W$ and $\SM$ are equal whenever $m$ is in the image of
$c_1(TX) \colon \pi_2(X) \to \Z$, so it follows that
\[
    \CZ_H^W(\widetilde p_m) = \SM(\widetilde p_m)
\]
for all $m$, as claimed.

The index
\eqref{E:grading_Reeb_orbits} of the Reeb orbit 
was introduced for convenience in writing formulas for expected dimensions of
moduli spaces. It comes from similar considerations for the
Conley--Zehnder index of the Reeb orbit, together with the $n-3$ shift coming
from the grading of SFT. In particular, the Fredholm index for an unparametrized
holomorphic plane in $W$ asymptotic to the closed Reeb orbit $\gamma_0$ 
(free to move in its Morse--Bott family) will be given by $|\gamma_0|$.

\begin{remark}
Even though the idea of a fractional grading may seem unnatural at
first, it can be thought of as a way of keeping track of some
information about the homotopy classes of the Hamiltonian orbits.

Indeed, there can only be a Floer cylinder connecting two Hamiltonian
orbits if the difference of their degrees is an integer. Hence, one
could write the symplectic homology as a direct sum indexed by the
fractional parts of the degrees.  Alternatively, one could also
decompose it as a direct sum over homotopy classes of Hamiltonian
orbits, as done for instance in \cite{BOSymplecticHomology}.
\end{remark}

\section{Split symplectic homology moduli spaces} \label{S:split moduli}

In this section, we describe the moduli spaces of cascades that contribute to the differential in the Morse--Bott split symplectic homology of $W$.

We also define auxiliary moduli spaces of spherical ``chains of pearls'' in $\Sigma$ and in $X$. (These are familiar objects, reminiscent of ones considered in the literature for Floer homology of compact symplectic manifolds 
\citelist{%
\cite{ Biran Cornea} 
\cite{ OhChainsOfPearls} 
\cite{PSS}%
}.)

\subsection{Split Floer cylinders with cascades}
\label{cascades after splitting}

We now identify the moduli spaces of \defin{split Floer cylinders with cascades} 
we use to define the differential on the chain complex \eqref{eqn:chain complex}. 

First, we define the basic building blocks: split Floer cylinders. We consider
two types of basic split Floer cylinders: one connecting two non-constant
1-periodic Hamiltonian orbits and one that connects a non-constant 1-periodic
orbit to a constant one (in $W$).

Notice that we may identify a 1-periodic orbit of $H$ with its starting point,
and in this way, we have an identification between $Y_{k}$ and the set of
(parametrized) 1-periodic orbits of $H$ that have covering multiplicity $k$ over
the underlying simple periodic orbit.

\begin{definition}
\label{stretched Y to Y}
Let $x_\pm \in Y_{k_\pm}$ be $1$-periodic orbits of $X_H$ in $\R \times Y$.
A {\em split Floer cylinder} from $x_-$ to $x_+$ consists of a map 
$\tilde v=(b,v) \colon\R \times S^1\setminus \Gamma \to \R \times Y$, 
where $\Gamma = \{z_1,\ldots,z_k\} \subset \R \times S^1$ is a (possibly empty) finite subset, 
together with equivalence classes $[U_i]$ of $J_W$-holomorphic planes $U_i: \C \to W$ for 
each $z_i\in \Gamma$, such that 
\begin{itemize}
\item $\tilde v$ satisfies Floer's equation
\begin{equation}
\partial_s \tilde v + J_Y(\partial_t \tilde v - X_H(\tilde v)) = 0;
\label{Floer eq Y}
\end{equation}
\item $\lim_{s \to \pm \infty} \tilde v(s,.) = x_{\pm}$;
\item if $\Gamma \neq \emptyset$, then, for conformal parametrizations 
$\varphi_i: (-\infty,0] \times S^1 \to \R \times S^1\setminus\{z_1, \ldots, z_k\}$ of neighborhoods 
of the $z_i$, $\lim_{s\to -\infty} \tilde v(\varphi_i(s,.)) = (-\infty,\gamma_i(.))$, where the 
$\gamma_i$ are periodic Reeb orbits in $Y$;
\item for each Reeb orbit $\gamma_i$ above, $U_i: \C \to W$ is asymptotic to $(+\infty,\gamma_i)$. 
We consider $U_i$ up to the action of $\Aut(\C)$. 
\end{itemize} 
Call $\tilde v$ the \defin{upper level} of the split Floer cylinder. 
\end{definition}

See Figures \ref{case_1_fig} and \ref{case_2_fig} for an illustration (ignore the horizontal segments in 
the figures, which represent gradient flow lines). 

\begin{definition}
\label{stretched Y to W}
Let $x_+\in Y_{k_+}$ for some $k_+$, 
and let $x_-\in W$. A {\em split Floer cylinder} from $x_-$ to $x_+$ consists
of $\tilde v_1=(b,v) \colon \R \times S^1\setminus \Gamma \to \R \times Y$ (where
$\Gamma = \{z_1,\ldots,z_k\} \subset \R \times S^1$ is a (possibly empty) finite subset), 
$\tilde v_0 \colon \R\times S^1 \to W$, and equivalence classes $[U_i]$ of $J_W$-holomorphic 
planes $U_i: \C \to W$ for each $z_i\in \Gamma$, such that 
\begin{itemize}
\item $\tilde v_1$ solves equation (\ref{Floer eq Y});
\item $\tilde v_0$ is $J_W$--holomorphic;
\item $\lim_{s \to +\infty} \tilde v_1(s,.) = x_+$;
\item $\lim_{s \to -\infty} \tilde v_1(s,.) = (-\infty,\gamma(\cdot))$, for some Reeb orbit $\gamma$ in $Y$;
\item $\lim_{s \to +\infty} \tilde v_0(s,.) = (+\infty,\gamma(\cdot))$, where $\gamma$
    is the same Reeb orbit; 
\item $\lim_{s \to -\infty} \tilde v_0(s,.) = x_-$;
\item if $\Gamma \neq \emptyset$, then, for conformal parametrizations 
$\varphi_i: (-\infty,0] \times S^1 \to \R \times S^1\setminus\{z_1, \ldots, z_k\}$ of neighborhoods 
of the $z_i$, $\lim_{s\to -\infty} \tilde v(\varphi_i(s,.)) = (-\infty, \gamma_i(.))$, where the $\gamma_i$ are periodic Reeb orbits in $Y$;
\item for each Reeb orbit $\gamma_i$ above, $U_i: \C \to W$ is asymptotic to $(+\infty,\gamma_i)$. We consider $U_i$ up to the action of $\Aut(\C)$. 
\end{itemize}
Call $\tilde v_1$ the {\em upper level} of this split Floer cylinder.  
\end{definition} 

See Figure \ref{case_3_fig} for an illustration. 

For the upper levels of split Floer cylinders
in $\R \times Y$, we introduce a suitable form of energy, a hybrid between
the standard Floer energy and the Hofer energy used in symplectizations.
Recall that the Hofer energy of a punctured pseudoholomorphic curve $\tilde u$ in the symplectization of $Y$ with contact form $\alpha$ is given by:
\[
\sup \{  \int \tilde u \pb d(\psi \alpha) \, | \, \psi\colon \R \rightarrow [0,1] \text{ smooth and nondecreasing} \}.
\]
In a symplectic manifold either compact or convex at infinity, the standard Floer energy of a cylinder $\tilde v \colon \R \times S^1 \to W$ is given by 
\[
\int \tilde v \pb \omega - \tilde v\pb dH \wedge dt.
\]
In our situation, however, the target manifold is $\R \times Y$, which
has a concave end. We therefore need to combine these two types of energy.

\begin{definition} \label{D:hybridEnergy}
Consider a Hamiltonian $H \colon \R \times Y \rightarrow \R$ so that $dH$ has 
support in $[R, \infty) \times Y$, for some $R \in \R$.

Let $\vartheta_R$ be the set of all non-decreasing smooth functions 
$\psi \colon \R \to [0, \infty)$ such that $\psi(r) = \e^r$ for $r \ge R$.

The \textit{hybrid energy} $E_R$ of  
$\tilde v \colon \R \times S^1 \setminus \Gamma  \rightarrow \R \times Y$
solving Floer's equation \eqref{Floer eq Y} is then given by:
\begin{equation} \label{E:hybridEnergy}
E_R( \tilde v ) = 
\sup_{\psi \in \vartheta} 
\int_{\R \times S^1} \tilde v \pb \left ( d( \psi \alpha ) - dH \wedge dt \right).
\end{equation}
\end{definition}

Notice that this is equivalent to partitioning $\R \times S^1 \setminus \Gamma = S_0 \cup
S_1$, so that $S_0 = \tilde v^{-1} ( [R, +\infty) \times Y )$ and
$S_1 = S \setminus S_0$.  Then, $\tilde v$ has finite 
hybrid energy if and only if $\tilde v|_{S_0}$ has finite Floer energy
and $\tilde v|_{S_1}$ has finite Hofer energy.  

Equivalently, $\tilde v$ has finite hybrid
energy if and only if the punctures $\{\pm\infty\}\cup \Gamma$ can be partitioned into 
$\Gamma_F$ and $\Gamma_C$ (with $+\infty\in \Gamma_F$, $\Gamma \subset \Gamma_C$ and $-\infty$ either in $\Gamma_F$ or in $\Gamma_C$), such that in a neighbourhood of each puncture in
$\Gamma_F$, the map $\tilde v$ is asymptotic to a Hamiltonian trajectory
and in a neighbourhood of each puncture in $\Gamma_C$, the map is proper 
and negatively asymptotic to an orbit cylinder in $\R \times Y$.  
This follows from a variation on the arguments in 
\cite{SFTcompactness}*{Proposition 5.13, Lemma 5.15}.
We will use the following notation to denote the
Hamiltonian orbits to which such a punctured cylinder $\tilde v$ is asymptotic:
\begin{align*}
    \tilde v(+\infty, t) &= \lim_{s \to \infty} \tilde v(s, t) \\
    \tilde v(-\infty, t) &= \lim_{s \to -\infty} \tilde v(s, t) \quad \text{ if
    } \quad -\infty \in \Gamma_F.
\end{align*}
Instead, if $-\infty \in \Gamma_C$, we will write $v(-\infty, t) = \lim_{s \to
-\infty} v(s, t)$ for the Reeb orbit in $Y$ that the curve converges to. Notice
that since the cylinder is parametrized, the asymptotic limit is parametrized as
well.  
Since there is an ambiguity of the $S^1$ parametrization of the Reeb orbits to
which $v$ is asymptotic at punctures $z \in \Gamma$, we will avoid using the
analogous notation at punctures in $\Gamma$.

We now define a {\em split Floer cylinder with cascades} between
two generators of the chain complex \eqref{eqn:chain complex}. 
\begin{definition} \label{def:FloerCylinderWithCascades}
Fix $N \ge 0$. 
Let $S_0, S_1, \dots, S_{N}$ be a collection of connected spaces of orbits, 
with $S_0 = Y_{k_0}$ or $S_0 = W$, and $S_i = Y_{k_i}$ for $1 \le i \le N$. 
Let $(f_i, Z_i)$, $i=0, \dots, N$ be the pair of Morse function and 
gradient-like vector field of $f_i = f_Y$, $Z_i = Z_Y$ if $S_i = Y_{k_i}$ 
and $f_i = -f_W$, $Z_i=-Z_W$ if $S_i = W$.

Let $x$ be a critical point of $f_0$ and $y$ a critical point of $f_N$ 
(so $x$ and $y$ are generators of the chain complex (\ref{eqn:chain complex})). 

A {\em Floer cylinder with $0$ cascades} ($N = 0$), from $x$ to $y$, consists of a 
positive gradient trajectory $\nu \colon \R \to S_0$, such that $\nu(-\infty) = x$, $\nu(+\infty) = y$ and $\dot \nu =  Z_0(\nu)$.

A {\em Floer cylinder with $N$ cascades}, $N \ge 1$, from $x$ to $y$, consists of the following data:
\begin{enumerate}
\renewcommand{\theenumi}{\roman{enumi}}
\item $N-1$ length parameters $l_i > 0, i=1, \dots, N-1$;
\item two half-infinite gradient trajectories, $\nu_0 \colon (-\infty, 0] \to
    S_0$ and $\nu_{N} \colon [0, +\infty) \to S_{N}$ with $\nu_0(-\infty) = x$, 
    $\nu_N(+\infty) = y$ and $\dot \nu_i = Z_i(\nu_i)$ for $i=0$ or $N$;
\item $N-1$ gradient trajectories $\nu_i$ defined on intervals of length $l_i$,
    $\nu_i \colon [0, l_i] \to S_i$ for $i=1, \dots, N-1$ such that $\dot \nu_i
    = Z_i(\nu_i)$;
\item $N$ non-trivial split Floer cylinders from $\nu_{i-1}(l_{i-1}) \in S_{i-1}$ to
    $\nu_{i}(0) \in S_i$, where we take $l_0 = 0$.
\end{enumerate}	
In the case of a Floer cylinder with $N \ge 1$ cascades, we refer to the 
non-trivial Floer cylinders $\tilde v_i$ as {\em sublevels}. 
Notice that if $S_0$ (and thus all $S_i$) is of the form $Y_k$, all the split
Floer cylinders are as in Definition \ref{stretched Y to Y}. If $S_0 = W$, then
the bottom-most level is a split Floer cylinder as in Definition \ref{stretched
Y to W}.
\end{definition}

See Figure \ref{fig:Split Floer cylinder with cascades} for a schematic
illustration.

\begin{figure}
  \begin{center}
    \def\svgwidth{0.5\textwidth}

\begingroup
  \makeatletter
  \providecommand\color[2][]{%
    \errmessage{(Inkscape) Color is used for the text in Inkscape, but the package 'color.sty' is not loaded}
    \renewcommand\color[2][]{}%
  }
  \providecommand\transparent[1]{%
    \errmessage{(Inkscape) Transparency is used (non-zero) for the text in Inkscape, but the package 'transparent.sty' is not loaded}
    \renewcommand\transparent[1]{}%
  }
  \providecommand\rotatebox[2]{#2}
  \ifx\svgwidth\undefined
    \setlength{\unitlength}{318.76461123pt}
  \else
    \setlength{\unitlength}{\svgwidth}
  \fi
  \global\let\svgwidth\undefined
  \makeatother
  \begin{picture}(1,1.18489294)%
    \put(0,0){\includegraphics[width=\unitlength]{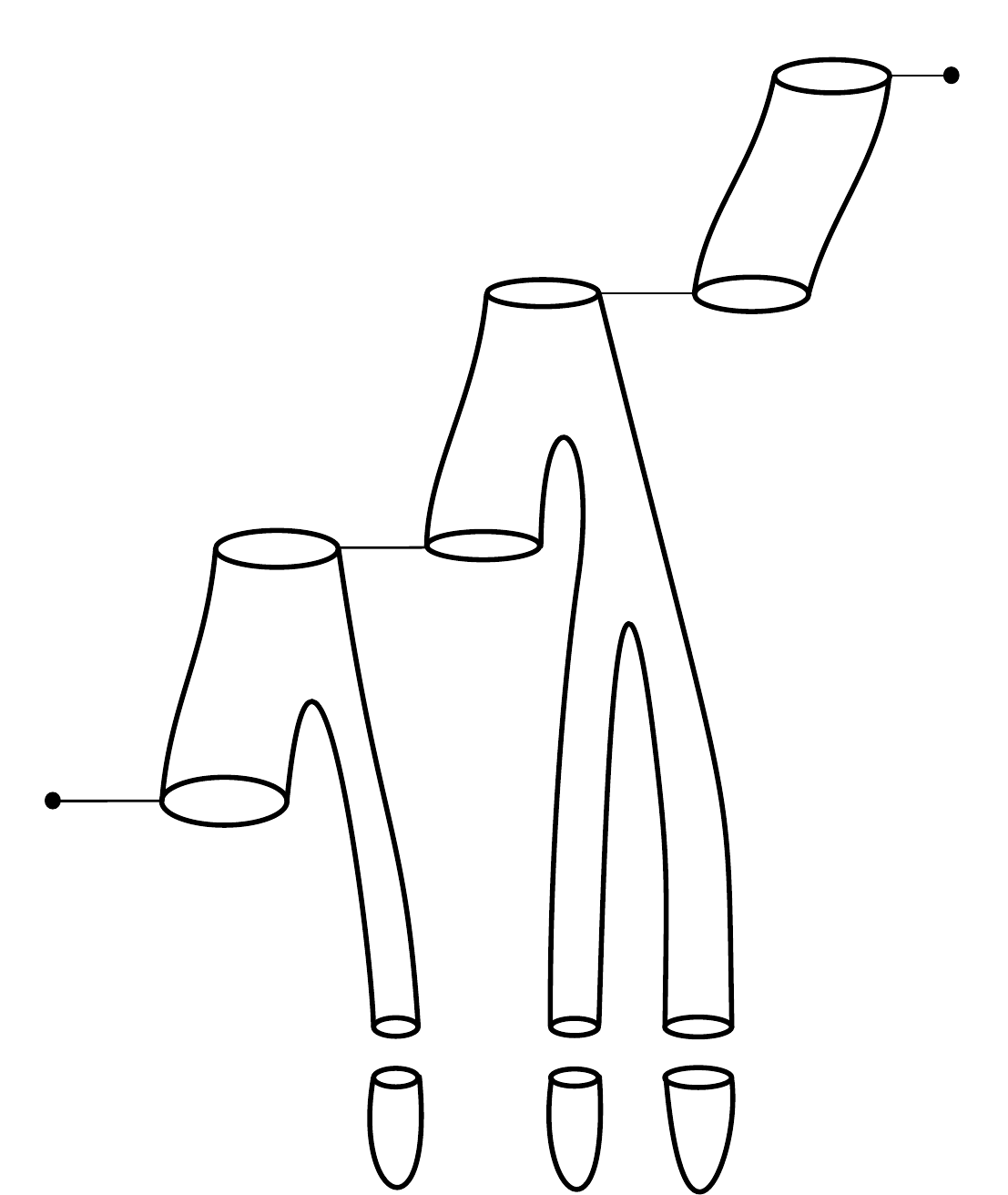}}%
    \put(-0.00166659,0.38440617){\color[rgb]{0,0,0}\makebox(0,0)[lb]{\smash{$x$}}}%
    \put(0.97661872,1.10473558){\color[rgb]{0,0,0}\makebox(0,0)[lb]{\smash{$y$}}}%
    \put(0.2222196,0.51825114){\color[rgb]{0,0,0}\makebox(0,0)[lb]{\smash{$\tilde v_1$}}}%
    \put(0.50937797,0.79080819){\color[rgb]{0,0,0}\makebox(0,0)[lb]{\smash{$\tilde v_2$}}}%
    \put(0.75759958,0.98305829){\color[rgb]{0,0,0}\makebox(0,0)[lb]{\smash{$\tilde v_3$}}}%
    \put(0.08350749,0.42334281){\color[rgb]{0,0,0}\makebox(0,0)[lb]{\smash{$\nu_0$}}}%
    \put(0.35363105,0.67886517){\color[rgb]{0,0,0}\makebox(0,0)[lb]{\smash{$\nu_1$}}}%
    \put(0.61402046,0.92952031){\color[rgb]{0,0,0}\makebox(0,0)[lb]{\smash{$\nu_2$}}}%
    \put(0.88901099,1.15097293){\color[rgb]{0,0,0}\makebox(0,0)[lb]{\smash{$\nu_3$}}}%
    \put(0.27819111,0.04127623){\color[rgb]{0,0,0}\makebox(0,0)[lb]{\smash{$U_1$}}}%
    \put(0.4680077,0.04614319){\color[rgb]{0,0,0}\makebox(0,0)[lb]{\smash{$U_2$}}}%
    \put(0.75516602,0.04370979){\color[rgb]{0,0,0}\makebox(0,0)[lb]{\smash{$U_3$}}}%
    \put(0.88901099,0.42334281){\color[rgb]{0,0,0}\makebox(0,0)[lb]{\smash{$\R\times Y$}}}%
    \put(0.92901099,0.044334281){\color[rgb]{0,0,0}\makebox(0,0)[lb]{\smash{$W$}}}%
  \end{picture}%
\endgroup
   \end{center}
  \caption{A split Floer cylinder with 3 cascades.} 
   \label{fig:Split Floer cylinder with cascades}
\end{figure}

\begin{definition}
    We refer to the punctures $\Gamma$ appearing in Definitions
    \ref{stretched Y to Y} and  \ref{stretched Y to W} as
    \defin{augmentation punctures}.  The corresponding $J_W$ holomorphic
    planes, $U_i \colon \C \to W$ are referred to as \defin{augmentation
    planes}.  This terminology is by analogy to linearized contact
    homology, where rigid planes of this type give an (algebraic) augmentation
    of the full contact homology differential. 
\end{definition} 

\begin{remark}

Notice that the hybrid energy of each sublevel must be non-negative. 
Since we require that the sublevels are non-trivial, 
it follows that any such cascade with
collections of orbits $S_i = Y_{k_i}$, $i=1, \dots, N$ and $S_0 = Y_{k_0}$, or,
if $S_0 = W$, with $k_0 = 0$, we must have that the sequence of multiplicities
is monotone increasing $k_0 < k_1 < \dots < k_N$.

By a standard SFT-type compactness argument, the
Floer--Gromov--Hofer compactification of a moduli space of split Floer cylinders with cascades
will consist of several possible configurations. The length parameters can go to
$0$ or to $\infty$ (in the latter case, corresponding to a Morse-type breaking
of the gradient trajectory). The split Floer cylinders can break at Hamiltonian
orbits, thus increasing the number of cascades but with a length parameter of
$0$. 
They can also split off a holomorphic building with levels in $\R \times
Y$ and in $W$. We will see that this latter degeneration can't occur in low dimensional moduli
spaces, at least under our monotonicity assumptions.  
For energy reasons, these Floer cylinders with cascades will
not break at constant Hamiltonian trajectories in $(-\infty, \log 2] \times Y$ (see \cite{DiogoLisiComplements}*{Lemma 4.8}).

\end{remark}

We now define the split Floer differential $\partial$ on the chain complex (\ref{eqn:chain complex}).
Given generators $x, y$, denote by
$$
\M_{H,N}(x,y;J_W)
$$
the space of split Floer cylinders with $N$ cascades from $x$ to
$y$ (with negative end at $x$ and positive end at $y$).

For $N \ge 1$, this moduli space $\M_{H,N}(x,y; J_W)$ has an $\R^{N}$ action 
by domain automorphisms corresponding to $\R$-translation 
of the domain cylinders $\R \times S^1$. 
When $N = 0$, this moduli space is of gradient trajectories, and also admits an
$\R$ reparametrization action.

When $|x| = |y| -1$, these moduli spaces will be rigid modulo these actions.  
See Remark \ref{rem:degree_minus_one}.

We now define
\begin{equation} \label{eqn:differential}
    \partial \, y = \sum_{|x|=|y| - 1} \# \left( \M_{H, 0}(x,y; J_W) / \R \right ) x
            + \sum_{|x| = |y| - 1} \sum_{N = 1}^\infty \# \left ( \M_{H, N}(x,y; J_W / \R^N
            \right ) x.
\end{equation}

We call $\partial$ the {\em split} Floer differential on \eqref{eqn:chain complex}. 

\section{Transversality for the Floer and holomorphic moduli spaces}
\label{S:transversality}

In this section, we will build the transversality theory needed 
for the Floer cylinders with cascades that appear in the split Floer differential 
as in Equation \eqref{eqn:differential}.
In the process, we will also discuss transversality for pseudoholomorphic
curves in $X$ and in $\Sigma$, which will be necessary for the proof of our
main result.

\subsection{Statements of transversality results}

Before stating the main result of this section, we will introduce some
definitions allowing us to relate transversality for split Floer cylinders
with cascades to transversality problems for spheres in $\Sigma$ and in $X$
with various constraints.

\begin{lemma} \label{L:projectsToSphere}
Let $\tilde v \colon \R \times S^1 \setminus \Gamma \to \R \times Y$ be a
    finite hybrid energy Floer cylinder in $\R \times Y$ (as in Definition \ref{D:hybridEnergy}), converging
    to a Hamiltonian orbit in the manifolds $Y_+$  at $+\infty$ and 
    converging at $-\infty$ either to a Hamiltonian orbit in the manifold $Y_-$ 
    or to a Reeb orbit at $\{-\infty\} \times Y$,
    and with finitely many punctures at $\Gamma \subset \R \times S^1$ converging to Reeb orbits
    in $\{-\infty\} \times Y$.  
    Then, the projection $\pi_\Sigma\circ \tilde v$ extends to a smooth $J_\Sigma$-holomorphic
sphere $\pi_\Sigma \circ \tilde v \colon \CP^1 \to \Sigma$.

\end{lemma}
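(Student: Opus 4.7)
The plan is to prove the lemma in three steps: (i) verify that $\pi_\Sigma \circ \tilde v$ satisfies the $J_\Sigma$--holomorphic equation on the punctured cylinder, (ii) check that the map has bounded image near each puncture and finite $\omega_\Sigma$-area on $\R \times S^1 \setminus \Gamma$, and (iii) invoke removal of singularities at each puncture (including the ends $\pm\infty$ of the cylinder).

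For (i), admissibility of $J_Y$ means $J_Y$ preserves $\xi = \ker\alpha$, commutes with the Reeb flow, and sends $\partial_r$ to $R$; hence $J_Y$ descends to a compatible almost complex structure $J_\Sigma$ on $\Sigma$ with $d\pi_\Sigma \circ J_Y = J_\Sigma \circ d\pi_\Sigma$, as already used in Definition \ref{def:J_W}. Since $X_H = h'(\e^r) R$ is a multiple of $R$, it lies in $\ker d\pi_\Sigma$. Applying $d\pi_\Sigma$ to Floer's equation \eqref{Floer eq Y} then gives
\[
\partial_s(\pi_\Sigma \circ \tilde v) + J_\Sigma\,\partial_t(\pi_\Sigma \circ \tilde v) = 0
\]
on $\R \times S^1 \setminus \Gamma$.

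For (ii), each asymptotic limit of $\tilde v$ (the Hamiltonian orbit at $+\infty$, the Hamiltonian or Reeb orbit at $-\infty$, and the Reeb orbit at each augmentation puncture) is supported on a Reeb fibre of $Y \to \Sigma$ and therefore projects to a single point of $\Sigma$. Combined with standard exponential convergence (of Floer type at the Hamiltonian ends and of SFT type at punctures approaching Reeb orbits in $\{-\infty\} \times Y$), this shows $\pi_\Sigma \circ \tilde v$ extends continuously across every puncture. Since $\pi_\Sigma^*\omega_\Sigma = d\alpha$, the area of the projected map equals $\int \tilde v^* d\alpha$. The projected Cauchy--Riemann equation together with $\iota_R d\alpha = 0$ imply $\tilde v^* d\alpha \ge 0$ pointwise. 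Evaluating this integral by Stokes on an exhaustion of $\R \times S^1 \setminus \Gamma$ by compact subdomains whose boundaries encircle each puncture, the boundary contribution at each puncture converges to $\pm$ the $\alpha$-period of the corresponding asymptotic orbit, i.e. a positive integer multiple of $T_0$. Hence the total $\omega_\Sigma$-area is finite.

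With (i) and (ii) in hand, and $\Sigma$ compact, Gromov's removal of singularities theorem extends $\pi_\Sigma \circ \tilde v$ smoothly across each puncture to produce a $J_\Sigma$--holomorphic sphere $\CP^1 \to \Sigma$. The main technical point is step (ii): one must ensure that the boundary integrals in the Stokes argument converge to the $\alpha$-periods of the asymptotic orbits both at the non-compact Hamiltonian ends $\pm\infty$ and at the augmentation punctures approaching Reeb orbits in $\{-\infty\} \times Y$, which is precisely where the exponential convergence of $\tilde v$ to its asymptotic orbits is essential.
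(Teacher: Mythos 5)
Your proof is correct and follows essentially the same route as the paper: show that admissibility of $H$ (so $X_H$ is a multiple of $R$, killed by $d\pi_\Sigma$) and of $J_Y$ make the projection $J_\Sigma$--holomorphic, then apply Gromov's removal of singularities once finite energy is established. The paper states finiteness of the projected energy without proof, whereas you supply the standard justification (non-negativity of $\tilde v^* d\alpha$ from the $\xi$--component of Floer's equation, plus Stokes with boundary contributions equal to the $\alpha$--periods of the asymptotic orbits); this is a reasonable way to fill in the detail, modulo a factor of $K$ relating $\pi_\Sigma^*\omega_\Sigma$ to $d\alpha$ which does not affect finiteness.
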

\begin{proof}
The projection $\pi_\Sigma \circ \tilde v$ is $J_\Sigma$-holomorphic on $\R\times S^1$, 
since $H$ is admissible (as in Definition \ref{def:J_shaped}). 
 The result now follows from Gromov's removal of singularities theorem
together with the finiteness of the energy of $\pi_\Sigma \circ \tilde v$.
\end{proof}

\begin{figure}
  \begin{center}
    \def\svgwidth{1\textwidth}

\begingroup
  \makeatletter
  \providecommand\color[2][]{%
    \errmessage{(Inkscape) Color is used for the text in Inkscape, but the package 'color.sty' is not loaded}
    \renewcommand\color[2][]{}%
  }
  \providecommand\transparent[1]{%
    \errmessage{(Inkscape) Transparency is used (non-zero) for the text in Inkscape, but the package 'transparent.sty' is not loaded}
    \renewcommand\transparent[1]{}%
  }
  \providecommand\rotatebox[2]{#2}
  \ifx\svgwidth\undefined
    \setlength{\unitlength}{561.72030987pt}
  \else
    \setlength{\unitlength}{\svgwidth}
  \fi
  \global\let\svgwidth\undefined
  \makeatother
  \begin{picture}(1,0.14241963)%
    \put(0,0){\includegraphics[width=\unitlength]{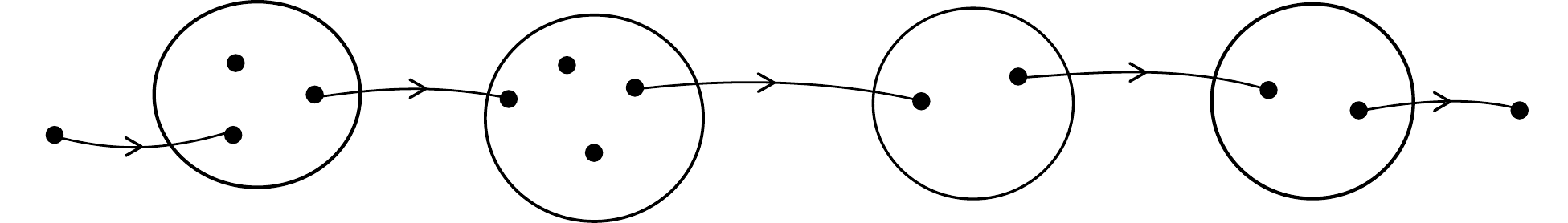}}%
    \put(0.1770181,0.093){\color[rgb]{0,0,0}\makebox(0,0)[lb]{\smash{${\infty}$}}}%
    \put(0.32,0.04529838){\color[rgb]{0,0,0}\makebox(0,0)[lb]{\smash{$0$}}}%
    \put(0.1570181,0.00){\color[rgb]{0,0,0}\makebox(0,0)[lb]{\smash{${w_1}$}}}%
    \put(0.39562311,0.0999673){\color[rgb]{0,0,0}\makebox(0,0)[lb]{\smash{$\infty$}}}%
    \put(0.5783324,0.04242108){\color[rgb]{0,0,0}\makebox(0,0)[lb]{\smash{$0$}}}%
    \put(0.42562311,0.00){\color[rgb]{0,0,0}\makebox(0,0)[lb]{\smash{$w_2$}}}%
    \put(0.62587865,0.1071606){\color[rgb]{0,0,0}\makebox(0,0)[lb]{\smash{$\infty$}}}%
    \put(0.79556945,0.05105301){\color[rgb]{0,0,0}\makebox(0,0)[lb]{\smash{$0$}}}%
    \put(0.64587865,0.00){\color[rgb]{0,0,0}\makebox(0,0)[lb]{\smash{$w_3$}}}%
    \put(0.85030883,0.0898967){\color[rgb]{0,0,0}\makebox(0,0)[lb]{\smash{$\infty$}}}%
    \put(0.15256096,0.03235053){\color[rgb]{0,0,0}\makebox(0,0)[lb]{\smash{$0$}}}%
    \put(0.86030883,0.00){\color[rgb]{0,0,0}\makebox(0,0)[lb]{\smash{$w_4$}}}%
    \put(0.11227862,0.10428326){\color[rgb]{0,0,0}\makebox(0,0)[lb]{\smash{$z_1$}}}%
    \put(0.35109544,0.11075554){\color[rgb]{0,0,0}\makebox(0,0)[lb]{\smash{$z_2$}}}%
    \put(0.37699123,0.01508662){\color[rgb]{0,0,0}\makebox(0,0)[lb]{\smash{$z_3$}}}%
    \put(-0.00137517,0.05393031){\color[rgb]{0,0,0}\makebox(0,0)[lb]{\smash{$q$}}}%
    \put(0.98554246,0.06543958){\color[rgb]{0,0,0}\makebox(0,0)[lb]{\smash{$p$}}}%
    \put(0.91554246,0.09043958){\color[rgb]{0,0,0}\makebox(0,0)[lb]{\smash{$Z_\Sigma$}}}%
    \put(0.71554246,0.06043958){\color[rgb]{0,0,0}\makebox(0,0)[lb]{\smash{$Z_\Sigma$}}}%
    \put(0.4754246,0.05543958){\color[rgb]{0,0,0}\makebox(0,0)[lb]{\smash{$Z_\Sigma$}}}%
    \put(0.2454246,0.10543958){\color[rgb]{0,0,0}\makebox(0,0)[lb]{\smash{$Z_\Sigma$}}}%
    \put(0.0554246,0.010543958){\color[rgb]{0,0,0}\makebox(0,0)[lb]{\smash{$Z_\Sigma$}}}%
  \end{picture}%
\endgroup
   \end{center}
  \caption{A chain of 4 pearls from $q$ to $p$ with 3 marked points.} 
   \label{fig:linear tree marked pts}
\end{figure}

In order to describe the projection to $\Sigma$ of the levels of a split Floer
cylinder with $N$ cascades that map to $\R \times Y$, we introduce the
following:
\begin{definition} \label{D:chain of pearls}
A \defin{chain of pearls} from $q$ to $p$, where $p$ and $q$ are critical points of $f_\Sigma$, consists of the following:
\begin{itemize}
\item $N \ge 0$ parametrized $J_\Sigma$-holomorphic spheres $w_i$ in $\Sigma$
    with two distinguished marked points at $0$ and $\infty$ and a possibly
    empty collection of additional marked points $z_1, \dots, z_k$ on the union
    of the $N$ domains (distinct from $0$ or $\infty$ in each of the $N$
    spherical domains); 
    the spheres are either non-constant or contain at least one additional marked point; 
\item if $N=0$, an infinite positive flow trajectory of $Z_\Sigma$ from $q$ to $p$; if $N\geq 1$, 
    a half-infinite trajectory of $Z_\Sigma$ from $q$ to $w_1(0)$, 
    a half-infinite trajectory of $Z_\Sigma$ from $w_N(\infty)$ to $p$;
\item if $N \ge 1$, positive length parameters $l_i, i=1, \dots, N-1$, 
    so that $\varphi^{l_i}_{Z_\Sigma}( w_i(\infty) ) = w_{i+1}(0)$, $i=1,
    \dots, N-1$.
\end{itemize}
\end{definition}

See Figure \ref{fig:linear tree marked pts}. If such a chain of
pearls is the projection to $\Sigma$ of the components in $\R\times
Y$ is a split Floer cylinder, then the additional marked points in
the pseudoholomorphic spheres correspond to augmentation punctures in the Floer
cylinders, where they converge to cylinders over Reeb orbits that
are capped by planes in $W$.

Notice that the geometric configuration of two spheres touching at a critical point of $f_\Sigma$ 
admits an interpretation as a chain of pearls in $\Sigma$, since the critical
point is the image of any positive length flow line with that initial condition.

\begin{definition} \label{D:chain of pearls w sphere in X}
A \defin{chain of pearls with a sphere in $X$} from $x$ to $p$, where 
$x$ is a critical point of $f_W$ and $p$ is a critical point of $f_\Sigma$, consists of the following:
\begin{itemize}
\item $N \ge 1$ parametrized $J_\Sigma$-holomorphic spheres $w_i$ in $\Sigma$ with two distinguished marked points at $0$ and $\infty$ and a possibly empty collection of additional marked points $z_1, \dots, z_k$ on the union of the $N$ domains (distinct from $0$ or $\infty$);
\item a parametrized non-constant $J_X$-holomorphic sphere $v$ in $X$;
\item a half-infinite trajectory of $Z_\Sigma$ from 
    $w_N(\infty)$ to $p$, a half-infinite trajectory of $-Z_X$ from $x$ to $v(0)$ 
    (where $Z_X$ is the push-forward of $Z_W$ by the inverse of the map from Lemma
    \ref{planes = spheres});
\item positive length trajectories of $Z_\Sigma$ from $w_{i}(\infty)$ to $w_{i+1}(0)$ for $i=1, \dots, N-1$;
\item the sphere in $X$ touches the first sphere in $\Sigma$: $w_1(0) =
    v(+\infty)$;
\item the spheres $w_2, \dots, w_N$ satisfy the stability condition that they are either non-constant
or contain at least one of the additional marked points ($v$ is automatically
non-constant and $w_1$ is allowed to be constant and unstable).
\end{itemize}
\end{definition}

\begin{definition} \label{D:aug chain of pearls}
An \defin{augmented chain of pearls} [or an \defin{augmented chain of pearls with
a sphere in $X$}] is a chain of pearls [or chain of pearls with a
sphere in $X$] together with $k$ equivalence classes $[U_i]$ of $J_X$-holomorphic spheres $U_i \colon \CP^1 \to
X$, $i=1, \dots, k$, with the following additional properties:
\begin{itemize}
    \item for each $z \in \CP^1$, $U_i(z) \in \Sigma$ if and only if $z =
        \infty$;
    \item if the puncture $z_i$ is in the domain of the holomorphic sphere
        $w_{j_i} \colon \CP^1 \to \Sigma$, then $w_{j_i}(z_i) = U_i(\infty)$;
    \item each $U_i$ is considered up to the action of $\Aut(\CP^1,\infty) = \Aut(\C)$, that is, 
        as an unparametrized sphere. 
\end{itemize}
\end{definition}

From Lemma \ref{L:projectsToSphere} 
and Lemma \ref{planes = spheres} and the fact that the trajectories of $Z_Y$ cover
trajectories of $Z_\Sigma$, it follows that a Floer cylinder with $N$
cascades projects to a chain of pearls or a chain of pearls with a sphere
in $X$. Additionally, again by Lemma \ref{planes = spheres}, 
if any of the sublevels have augmentation planes,
then those correspond to spheres in $X$ passing through $\Sigma$ at the
images of the corresponding marked points in the chain of pearls.

Observe that we allow the sphere $w_1$ to be unstable in the definition
of a chain of pearls in $\Sigma$ with a sphere in $X$.  The case in
which $w_1$ is a constant curve without marked points corresponds to the
situation in which the corresponding Floer cascade contains a non-trivial
Floer cylinder $\tilde v_1$ contained in a single fibre of $\R \times
Y \to \Sigma$, and has the asymptotic limits $\tilde v_1(+\infty, t)$
on a Hamiltonian orbit and $\tilde v_1(-\infty, t)$ on a closed Reeb
orbit in $\{ -\infty \} \times Y$.
The Floer cylinder $\tilde v_1$ in $\R\times Y$ is non-trivial and hence
stable, whereas the
corresponding sphere $w_1$ in $\Sigma$ is unstable. Since we do not
quotient by automorphisms (yet), this does not pose a problem.
(see Figure \ref{case_3_fig} and Proposition \ref{P:Floer_also_in_W} below, where this
situation is analysed.)

\begin{definition} \label{D:simple chain pearls}

A chain of pearls in $\Sigma$ is \textit{simple} if each sphere is either simple 
(i.e. not multiply covered, \cite{McDuffSalamon}*{Section 2.5}) or is constant,
and if the image of no sphere is contained in the image of another.
If the chain of pearls has a sphere $v$ in $X$, we require $v$ to be somewhere injective 
(but the first sphere in $\Sigma$ is allowed to be constant, with image contained in the image of $v$).

An augmented chain of pearls is simple if the chain of pearls is simple and
the augmentation spheres in 
$X$ are somewhere
injective, none has image contained in the fixed open neighbourhood $\varphi(\U)$ 
and no sphere in $X$ has image contained in the image of
another sphere in $X$.
\end{definition}

\begin{remark}
    Recall that a chain of pearls with a sphere in $X$ has a distinguished
    sphere $v$ in $X$ for which $v(0)$ is on the descending manifold of a critical
    point $x$ of $f_W$. By the construction of $f_W$, this forces the image of $v$ to intersect the complement
    of the tubular neighbourhood of $\Sigma$. As we revisit in 
    Remark \ref{rem:augmentation planes not near Sigma},
    Fredholm index considerations related to monotonicity will force 
    the augmentation planes/spheres to leave the tubular neighbourhood.
\end{remark}

\begin{remark}
    Notice that our condition on a simple chain of pearls is slightly different
    than the condition imposed in \cite{McDuffSalamon}*{Section 6.1}, with
    regard to constant spheres. For a chain of pearls to be simple by our
    definition, constant spheres may not be contained in another sphere,
    constant or not. In \cite{McDuffSalamon}, there is no such 
    condition on constant spheres. 
\end{remark}

\begin{definition} \label{D:simple cascade}
Given a finite hybrid energy Floer cylinder with $N$ cascades, we obtain an
augmented chain of pearls (possibly with a sphere in $X$) by the following
construction:
\begin{enumerate}
    \item cylinders in $\R \times Y$ are projected to $\Sigma$: by 
        Lemma \ref{L:projectsToSphere} these form holomorphic spheres in
        $\Sigma$;
    \item planes in $W$ are interpreted as spheres in $X$ by Lemma
        \ref{planes = spheres};
    \item flow lines of the gradient-like vector field $Z_Y$ are projected to
        flow lines of $Z_\Sigma$.
\end{enumerate}
We refer to this augmented chain of pearls in $\Sigma$ (possibly with a sphere
in $X$) as the projection of the Floer cylinder with $N$ cascades.

A finite hybrid energy Floer cylinder with $N$ cascades is \defin{simple}
if the projected chain of pearls is simple.
\end{definition}

Given generators $x, y$ of the chain complex (\ref{eqn:chain complex}), denote by
$$
\M_{H,N}^*(x,y;J_W)
$$
the space of {\em simple} split Floer cylinders with $N$ cascades from $x$ to $y$. 
Recall that if $x$ or $y$ is in $\R \times Y$, the corresponding generator is described by a critical point $\widetilde p$ of $f_Y$
(which can be either $\wc p$ or $\wh p$), together with a multiplicity $k$. If instead, $x$ or $y$ is in $W$, it
corresponds to a critical point of $f_W$.     Note that when both $x, y$ are generators in $\R \times Y$, 
    the moduli space $\M_{H,N}^*(x,y; J_W)$ will depend on $J_W$ only insofar as 
    augmentation planes appear, otherwise it depends only on $J_Y$.

\begin{proposition} \label{cascades form manifold}
    There exists a residual set $\mathcal J^{reg}_W \subset \mathcal J_W$ of
    almost complex structures such that for each $J_W \in \mathcal J^{reg}_W$,
    $\M_{H,N}^*(x,y;J_W)$ is a manifold. 

 If $N=0$, and thus $x, y$ are generators in $\R \times Y$, 
 then  $x = \widetilde q_k$, $y = \widetilde p_k$ for the same multiplicity $k$, and 
     \[
         \dim_\R \M_{H,0}^*(\widetilde q_k,\widetilde p_k;J_W) = |\widetilde p_k| - |\widetilde q_k|.
     \]

If $N \ge 1$, and $x, y$ are generators in $ \R \times Y$, then $x = \widetilde q_{k_-}$, 
$y = \widetilde p_{k_+}$ and 
\[
    \dim_\R \M_{H,N}^*(\widetilde q_{k_-},\widetilde p_{k_+};J_W) = |\widetilde
p_{k_+}| - |\widetilde q_{k_-}| + N-1 .
\]

Finally, if $x \in W$ and $y \in \R \times Y$, then $x\in \Crit(f_W)$, $y = \widetilde p_k$  and 
\[
    \dim_\R \M_{H,N}^*(x,\widetilde p_k;J_W) = |\widetilde p_k| - |x| + N. 
\]

Furthermore, the image $P(\mathcal J^{reg}_W) \subset \mathcal J_\Sigma$ (recall Definition \ref{def:J_W}) 
is residual and consists of almost complex structures that are regular for simple pseudoholomorphic spheres in $\Sigma$.
\end{proposition}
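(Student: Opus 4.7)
My plan is to reduce the transversality problem for simple split Floer cylinders with cascades to the transversality of simple augmented chains of pearls in $\Sigma$, with or without an additional sphere in $X$. By Lemma \ref{L:projectsToSphere}, each Floer cylinder in $\R \times Y$ projects to a $J_\Sigma$-holomorphic sphere in $\Sigma$; by Lemma \ref{planes = spheres}, each augmentation plane in $W$ corresponds to a $J_X$-holomorphic sphere in $X$ with a single intersection with $\Sigma$; and the compatibility of $(f_Y, Z_Y)$ with $(f_\Sigma, Z_\Sigma)$ ensures that $Z_Y$-trajectories project to $Z_\Sigma$-trajectories. Consequently, the projection of a simple Floer cascade is exactly a simple augmented chain of pearls in the sense of Definitions \ref{D:aug chain of pearls} and \ref{D:simple chain pearls}.

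The construction of $\mathcal J_W^{reg}$ then proceeds in two stages. First, I would invoke Proposition \ref{necklaces are regular} to extract a residual set $\mathcal J_\Sigma^{reg} \subset \mathcal J_\Sigma$ for which all moduli spaces of simple augmented chains of pearls in $\Sigma$, and of such chains with a distinguished sphere in $X$ attached to the descending manifold of a critical point of $f_W$, are cut out transversally. Next, for each $J_\Sigma \in \mathcal J_\Sigma^{reg}$, I would work inside the fiber $P^{-1}(J_\Sigma) \subset \mathcal J_W$, in which the almost complex structure is free to vary in $\mathcal V = X \setminus \varphi(\mathcal U)$. Because the relevant spheres in $X$ are somewhere injective and by Definition \ref{D:simple chain pearls} are not contained in the fixed neighbourhood $\varphi(\mathcal U)$, a standard Sard--Smale argument along the lines of \cite{McDuffSalamon} provides a residual subset of $P^{-1}(J_\Sigma)$ for which all such sphere moduli spaces are transverse. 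Taking a countable intersection over combinatorial types of cascades and a union of these fibrewise residuals defines $\mathcal J_W^{reg}$. By construction $P(\mathcal J_W^{reg}) \supseteq \mathcal J_\Sigma^{reg}$, which gives the claimed residuality of the image under $P$; residuality of $\mathcal J_W^{reg}$ itself in $\mathcal J_W$ follows since $P$ is open and surjective with comeagre fibres.

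The core analytical step is to upgrade the transversality in $\Sigma$ and $X$ to transversality of the full Floer cascades. Decomposing $T(\R \times Y) = (\R \partial_r \oplus \R R) \oplus \xi$, the linearization of the Floer operator at a sublevel $\tilde v = (b, v)$ splits into a horizontal piece that under $d\pi_\Sigma$ coincides with the linearized $\bar\partial_{J_\Sigma}$ operator along $\pi_\Sigma \circ \tilde v$, and a vertical piece that is a Cauchy--Riemann operator on a trivial $\C$-bundle over the domain, with asymptotic operators determined by the linearized Hamiltonian flow of $h'(\e^r)R$. Horizontal surjectivity is supplied by the choice $J_\Sigma \in \mathcal J_\Sigma^{reg}$, while the vertical surjectivity is essentially automatic: the convexity $h'' > 0$ makes the vertical asymptotic operators non-degenerate and reduces the vertical problem to a classical one-dimensional Floer transversality statement for a convex Hamiltonian on $S^1$. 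Matching at Morse--Bott orbit families between consecutive sublevels and at augmentation punctures is transverse by the Morse--Smale condition on $(f_Y, Z_Y)$ and by the regularity of the augmentation spheres in $P^{-1}(J_\Sigma)$. The dimension formulas then follow by summing Fredholm indices over the $N$ sublevels and augmentation planes, adding $N-1$ (respectively $N$) length parameters for cascades ending in $Y_{k_0}$ (respectively in $W$), and applying Equation \eqref{eqn:constant trivialization CZ index} together with the grading conventions of Definition \ref{def:grading} to rewrite the totals in terms of $|\widetilde p_k|$, $|\widetilde q_k|$, and $|x|$. The main obstacle is the vertical transversality check, which requires a careful trivialization of $\tilde v^*(\R \partial_r \oplus \R R)$ adapted to the winding of $v$ around the Reeb circle fibres in the spirit of Section \ref{sec:gradings} and a direct unique-continuation argument to exclude nontrivial cokernel elements.
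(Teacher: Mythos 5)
Your high-level strategy matches the paper: project sublevels to $\Sigma$ and augmentation planes to $X$, split the linearized Floer operator as upper triangular over $(\R\partial_r \oplus \R R)\oplus\xi$ via Lemma~\ref{L:linearizationFloer}, show transversality for the two diagonal blocks separately, and reconstruct the cascade moduli as a fibre product over asymptotic constraints. However, there are two substantive gaps.

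First, and most seriously, you assert that the matching at Morse--Bott orbit families between consecutive sublevels ``is transverse by the Morse--Smale condition on $(f_Y, Z_Y)$.'' This is not enough. The Morse--Smale condition governs intersections of stable and unstable manifolds of $Z_Y$, but the question here is whether the asymptotic evaluation map $\widetilde{\ev}_Y \colon \MM^*_{H,k,\R\times Y;k_-,k_+} \to Y^{2N}$ is transverse to $W^s_Y(\tilde q)\times(\widetilde\Delta_{f_Y})^{N-1}\times W^u_Y(\tilde p)$, and the evaluation map is not a submersion in the $(\R R)^{2N}$ directions. Reeb rotation on each $\tilde v_i$ only contributes the diagonal vectors $(R,R)$, and that does not span the two vertical fibre directions at the two ends of one cylinder. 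The paper closes this gap in Proposition~\ref{prop:TransverseLiftedEvaluation} by combining (a) the $\Sigma$-level submersion result, (b) Reeb rotation, (c) domain rotation (which contributes $(k_-R, k_+R)$), (d) Lemma~\ref{different multiplicities}, establishing $k_+\ne k_-$ so that (b) and (c) together span, and (e) the explicit description of $T\widetilde\Delta_{f_Y}$, which contributes $(R,g_iR)$ with $g_i>0$. Without the $k_+\ne k_-$ observation and the consequent linear-algebra argument spanning $(\R R)^{2N}$, the lift of the $\Sigma$-transversality to $Y$ does not follow, and your proof is incomplete at precisely the step the paper works hardest on.

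Second, your treatment of the vertical operator $D^\C_{\tilde v}$ is off. The asymptotic operator at a Hamiltonian orbit has $i\R$ (the Reeb direction) in its kernel, so it is degenerate; only the $\delta$-perturbed operator is non-degenerate. More importantly, a ``unique continuation argument'' does not by itself show that the cokernel vanishes --- it controls the zero set of solutions, not the dimension of the cokernel. The paper obtains surjectivity of $D^\C_{\tilde v}$ from Wendl's automatic transversality criterion (\cite{WendlAutomatic}, Proposition~2.2) applied to the rank-one complex bundle, verifying that the adjusted Chern number is below the Fredholm index; this is a quantitative winding-number criterion, not a unique-continuation argument. Your sketch would not go through as stated.

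The two-stage Sard--Smale construction of $\J_W^{reg}$, the residuality of $P(\J_W^{reg})$, the use of Proposition~\ref{necklaces are regular}, and the index bookkeeping (adding $N-1$ or $N$ length parameters and rewriting via Definition~\ref{def:grading}) are all consistent with the paper.
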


    The two different formulas involving $N$ reflect the fact that $N$ counts the
    number of cylinders in $\R \times Y$. In the case of a Floer cascade that
    descends to $W$, there are therefore $N+1$ cylinders in the cascade.

\begin{remark} \label{rem:degree_minus_one}
    These index formulas justify that the moduli spaces are rigid (modulo
    their $\R$, $\R^{N}$ and $\R^{N+1}$ actions) when the index difference is
    $1$, which then justifies the definition of the differential given in
    Equation \eqref{eqn:differential}.
    Indeed,
    observe that the case $N=0$ corresponds to a pure Morse configuration and
    doesn't depend on any almost complex structure. We count rigid flow lines modulo the 
    $\R$ action, and thus require $|y| - |x| = 1$. For generators $x, y$ in $\R \times Y$,
    we consider these $N$ cylinders modulo the $\R$ action on each one, giving an $\R^N$ action.
    From this, a rigid configuration has $|y|-|x|+N-1 = N$. 
    For the case with $x \in W$, we have $N+1$ cylinders in the Floer cascade, so we have a 
    rigid configuration modulo the $\R^{N+1}$ action when $N+1 = |y|-|x|+N$.
\end{remark}

The split Floer differential $\partial$, introduced in Equation
\eqref{eqn:differential}
was defined by counting elements in $\M_{H,N}(x,y;J_W)$. 
We will see in Propositions \ref{P:Floer_in_R_times_Y} and
\ref{P:Floer_also_in_W}
that our monotonicity assumptions 
imply that this is equivalent to counting simple configurations in $\M_{H,N}^*(x,y;J_W)$.

The rest of this section will be devoted to the proof of 
Proposition \ref{cascades form manifold}. It will proceed in the following
steps:
\begin{itemize}
    \item Section \ref{S:fredholm_for_cascades} describes 
        the Fredholm set-up for Floer cylinders with cascades. On a first reading, it can be skipped and used as a reference. 
        In Section \ref{S:sobolev_morse_bott}, we discuss the necessary function spaces and linear
        theory for the Morse--Bott problems. Then, 
         Section \ref{S:linearization at Floer} splits the linearization of the
        Floer operator in such a way as to split the transversality problem into two problems.
        The first is a Cauchy--Riemann operator acting on sections of a complex line
        bundle, and it is transverse for topological reasons (automatic
        transversality).  The second is a transversality problem 
        for a Cauchy--Riemann operator in $\Sigma$.
    \item Section \ref{S:transverse pearls} adapts the transversality arguments
        from \cite{McDuffSalamon} in order to obtain transversality for chains
        of pearls in $\Sigma$. 
    \item Section \ref{S:transverse contact} shows transversality for 
        the components of the cascades contained in $W$. This problem is
        translated into the equivalent problem of obtaining transversality for
        spheres in $X$ with order of contact conditions at $\Sigma$, together
        with evaluation maps. The main technical point is an extension of the
        transversality results from \cite{CieliebakMohnkeTransversality}.
    \item Finally, Section \ref{S:proof} 
        uses the splitting from Section \ref{S:linearization at Floer} 
        to lift the transversality results in $\Sigma$ to obtain transversality
        for Floer cylinders with cascades, and to finish the proof of Proposition \ref{cascades form manifold}. 
\end{itemize}

\subsection{A Fredholm theory for Floer cylinders with cascades}
\label{S:fredholm_for_cascades}

\subsubsection{ A Fredholm theory for Morse--Bott asymptotics}
\label{S:sobolev_morse_bott}

In this section, we collect some facts about Cauchy--Riemann operators on
Hermitian vector bundles over punctured Riemann surfaces, specifically in the
context of \textit{degenerate} asymptotic operators. 
These facts can mostly be found in the literature, but not in a unified way. 
The main reference for these results is
\cite{SchwarzThesis}. Additional references include \citelist{
    \cite{HWZ3}
    \cite{SchwarzThesis}
    \cite{WendlAutomatic} \cite{ACHplanar}*{Sections 2.1--2.3} \cite{BourgeoisMohnke}}. 

We begin by introducing some Sobolev spaces of sections of appropriate bundles.
Let $\Gamma \subset \R \times S^1$ be a finite set of punctures and 
denote $\R \times S^1 \setminus \Gamma$ by $\dot S$. 
Write $\Gamma_+ = \{+\infty\}$ and $\Gamma_- = \{-\infty\} \cup \Gamma$. 
Consider, for each puncture $z\in \Gamma$, exponential cylindrical 
polar coordinates of the form $(-\infty, -1] \times S^1 \to \R \times S^1
\setminus \Gamma  \colon \rho + i \eta \mapsto z_0 + \epsilon \e^{2\pi(\rho + i
    \eta)}$. Choose $\epsilon > 0$ sufficiently small these are embeddings and 
    that the image of these embeddings for any two different punctures are disjoint.

Let $E \to \dot S$ be a (complex) rank $n$ Hermitian vector bundle over
$\dot S$ together with a preferred set of trivializations in a small
neighbourhood of $\Gamma \cup \{\pm \infty\}$.  
While the bundle $E$ over $\dot S$ is trivial if there is at least one puncture, this is no
longer the case once we specify these preferred trivializations near
$\Gamma \cup \{\pm\infty\}$.  
We therefore associate a first Chern number
to this bundle relative to the asymptotic trivializations.  There are
several equivalent definitions. One approach is to consider the complex
determinant bundle $\Lambda_\C^n E$. The trivialization of $E$ at infinity
gives a trivialization of this determinant bundle at infinity, and we can
now count zeros of a generic section of $\Lambda_\C^n E$ that is constant
(with respect to the prescribed trivializations) near the punctures. We
denote this Chern number by $c_1(E)$, but emphasize that it depends on
the choice of these trivializations near the punctures.

Since we cannot specify where an augmentation puncture appears when
we stretch the neck on a Floer cylinder, we should have the punctures
in $\Gamma$ free to move on the domain $\R \times S^1$.  
This creates a
problem when we try to linearize the Floer operator in a family of domains
where the positions of the punctures are not fixed.  
We will instead
consider a $2\#\Gamma$ parameter family of almost complex structures
on $\R \times S^1$, but fix the location of the punctures.
Specify a fixed collection $\Gamma$ of punctures on $\R\times S^1$
and, for any other collection of augmentation punctures, choose an
isotopy with compact support from the new punctures to the fixed ones.
We take the push-forward of the standard complex structure in $\R\times
S^1$ by the final map of the isotopy, to produce a family of complex
structures on $\R \times S^1$, which can be assumed standard near $\Gamma$
and outside of a compact set. 

For each $z \in \Gamma$, let $\beta_{z} \colon \dot S \to [0, +\infty)$ be a function supported in a small
neighbourhood of $z$, with $\beta_{z}(\rho  , \eta) = -\rho$ near the
puncture (where $(\rho, \eta)$ are cylindrical polar coordinates near $z$, as above). Similarly, let $\beta_+ \colon \R \times S^1 \to [0, +\infty)$ be  
supported in a region where $s$ is sufficiently large and $\beta_+(s,t) = s$ for $s$ large enough.  Let $\beta_- \colon \R \times S^1 \to [0, +\infty)$ have
support near $-\infty$, and $\beta_-(s,t) = -s$ for $s$ sufficiently small.

In many situations, it will be convenient to consider the function 
\begin{equation} \label{beta}
    \beta \coloneqq \sum_{z \in \Gamma} \beta_{z} + \beta_- + \beta_+.
\end{equation}

Finally, on the punctured cylinder $\dot S$, we take the measure induced by an
area form on $\dot S$ that has the form $ds \wedge dt$ for $|s|$ large and that
has the form $d\rho \wedge d\eta$ in the cylinder polar coordinates near each
puncture in $\Gamma$. Notice that pairing this with the domain complex
structure induces a metric on $\dot S$ for which the vector field
$\partial_\eta$, defined 
near a puncture in $\Gamma$ by the exponential cylindrical polar coordinates, 
has norm comparable to $1$.

Given a vector of weights $\mathbf \delta \colon \Gamma \cup \{ \pm\infty \} \to \R$, we define $W^{1,p,\mathbf \delta}(\dot S, E)$ to be the space of sections $u$ of $E$ for which 
\[
    u \e^{\sum \delta_z \beta_z + \delta_- \beta_- + \delta_+ \beta_+} 
\in W^{1,p}(\dot S, E) \]
(with respect to the measure and metric described above).
Note that these sections decay exponentially fast at the punctures where $\delta
>0$ and are allowed to have exponential growth at punctures where $\delta < 0$.
We can similarly define $L^{p, \mathbf{\delta}}(\dot S, E)$.
While these definitions involve making various choices, the resulting metrics
are strongly equivalent. In practice, we'll typically take $p > 2$ to obtain
continuity of the sections. By a similar construction, we may define $W^{m,p,
\mathbf{\delta}}$ as well.

We will say that a differential operator $D \colon \Gamma(E) \to
\Gamma(\Lambda^{0,1} T^*\dot S \tensor E)$ is a Cauchy--Riemann operator if it is
a real linear Cauchy--Riemann operator \cite{McDuffSalamon}*{Definition
C.1.5} such that, near $\pm \infty$, it takes the form:
\begin{equation}
(D \sigma) \frac{\partial}{\partial s} = \frac{\partial}{\partial s} \sigma + J(s,t) \frac{\partial}{\partial t} \sigma + A(s,t) \sigma
\label{CR}
\end{equation}
where $J(s,t)$ is a smooth function on $\R^\pm \times S^1$ with values
in almost complex structures on $\C^n$ compatible with the standard
symplectic form, and $A(s,t)$ takes values in real matrices on $\R^{2n}
\cong \C^n$. We further impose that these functions converge uniformly as $s \to
\pm \infty$, $J(s,t) \to J_z(t)$ and $A(s,t) \to A_z(t)$, where $A_z(t)$
is a loop of self-adjoint matrices. We impose the same conditions near
punctures $z\in \Gamma$, using the local coordinates $(\rho,\eta)$
instead of $(s,t)$ in \eqref{CR}.

    A Cauchy--Riemann operator $D \colon \Gamma(E) \to \Gamma( \Lambda^{0,1}
    T^*\dot S \tensor E)$ acting on smooth sections induces an
    operator on various Sobolev spaces of sections. Of particular
    importance for us will be that for any vector of weights $\delta \colon
    \Gamma \cup \{ \pm \infty \} \to \R$, $D$ induces an operator 
    \[
        D \colon W^{1,p,\delta}(\dot S, E) \to L^{p, \delta}(\dot S,
    \Lambda^{0,1}T^*\dot S \tensor E) \]
    as well as operators $D \colon W^{k,p, \delta} \to W^{k-1, p, \delta}$.
    In the following, we will not emphasize the distinction and refer to
    the operators $W^{1,p,\delta} \to L^{p,\delta}$ as Cauchy--Riemann
    operators. 
    For generic weight vectors, these operators will be
    Fredholm as formulated precisely in Theorem \ref{T:RiemannRoch} below.

Associated to a Cauchy--Riemann operator $D$, we obtain
\defin{asymptotic operators} at each puncture in $\Gamma
\cup\{\pm\infty\}$ by $\mathbf{A}_z \coloneqq - J_z(t) \frac{d}{dt} -
A_z(t)$. This is a densely defined unbounded self-adjoint operator on
$L^2(S^1, \R^{2n})$. Let $\sigma(\mathbf{A}_z) \subset \R$ denote its
spectrum. This will consist of a discrete set of eigenvalues. If an
asymptotic operator $\mathbf{A}_z$ does not have $0$ in its spectrum,
we say the asymptotic operator is non-degenerate. If all the asymptotic
operators are non-degenerate, we say $D$ itself is non-degenerate.

Note that we obtain a path of symplectic matrices associated to the
asymptotic operator $\mathbf{A}_z$ by finding the fundamental matrix
$\Phi$ to the ODE $\frac{d}{dt} x = J_z(t) A_z(t) x$. The asymptotic
operator is non-degenerate if and only if the time-1 flow of the ODE
does not have $1$ in the spectrum. 
We will consider a description of the
Conley--Zehnder index in terms of properties of the asymptotic operator
itself \cite{HWZ2}*{Lemmas 3.4, 3.5, 3.6, 3.9}.

\begin{remark}
An asymptotic operator induces a path of symplectic matrices, and this
identification (understood correctly) is a homotopy equivalence. 
This will allow us to 
associate a Conley--Zehnder index to a periodic orbit of a Hamiltonian vector field, 
given a trivialization of the tangent bundle along the orbit. In order to do so, 
we take the linearized flow map, which defines a path $\Phi\colon [0,1] \to \Sp(2n)$ 
with respect to the fixed trivialization. 
If we fix a path of almost complex structures, this path of symplectic matrices
satisfies an ODE as in the previous paragraph, which in turn specifes an
asymptotic operator. The Conley--Zehnder index of the Hamiltonian orbit is by
definition the Conley--Zehnder index of this asymptotic operator. This is
homotopic to the asymptotic operator coming from the linearized Floer operator.
\end{remark}

\begin{proposition} \label{P:CZ winding}

Suppose $\mathbf{A}_z$ is non-degenerate and $E$ is a rank 1 vector
bundle. 

If $u \colon S^1 \to \C$ is an eigenfunction of $\mathbf{A}_z$ corresponding to
the eigenvalue $\lambda$, it must be nowhere vanishing. The winding number of
$u$ is then the degree of the map $\frac{u}{|u|} \colon S^1 \to S^1$. 
Any two eigenfunctions corresponding to the same eigenvalue $\lambda$ have the same
winding number. This is then referred to as the winding number of the
eigenvalue, and is denoted by $w(\lambda)$. 

The function $w \colon \sigma( \mathbf{A}_z) \to \Z$ 
is non-decreasing in $\lambda$ and is surjective. If $\lambda_\pm$
are eigenvalues so that $\lambda_- < 0 < \lambda_+$ and there are no
eigenvalues in the interval $(\lambda_-, \lambda_+)$, then \[ \CZ(
\mathbf{A}_z ) = w(\lambda_-) + w(\lambda_+).  \]

\qedhere

\end{proposition}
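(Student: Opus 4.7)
The plan is to treat the four assertions — nowhere vanishing of nontrivial eigenfunctions, well-definedness of $w(\lambda)$, monotonicity and surjectivity of $w$, and the Conley--Zehnder formula — in turn, broadly following \cite{HWZ2}*{Lemmas 3.4--3.9}.

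First, a nontrivial eigenfunction $u$ of $\mathbf{A}_z$ at eigenvalue $\lambda$ satisfies the first-order linear ODE $u'(t) = -J_z(t)(A_z(t) + \lambda)u(t)$, so uniqueness of ODE solutions forbids $u(t_0) = 0$ without $u \equiv 0$; hence $u/|u| \colon S^1 \to S^1$ is well-defined and carries an integer degree $w(u)$. For independence of representative, let $u, v$ be two eigenfunctions at the same $\lambda$ and consider $f \coloneqq v/u \colon S^1 \to \C^*$. If $f(t_0) = c \in \R$ for some $t_0$, then $v - cu$ is an eigenfunction vanishing at $t_0$, so $v \equiv cu$ by the preceding argument, giving $w(v) = w(u)$. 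Otherwise $f$ avoids the real axis, and since $\C \setminus \R$ consists of two contractible half-planes, $f$ is nullhomotopic in $\C^*$, so $w(v) = w(u) + w(f) = w(u)$.

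For monotonicity and surjectivity, connect $\mathbf{A}_z$ to the model operator $\mathbf{A}_{\mathrm{mod}} \coloneqq -i \frac{d}{dt}$ on $L^2(S^1, \C)$ by a continuous path $s \mapsto \mathbf{A}_s$ of self-adjoint asymptotic operators. Standard perturbation theory for self-adjoint operators with compact resolvent ensures that the spectrum varies continuously in $s$ (counted with multiplicity) and that eigenfunctions may be chosen locally continuously away from crossings; by the nowhere-vanishing claim, windings are then locally constant along such a family. For the model, the spectrum is $\{2 \pi k : k \in \Z\}$ with eigenfunctions $\e^{2 \pi i k t}$ of winding $k$, which is manifestly monotone and surjective. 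Transporting this data along the spectral flow gives the same conclusions for $\mathbf{A}_z$.

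Finally, the Conley--Zehnder formula. The linearized flow of $x'(t) = J_z(t) A_z(t) x(t)$ defines a path $\Phi \colon [0,1] \to \Sp(2)$, and $\CZ(\mathbf{A}_z)$ is defined as a Maslov-type index of $\Phi$. The approach is to deform $\mathbf{A}_z$ within the space of non-degenerate operators to a normal form on which both sides of the identity may be computed by hand, while keeping track of how the two Floquet multipliers of $\Phi(1)$ correspond to the two innermost eigenvalues $\lambda_\pm$ of $\mathbf{A}_z$ straddling zero. The main obstacle is exactly this last piece of bookkeeping: one must justify that these two multipliers contribute precisely $w(\lambda_-)$ and $w(\lambda_+)$ to $\CZ(\mathbf{A}_z)$, and that no other eigenvalue contributes. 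This is accomplished in \cite{HWZ2}*{Lemmas 3.6, 3.9} by a careful splitting of $\Phi$ against a reference trivialization, and I would follow that approach.
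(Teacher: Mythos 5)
The paper itself offers no proof of this proposition: the statement ends with \verb|\qedhere| inside the proposition environment, and the surrounding text explicitly points the reader to \cite{HWZ2}*{Lemmas 3.4, 3.5, 3.6, 3.9}. Your proposal follows the same outline as those lemmas, fills in several of the preparatory steps, and is explicit about deferring the Conley--Zehnder identity itself to \cite{HWZ2}*{Lemmas 3.6, 3.9}. So in terms of approach you and the paper are doing the same thing.

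A few specific remarks. First, a minor sign error: from $\mathbf{A}_z u = -J_z u' - A_z u = \lambda u$ and $J_z^{-1} = -J_z$ one gets $u' = J_z(A_z + \lambda)u$, not $-J_z(A_z+\lambda)u$; the uniqueness argument is unaffected. Second, your well-definedness argument via $f = v/u$ is clean and correct: either $f$ hits $\R^*$ somewhere, in which case $v$ is a real multiple of $u$, or $f$ maps the connected circle into one contractible half-plane of $\C\setminus\R$ and so is nullhomotopic in $\C^*$, giving $\deg(f/|f|)=0$.

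Third, your spectral-flow/deformation argument for monotonicity and surjectivity of $w$ is a legitimate route, but as written it skips the step that actually does the work at crossings. "Eigenfunctions may be chosen locally continuously away from crossings" keeps the winding constant along each smooth eigenvalue branch, but you also need that two branches with \emph{different} windings can never collide. That is the contrapositive of what you have already proved: if $\lambda_1(s_0) = \lambda_2(s_0)$, the two eigenfunctions sit in the same eigenspace, so by well-definedness of $w$ they have the same winding there, which by continuity on each side forces $w(\lambda_1) = w(\lambda_2)$ throughout. You should say this; it is the lemma that makes the transport argument close. (HWZ2 proves monotonicity and the two-to-one counting by a more direct argument about pairs of eigenfunctions at distinct eigenvalues, so here your route is genuinely different from the cited source, though it reaches the same place.) Finally, for the last assertion, the identity $\CZ(\mathbf{A}_z)=w(\lambda_-)+w(\lambda_+)$, you declare that you would follow HWZ2, which is exactly what the paper does; there is no gap between you and the paper there, but there is still a gap between your writeup and a self-contained proof, so the deferral should be made prominently if this were to be submitted as a complete argument.
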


This formulation will be the most useful for our
calculations. Furthermore, in the case of a higher rank bundle, we use
the axiomatic description, see for instance \cite{HWZ2}*{Theorem 3.1}
to observe that $\CZ( \mathbf{A}_z )$ is invariant under deformations
for which $0$ is never in the spectrum, and that if the operator can
be decomposed as the direct sum of operators, then the Conley--Zehnder index
is additive.

The following computation is useful at several points in the paper. It
can often be combined with Proposition \ref{P:CZ winding} to compute
Conley--Zehnder indices of interest.
\begin{lemma}\label{L:spectrum}
Given a constant $C\geq 0$, the spectrum $\sigma(A_C)$ of the operator
$$
\mathbf{A}_C:= - i \frac{d}{dt} - %
\begin{pmatrix} C & 0 \\
0 & 0  
\end{pmatrix} %
\colon W^{1,p}(S^1,\C) \to L^p(S^1,\C)
$$
is the set
 $$\left \{\frac{1}{2}\left(- C - \sqrt{ C^2 + 16 \pi^2 k^2}\right) \, | \, k\in \Z \right\} \cup \left \{\frac{1}{2}\left(- C + \sqrt{ C^2 + 16 \pi^2 k^2}\right) \, | \, k\in \Z \right\}.$$
If $\lambda$ is an eigenvalue associated to $k\in \Z$, then the winding number of the corresponding eigenvector is $|k|$ if $\lambda\geq 0$ and $-|k|$ if $\lambda\leq 0$. If $C=0$, then all eigenvalues have multiplicity 2 (see Table \ref{evals 0}). If $C>0$, then the same is true except for the eigenvalues $-C$ and $0$, corresponding to $k=0$ above, both of which have multiplicity 1 (see Table \ref{evals C}).  
 
In particular, the $\sigma(\mathbf{A}_0) = 2\pi\Z$ and the winding number of $2\pi k$ is $k$. 
\end{lemma}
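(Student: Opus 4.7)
The plan is to reduce the eigenvalue problem $\mathbf{A}_C u = \lambda u$ to a first order linear ODE on $\R^2 \cong \C$ and extract the spectrum, multiplicities and winding numbers from explicit computations with the monodromy.

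\textbf{Step 1: Set up the ODE.} Writing $u = (u_1, u_2)$ and identifying multiplication by $i$ with $J_0 = \bigl(\begin{smallmatrix} 0 & -1 \\ 1 & 0 \end{smallmatrix}\bigr)$, the eigenvalue equation $-J_0 \dot u - \bigl(\begin{smallmatrix} C & 0 \\ 0 & 0 \end{smallmatrix}\bigr) u = \lambda u$ becomes
\[
\dot u = M_\lambda u, \qquad M_\lambda := \begin{pmatrix} 0 & -\lambda \\ C + \lambda & 0 \end{pmatrix}.
\]
Thus $\lambda \in \sigma(\mathbf{A}_C)$ iff the time-1 flow $e^{M_\lambda}$ has $1$ as an eigenvalue, and the geometric multiplicity of $\lambda$ equals $\dim \ker(e^{M_\lambda} - I)$.

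\textbf{Step 2: Determine the spectrum.} Since $\operatorname{tr}(M_\lambda) = 0$ and $\det(M_\lambda) = \lambda(C+\lambda)$, the eigenvalues of $M_\lambda$ are $\pm \sqrt{-\lambda(C+\lambda)}$. If $\lambda(C+\lambda) < 0$ the flow is hyperbolic and has no nonzero periodic orbit. If $\lambda(C+\lambda) > 0$ the eigenvalues are $\pm i \omega$ with $\omega = \sqrt{\lambda(C+\lambda)}$, and $e^{M_\lambda}$ has $1$ as an eigenvalue exactly when $\omega = 2\pi k$ for some integer $k \ge 1$. Solving $\lambda^2 + C\lambda - 4\pi^2 k^2 = 0$ gives the two branches $\lambda = \tfrac{1}{2}(-C \pm \sqrt{C^2 + 16\pi^2 k^2})$, which produces the claimed spectrum (for $k = 0$, the condition $\lambda(C+\lambda)=0$ yields the two boundary values $\lambda = 0$ and $\lambda = -C$).

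\textbf{Step 3: Multiplicities.} When $\lambda(C+\lambda) > 0$, the matrix $M_\lambda$ has two distinct imaginary eigenvalues, so $e^{M_\lambda}$ equals the identity and the full $2$-dimensional space of solutions is $1$-periodic. This gives multiplicity $2$ for every $k \ne 0$. For the $k = 0$ eigenvalues, $M_\lambda$ is nilpotent of rank one (provided $C > 0$); then $e^{M_\lambda} = I + M_\lambda$, whose kernel is $\ker M_\lambda$, which is $1$-dimensional. Explicitly, at $\lambda = 0$ one gets the line $\{u_1 = 0\}$ and at $\lambda = -C$ the line $\{u_2 = 0\}$. If $C = 0$, the two $k=0$ eigenvalues coincide at $\lambda=0$, $M_0 = 0$, and the multiplicity is $2$.

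\textbf{Step 4: Winding numbers.} For nonzero $k$ the flow of $M_\lambda$ acts by linear rotation in $\R^2$ with angular speed $\omega = 2\pi|k|$, so any nonzero eigenvector $u(t) = e^{tM_\lambda} u(0)$ has winding $\pm |k|$ with sign determined by the orientation of $M_\lambda$. Reading off the signs of the entries of $M_\lambda$: for $\lambda > 0$ we have $-\lambda < 0$ and $C+\lambda > 0$, so $M_\lambda$ is a positive multiple of $J_0$-like rotation (counterclockwise), giving winding $+|k|$; for $\lambda \le -C$ the signs flip, the rotation is clockwise, and the winding is $-|k|$. The $k=0$ eigenfunctions are constant, so their winding is $0$, consistent with the rule. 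Combining with Proposition~\ref{P:CZ winding} gives the final statement.

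The only mild subtlety is the bookkeeping at $k = 0$ to split the $C > 0$ and $C = 0$ cases; everything else is a direct ODE computation.
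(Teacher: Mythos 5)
Your proof is correct and follows essentially the same route as the paper: reduce the eigenvalue equation to the ODE $\dot u = M_\lambda u$ with $M_\lambda = \bigl(\begin{smallmatrix} 0 & -\lambda \\ C+\lambda & 0\end{smallmatrix}\bigr)$ and analyze when the time-$1$ flow has a fixed point. The paper compresses everything after that to ``computing the eigenvalues of the matrix on the right, and requiring that they be of the form $2\pi i k$, yields the result,'' whereas you explicitly work out the hyperbolic versus rotational dichotomy, the multiplicity count via $e^{M_\lambda}=I$ for $k\neq 0$ and the nilpotent Jordan block at $k=0$, and the winding numbers from the rotation direction determined by the signs of $-\lambda$ and $C+\lambda$. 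All of these details check out and match the tables in the paper.
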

\begin{proof}
An eigenvector $v : S^1 \to \C$ of $\mathbf{A}_C$ with eigenvalue $\lambda$ solves the equation 
 $$
 - \begin{pmatrix} 0 & -1 \\
1 & 0  
\end{pmatrix}  \dot v - 
\begin{pmatrix} C & 0 \\
0 & 0 
\end{pmatrix} v = \lambda v \Leftrightarrow \dot v = \begin{pmatrix} 0 & -\lambda \\
 C + \lambda & 0
\end{pmatrix} v.
 $$
Computing the eigenvalues of the matrix on the right, and requiring that they be of the form $2\pi i k$, $k\in \Z$ (since  $v(t+1)= v(t)$), yields the result. 
\end{proof}

\begin{table}[ht]
\begin{tabular}{c|c|c|c|c|c|c|c}
eigenvalues & \ldots & $-4\pi$ & $-2\pi$ & $0$ & $2\pi$ & $4\pi$ & \ldots\\
\hline
multiplicities & \ldots & 2 & 2 & 2 & 2 & 2 &\ldots  \\
\hline
winding numbers & \ldots & $-2$ & $-1$ & 0 & 1 & 2 & \ldots \\
\end{tabular}
\caption{Eigenvalues of $\mathbf{A}_0$}
  \label{evals 0}
\end{table}

\begin{table}[ht]
\begin{tabular}{c|c|c|c|c|c|c}
eigenvalues & \ldots & $\frac{1}{2}\left(- C - \sqrt{ C^2 + 16 \pi^2}\right)$ & $-C$ & $0$ & $\frac{1}{2}\left(- C + \sqrt{ C^2 + 16 \pi^2}\right)$ & \ldots\\
\hline
multiplicities & \ldots & 2 & 1 & 1 & 2 &\ldots  \\
\hline
winding \#s & \ldots & $-1$ & 0 & 0 & 1 & \ldots \\
\end{tabular}
\caption{Eigenvalues of $\mathbf{A}_C$ in increasing order, if $C>0$}
  \label{evals C}
\end{table}

\begin{corollary} \label{C:CZ computation}
Take $C\geq 0$ and $\delta>0$ such that $[-\delta,\delta] \cap \sigma(\mathbf{A}_C) = \{0\}$. Then
$$
\CZ(\mathbf{A}+ \delta) = \begin{cases}
                     0 & \text{ if } C > 0 \\
                     -1 & \text{ if } C = 0 
                    \end{cases}
\qquad \text{and} \qquad
\CZ(\mathbf{A}_C - \delta) = 1. 
$$

For any $n\geq 0$, taking 
$$
-i\frac{d}{dt}\colon W^{1,p}(S^1,\C^n) \to L^p(S^1,\C^n),
$$
we have 
$$
\CZ\left(-i\frac{d}{dt} \pm \delta\right) = \mp n.
$$
\end{corollary}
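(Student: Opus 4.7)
The plan is to directly apply Proposition \ref{P:CZ winding} using the eigenvalue data provided by Lemma \ref{L:spectrum}. The key observation is that for any real $c$, the spectrum of $\mathbf{A}_C + c$ is $\sigma(\mathbf{A}_C) + c$, while the eigenvectors (and hence their winding numbers) are unchanged. So the strategy reduces to identifying the two eigenvalues of $\mathbf{A}_C \pm \delta$ that bracket $0$, looking up their winding numbers from Lemma \ref{L:spectrum}, and summing.

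First I would dispatch the case $C > 0$. By the hypothesis $[-\delta,\delta] \cap \sigma(\mathbf{A}_C) = \{0\}$ and the description in Table \ref{evals C}, we have $\delta < C$ and $\delta < \tfrac{1}{2}(-C + \sqrt{C^2+16\pi^2})$. For $\mathbf{A}_C + \delta$, the eigenvalues of $\mathbf{A}_C$ immediately below and above $0$, namely $-C$ and $0$, are shifted to $-C+\delta < 0$ and $\delta > 0$; both correspond to $k=0$ in Lemma \ref{L:spectrum} and therefore have winding $0$. Proposition \ref{P:CZ winding} gives $\CZ(\mathbf{A}_C + \delta) = 0 + 0 = 0$. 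For $\mathbf{A}_C - \delta$, the shifted eigenvalues straddling $0$ are $-\delta$ (from $0 \in \sigma(\mathbf{A}_C)$, winding $0$) and $\tfrac{1}{2}(-C+\sqrt{C^2+16\pi^2}) - \delta$ (from the $k = \pm 1$ positive eigenvalue, winding $1$), yielding $\CZ(\mathbf{A}_C - \delta) = 0 + 1 = 1$.

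For $C = 0$, Lemma \ref{L:spectrum} gives $\sigma(\mathbf{A}_0) = 2\pi\Z$ with the eigenvalue $2\pi k$ having winding $k$. Under the hypothesis, $\delta < 2\pi$. For $\mathbf{A}_0 + \delta$, the bracketing eigenvalues are $-2\pi + \delta$ (winding $-1$) and $\delta$ (winding $0$), so $\CZ = -1$. For $\mathbf{A}_0 - \delta$, the bracketing eigenvalues are $-\delta$ (winding $0$) and $2\pi - \delta$ (winding $1$), giving $\CZ = 1$, consistent with the $C > 0$ calculation.

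Finally, for the rank $n$ statement, I would write $W^{1,p}(S^1,\C^n)$ as the orthogonal direct sum of $n$ copies of $W^{1,p}(S^1,\C)$ in the standard basis. Under this decomposition, $-i\frac{d}{dt} \pm \delta$ on $\C^n$ is the direct sum of $n$ copies of $\mathbf{A}_0 \pm \delta$. Invoking the additivity of the Conley--Zehnder index under direct sums (noted immediately after Proposition \ref{P:CZ winding}), we obtain $\CZ(-i\frac{d}{dt} + \delta) = n \cdot (-1) = -n$ and $\CZ(-i\frac{d}{dt} - \delta) = n \cdot 1 = n$, which is precisely $\mp n$. There is no serious obstacle here; the only thing requiring care is bookkeeping the shift of the spectrum versus the invariance of winding numbers, and checking that the hypothesis on $\delta$ guarantees the correct eigenvalues are picked up as the nearest neighbors of $0$.
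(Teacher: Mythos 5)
Your proof is correct and follows the same approach as the paper's own: the rank-one statements are obtained by combining Proposition \ref{P:CZ winding} with the spectral data of Lemma \ref{L:spectrum}, and the rank-$n$ statement follows from additivity of the Conley--Zehnder index under direct sums. The paper states this tersely; you simply fill in the bookkeeping of shifted eigenvalues and their winding numbers.
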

\begin{proof}
 The case $n=1$ follows from Proposition \ref{P:CZ winding} and Lemma \ref{L:spectrum}. 
 The case of general $n$ uses the additivity of $\CZ$ under direct sums. 
\end{proof}

\begin{definition} \label{def:delta_perturbed}
A key observation for our computations of Fredholm indices (as noted, for instance, in \cite{HWZ3}) is that a Cauchy--Riemann operator 
\[
D \colon W^{1,p,\mathbf{\delta}}(\dot S, E) \to L^{p, \mathbf{\delta}}(\dot S,  \Lambda^{0,1} T^*\dot S \tensor E)
\]
with asymptotic operators $\mathbf{A}_z$ 
is conjugate to the Cauchy--Riemann operator
\begin{align*}
D^\mathbf{\delta} &\colon  W^{1,p}(\dot S, E) \to L^{p}(\dot S,  \Lambda^{0,1} T^*\dot S \tensor E) \\
&D^\mathbf{\delta} =  \e^{ \sum \delta_z \beta_z(s) }D \e^{ -\sum \delta_z \beta_z(s) }.
\end{align*}
This has asymptotic operators $\mathbf{A}^\mathbf{\delta}_z = \mathbf{A}_z \pm
\delta_z$,
which are non-degenerate and 
 where the sign is positive at positive punctures and negative at
negative punctures. We refer to these as the {\em $\delta$-perturbed
asymptotic operators}.

Notice that the operator $D^\mathbf{\delta}$ depends on the choice of cut-off
functions $\beta_z, z \in \Gamma \cup \{ \pm \infty \}$. A different choice of
cut-off function will give an operator that differs only by a compact operator.
This is thus of secondary importance for what we discuss here.
\end{definition}

    Note that with the sign conventions that we have chosen, 
    a positive weight $\delta_z > 0$ always corresponds to the constraint of
    exponential decay at the puncture. A negative weight $\delta_z < 0$ always corresponds to
    allowing exponential growth.

\begin{theorem} \label{T:RiemannRoch}
    Let $\mathbf{\delta} \colon \Gamma \cup \{ \pm \infty \} \to \R$ such that 
    $-\delta_z \notin \sigma( \mathbf{A}_z )$ for positive punctures $z \in
    \Gamma_+$ and such that $+\delta_z \notin \sigma( \mathbf{A}_z )$ for
    negative punctures $z \in \Gamma_-$.  

Then, the Cauchy--Riemann operator 
\[
D \colon W^{1,p,\mathbf{\delta}}(\dot S, E) \to L^{p, \mathbf{\delta}}(\dot S,  \Lambda^{0,1} T^*\dot S \tensor E)
\]
with asymptotic operators $\mathbf{A}_z$, $z \in \Gamma \cup \{ \pm \infty \}$
is Fredholm and its index is given by
\[
\Ind(D, \mathbf{\delta}) = n \chi_{\dot S} + 2c_1(E) + \sum_{z \in \Gamma_+} \CZ( \mathbf{A}_z + \delta_z) - \sum_{z \in \Gamma_-} \CZ( \mathbf{A}_z - \delta_z ).
\]
\end{theorem}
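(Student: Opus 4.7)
The plan is to reduce the Fredholm theorem with exponential weights and degenerate asymptotics to the classical Riemann--Roch theorem for Cauchy--Riemann operators with non-degenerate asymptotics, via the conjugation trick already introduced in Definition \ref{def:delta_perturbed}. The conjugation
\[
    D^{\mathbf{\delta}} = \e^{\sum \delta_z \beta_z} \,D\, \e^{-\sum \delta_z \beta_z}
\]
fits into a commutative square in which the vertical arrows, multiplication by $\e^{\pm \sum \delta_z \beta_z}$, are Banach space isomorphisms $W^{1,p,\mathbf{\delta}} \to W^{1,p}$ and $L^{p,\mathbf{\delta}} \to L^p$ (this is immediate from the definitions of the weighted spaces via the functions $\beta_z$). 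Consequently $D$ is Fredholm if and only if $D^{\mathbf{\delta}}$ is, and their indices coincide.

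The next step is to identify the asymptotic operators of $D^{\mathbf{\delta}}$. Near a positive puncture $z \in \Gamma_+$, cylindrical coordinates $(s,t)$ satisfy $\beta_z(s,t) = s$, so the conjugation adds $\delta_z$ to the matrix part $A(s,t)$ in the local form \eqref{CR}, shifting the limiting self-adjoint operator from $\mathbf{A}_z$ to $\mathbf{A}_z + \delta_z$. Near a negative puncture $z \in \Gamma_-$, $\beta_z = -s$ (in the appropriate coordinates) and the shift is by $-\delta_z$, giving asymptotic operator $\mathbf{A}_z - \delta_z$. The spectral hypothesis on $\mathbf{\delta}$ guarantees that $0 \notin \sigma(\mathbf{A}_z \pm \delta_z)$, so all the asymptotic operators of $D^{\mathbf{\delta}}$ are non-degenerate. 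I would include, as a brief computation, the verification that replacing $\beta_z$ by a different cut-off with the same asymptotic behaviour only changes $D^{\mathbf{\delta}}$ by a compactly supported zeroth-order term, hence by a compact perturbation which neither affects Fredholmness nor the index.

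Having arrived at a Cauchy--Riemann operator with non-degenerate asymptotic operators, I would invoke the standard Riemann--Roch theorem for such operators on punctured Riemann surfaces, for which the Fredholm property and the index formula
\[
    \Ind(D^{\mathbf{\delta}}) = n\chi_{\dot S} + 2 c_1(E) + \sum_{z \in \Gamma_+} \CZ(\mathbf{A}_z + \delta_z) - \sum_{z \in \Gamma_-} \CZ(\mathbf{A}_z - \delta_z)
\]
is proved in \cite{SchwarzThesis} and reviewed in \cite{HWZ3}; the axiomatic characterisation of the Conley--Zehnder index via \cite{HWZ2}*{Theorem 3.1} ensures compatibility with the conventions set out after Proposition \ref{P:CZ winding}. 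Combining with the conjugation step gives the stated formula for $\Ind(D, \mathbf{\delta})$.

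The main technical obstacle, were one to prove the non-degenerate Riemann--Roch statement from scratch rather than cite it, is the Fredholm property itself: one must show that near each puncture the operator behaves as a small perturbation of $\partial_s + \mathbf{A}_z$ acting on a half-cylinder, and that non-degeneracy of $\mathbf{A}_z$ (equivalently, a spectral gap around zero) yields exponential decay estimates and a parametrix. Once Fredholmness is in place, the index formula follows from a standard gluing/excision argument that reduces the computation to the closed-surface Riemann--Roch theorem plus a model calculation on a half-cylinder identifying the boundary contribution with $\pm \CZ$. Since these arguments are well-documented in the references cited in the excerpt, I would present the proof as a short conjugation-and-cite argument, with the asymptotic-operator shift computation as the only explicit content.
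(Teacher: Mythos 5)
Your proposal follows exactly the route the paper intends: Theorem~\ref{T:RiemannRoch} is obtained by conjugating $D$ to the non-degenerate operator $D^{\mathbf\delta}$ of Definition~\ref{def:delta_perturbed} and then citing the punctured Riemann--Roch theorem from the listed references, which is precisely what the paper's one-sentence justification says. One small slip: with $\mathbf{A}_z = -J_z\frac{d}{dt} - A_z$, the conjugation by $\e^{\delta_z \beta_z}$ near a positive puncture (where $\beta_z = s$) \emph{subtracts} $\delta_z$ from the zeroth-order term $A$, which is what makes the asymptotic operator shift to $\mathbf{A}_z + \delta_z$; your text says "adds $\delta_z$ to the matrix part" but then states the correct shifted operator $\mathbf{A}_z + \delta_z$, so the final answer agrees with Definition~\ref{def:delta_perturbed} and the remainder of the argument is unaffected.
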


This observation about the conjugation of the weighted operator to the
non-degenerate case, combined with Riemann--Roch for punctured domains 
(see for instance, \citelist{
    \cite{SchwarzThesis}*{Theorem 3.3.11} 
    \cite{HWZ3}*{Theorem 2.8}
    \cite{WendlLecturesSFT}*{Theorem 5.4}
}) gives the following.

Now, a useful fact for us is a description of how the Conley--Zehnder index changes as a weight crosses the spectrum of the operator:
\begin{lemma} \label{L:crossingCZ}
Let $\delta > 0$ with $[-\delta, +\delta] \cap \sigma( \mathbf{A}_z ) = \{ 0 \}$. Then, 
\[
\CZ( \mathbf{A}_z - \delta ) - \CZ( \mathbf{A}_z + \delta) = \dim( \ker
\mathbf{A}_z ).
\]
\qedhere
\end{lemma}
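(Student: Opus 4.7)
The plan is a spectral flow / deformation argument. Consider the continuous one-parameter family of self-adjoint asymptotic operators $\{\mathbf{A}_z + s\}_{s \in [-\delta, +\delta]}$, which share the same eigenfunctions as $\mathbf{A}_z$ but whose eigenvalues are all shifted by $s$. By the hypothesis $[-\delta, +\delta] \cap \sigma(\mathbf{A}_z) = \{0\}$, the only value of $s$ for which $\mathbf{A}_z + s$ is degenerate is $s = 0$, and there the kernel has dimension $d := \dim \ker \mathbf{A}_z$. Since the Conley--Zehnder index is invariant under deformations through non-degenerate self-adjoint operators (HWZ2, Theorem 3.1), the function $s \mapsto \CZ(\mathbf{A}_z + s)$ is constant on $[-\delta, 0)$ and on $(0, +\delta]$. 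So the problem reduces to computing the jump of this function at $s = 0$.

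To compute the jump, I would first treat the rank one case by direct application of Proposition \ref{P:CZ winding}. Let $\mu_- < 0 < \mu_+$ be the largest negative and smallest positive eigenvalues of $\mathbf{A}_z$, with winding numbers $w_\pm$, and let $w_0$ be the winding of the kernel. For $s = +\delta$ the eigenvalues of $\mathbf{A}_z + \delta$ bracketing $0$ are $\mu_- + \delta < 0 < \delta$, the latter coming from the shifted kernel, so
\[
    \CZ(\mathbf{A}_z + \delta) = w_- + w_0.
\]
For $s = -\delta$ the bracketing eigenvalues of $\mathbf{A}_z - \delta$ are $-\delta < 0 < \mu_+ - \delta$, giving
\[
    \CZ(\mathbf{A}_z - \delta) = w_0 + w_+.
\]
Subtracting, $\CZ(\mathbf{A}_z - \delta) - \CZ(\mathbf{A}_z + \delta) = w_+ - w_-$.

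I would then identify $w_+ - w_- = d$ using the standard structural fact for asymptotic operators on a rank one complex bundle: each integer appears as the winding of exactly two eigenvalues of $\mathbf{A}_z$, counted with multiplicity (as illustrated by the model computation in Lemma \ref{L:spectrum}). Combined with the monotonicity and surjectivity of $w$, this forces the winding jump across the eigenvalue $0$ of multiplicity $d$ to equal $d$. For general complex rank $n$, I would conclude by additivity of $\CZ$ under direct sums: after a compactly-supported perturbation of the coefficients that does not cross the degeneracy at $s = 0$, the operator decomposes (possibly only asymptotically) into a direct sum of rank one pieces, to each of which the previous paragraph applies; summing the contributions gives the total jump $d$.

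The main obstacle is the last step, namely pinning down the winding-count $w_+ - w_- = d$ for a general rank one asymptotic operator (as opposed to the explicit models of Lemma \ref{L:spectrum}). An alternative that sidesteps this is to take as definition of $\CZ$ its spectral-flow characterization, in which case the lemma is essentially immediate since the eigenvalues $\lambda + s$ of $\mathbf{A}_z + s$ move with unit speed in $s$ and exactly $d$ of them cross $0$ from negative to positive as $s$ increases through $0$.
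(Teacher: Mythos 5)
Your alternative paragraph is precisely the route the paper intends: the in-text ``proof'' is a citation to \cite{WendlThesis}*{Proposition 4.5.22}, which proves this using the spectral flow of \cite{Robbin_Salamon_spectral}. On that route the lemma is essentially a bookkeeping exercise, as you say: the eigenvalues of $\mathbf{A}_z + s$ all move with unit speed in $s$, exactly $d = \dim\ker\mathbf{A}_z$ of them cross $0$ as $s$ increases through $0$, and the sign convention can be pinned down by comparing with the model computation in Corollary \ref{C:CZ computation}. That is a complete proof.

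Your main route, via winding numbers, is a different and more concrete argument, but it has a genuine gap in the passage from rank one to rank $n$. The rank-one computation is correct: Proposition \ref{P:CZ winding} gives $\CZ(\mathbf{A}_z + \delta) = w_- + w_0$ and $\CZ(\mathbf{A}_z - \delta) = w_0 + w_+$, and the fact that each winding integer accounts for exactly two eigenvalues (counted with multiplicity) --- a standard fact from \cite{HWZ2} --- combined with monotonicity and surjectivity of $w$ forces $w_+ - w_- = d$ (and, incidentally, $d \le 2$ in rank one). The problem is the last step: a general asymptotic operator $\mathbf{A}_z = -J_z(t)\tfrac{d}{dt} - A_z(t)$ on $\C^n$ need not admit a splitting of $\C^n$ into rank-one subbundles over $S^1$ that is simultaneously invariant under $J_z(t)$ and $A_z(t)$, and ``possibly only asymptotically'' does not make the decomposition exist. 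Deforming $\mathbf{A}_z$ to a split operator while keeping both $\mathbf{A}_z \pm \delta$ non-degenerate and not altering $\CZ(\mathbf{A}_z \pm \delta)$ is exactly what the spectral flow controls, so this route does not avoid it. The winding-number picture is a nice sanity check in rank one and in examples like Lemma \ref{L:spectrum}, but for the general statement you should lead with the spectral-flow argument.
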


For a proof (using the spectral flow idea of \cite{Robbin_Salamon_spectral}), 
see for instance \cite{WendlThesis}*{Proposition 4.5.22}. 

\

To obtain a result that is useful for our moduli spaces of cascades asymptotic to Morse--Bott families of orbits, we consider the following modification of our function spaces.

To each puncture, we associate a subspace of the kernel of the
corresponding asymptotic operator, which we denote by $V_z, z\in \Gamma$,
$V_-, V_+$ and write $\mathbf V$ for this collection. Then, for each
puncture $z \in \Gamma$ and also $\pm \infty$, we associate a smooth
bump function $\mu_z$, $\mu_\pm$, supported near and identically $1$
even nearer to its puncture. We then define
\begin{equation} \label{eqn:weightedSobolevMovingPunctures}
    \begin{split}
W^{1, p, \mathbf \delta}_{\mathbf V} (\dot S,E)
        = &\{ u \in W^{1,p}_\loc (\dot S,E)\, 
		| \, \exists \, c_z \in V_z, z \in \Gamma, c_- \in V_-, c_+ \in V_+ \\
    &\qquad \text{such that } u - \sum c_z \mu_z - c_- \mu_- - c_+ \mu_+ 
		\in W^{1,p, \mathbf \delta}(\dot S,E) \}.%
\end{split}
\end{equation}

We remark that we are using the asymptotic cylindrical coordinates near
$\Gamma$ and the asymptotic trivialization of $E$ in order to define the local
sections $c_z \mu_z$.

In this paper, we are primarily concerned with Cauchy--Riemann operators defined on $\dot S = \R \times S^1$ and on 
$\dot S = \R \times S^1 \setminus \{ P \}$ (a cylinder with one additional negative puncture). 
In the case of $\R \times S^1$, we will write $\mathbf V = (V_-; V_+)$, and in the 
case of $\R \times S^1 \setminus \{ P \}$, we will write $\mathbf V = (V_-, V_P; V_+)$.
(The negative punctures are enumerated first, and separated from the positive 
puncture by a semicolon.)

Observe that since the vector spaces $V_z$ are in the kernel of the
corresponding asymptotic operators, for any choice of $\mathbf{V}$ and 
any vector of weights $\mathbf{\delta}$, we have that the 
Cauchy--Riemann operator $D$ can be extended to 
\begin{equation*}
D \colon W^{1,p,
    \mathbf{\delta}}_{\mathbf V}(\dot S, E) \to 
    L^{p, \mathbf{\delta}}(\dot S, \Lambda^{0,1} T^*\dot S \otimes E).
\end{equation*}

Let $\dim V_z$ denote the dimension of the vector space $V_z$ and let $\codim V_z = \dim \left ( \ker \mathbf{A}_z  / V_z \right )$.
Combining Theorem \ref{T:RiemannRoch} with Lemma \ref{L:crossingCZ}, we have:
\begin{theorem} \label{T:RiemannRochMB}
Let $\delta > 0$ be sufficiently small that 
for $z \in \Gamma_+$,
$[-\delta, 0) \cap \sigma( \mathbf{A}_z) = \emptyset$ and such that for $z \in
\Gamma_-$, $(0, \delta] \cap \sigma(\mathbf{A}_z) = \emptyset$.

For each $z \in \Gamma$, fix the subspace $V_z \subset \ker \mathbf{A}_z$. 

Then, the operator
\[
D \colon W^{1,p, {\delta}}_{\mathbf V}(\dot S, E) \to L^{p, \mathbf{\delta}}(\dot S, \Lambda^{0,1} T^*\dot S \otimes E)
\]
is Fredholm, and its Fredholm index is given by
\[
\Ind(D) = n \chi_{\dot S} + 2c_1(E) + \sum_{z \in \Gamma_+} \big( \CZ( \mathbf{A}_z + \delta) + \dim(V_z)\big)
- \sum_{z \in \Gamma_-} \big(\CZ( \mathbf{A}_z + \delta ) + \codim(V_z)\big).
\]
\end{theorem}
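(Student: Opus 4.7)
The plan is to relate the Fredholm theory on the Morse--Bott space $W^{1,p,\delta}_{\mathbf V}$ to the standard weighted theory on $W^{1,p,\delta}$, already handled by Theorem \ref{T:RiemannRoch}, and to bridge the two index formulas via Lemma \ref{L:crossingCZ}.

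First, I would observe that $W^{1,p,\delta}(\dot S, E)$ sits inside $W^{1,p,\delta}_{\mathbf V}(\dot S, E)$ as a closed subspace of finite codimension: every $u \in W^{1,p,\delta}_{\mathbf V}$ can be written uniquely as $\sum_z c_z \mu_z + u_0$ with $u_0 \in W^{1,p,\delta}$ and $c_z \in V_z$, so that
\[
W^{1,p,\delta}_{\mathbf V}/W^{1,p,\delta} \;\cong\; \bigoplus_{z \in \Gamma \cup \{\pm\infty\}} V_z,
\]
which has total dimension $\sum_z \dim V_z$. Next I would verify that $D$ genuinely extends to a bounded operator on the larger domain with the same $L^{p,\delta}$ target. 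The point here is that for $c_z \in V_z \subset \ker \mathbf{A}_z$, the constant section $c_z$ is annihilated by the asymptotic operator in the chosen trivialization, so $D(c_z \mu_z)$ agrees with $(A(\rho,\eta)-A_z(\eta))c_z$ near the puncture (times the bump derivative elsewhere), and hence decays exponentially at a rate depending on how fast $A$ converges to $A_z$. For $\delta$ sufficiently small, this lies in $L^{p,\delta}$.

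Given this, the extension $D \colon W^{1,p,\delta}_{\mathbf V} \to L^{p,\delta}$ has the same cokernel as $D \colon W^{1,p,\delta} \to L^{p,\delta}$ and a kernel enlarged by at most $\sum_z \dim V_z$; combining with Fredholmness from Theorem \ref{T:RiemannRoch}, I would conclude it is itself Fredholm with
\[
\Ind\bigl(D|_{W^{1,p,\delta}_{\mathbf V}}\bigr) \;=\; \Ind\bigl(D|_{W^{1,p,\delta}}\bigr) + \sum_{z \in \Gamma \cup \{\pm\infty\}} \dim V_z.
\]
Applying Theorem \ref{T:RiemannRoch} with the uniform weight $\delta$ at every puncture (which is allowed for $\delta$ small since the spectrum of each $\mathbf{A}_z$ is discrete and $\pm \delta$ avoid it by hypothesis), the right-hand side becomes
\[
n\chi_{\dot S} + 2 c_1(E) + \sum_{z \in \Gamma_+} \CZ(\mathbf{A}_z + \delta) - \sum_{z \in \Gamma_-} \CZ(\mathbf{A}_z - \delta) + \sum_z \dim V_z.
\]

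Finally I would invoke Lemma \ref{L:crossingCZ} at each negative puncture to rewrite $\CZ(\mathbf{A}_z - \delta) = \CZ(\mathbf{A}_z + \delta) + \dim(\ker \mathbf{A}_z)$, and use the identity $\dim V_z - \dim(\ker \mathbf{A}_z) = -\codim V_z$ to reorganize the negative-puncture contributions as $-(\CZ(\mathbf{A}_z + \delta) + \codim V_z)$, while the positive-puncture contributions consolidate as $\CZ(\mathbf{A}_z + \delta) + \dim V_z$. This yields exactly the stated formula.

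The main technical obstacle is Step 2: verifying that the enlarged domain really does increase the index by \emph{exactly} $\sum_z \dim V_z$, rather than by some smaller amount. This requires confirming that the finite-dimensional extension is \emph{linearly independent from} $W^{1,p,\delta}$ modulo the kernel (which follows since the $c_z \mu_z$ do not decay) and that $D$ retains closed range on the extension (which follows from standard semi-Fredholm perturbation arguments, since the extension is finite-dimensional). These are standard Morse--Bott moves (compare \cite{SchwarzThesis} and \cite{BourgeoisMohnke}), and the rest is algebraic bookkeeping.
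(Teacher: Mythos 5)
Your proposal is correct, and it follows the route the paper intends: the text preceding Theorem \ref{T:RiemannRochMB} says the theorem follows by ``Combining Theorem \ref{T:RiemannRoch} with Lemma \ref{L:crossingCZ},'' which is precisely what you do — compute the index on $W^{1,p,\delta}$ by Theorem \ref{T:RiemannRoch} with decay weight $\delta$ at every puncture, add $\sum_z \dim V_z$ for the finite-codimension enlargement, then use Lemma \ref{L:crossingCZ} at the negative punctures to trade $\CZ(\mathbf{A}_z - \delta)$ for $\CZ(\mathbf{A}_z + \delta) + \dim\ker\mathbf{A}_z$ and reorganize using $\dim V_z + \codim V_z = \dim\ker\mathbf{A}_z$.

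One small correction to the reasoning: the ``main technical obstacle'' you flag at the end is not actually there, and the intermediate justification (``same cokernel, kernel enlarged by at most $\sum_z \dim V_z$'') is not quite right either. Once $D$ is bounded on $W^{1,p,\delta}_{\mathbf V}(\dot S,E)$ and $W^{1,p,\delta}(\dot S,E)$ sits inside as a closed subspace of finite codimension $d = \sum_z \dim V_z$, the Fredholmness of the restriction implies that of the extension, and the index relation
\[
\Ind\bigl(D|_{W^{1,p,\delta}_{\mathbf V}}\bigr) = \Ind\bigl(D|_{W^{1,p,\delta}}\bigr) + d
\]
is \emph{automatic}: apply the snake lemma to the short exact sequence of two-term complexes
\[
0 \to \bigl(W^{1,p,\delta} \to L^{p,\delta}\bigr) \to \bigl(W^{1,p,\delta}_{\mathbf V} \to L^{p,\delta}\bigr) \to \bigl(\textstyle\bigoplus_z V_z \to 0\bigr) \to 0
\]
and take Euler characteristics. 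No linear-independence-modulo-kernel or separate closed-range check is needed for a finite-codimension restriction, and the cokernel may in fact strictly shrink (the image can grow); the index relation is exact regardless, so your final formula is correct.
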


In applications where there are Morse--Bott manifolds of orbits,
we will typically take $V_z$ to be the tangent space to the descending
manifold of a critical point $p_z$ on the manifold of orbits at a
positive puncture, and $V_z$ will be the tangent space to ascending
manifold of a critical point $p_z$ at a negative puncture. In either
case, the contribution to $\Ind(D)$ of $\dim V_z$ or of $\codim
V_z$ will be the Morse index of the appropriate critical point.
This motivates the following definition.

\begin{definition} \label{D:CZ}
    Let $\delta > 0$ be sufficiently small.
If $p_z$ is a critical point of an auxiliary Morse function on the manifold of orbits associated to $z$, then the Conley--Zehnder index of the pair $(\mathbf{A}_z, p_z)$ is 
$$\CZ(\mathbf{A}_z, p_z) = \CZ( \mathbf{A}_z + \delta) + M(p_z),$$ 
where $M(p_z)$ is the Morse index of $p_z$. 
\end{definition}

In this case, we can write the Fredholm index as
\[
\Ind(D) = n \chi_{\dot S} + 2c_1(E) + \sum_{z \in \Gamma_+} \CZ( \mathbf{A}_z,p_z)
- \sum_{z \in \Gamma_-} \CZ( \mathbf{A}_z,p_z).
\]

\

We conclude with a lemma that is particularly useful when applying the automatic transversality result \cite{WendlAutomatic}*{Proposition 4.22}. 
The lemma states that the Fredholm index of an operator with a small negative weight at a puncture is the same as that of the corresponding operator with a small positive weight at that puncture, if the puncture is decorated with the kernel of the corresponding asymptotic operator. 
The former indices are used in \cite{WendlAutomatic}*{Proposition 4.22}, whereas
the latter can be computed using Theorem \ref{T:RiemannRochMB}. Additionally,
the latter arises naturally in the linearization of the non-linear problem.

We first learned this result from Wendl \cite{WendlThesis}. We give a
proof of this formulation since it is slightly stronger than what we
have found in the literature (and is still not as strong as can be proved.)

\begin{lemma} \label{L:exponentialGrowth}
Let $D$ be a Cauchy--Riemann operator. Fix a puncture $z_0 \in \Gamma \cup \{ \pm \infty \}$. 

Let $\mathbf{\delta}$ and $\mathbf{\delta'}$ be vectors of sufficiently small weights so that the differential operator induces a Fredholm operator
 on $W^{1,p,\mathbf{\delta}}$
and on $W^{1,p,\mathbf{\delta'}}$, and $\delta_{z_0} > 0 $ and $\delta_{z_0}' < 0$,
the interval $[\delta_{z_0}', \delta_{z_0}] \cap \sigma(\mathbf{A}_{z_0}) = \{ 0 \}$,
and for each $z \in \Gamma \cup \{ \pm \infty \}$ with $z \ne z_0$, the weights $\delta_z = \delta'_z$.

Let $\mathbf{V}$ be the trivial vector space at each puncture other than $z_0$ and let $V_{z_0}$ be the kernel of the asymptotic operator $\mathbf{A}_{z_0}$ at $z_0$.

Then, the induced operators
\begin{align*}
D_\delta &\colon W^{1,p, \mathbf \delta}_{\mathbf{V}}( \dot S, E)  \to L^{p, \mathbf{\delta}}(\dot S, \Lambda^{0,1} T^*\dot S \otimes E)\\
D_{\delta'} &\colon W^{1,p, \mathbf \delta'}( \dot S, E)  \to L^{p, \mathbf{\delta'}}(\dot S, \Lambda^{0,1} T^*\dot S \otimes E)
\end{align*}
have the same Fredholm index and their kernels and cokernels are isomorphic.
\end{lemma}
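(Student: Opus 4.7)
The plan is to use the natural inclusions
$W^{1,p,\mathbf{\delta}}_{\mathbf V}(\dot S, E) \hookrightarrow W^{1,p,\mathbf{\delta'}}(\dot S, E)$
and
$L^{p,\mathbf{\delta}}(\dot S, \Lambda^{0,1}T^*\dot S \tensor E) \hookrightarrow L^{p,\mathbf{\delta'}}$,
which make sense since $\delta_{z_0} > 0 > \delta'_{z_0}$, the weights at every other puncture coincide, and any asymptotic term $c_0 \mu_{z_0}$ with $c_0 \in V_{z_0} = \ker \mathbf{A}_{z_0}$ is bounded and hence lies in $W^{1,p,\mathbf{\delta'}}$. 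These inclusions intertwine $D_\delta$ and $D_{\delta'}$ and so induce natural maps on kernels and cokernels; I will show both are isomorphisms.

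A preliminary index count confirms that the claim is consistent: applying Theorem \ref{T:RiemannRoch} to $D_{\delta'}$ and Theorem \ref{T:RiemannRochMB} to $D_\delta$, the only differing contribution is at $z_0$. Since $V_{z_0}$ is the entire kernel of $\mathbf{A}_{z_0}$ and $[\delta'_{z_0}, \delta_{z_0}] \cap \sigma(\mathbf{A}_{z_0}) = \{0\}$, Lemma \ref{L:crossingCZ} shows that the Conley--Zehnder jump on the weight side cancels exactly the $\dim V_{z_0}$ (or $\codim V_{z_0}$) contribution on the decoration side, so the two Fredholm indices coincide. Consequently, once the kernels are shown isomorphic, it suffices to establish \emph{injectivity} of the induced cokernel map in order to conclude it is an isomorphism.

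For the kernel comparison, I would appeal to the standard asymptotic expansion at punctures for solutions to Cauchy--Riemann equations (see for example \cite{HWZ3}, \cite{SchwarzThesis}). Near $z_0$, in exponential cylindrical coordinates $(\rho, \eta)$, any $u \in \ker D_{\delta'}$ admits an eigenfunction decomposition whose modes behave like $e^{\lambda \rho} e_\lambda(\eta)$ along eigenfunctions $e_\lambda$ of $\mathbf{A}_{z_0}$. The weight requirement $u \in W^{1,p,\mathbf{\delta'}}$ rules out the modes with $\lambda$ outside a neighbourhood of $\sigma(\mathbf{A}_{z_0}) \cap (\delta'_{z_0},\infty)$, and the spectral gap hypothesis rules out any mode with $\delta'_{z_0} < \lambda < 0$ or $0 < \lambda < \delta_{z_0}$. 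Thus $u = c_0 \mu_{z_0} + \tilde u$ with $c_0 \in \ker \mathbf{A}_{z_0} = V_{z_0}$ and $\tilde u$ in $W^{1,p,\mathbf{\delta}}$ near $z_0$, exhibiting $u$ as an element of $W^{1,p,\mathbf{\delta}}_{\mathbf V}$. This gives surjectivity, hence bijectivity, of the kernel inclusion.

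For cokernel injectivity, I would show that if $\eta \in L^{p,\mathbf{\delta}}$ lies in $\image(D_{\delta'})$, then it lies in $\image(D_\delta)$. Given $D u = \eta$ with $u \in W^{1,p,\mathbf{\delta'}}$, the same mode-by-mode analysis is applied to the inhomogeneous ODE $\partial_\rho a_\lambda - \lambda a_\lambda = \eta_\lambda$ near $z_0$: since $\eta$ decays at rate $\delta_{z_0}$, each mode with $\lambda \neq 0$ decays at rate $\delta_{z_0}$, while the zero-mode $a_0(\rho)$ tends to a limit $c_0 \in V_{z_0}$; hence $u - c_0 \mu_{z_0} \in W^{1,p,\mathbf{\delta}}$ and $u \in W^{1,p,\mathbf{\delta}}_{\mathbf V}$. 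The main obstacle is this technical asymptotic analysis with an inhomogeneous source in the critical zero-mode: the spectral gap hypothesis ensures the mode equations decouple cleanly and that the resonant zero-mode ODE is a first-order equation whose solution converges to a limit thanks to the integrability of $\eta_0$.
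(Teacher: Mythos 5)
Your proposal is essentially the same as the paper's argument, with one portion that is more explicit and one portion that is more elaborate than necessary. Like the paper, you use the weight inequality to obtain inclusions that identify $\ker D_\delta$ as a subspace of $\ker D_{\delta'}$, and you invoke the asymptotic analysis of HWZ/Schwarz to show that any kernel element of $D_{\delta'}$ converges to an element of $\ker \mathbf{A}_{z_0}$, giving equality of kernels. For the index equality you prove it directly by pairing Theorem~\ref{T:RiemannRoch} against Theorem~\ref{T:RiemannRochMB} and invoking Lemma~\ref{L:crossingCZ}; the paper simply cites Wendl's Proposition 4.5.22, which encapsulates the same content, so this is a stylistic rather than substantive difference.

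Where you depart from the paper — and create unnecessary work — is at the cokernel. Once the kernels are shown isomorphic and the Fredholm indices are shown equal, the cokernels automatically have the same finite dimension and are therefore isomorphic; no further argument is needed, and this is exactly what the paper does. Your remark that ``it suffices to establish injectivity of the induced cokernel map'' overlooks this shortcut. The inhomogeneous mode-by-mode analysis you then sketch would prove a genuinely stronger fact (that the \emph{natural inclusion-induced} map on cokernels is an isomorphism), and it is a reasonable thing to want, but it demands the more delicate asymptotic analysis with source term that you yourself flag as ``the main obstacle.'' If you only need the lemma as stated, you can skip that paragraph entirely and conclude directly from the dimension count.
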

\begin{proof}

The main idea of the lemma is contained in \cite{WendlThesis}*{Proposition 4.5.22}, which contains a proof of the equality of Fredholm indices. See also the very closely related \cite{WendlSuperRigid}*{Proposition 3.15}.

Note that $W^{1,p, \mathbf \delta}_{\mathbf{V}}( \dot S, E) $ is a subspace of $W^{1,p, \mathbf \delta'}(\dot S, E)$, and thus the kernel of $D_\delta$ is contained in the kernel of $D_{\delta'}$. 

Now, by a linear version of the analysis done in \citelist{\cite{HWZ3} \cite{SiefringAsymptotics}}, any element of the kernel of $D_{\delta'}$ 
converges exponentially fast at $z_0$ to an eigenfunction of the asymptotic operator, with exponential rate governed by the eigenvalue (in this case 0). Therefore, any element of the kernel of $D_{\delta'}$ must converge exponentially fast to an element of the kernel of the asymptotic operator at $z_0$. 
Hence, the kernel of $D_{\delta'}$ is contained in the kernel of $D_{\delta}$. 

We conclude that the kernels of the two operators may be identified. Since their Fredholm indices are the same, their cokernels are also isomorphic.
\end{proof}

\subsubsection{The linearization at a Floer solution} \label{S:linearization at Floer}

The first step in the proof of Proposition \ref{cascades form manifold} is to
set up the appropriate Fredholm problem.
Given a Floer solution $\tilde v \colon \R \times S^1 \setminus \Gamma \to \R
\times Y$, we consider exponentially weighted Sobolev spaces of sections
of the pull-back bundle $\tilde v^*T( \R \times Y)$ since the asymptotic limits
are (Morse--Bott) degenerate.
For $\delta > 0$, we 
denote by $W^{1,p,\delta}(\R \times S^1 \setminus \Gamma, v^*T(\R \times Y))$
the space of sections that decay exponentially like $\e^{-\delta |s|}$ near the
punctures (also in cylindrical coordinates near the punctures $\Gamma$),
as in the previous section.

We similarly define $W^{m,p,\delta}$ sections with exponential decay/growth.
The following results will not depend on $m$ except in the case of jet
conditions considered in Section \ref{S:transverse contact}, where $m$ will
need to be sufficiently large that the order of contact condition can be
defined.

In order to consider a parametric family of punctured cylinders in which the
asymptotic limits move in their Morse--Bott families, 
we let $\mathbf V$ be a collection of vector spaces, associating to each
puncture $z \in \Gamma \cup \{ \pm \infty \}$ 
a vector subspace $V_z$ of the tangent space to the corresponding Morse--Bott family of
orbits.
For $\delta >0$, we then consider the space of
sections $W^{1,p,\delta}_{\mathbf V}( \R \times S^1 \setminus \Gamma, v^*T(\R
\times Y))$ that converge exponentially at each puncture $z$ to a vector
in the corresponding vector space $V_z$.

\begin{remark}
In this paper, we will not always be careful to specify how small $\delta$
has to be. It is worth pointing out that there is no value of $\delta$
that works for all moduli spaces.  The reason is that we need $|\delta|$
to be smaller than the absolute value of all eigenvalues in the spectra
of the relevant linearized operators.  
Lemma \ref{L:spectrum} computes the spectrum of a number of these relevant
asymptotic operators, and as we see in 
Table \ref{evals C}, the 
smallest positive eigenvalue $\frac{1}{2}\left(-C + \sqrt{C^2 + 16
\pi^2} \right)$ becomes arbitrarily small as $C \to \infty$. As will become
clear from Lemma \ref{L:linearizationFloer} and Equation \eqref{E:DC}, the
relevant value for $C$ here is $h''(e^{b_k}) e^{b_k}$, which can become
arbitrarily large as the multiplicity $k \to \infty$.  
Since the relevant moduli spaces in the differential involve connecting orbits
of bounded multiplicities, for any given moduli space, we may choose $\delta$
sufficiently small.
\label{delta small}
\end{remark}

We now adapt an observation first used in \citelist{
\cite{DragnevTransversality} \cite{BourgeoisHomotopyContact}}, 
to show that the linearization of the Floer operator is upper triangular with
respect to the splitting of $T(\R \times Y)$ as $(\R \oplus \R R) \oplus \xi$.
We then describe the non-zero blocks in this upper triangular presentation of the
operator. The two diagonal terms are of special importance: one will be a
Cauchy--Riemann operator acting on sections of a complex line bundle, and
the other can be identified with the linearization of the Cauchy--Riemann
operator for spheres in $\Sigma$. 

We now explain this construction in more detail.
Let $\tilde v \colon \R \times S^1 \setminus \Gamma \to \R \times Y$ be a Floer
solution with punctures $\Gamma$. The Hamiltonian need not be admissible, but
needs to be radial (i.e.~depending only on $r$, the symplectization variable).
The almost complex structure $J_Y$ is assumed to be admissible.
We consider three possible cases for the asymptotics of such a curve.

In the first case, $\tilde v$ is asymptotic to a closed Hamiltonian orbit at 
$\tilde v(+\infty, t)$, to a closed Hamiltonian orbit at $\tilde v(-\infty,
t)$, and with negative ends converging to Reeb orbits at the punctures in
$\Gamma$. The second case has $\tilde v$ asymptotic to a closed
Hamiltonian orbit at $\tilde v(+\infty, t)$, but with negative ends converging
to Reeb orbits in $\{-\infty \} \times Y$ at $\{ -\infty \} \cup \Gamma$.
These two cases correspond to an upper level of a split Floer cylinder as in
Definitions \ref{stretched Y to Y} and \ref{stretched Y to W}, respectively.

The third case we consider is most directly applicable to studying holomorphic
curves in $\R \times Y$:
$\tilde v$ has a positive cylindrical end at $+\infty$ converging to a Reeb orbit 
in $\{+\infty \} \times Y$, and has
negative cylindrical ends at the punctures $\{ -\infty \} \cup \Gamma$.
For such a curve, we may assume that $H$ is identically $0$, and thus this
example includes $J_Y$--holomorphic curves.  
This is of independent interest, and is useful in
\cite{DiogoLisiComplements}. Part of this was sketched in \cite{EGH}*{Section 2.9.2}.

Let $w = \pi_\Sigma \circ \tilde v \colon \CP^1 \to \Sigma$ be the smooth extension of the projection
of $\tilde v$ to the divisor (as given by Lemma \ref{planes = spheres}).
The linearized projection $d\pi_\Sigma$ induces an isomorphism of complex vector bundles
\[
\tilde v^*\big(T (\R \times Y) \big) \cong ( \R \oplus \R R) \oplus w^*T\Sigma.
\]
To see this, note that for each point
$p \in Y$, $d\pi_\Sigma$ induces a symplectic isomorphism $(\xi_p, d\alpha) \cong
(T_{\pi_\Sigma(p)} \Sigma, K\omega_\Sigma)$. By the Reeb invariance of the almost
complex structure (and thus $S^1$-invariance under rotation in the fibre), 
this then gives a complex vector bundle isomorphism.

Let $\mathbf{V}$ associate to each puncture $z \in \Gamma \cup \{ \pm \infty \}$
the tangent space to $Y$ if the corresponding limit of $\tilde v$ is a
closed Hamiltonian orbit and the tangent space to $\R \times Y$ if the
corresponding limit of $\tilde v$ is a closed Reeb orbit. 
As will be clearer shortly, this is associating to each puncture the entirety
of the kernel of the corresponding asymptotic operator.

Let
\begin{equation} \label{E:FloerLinearization}
D_{\tilde v} \colon W^{1,p,\delta}_\mathbf{V} ( \tilde v^* T(\R \times Y) )\to L^{p,\delta}(
\Hom^{0,1}(T(\R \times S^1 \setminus \Gamma), \tilde v^*T(\R \times Y) ))
\end{equation}
be the linearization of the nonlinear Floer operator at the solution $\tilde v$,
for $\delta > 0$ sufficiently small. The vector spaces $\mathbf{V}$ correspond to
allowing the asymptotic limits to move in their Morse--Bott families. 
We have then a linearized evaluation map at the punctures with values in
$\oplus_{z \in \{ \pm \infty \} \cup \Gamma} V_z$.  
Let 
\[
D^\Sigma_w \colon W^{1,p}( w^*T \Sigma) \to L^{p}( \Hom^{0,1}(T\CP^1, w^*T\Sigma))
\]
be the linearized Cauchy--Riemann operator in $\Sigma$ at the holomorphic sphere $w$.
We also have the linearized Cauchy--Riemann operator
$\dot D^\Sigma_w$ at
the holomorphic cylinder $s+it \mapsto w( \e^{2\pi(s+it)} ) = \pi_\Sigma( \tilde
v(s,t) )$. Then, %
$(\pi_\Sigma\circ\tilde v)^*T\Sigma = w^*T\Sigma|_{\R \times S^1 \setminus \Gamma}$
is a Hermitian vector bundle over $\R \times S^1 \setminus \Gamma$.
Let $\mathbf{V_\Sigma}$ be the kernels of the asymptotic operators of 
$\dot D^\Sigma_w$ 
at each of the punctures, $\pm \infty$ and $\Gamma$.
(These are explicitly given by 
$V_\Sigma(- \infty) = T_{w(0)} \Sigma$, $V_\Sigma(+\infty) = T_{w(\infty)}
\Sigma$, $V_\Sigma(z) = T_{w(z)} \Sigma$ for each marked point $z \in \Gamma$.)
We consider this operator acting on the space of sections
\[
    \dot D^\Sigma_w 
       \colon W^{1,p, \delta}_{\mathbf{V_\Sigma}}(w^*T\Sigma|_{\R \times S^1 \setminus \Gamma}) 
       \to L^{p, \delta}( 
       \Hom^{0,1}(T(\R \times S^1 \setminus \Gamma), w^*T\Sigma|_{\R \times S^1 \setminus
                \Gamma}
                )
                ).
\]
The operator $D^\Sigma_w$ is Fredholm independently of the weight, but $\dot
D^\Sigma_w$ is only Fredholm when the weight $\delta$ is not an integer multiple of
$2\pi$. Furthermore, by combining \cite{WendlSuperRigid}*{Proposition 3.15}
with Lemma \ref{L:exponentialGrowth}, for $0 <
\delta < 2\pi$, these operators have the same Fredholm index and their kernels
and cokernels are isomorphic by the map induced by restricting a section of $w^*T\Sigma$ to
the punctured cylinder. 

Finally, define $D^\C_{\tilde v}$ by
\begin{equation} \label{E:DC}
\begin{aligned}
    D^\C_{\tilde v} \colon W^{1,p,\delta}_{\mathbf{V_0}}(\R \times S^1 \setminus \Gamma, \C) &\to L^{p,\delta}(
\Hom^{0,1}(T (\R \times S^1 \setminus \Gamma), \C) )\\
	 (D^\C_{\tilde{v}} F)(\partial_s) &= F_s + iF_t  + \begin{pmatrix} h''(\e^b) \e^b & 0 \\0 & 0 \end{pmatrix}F
\end{aligned}
\end{equation}
where $\mathbf{V_0}$ associates 
the vector space $i\R$ to the punctures at which $\tilde v$ converges to a
closed Hamiltonian orbit 
and associates the vector space $\C$ at punctures at which $\tilde v$ converges
to a closed Reeb orbit.
Notice that again these are chosen so that they precisely give the kernels of the
corresponding asymptotic operators of $D^\C_{\tilde v}$.

\begin{lemma} \label{L:linearizationFloer}
The isomorphism $\tilde v^*T( \R \times Y) \cong (\R \oplus \R R) \oplus w^*T\Sigma$
induces a decomposition:
\[
D_{\tilde v} = \begin{pmatrix} 
	D^\C_{ \tilde v} & M \\
	0 & \dot D^\Sigma_{ w}
	\end{pmatrix}
\]
where $M$ is a multiplication operator 
that evaluates on $\partial_s$ to a 
fibrewise 
linear map $M \colon w^*T\Sigma \to \R \oplus \R R$, decaying at the
punctures. (In particular, $M$ is
compact.) Furthermore, if $w = \pi_\Sigma \circ \tilde v$ is non-constant, 
then $M$ is pointwise surjective except at finitely many points.
 \end{lemma}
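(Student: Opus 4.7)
The plan is to pick a $J_Y$-compatible unitary connection $\nabla$ on $T(\R\times Y)$ that preserves the splitting $(\R\partial_r\oplus\R R)\oplus\xi$ and satisfies $\nabla\partial_r=\nabla R=0$, then read the claimed block decomposition off the standard formula
\[
D_{\tilde v}F = \nabla_s F + J_Y\nabla_t F + (\nabla_F J_Y)(\partial_t\tilde v - X_H) - J_Y\nabla_F X_H.
\]
Such a $\nabla$ exists because both summands of the splitting are $J_Y$-invariant: one may combine the trivial connection on $\R\partial_r\oplus\R R$ (using the global frame $\partial_r, R$) with the horizontal lift of a unitary connection on $T\Sigma$ via the identification $d\pi_\Sigma\colon\xi\xrightarrow{\sim}\pi_\Sigma^*T\Sigma$. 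The Cauchy--Riemann part $\nabla_s F + J_Y\nabla_t F$ then preserves the splitting by construction.

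For upper triangularity, I would check that vertical input produces no horizontal output. The radiality of $H$ gives $X_H=h'(\e^r)R$, so for $F^v=a\partial_r+bR$ one finds $\nabla_{F^v}X_H=h''(\e^r)\e^r\,a\,R$, which is vertical; the cylindricity and Reeb-invariance $\mathcal{L}_{\partial_r}J_Y=\mathcal{L}_R J_Y=0$ imply that $\nabla_{F^v}J_Y$ preserves the splitting, so $(\nabla_{F^v}J_Y)(\partial_t\tilde v-X_H)$ is also vertical. This gives the zero in the lower-left block. Identifying $\R\partial_r\oplus\R R\cong\C$ via $a\partial_r+bR\mapsto a+ib$, the vertical-to-vertical block becomes $F_s+iF_t$ plus multiplication by $\diag(h''(\e^b)\e^b,0)$, i.e.\ $D^\C_{\tilde v}$. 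For the horizontal-to-horizontal block, $d\pi_\Sigma$ is complex-linear and (by the choice of $\nabla$) intertwines the connections; since $\pi_\Sigma\circ\tilde v = w\circ\phi$ for a holomorphic map $\phi\colon\R\times S^1\setminus\Gamma\to\CP^1$, this block is the pullback of the linearized $\bar\partial_{J_\Sigma}$ at $w$, that is, $\dot D^\Sigma_w$.

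It remains to analyze $M$. The horizontal-to-vertical output comes only from $(\nabla_{F^h}J_Y)(\partial_t\tilde v-X_H)-J_Y\nabla_{F^h}X_H$, which is algebraic in $F^h$ and hence a multiplication operator; decay at the punctures will follow from the exponential decay of $d\tilde v - X_H\,dt$ there. Using the Ehresmann curvature identity $[\xi_1,\xi_2]^{\mathrm{vert}}=-d\alpha(\xi_1,\xi_2)R$ for horizontal fields together with the Reeb-invariance of $J_Y$, I would express $MF^h$ as a linear combination of $d\alpha(F^h, d\tilde v(\partial_s)^\xi)$ and $d\alpha(F^h, d\tilde v(\partial_t)^\xi)$ weighted by $1$ and $h'(\e^b)$, along the frame $\partial_r, R$. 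Because $d\alpha|_\xi$ is non-degenerate and $d\pi_\Sigma$ sends $d\tilde v|_\xi$ to $dw$, these pairings surject onto $\R\partial_r\oplus\R R$ at any point where $dw\ne 0$; the isolation of critical points of a non-constant $J_\Sigma$-holomorphic sphere \cite{McDuffSalamon}*{Lemma 2.4.1} then yields the claimed surjectivity off a finite set. The main obstacle will be the bookkeeping of the algebraic terms $(\nabla_F J_Y)(\partial_t\tilde v-X_H)$; this is precisely what the connection conditions $\nabla\partial_r=\nabla R=0$ and splitting-preservation are designed to neutralize, so that the identifications of $D^\C_{\tilde v}$, $\dot D^\Sigma_w$, and $M$ fall out directly rather than requiring a case analysis of torsion/curvature terms.
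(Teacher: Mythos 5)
Your approach — replacing the paper's Levi--Civita connection of a product metric by a splitting-preserving, $J_Y$-unitary connection — is genuinely different from the paper, and it can be made to work, but as written it has a significant gap that kills the $M$ block entirely.

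The formula
\[
D_{\tilde v}F = \nabla_s F + J_Y\nabla_t F + (\nabla_F J_Y)(\partial_t\tilde v - X_H) - J_Y\nabla_F X_H
\]
is valid only for torsion-free connections. Any connection preserving the splitting $(\R\partial_r \oplus \R R)\oplus\xi$ necessarily has torsion, because the horizontal distribution $\xi$ is non-involutive: from $[\tilde V,\tilde W] = \widetilde{[V,W]} - K\omega_\Sigma(V,W)R$ one gets $T(\tilde V,\tilde W) = \nabla_{\tilde V}\tilde W - \nabla_{\tilde W}\tilde V - [\tilde V,\tilde W]$, which has the nonzero vertical component $K\omega_\Sigma(V,W)R$ regardless of what $\nabla$ does on the horizontal subbundle. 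For a connection with torsion $T$, the correct linearization formula picks up an extra correction,
\[
D_{\tilde v}F(\partial_s) = \nabla_s F + J_Y\nabla_t F + (\nabla_F J_Y)(\partial_t\tilde v - X_H) - J_Y\nabla_F X_H + T(F,\partial_s\tilde v) + J_Y\,T(F,\partial_t\tilde v),
\]
as one checks by comparing to a torsion-free connection $\nabla - A$ with $T(X,Y) = A(X)Y - A(Y)X$. With your connection the terms $(\nabla_{F^h}J_Y)(\partial_t\tilde v - X_H)$ and $J_Y\nabla_{F^h}X_H$ both vanish (you have $\nabla J_Y = 0$, $\nabla R = \nabla\partial_r = 0$, and $F^h$ annihilates $dr$), so your formula without the torsion correction produces $M \equiv 0$ — contradicting the lemma's assertion that $M$ is pointwise surjective away from finitely many points. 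Your closing claim that the connection conditions are "designed to neutralize" the torsion terms is exactly backwards: the torsion cannot be removed, and its vertical part $K\omega_\Sigma(\cdot,\cdot)R$ \emph{is} the source of $M$. Indeed, the vertical component of $T(F^h,\tilde v_s) + J_Y T(F^h,\tilde v_t)$ is $K\omega_\Sigma(w_t,\eta)\partial_r - K\omega_\Sigma(w_s,\eta)R$, which is precisely the paper's $D_{ab}$; and the horizontal component of the torsion correction is what restores agreement between your block $\nabla^\Sigma_s\eta + J_\Sigma\nabla^\Sigma_t\eta$ (which also omits the torsion of the unitary connection on $T\Sigma$ when $J_\Sigma$ is non-integrable) and the intrinsic operator $\dot D^\Sigma_w$. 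To repair the argument, you must work with the torsion-corrected linearization formula throughout; once you do, the Ehresmann-curvature reasoning you sketch for surjectivity of $M$ is exactly right, though the weights on the $\partial_r$ and $R$ components are both $1$, not $1$ and $h'(\e^b)$. The paper avoids this issue by using the Levi--Civita connection (torsion-free but not splitting-preserving), so that $M$ arises from the horizontal--vertical mixing of covariant derivatives rather than from torsion.
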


 \begin{proof}
In our setting, the nonlinear Floer operator takes the form of the left-hand
side of the equation:
\begin{equation*}
    d \tilde v + J_Y(\tilde v) d\tilde v \circ i - h'(\e^r) R \otimes dt + h'(\e^r) \partial_r \otimes ds = 0.
\end{equation*}

Write $\tilde v = (b,v) \colon \R\times S^1 \setminus \Gamma \to \R \times Y$. If we apply $dr$ to the previous equation, and use the fact that $dr \circ J_Y = - \alpha$, we get:
\[
db - v^*\alpha \circ i + h'(\e^b) ds = 0.
\]

Denoting by $\pi_\xi \colon TY \to \xi$ the projection along the Reeb vector field, we get
\begin{equation} \label{apply dr}
\pi_\xi d\tilde v + J_Y(\tilde v) \pi_\xi d\tilde v \circ i = 0.
\end{equation}

Let $g$ be the metric on $\R\times Y$ given by 
$g = dr^2 + \alpha^2 + d\alpha( \cdot, J_Y \cdot)$. 
This metric is $J_Y$-invariant. 
Let $\Nabla$ be the Levi-Civita connection for $g$. Let $\nabla$
be the Levi-Civita connection on $T \Sigma$ for the metric $\omega_\Sigma(\cdot,
J_\Sigma \cdot)$.

Then, it follows that the linearization $D_{\tilde v}$ applied to a section $\zeta$ of $\tilde v^* T(\R \times Y)$ satisfies
\begin{equation} \label{E:linearizedFloerOperator}
\begin{aligned}
D_{\tilde v} \zeta \left(\partial_s \right) 
	&= \Nabla_s\zeta + J_Y(\tilde v) \Nabla_t \zeta + \big(\Nabla_\zeta J_Y(\tilde v)\big) \partial_t \tilde v - \Nabla_\zeta (J_Y X_H)(\tilde v)\\
	&= \Nabla_s \zeta + J_Y(\tilde v) \Nabla_t \zeta + \big( \Nabla_\zeta J_Y(\tilde v) \big ) \partial_t \tilde v + \Nabla_\zeta( h'(\e^r) \partial_r )_{r = b}.
\end{aligned}
\end{equation}

Notice that $\Nabla \partial_r = 0$ since $g$ is a product metric. We have then
$$\Nabla_\zeta( h'(\e^r) \partial_r)|_{r=b} = h''(\e^b) \e^b dr(\zeta) \partial_r.$$

Observe also that for any vector field $V$ in $T\Sigma$, there is a 
unique horizontal
lift $\tilde V$ to $Y$ with the property $\alpha( \tilde V) =0$.
For any two vector fields $V$ and $W$ in $T\Sigma$,
since $d\alpha( \tilde V, \tilde W) = K\omega_\Sigma(V, W)$,
we have the following 
\[
    [\tilde V, \tilde W] = \widetilde{[V, W]} - K\omega_\Sigma(V, W) R.
\]

From this, it follows that the Levi-Civita connection $\Nabla$ satisfies the following
identities:
\begin{align*}
\Nabla_{\tilde V} \tilde W &= \widetilde{ \nabla_V W} - \frac{K}{2} \omega_\Sigma( V, W) R \\
\Nabla_{R}R &= 0 \\
\Nabla_{R} \tilde V &= - \frac{1}{2} J_Y \tilde V. 
\end{align*}

A simple computation using the Reeb-flow invariance of $J_Y$ and the torsion-free
property of the connection gives
\[
    \Nabla_{\partial_r} J_Y = 0 = \Nabla_{R} J_Y.
\]

We will now compute $D_{\tilde v} \zeta \left (\partial s \right)$, first when
$\zeta = \zeta_1 \partial_r + \zeta_2 R = (\zeta_1 + i \zeta_2) \partial_r$, and
then when $\zeta$ is a section of $\tilde v^*\xi$.

For the first computation, it suffices to notice the following two identities
\begin{align*}
D_{\tilde v} \partial_r \left( \partial_s \right ) 
    &= h''(\e^b)\e^b \partial_r \\
D_{\tilde v} R \left( \partial_s \right ) 
    &= 0.
\end{align*}
It follows then from the Leibniz rule that we have
\[
D_{\tilde v} (\zeta_1 + i \zeta_2) \,\partial_r \left( \partial_s \right ) 
= 
\left (\zeta_s + i \zeta_t\right) + \left(h''(e^b)e^b \zeta_1 \right )\partial_r = D^\C_{\tilde v}(\zeta_1 + i \zeta_2)\,\partial_r(\partial_s).
\]

Now consider the case when $\zeta$ 
is a section of $\tilde v^*\xi$, and is thus the lift $\zeta = \tilde \eta$ of a section $\eta$ of $w^*T \Sigma$.
We compute 
\begin{align*}
\Nabla_s R &= \Nabla_{\pi_\xi v_s} R = -\frac{1}{2} J_Y \pi_\xi v_s \\
\Nabla_s \zeta 
  &= \widetilde{ \nabla_{w_s} \eta } - \frac{K}{2} \omega_\Sigma(w_s, \eta)R
      -\frac{1}{2} \alpha(v_s) J_Y \zeta,
\end{align*}
and similarly for $\Nabla_t$.
We then obtain the following covariant derivatives of $J_Y$, where $\tilde W$ is a
section of $\tilde v^*\xi$:
\begin{align*}
(\Nabla_{\zeta}J_Y) \partial_r &= \Nabla_\zeta R - J_Y \Nabla_\zeta \partial_r = -\frac{1}{2} J_Y \zeta \\
(\Nabla_\zeta J_Y)R &= - \Nabla_\zeta \partial_r - J_Y \Nabla_\zeta R = - \frac{1}{2} \zeta \\
(\Nabla_\zeta J_Y)\tilde W &= \Nabla_\zeta (J_Y \tilde W) - J_Y \Nabla_\zeta \tilde W \\
    &= \widetilde{ \nabla_{\eta} J_\Sigma W } 
    - \frac{K}{2} \omega_\Sigma(\eta, J_\Sigma W) R 
    - J_Y \left ( \widetilde{ \nabla_{\eta} W} 
    - \frac{K}{2} \omega_\Sigma(\eta, W)R \right ) \\
    &= \widetilde{ (\nabla_{\eta} J_\Sigma)W } - \frac{K}{2} \omega_\Sigma( \eta, J_\Sigma W) R -
    \frac{K}{2} \omega_\Sigma(\eta, W) \partial_r.
\end{align*}
It follows then
\begin{align*}
D_{\tilde v} \zeta \left( \partial_s \right ) 
    &= \Nabla_s \zeta + J_Y \Nabla_t \zeta + (\Nabla_\zeta J_Y) \tilde v_t \\
    \begin{split}
    &= 
    \widetilde{ \nabla_s \eta} - \frac{1}{2} \alpha(v_s) J_Y \zeta - \frac{K}{2} \omega_\Sigma(w_s, \eta) R 
    + J_Y \widetilde{ \nabla_t \eta } + \frac{1}{2} \alpha(v_t) \zeta +
    \frac{K}{2} \omega_\Sigma( w_t, \eta) \partial_r  \\
    &\qquad - \frac{1}{2} b_t J_Y \zeta - \frac{1}{2} \alpha(v_t) \zeta + \widetilde{ (\nabla_\eta J_\Sigma) w_t } - \frac{K}{2} \omega_\Sigma( \eta, J_\Sigma
    w_t) R - \frac{K}{2} \omega_\Sigma(\eta, w_t) \partial_r \\
\end{split} \\
&= \widetilde{ {\dot {D^\Sigma_w}} \eta  } + K\omega_\Sigma(w_t, \eta) \partial_r -
    K\omega_\Sigma(w_s, \eta) R.
\end{align*}
(Note that we use the fact that $\tilde v_s + J_Y \tilde v_t + h'(\e^b)
\partial_r = 0$ in the cancellations.)

Writing $\zeta = (\zeta_a,\zeta_b)$ under the isomorphism $\tilde v^*T( \R \times Y) \cong (\R \oplus \R R) \oplus w^*T\Sigma$,
we obtain the decomposition:
\[
D_{\tilde v} (\zeta_a, \zeta_b) (\partial_s) = \begin{pmatrix} 
	D_{aa} & D_{ab} \\
	D_{ba} & D_{bb}
	\end{pmatrix}
        \begin{pmatrix} \zeta_a \\ \zeta_b \end{pmatrix} (\partial_s).
\]

Our calculations now establish that 
$D_{aa} = D^\C_{\tilde v}$ and $D_{ba} = 0$, 
$D_{bb} = \dot D^\Sigma_w$, and 
$D_{ab} \zeta (\partial_s) = K\omega_\Sigma(w_t, \pi_\Sigma\zeta) \partial_r -  K\omega_\Sigma(w_s, \pi_\Sigma\zeta) R$. 
Observe that in particular, $D_{ab}$ is a pointwise linear map from $\tilde
v^*\xi|_p$ to $\R \partial_r \oplus \R R$. The map is surjective except at
critical points of the pseudoholomorphic map $w$, of which there are finitely many 
if $w$ is non-constant.
The decay claim follows since $w$ converges to a point, and thus its
derivatives decay exponentially fast.

\end{proof}

\begin{remark}
 Notice that for each puncture $z \in \{ \pm \infty \} \cup \Gamma$, 
if $\gamma(t)$ denotes the corresponding asymptotic Hamiltonian or Reeb orbit, 
the previous result allows us to identify $V_z$ with $T_{\gamma(0)} Y$ at a Hamiltonian orbit 
and with $\R \times T_{\gamma(0)} Y$ at a Reeb orbit.
\end{remark}

\begin{lemma} \label{L:verticalOperatorTransverse}
    Let $\tilde v \colon \R \times S^1 \setminus \Gamma \to \R \times Y$ be a
    finite hybrid energy Floer cylinder with punctures $\Gamma$.

    Then, the operator $D^\C_{\tilde v}$ defined in Equation \eqref{E:DC}
is Fredholm for $\delta > 0$ sufficiently small. 

The restriction \[
    D^\C_{\tilde v}|_{W^{1,p, \delta}} \colon W^{1,p,\delta}(\R \times S^1
\setminus \Gamma, \C) \to L^{p,\delta}(\Hom^{0,1}(T(\R \times S^1 \setminus
\Gamma), \C))\]
has Fredholm index 
$-1-2\#\Gamma$ if the positive puncture at $+\infty$ converges to a closed Hamiltonian
orbit and has Fredholm index $-2 -2\#\Gamma$ if the positive puncture converges
to Reeb orbit at ${+\infty} \times Y$.

If $\tilde v$ converges at both $\pm \infty$ to closed Hamiltonian orbits, then
\[
    D^\C_{\tilde v} \colon W^{1,p,\delta}_{\mathbf{V_0}}(\R \times S^1 \setminus \Gamma, \C) \to L^{p,\delta}(
\Hom^{0,1}(T (\R \times S^1 \setminus \Gamma), \C) )
\]
has Fredholm index $1$ and is surjective.

If, instead, $\tilde v$ converges at $+\infty$ to a closed Hamiltonian orbit,
and at $-\infty$ to a closed Reeb orbit in
$\{-\infty \} \times Y$, then 
$D^\C_{\tilde v}$ has Fredholm index $2$ and is
surjective.

Finally, if $\tilde v$ converges at $\pm \infty$ to closed Reeb orbits in $\{
\pm \infty \} \times Y$, then 
$D^\C_{\tilde v}$ has Fredholm index $2$ and is
surjective.

In all three cases, the kernel of $D^\C_{\tilde v}$ contains the
constant section $i$, which can be identified with the Reeb vector field.

\end{lemma}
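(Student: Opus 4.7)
My plan is to first establish the Fredholm property and compute the indices using the machinery of Section \ref{S:sobolev_morse_bott}, and then treat surjectivity, which is the main technical point, via Wendl's automatic transversality for rank-one Cauchy--Riemann operators.

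First I would identify the asymptotic operators. At a puncture where $\tilde v$ converges to a Hamiltonian orbit in $Y_k$, the $\R$-coordinate tends to the finite value $b_k$, so the asymptotic operator is exactly the model $\mathbf{A}_C$ of Lemma \ref{L:spectrum} with $C = h''(\e^{b_k})\e^{b_k} > 0$. At a Reeb puncture in $\{\pm\infty\} \times Y$ (including any puncture in $\Gamma$), $h''(\e^b)\e^b \to 0$ by admissibility of $H$, and the asymptotic operator is $\mathbf{A}_0 = -i\tfrac{d}{dt}$. By Lemma \ref{L:spectrum}, $\pm\delta$ avoids each of these spectra for all $\delta > 0$ sufficiently small (cf.~Remark \ref{delta small}), so Theorems \ref{T:RiemannRoch} and \ref{T:RiemannRochMB} both apply and yield the Fredholm property.

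Next I would compute the indices. With $n = 1$, $c_1(E) = 0$ (the bundle $\C$ is trivial with the constant trivializations at every puncture), $\chi(\R \times S^1 \setminus \Gamma) = -\#\Gamma$, and using $\CZ(\mathbf{A}_C + \delta) = 0$, $\CZ(\mathbf{A}_C - \delta) = 1$ at a Hamiltonian puncture and $\CZ(\mathbf{A}_0 + \delta) = -1$, $\CZ(\mathbf{A}_0 - \delta) = 1$ at a Reeb puncture (Corollary \ref{C:CZ computation}), direct substitution into Theorem \ref{T:RiemannRoch} gives the indices $-1 - 2\#\Gamma$ and $-2 - 2\#\Gamma$ for the non-decorated operator according to whether $+\infty$ is Hamiltonian or Reeb. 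For the decorated operator, $V_z = \ker\mathbf{A}_z$ at every puncture, so $\codim V_z = 0$ at each negative puncture, while $\dim V_z$ is $1$ at a Hamiltonian positive puncture and $2$ at a Reeb positive puncture; substitution into Theorem \ref{T:RiemannRochMB} produces the indices $1$, $2$, $2$ in the three cases. Verifying that the constant section $F \equiv i$ lies in $\ker D^\C_{\tilde v}$ is then immediate from Equation \eqref{E:DC}: both derivatives vanish pointwise, $\mathrm{diag}(h''(\e^b)\e^b,0) \cdot i = 0$, and $i$ lies in every $V_z$ (it spans $i\R$ at a Hamiltonian puncture and is contained in $\C$ at a Reeb puncture).

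The main obstacle is surjectivity. I would apply Wendl's automatic transversality \cite{WendlAutomatic}*{Proposition 4.22}, after first translating from the decorated space $W^{1,p,\delta}_{\mathbf{V_0}}$ to the equivalent weighted Sobolev space with small negative weights at each puncture via Lemma \ref{L:exponentialGrowth}, which identifies kernels and cokernels of the two set-ups. Since $D^\C_{\tilde v}$ acts on a rank-one bundle over a genus-zero domain, Wendl's criterion reduces to a numerical inequality involving the Fredholm index, the number of punctures, and the winding numbers of the eigenvalues of the perturbed asymptotic operators bracketing zero. These extremal winding numbers can be read off Lemma \ref{L:spectrum} and Tables \ref{evals 0} and \ref{evals C}: they are all zero at both Hamiltonian and Reeb punctures. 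A case-by-case check then verifies Wendl's inequality in each of the three cases, yielding surjectivity --- and in particular, in cases 2 and 3, the existence of the required second kernel element beyond the constant $i$.
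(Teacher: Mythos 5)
Your proposal is correct and follows essentially the same route as the paper's proof. Both arguments identify the asymptotic operators at each puncture (using Lemma \ref{L:spectrum} and Corollary \ref{C:CZ computation}), compute the Fredholm indices via Theorems \ref{T:RiemannRoch} and \ref{T:RiemannRochMB}, pass from the decorated space $W^{1,p,\delta}_{\mathbf{V}_0}$ to the space with small negative weights via Lemma \ref{L:exponentialGrowth} (applied at each puncture), and then invoke Wendl's automatic transversality. The paper checks the criterion through the formula $c_1(E,l,\mathbf{A}_\Gamma) = \tfrac12(\mathrm{ind} - 2 + \#\Gamma_0)$ (citing \cite{WendlAutomatic}*{Proposition 2.2}, not 4.22), while you propose computing the normal Chern number directly from the extremal winding numbers $\alpha_\pm$; these are equivalent bookkeepings, and your observation that all extremal windings vanish in the exponential-growth setup is correct. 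The only content the paper's proof has beyond yours is an additional verification that the restricted operator $D^\C_{\tilde v}|_{W^{1,p,\delta}}$ is \emph{injective}, again by automatic transversality — this is not asserted in the lemma's statement, so its omission is not a gap.
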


\begin{proof}
We will apply the punctured Riemann--Roch Theorems \ref{T:RiemannRoch}
and \ref{T:RiemannRochMB}. 
For this, we need to compute the Conley--Zehnder indices of the
appropriately perturbed asymptotic operators.
We will first identify the (Morse--Bott degenerate) asymptotic operators at each of the 
punctures, and then apply Corollary \ref{C:CZ computation} to obtain the
Conley--Zehnder indices of the $\pm\delta$-perturbed operators.

Recall from Remark \ref{delta small} that we have $|\delta| > 0$ smaller than
the spectral gap for any of these punctures. 

In order to consider the operator $D^\C_{\tilde v} \colon W^{1,p,
\delta}_{\mathbf{V}_0} \to L^{p,\delta}$, it will be convenient to consider a
related operator with the same formula, but on the much larger space of
functions with exponential \textit{growth}. By a slight abuse of notation, we
will use the same name:
\begin{align*}
    &D^\C_{\tilde v} \colon W^{1,p, -\delta} \to L^{p, -\delta} \\
    &(D^\C_{\tilde{v}} F)(\partial_s) = F_s + iF_t  + \begin{pmatrix} h''(\e^b)
\e^b & 0 \\0 & 0 \end{pmatrix}F.
\end{align*}
Then, the kernel and cokernel of the operator acting on the spaces of sections
with exponential growth
can be identified with the kernel and cokernel of the operator acting on
$W^{1,p,\delta}_{\mathbf{V}_0}$, by Lemma \ref{L:exponentialGrowth}.

First, consider the case when $\tilde v$ converges to a closed Hamiltonian
orbit in $\{ b_\pm \} \times Y$ as $s \to \pm \infty$.
Then, the asymptotic operator associated to $D^\C_{\tilde v}$ at $\pm\infty$ is given by 
    \[
        \mathbf{A}_\pm = -i \frac{d}{dt} - \begin{pmatrix} h''(\e^{b_\pm}) \e^{b_\pm} & 0 \\0 & 0
        \end{pmatrix} . %
    \] 
In the case of $\delta$--exponential decay, the relevant asymptotic operators are
given by $\mathbf{A}_+ +\delta$ at the positive puncture $+\infty$ and by $\mathbf{A}_- - \delta$ at the
negative puncture $-\infty$. In the case of $\delta$--exponential growth, the
relevant asymptotic operators are $\mathbf{A}_+ - \delta$ and $\mathbf{A}_- + \delta$,
respectively.

For the case of exponential decay, Corollary \ref{C:CZ computation} then gives the Conley--Zehnder 
index of $0$ for $\mathbf{A}_+ + \delta$ and of $1$ for $\mathbf{A}_- - \delta$.

In the case of exponential growth, Corollary \ref{C:CZ computation} gives instead that the Conley--Zehnder 
index of $\mathbf{A}_+ - \delta$ is 1 and that of $\mathbf{A}_- + \delta$ is 0. 

    Associated to a Reeb puncture at $\pm \infty$ or at $P\in \Gamma$, we have the asymptotic operator
    \[
        -i \frac{d}{dt}.
    \]
    Writing $\tilde v = (b,v) \colon \R\times S^1 \setminus \Gamma \to \R\times Y$, we have $b\to -\infty$ at both types of negative punctures and $b \to +\infty$ at the positive puncture. 

    As above, in the case of exponential decay, the relevant
    asymptotic operators are $-i\frac{d}{dt} +\delta$ at a positive puncture
    and $-i\frac{d}{dt} -\delta$ at a negative puncture. Again, by Corollary
    \ref{C:CZ computation}, we obtain a Conley--Zehnder index of $-1$ at
    $+\infty$ and a Conley--Zehnder indices of $1$ at a negative puncture
    ($-\infty$ or $P \in \Gamma$).

    If, instead, we consider exponential growth, we obtain Conley--Zehnder
    indices of $+1$ at positive punctures and $-1$ at negative punctures.

Applying now the punctured Riemann--Roch theorem \ref{T:RiemannRoch}, and using the fact that the
    Euler characteristic of the punctured cylinder is $-\# \Gamma$, 
    we obtain that the Fredholm index of 
\[
    D^\C_{\tilde v}|_{W^{1,p, \delta}} \colon W^{1,p,\delta}(\R \times S^1
\setminus \Gamma, \C) \to L^{p,\delta}(\Hom^{0,1}(T(\R \times S^1 \setminus
\Gamma), \C))\]
is given by 
    \[
        - \# \Gamma  - c - 1 - \#\Gamma = -c -1 - 2\# \Gamma,
    \]
    where $c = 0$ if the positive puncture converges to a Hamiltonian orbit,
    and $c=1$ if the positive puncture converges to a Reeb orbit at $+\infty$,
    as claimed.

    The injectivity of $D^\C_{\tilde v}$ restricted to $W^{1,p,\delta}$ follows from automatic transversality, applying \cite{WendlAutomatic}*{Proposition 2.2}. 
The criterion involves the adjusted Chern number 
\cite{WendlAutomatic}*{Equations (2.4) and (2.5)}. 
In our situation, there are $1-c$ punctures with even Conley--Zehnder index. 
This adjusted Chern number then becomes
\begin{align*}
    c_1(E,l,\textbf{A}_\Gamma) 
        &= \frac{1}{2}( \Ind(D^C_{\tilde v}|_{W^{1,p,\delta}}) -2 +\# \Gamma_0 ) \\
        &= \frac{1}{2}( -c-1-2\#\Gamma-2+(1-c) ) = -\# \Gamma -1-c < 0.
\end{align*}
as necessary to apply \cite{WendlAutomatic}*{Proposition 2.2}.

Now, applying 
Theorem \ref{T:RiemannRochMB}, we compute that the Fredholm index 
of 
\[
    D^\C_{\tilde v} \colon W^{1,p,\delta}_{\mathbf{V_0}}(\R \times S^1 \setminus \Gamma, \C) \to L^{p,\delta}(
\Hom^{0,1}(T (\R \times S^1 \setminus \Gamma), \C) )
\]
is given by 
    \[
    \begin{split}
    - \# \Gamma + &1 - (-\# \Gamma) - 
    	\begin{cases} 0 & \text{if $\tilde v(-\infty)$ converges to a Hamiltonian orbit}\\
	 -1 & \text{if $\tilde v(-\infty)$ converges to a Reeb orbit}    
	 	 \end{cases}	 \\
	 &= 1 \text{ or } 2, \text{ depending on the negative end of $\tilde v$.}
	 \end{split}
    \]
   
   Furthermore, the fact that the curve has genus $0$ and one puncture
   with even Conley--Zehnder index precisely if $\lim_{s\to -\infty} \tilde v$ is a Hamiltonian orbit implies that 
$$
c_1(E,l,\textbf{A}_\Gamma) = \begin{cases}  \frac{1}{2}(1-2+1) = 0   & \text{if $\tilde v(-\infty)$ converges to a Hamiltonian orbit}\\
	 \frac{1}{2}(2-2) = 0  & \text{if $\tilde v(-\infty)$ converges to a Reeb orbit}    
	 	 \end{cases}	 \\ 
$$   
In either case, the adjusted Chern number is less than the Fredholm index.
Therefore, $D^\C_{\tilde v}$ satisfies the automatic transversality criterion and is thus surjective, as wanted.

It follows immediately from the expression for $D^\C_{\tilde v}$ that the 
constant $i$ is in the kernel. Recalling that  
$\C = \tilde v^*(\R \oplus {\R}R)$ in the splitting
given by Lemma \ref{L:linearizationFloer}, we then may identify this constant with 
the Reeb vector field $R$.

\end{proof}

To summarize the results of this section, 
by Lemma \ref{L:linearizationFloer}, a punctured Floer cylinder in $\R \times
S^1$ is regular if the operators $D^\C_{\tilde v}$ and $\dot D^\Sigma_w$ are surjective.
Surjectivity of the latter is equivalent to surjectivity of $D^\Sigma_w$.
Lemma \ref{L:verticalOperatorTransverse} gives the surjectivity of $D^\C_{\tilde
v}$. 
It thus remains to study transversality for $D^\Sigma_w$, specifically with
respect to the evaluation maps that will allow us to define the moduli spaces of
chains of pearls in $\Sigma$ (see Section \ref{S:transverse pearls}). 
Additionally, we need to consider transversality for moduli spaces of planes in
$W$ asymptotic to Reeb orbits in $Y$, or equivalently, the moduli spaces of 
spheres in $X$ with an order of contact condition at $\Sigma$ (see Section \ref{S:transverse contact}). 

\subsection{Transversality for chains of pearls in \texorpdfstring{$\Sigma$}{Sigma}} 

\label{S:transverse pearls}

In this section and the next, we show that for generic almost complex structure (in a sense
to be made precise), the moduli spaces of chains of pearls and moduli spaces of
chains of pearls with spheres in $X$ (possibly augmented as well) are
transverse. We begin with the definition of several moduli spaces that will be useful.  

\begin{definition} \label{D:moduliSpacesPearls}
Let $J_\Sigma \in \mathcal{J}_\Sigma$ be an almost complex structure
compatible with $\omega_\Sigma$. Given $p,q \in \Crit(f_\Sigma)$ and a
finite collection $A_1,\ldots,A_N\in H_2(\Sigma;\Z)$,
let $${\M^*_{k, \Sigma}}((A_1,\ldots,A_N); q,p;J_\Sigma)$$ denote the space of simple
chains of pearls in $\Sigma$ from $q$ to $p$ (see Definition \ref{D:simple
    chain pearls}), such that $(w_i)_*[\CP^1] = A_i$, with $k$ marked points.

Let $${\M^*_{k, \Sigma}}( (A_1, \ldots, A_N); J_\Sigma )$$ denote the moduli space of $N$
parametrized $J_\Sigma$-holomorphic spheres in $\Sigma$, representing the classes $A_i, i=1, \dots, N$,
with $k$ marked points, also satisfying the simplicity criterion of Definition
\ref{D:simple chain pearls}, i.e.~so each sphere is either somewhere injective
or constant, each constant sphere has at least one augmentation marked point, and no
sphere has image contained in the image of another.

For $J_W \in \J_W$, let $J_\Sigma = P(J_W)$ be the
corresponding almost complex structure in $\J_\Sigma$ and $J_X$ the
corresponding almost complex structure on $X$. Define 
\[
    {\M^*_{k, (X, \Sigma)}}( (B; A_1, \dots, A_N); x, p, J_W)
\]
to be the moduli
space of simple chains of pearls in $\Sigma$ with a sphere in $X$ (as in
Definitions \ref{D:chain of pearls w sphere in X} and \ref{D:simple chain pearls}),
where $x$ is a critical point of $f_W$ and $p$ is a critical point of
$f_\Sigma$, and representing the spherical homology classes $[w_i] =
A_i \in H_2(\Sigma;\Z)$, $i=1, \dots, N$ and $[v] = B \in H_2(X;\Z)
\setminus 0$. 
In the following, we will write \[
l = B \bullet \Sigma = K \omega(B)
\]
which is the order of contact of $v$ with $\Sigma$.

Let \[
    {\M^*_{k, (X, \Sigma)}}( (B; A_1, \dots, A_N); J_W)
\]
denote the moduli
space of $N$ parametrized $J_\Sigma$-holomorphic spheres in $\Sigma$, representing the
classes $A_i$, and of a $J_X$-holomorphic sphere in $X$ representing the class $B$
with order of contact $l = B \bullet \Sigma = K \omega(B)$, also satisfying the 
simplicity criterion of Definition \ref{D:simple chain pearls}, i.e.~so each
sphere in $\Sigma$ is either somewhere injective or constant (if constant, it
has at least one augmentation marked point), 
no image of a sphere in $\Sigma$ is contained in the image of
another and the image of the sphere in $X$ is not contained in the tubular
neighbourhood $\varphi(\U)$ of $\Sigma$. Furthermore, the spheres in $\Sigma$ have $k$ marked
points.

Let 
\[
    \M^*_{X}((B_1, B_2, \dots, B_k); J_W)
\]
denote the moduli space of
$k$ {\em unparametrized} $J_X$-holomorphic spheres in $X$, where each sphere is somewhere
injective, no image of a sphere is contained in the image of another sphere,
and 
so the image of each sphere is not contained in the tubular neighbourhood
$\varphi(\U)$ of
$\Sigma$, and  such that each sphere intersects $\Sigma$ 
only at $\infty \in \CP^1$ with order of contact $B_i \bullet \Sigma$. 
We can think of an unparametrized sphere as an equivalence class of parametrized spheres,
modulo the action of $\Aut(\CP^1,\infty) = \Aut(\C)$ on the domain. 

Finally, let 
\[
    \M^a_{k,\Sigma}((A_1, \dots, A_N), (B_1, \dots, B_k);q, p;J_W) 
\]
denote the moduli space of simple augmented chains of pearls in $\Sigma$ with
$k$ unparamentrized augmentation planes,
and let 
\[
    \M^a_{k,(X, \Sigma)}((B; A_1, \dots, A_N); (B_1, \dots, B_k); x, p; J_W)
\]
denote the moduli space of simple augmented chains of pearls with a sphere in $X$. 
(See 
Definitions \ref{D:aug chain of pearls} and \ref{D:simple chain pearls}.)
\end{definition}

In order to apply the Sard--Smale Theorem, we need to consider Banach spaces of
almost complex structures, so we let $\J^r_\Sigma, \J^r_W$ be the space of 
$C^r$--regular almost complex structures otherwise satisfying the conditions of
being in $\J_\Sigma$, $\J_W$. We impose $r \ge 2$ and in general will require
$r$ to be sufficiently large that the Sard--Smale theorem holds (this will
depend on the Fredholm indices associated to the collection of homology classes
and will also depend on the order of contact to $\Sigma$ for the spheres in $X$).

For each of these moduli spaces, we also consider the corresponding
universal moduli spaces as we vary the almost complex structure.
For instance, we denote by 
$\M_{k, \Sigma}^*((A_1, \dots, A_N), \J^r_\Sigma)$ the moduli space of pairs
$((w_i)_{i=1}^N, J_\Sigma)$ with $J_\Sigma \in \J^r_\Sigma$ and 
$(w_i)_{i=1}^N \in \M_{k, \Sigma}^*( (A_1, \dots, A_N), J_\Sigma)$.

The main goal of this section and of the next is to prove that these moduli spaces of simple
chains of pearls are transverse for generic almost complex structures. This is analogous to
\cite{McDuffSalamon}*{Theorem 6.2.6}, and indeed, the transversality theorem of
McDuff--Salamon will be a key ingredient of our proof. Their Theorem 6.2.6 is about transversality 
of the universal evaluation map to a specific submanifold $\Delta^E$ of the target, whereas our work in this 
section establishes transversality to some other submanifolds.
We will furthermore require an extension of the results from 
\cite{CieliebakMohnkeTransversality} (see Section \ref{S:transverse contact}), 
and an additional technical transversality
point needed to be able to consider the lifted problem in $\R \times Y$.  

\begin{proposition} \label{necklaces are regular}
There is a residual set 
$\mathcal J_W^{reg}\subset \J_W$ such that $\mathcal J_\Sigma^{reg} \coloneqq P( \mathcal J_W^{reg})$
is a residual set in $\mathcal J_\Sigma$ and such that 
 for all $J_\Sigma \in \mathcal J_\Sigma^{reg}$ and $J_W \in \mathcal J_W^{reg}$,
 $p \in \Crit(f_\Sigma)$, $q \in \Crit(f_\Sigma)$ and $x \in \Crit(f_W)$, 
the moduli spaces 
    ${\M^*_{k, \Sigma}}((A_1,\ldots,A_N); q,p;J_\Sigma)$,
    ${\M^*_{k, (X, \Sigma)}}((B; A_1, \dots, A_N); x, p, J_W)$,
    $\M^a_{k,\Sigma}((A_1, \dots, A_N), (B_1, \dots, B_k);q, p; J_W)$  { and }
    $\M^a_{k, (X, \Sigma)}((B; A_1, \dots, A_N); (B_1, \dots, B_k); x, p; J_W)$
are manifolds. Their dimensions are 
\begin{align*}
    \dim&~{\M^*_{k, \Sigma}}((A_1,\ldots,A_N); q,p;J_\Sigma) =~
        M(p) + \sum_{i=1}^N 2 \, \langle c_1(T\Sigma), A_i\rangle - M(q) + N-1 + 2k, \\
    \dim&~{\M^*_{k, (X, \Sigma)}}((B; A_1, \dots, A_N); x, p, J_W) \\
       =&~ M(p) + \sum_{i=1}^N 2 \, \langle c_1(T\Sigma), A_i \rangle + 2 
                (\langle c_1(TX), B \rangle - B \bullet \Sigma) + M(x) - 2(n-1)
                + N-1 + 2k \\
    \dim&~\M^a_{k,\Sigma}((A_1, \dots, A_N), (B_1, \dots, B_k);q, p; J_W) \\
    =&~ 
        M(p) + \sum_{i=1}^N 2 \, \langle c_1(T\Sigma), A_i\rangle - M(q) + N-1 + 
         \sum_{i=1}^k \left ( 2 \, \langle c_1(TX), B_i \rangle - 2 B_i
        \bullet \Sigma \right ), \\
    \dim&~\M^a_{k, (X, \Sigma)}((B; A_1, \dots, A_N); (B_1, \dots, B_k); x, p; J_W)\\
        =&~
        M(p) + \sum_{i=1}^N 2 \, \langle c_1(T\Sigma), A_i \rangle + 2 
                (\langle c_1(TX), B \rangle - B \bullet \Sigma) + M(x) - 2(n-1)
                +\\
                &+ N-1 + \sum_{i=1}^k \left ( 2 \, \langle c_1(TX),
            B_i \rangle - 2 B_i \bullet \Sigma \right ), 
\end{align*}
where $M(p)$ and $M(q)$ are the Morse indices of $p,q \in \Crit(f_\Sigma)$ and $M(x)$ is the Morse index of $x\in \Crit(f_W)$.
\end{proposition}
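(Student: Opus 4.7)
My plan is to prove all four transversality statements together via a universal moduli space plus Sard--Smale argument. Three features go beyond the classical setup in \cite{McDuffSalamon}*{Chapter 6}: (a) constant spheres in $\Sigma$ carrying marked points are allowed by our simplicity condition (Definition \ref{D:simple chain pearls}); (b) incidence constraints along gradient trajectories of $Z_\Sigma$ and $Z_W$ must be made transverse; and (c) moduli spaces involving spheres in $X$ impose prescribed order of contact to $\Sigma$, under the constraint that $J_X$ is fixed as a pushforward of a bundle almost complex structure on the neighbourhood $\varphi(\U)$. I will first establish transversality in a $C^r$ setting for $r$ sufficiently large and upgrade to $C^\infty$ via a Taubes-type argument; then lift from $\J_\Sigma^{reg}$ to $\J_W^{reg}$ using the open surjection $P\colon \J_W \to \J_\Sigma$ of Definition \ref{def:J_W}.

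For $\M^*_{k, \Sigma}((A_1,\ldots,A_N); q, p; J_\Sigma)$ I form the universal moduli space over $\J_\Sigma^r$, cut out by the Cauchy--Riemann operators $\dbar_{J_\Sigma} w_i$ together with the evaluation map into the product of Morse submanifolds $W^s_\Sigma(q)$, $W^u_\Sigma(p)$ and the shifted diagonals $\{(x,y) : y = \varphi^{\ell_i}_{Z_\Sigma}(x)\}$, $i = 1, \ldots, N-1$. The simplicity hypothesis enables the McDuff--Salamon perturbation argument to give surjectivity of the $\dbar$-part via variations of $J_\Sigma$ at a somewhere-injective point of each non-constant sphere; for constant spheres no $\dbar$-freedom is needed, and transversality of the marked-point evaluation is produced by joint variations of $J_\Sigma$ and of the marked points on adjacent non-constant spheres. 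The Morse--Smale property of $(f_\Sigma, Z_\Sigma)$ handles the extremal evaluations. Sard--Smale together with the Taubes trick then yields the $C^\infty$-residual set. The dimension formula comes from summing the Fredholm indices $2(n-1) + 2\langle c_1(T\Sigma), A_i\rangle$ over the $N$ spheres, adding $2k + (N-1)$ for marked points and length parameters, and subtracting the codimensions $(N-1)\cdot 2(n-1)$, $M(q)$, and $2(n-1) - M(p)$ of the middle incidences and extremal evaluations.

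For $\M^*_{k, (X, \Sigma)}((B; A_1, \ldots, A_N); x, p; J_W)$, the new ingredient is transversality for the somewhere-injective $J_X$-sphere $v$ in $X$ with order of contact $l = B \bullet \Sigma$ to $\Sigma$ at $\infty$, while $J_X$ is fixed on $\varphi(\U)$. The simplicity hypothesis forces $v$ to have a somewhere-injective point in $\V$ disjoint from $v(\infty) \in \Sigma$. An adaptation of \cite{CieliebakMohnkeTransversality} then gives surjectivity of the linearized operator with its $(l-1)$-jet tangency condition, via perturbations of $J_X$ supported in $\V$. The incidence $v(\infty) = w_1(0)$ is transverse by generic perturbation, and $v(0)$ landing in the descending manifold of $x$ follows from Morse--Smale of $(f_W, Z_W)$. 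For the augmented moduli spaces, each augmentation sphere $U_i$ identifies under Lemma \ref{planes = spheres} with a simple $J_X$-sphere having one tangency point to $\Sigma$ of order $B_i \bullet \Sigma$ and image not contained in $\varphi(\U)$, so the same extended Cieliebak--Mohnke statement applies to each $U_i$, and the evaluations $U_i(\infty) = w_{j_i}(z_i)$ become transverse by further generic perturbation. Summing the parametrized Fredholm index $2n + 2\langle c_1(TX), B\rangle - 2l$ for $v$ and the unparametrized index $2\langle c_1(TX), B_i\rangle - 2 B_i \bullet \Sigma$ for each $U_i$ with the previously computed contributions yields the remaining dimension formulas.

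Finally, to pass from $\J_\Sigma^{reg}$ to $\J_W^{reg}$: the perturbations of $J_X$ used above are supported in $\V$ and hence, via $\psi$, correspond to perturbations of $J_W$ on a compact set in $\overline W$ that preserve $P(J_W)$. I take $\J_W^{reg}$ to consist of those $J_W$ with $P(J_W) \in \J_\Sigma^{reg}$ that are additionally regular for the countable list of sphere-in-$X$ problems; this is residual in $\J_W$, and $P(\J_W^{reg})$ is residual in $\J_\Sigma$. The main obstacle is step (c): given a cokernel element $\eta$ of the linearized operator with its $(l-1)$-jet condition, one must produce a $J_X$-variation $Y$ supported in $\V$ and away from $v(\infty)$ with nontrivial pairing against $\eta$. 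This follows by combining Aronszajn unique continuation for $\eta$ with the existence of a somewhere-injective point of $v$ in $\V$, as in \cite{CieliebakMohnkeTransversality}, with the variant that $Y$ must vanish on a neighbourhood of $\varphi(\U)$.
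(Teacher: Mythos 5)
Your strategy (universal moduli space plus Sard--Smale, McDuff--Salamon for non-constant spheres in $\Sigma$, an extension of Cieliebak--Mohnke for the tangency conditions in $X$, and passage from $\J_\Sigma$ to $\J_W$ via the open surjection $P$) is the same as the paper's, and your dimension bookkeeping is correct. The one place where the proposal is too loose is precisely where the paper needs a dedicated lemma: the treatment of constant spheres.

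You write that transversality for constant spheres ``is produced by joint variations of $J_\Sigma$ and of the marked points on adjacent non-constant spheres.'' This misidentifies the source of flexibility. Constant spheres are $J$-holomorphic for every $J$, so varying $J_\Sigma$ does nothing to a constant sphere; and the nodal points $0$, $\infty$ are fixed, so there is no freedom to move ``marked points on adjacent non-constant spheres.'' What actually saves the argument is that a constant sphere can be translated to any point of $\Sigma$, i.e.\ its evaluation map is already a submersion onto $\Sigma$, and the real issue is that both of its nodal evaluations land on the diagonal. The paper isolates this in a linear-algebra statement (Lemma~\ref{L:flow_diagonal}), which shows that a submersion $\M_1 \to \Sigma$ composed with the diagonal map, together with a transversality hypothesis ``to $A \times \pt$ for every $\pt$'' on the adjacent moduli space, gives transversality to $A \times \Delta_{f_\Sigma} \times \pt$. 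The ``for every $\pt$'' is essential, because it is what lets the induction absorb several consecutive constant spheres, one at a time from the end of the chain. Your sketch, as written, implicitly assumes a constant sphere is flanked by at least one non-constant sphere; it does not cover a run of several consecutive constant spheres, nor the all-constant chain, which the paper handles separately (in the proof of Proposition~\ref{ev Sigma submersion}) by a direct argument from the Morse--Smale condition and the explicit description of $T\Delta_{f_\Sigma}$. Also, in the all-constant case transversality has nothing to do with perturbing $J_\Sigma$ at all: it must come entirely from Morse--Smale and the structure of flow diagonals, not just for the ``extremal evaluations'' as you state but for the interior incidences too. Everything else — the use of the Cieliebak--Mohnke-type jet transversality for the sphere in $X$ and the augmentation planes, the Aronszajn/somewhere-injective-in-$\V$ argument, the lift to $\J_W^{reg}$ — matches the paper's Propositions~\ref{prop:jet_condition_evaluation}, \ref{ev Sigma submersion}, \ref{ev Sigma submersion bis}.
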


\begin{proposition}[\cite{McDuffSalamon}*{Proposition 6.2.7}]
    $\M_{k, \Sigma}^*((A_1, \dots, A_N); \J^r_\Sigma)$ is a Banach manifold. 
\end{proposition}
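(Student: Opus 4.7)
The plan is to exhibit $\M_{k,\Sigma}^*((A_1,\ldots,A_N);\J^r_\Sigma)$ as the zero set of a smooth section of a Banach bundle over a Banach manifold, and invoke the implicit function theorem. This is the standard universal moduli space argument of \cite{McDuffSalamon}*{Propositions 3.2.1 and 6.2.7}; the only points requiring attention are the possibility that some of the classes $A_i$ vanish (constant spheres) and the multi-component simplicity convention of Definition \ref{D:simple chain pearls}.

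Fix integers $m \ge 1$ and $p > 2$ with $m - 2/p > 0$. For each non-zero $A_i$ introduce the Banach manifold $\mathcal B^{m,p}(A_i)$ of $W^{m,p}$-maps $\CP^1 \to \Sigma$ representing $A_i$; for $A_i = 0$ use the finite-dimensional manifold $\Sigma$ (image of the constant map). Form the ambient space $\widetilde{\mathcal B}$ as the product of these factors with $\J^r_\Sigma$ and with the open subset of $(\CP^1)^k$ recording how the $k$ marked points are distributed among the $N$ domains and requiring them to be distinct among themselves and from $0, \infty$. The simplicity condition of Definition \ref{D:simple chain pearls} cuts out an open subset $\mathcal B \subset \widetilde{\mathcal B}$, openness holding because the images of the $w_i$ are compact and vary continuously in the $C^0$-topology. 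Over $\mathcal B$ define the Banach bundle $\mathcal E$ whose fibre at $((w_i),\mathbf z, J_\Sigma)$ is $\bigoplus_i W^{m-1,p}(\Hom^{0,1}(T\CP^1, w_i^*T\Sigma))$ and the smooth section $\mathcal S((w_i), \mathbf z, J_\Sigma) = (\dbar_{J_\Sigma} w_i)_{i=1}^N$.

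At any zero of $\mathcal S$, the vertical linearisation in a direction $((\xi_i), \dot{\mathbf z}, Y)$ has $i$th component
\[
D_{w_i}\xi_i + \tfrac{1}{2}\, Y(w_i)\circ dw_i \circ j,
\]
where $D_{w_i}$ is the standard linearised Cauchy--Riemann operator at $w_i$ and the marked-point directions $\dot{\mathbf z}$ do not enter. I would prove surjectivity summand by summand. For each non-constant $w_i$ the set of injective points at which $dw_i \neq 0$ is open and dense by \cite{McDuffSalamon}*{Proposition 2.5.1}. The simplicity condition ensures that $w_i^{-1}(w_j(\CP^1))$ is a proper closed subset of $\CP^1$ for every non-constant $j \neq i$, and $w_i^{-1}(\{q_j\})$ is finite for every constant $w_j = q_j$; hence I can select an injective point $z_i \in \CP^1$ with $w_i(z_i)$ lying off every $w_j(\CP^1)$, $j \neq i$. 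A variation $Y$ of $J_\Sigma$ supported in a small neighbourhood of $w_i(z_i)$ then affects only the $i$th summand, and \cite{McDuffSalamon}*{Proposition 3.2.1} gives surjectivity onto that summand modulo $\image(D_{w_i})$. Constant summands impose the trivial equation and are automatically surjective. The implicit function theorem then identifies $\mathcal S^{-1}(0)$ with $\M_{k,\Sigma}^*((A_1,\ldots,A_N);\J^r_\Sigma)$ as a Banach submanifold of $\mathcal B$.

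The main technical point is the simultaneous selection of injective points $z_i$ whose images $w_i(z_i)$ are pairwise geometrically independent across all $N$ spheres. This is exactly what the simplicity convention of Definition \ref{D:simple chain pearls} is designed to guarantee; without it, variations of $J_\Sigma$ supported near one sphere would inevitably contribute to the linearised equation for another sphere and one could not decouple the individual summands. Once this geometric input is secured, the remainder is a direct application of the standard universal moduli space machinery of \cite{McDuffSalamon}.
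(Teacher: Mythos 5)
The paper offers no proof here beyond citing \cite{McDuffSalamon}*{Proposition 6.2.7}, and your proposal reconstructs exactly the cited argument: exhibit the universal moduli space as the zero set of the universal $\dbar$-section over a product of Banach manifolds of maps, parametrizing almost complex structures and marked points, and establish surjectivity of the linearization by localizing $J$-perturbations near simultaneously chosen injective points of the several spheres. That is the correct route.

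There is, however, an internal inconsistency in your treatment of the constant factors. For $A_i = 0$ you take the \emph{finite-dimensional} ambient factor $\Sigma$ (parametrizing constant maps), yet you still include the corresponding \emph{infinite-dimensional} summand $W^{m-1,p}\bigl(\Hom^{0,1}(T\CP^1, w_i^*T\Sigma)\bigr)$ in the bundle $\mathcal E$. Since every constant map is $J$-holomorphic for every $J$, the section $\mathcal S$ is identically zero in the $i$-th component, and the vertical linearization in any direction in $\mathcal B$ has vanishing $i$-th component (varying the constant $q_i$ produces another constant map, still with $\dbar = 0$, and $Y(w_i)\circ dw_i\circ j = 0$ because $dw_i = 0$). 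The linearization therefore cannot be surjective onto that summand, and the implicit function theorem does not apply as stated. The phrase ``constant summands impose the trivial equation and are automatically surjective'' papers over this: a zero map onto an infinite-dimensional space is not surjective. The fix is standard and easy — either drop the summands of $\mathcal E$ corresponding to $A_i = 0$ (the $\dbar$-equation is vacuous there, so the universal zero set carries a free $\Sigma$-factor with no further constraint), or replace the ambient $\Sigma$-factor by the full mapping space $W^{m,p}(\CP^1,\Sigma)$ in class $0$ and observe that the linearized $\dbar$-operator at a constant map is already surjective by Riemann--Roch with $c_1 = 0$, with kernel the constant sections. Either repair makes the argument go through.

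A second, more minor, point: you assert that $w_i^{-1}(w_j(\CP^1))$ is a ``proper closed subset'' for non-constant $w_j \neq w_i$, and conclude that an injective point $z_i$ with $w_i(z_i) \notin w_j(\CP^1)$ can be chosen. Properness and closedness of $w_i^{-1}(w_j(\CP^1))$ are not, by themselves, enough to avoid the possibility that this set has interior, which would obstruct the choice. What is actually needed — and what unique continuation for pseudoholomorphic curves supplies, since an open set mapping into $w_j(\CP^1)$ forces the whole sphere to do so, contradicting the simplicity convention of Definition \ref{D:simple chain pearls} — is that these preimages are nowhere dense. This is exactly the content underlying \cite{McDuffSalamon}*{Section 2.5} and should be invoked explicitly; with that, the Baire-category selection of $z_i$ is justified.
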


We will also make use of the following definition and proposition, the latter of  
which we prove in the next section.

\begin{definition} \label{def:universal_evaluation_map}
There is a \defin{universal evaluation map} 
\begin{align*}
    \ev_\Sigma \colon &\M_{k, \Sigma}^*((A_1, \dots, A_N); \J^r_\Sigma) \to \Sigma^{2N} \\
               &(w_1, \dots, w_N) \mapsto (w_1(0), w_1(\infty), w_2(0), w_2(\infty), \dots, w_N(\infty)).
\end{align*}
Similarly, we have
\begin{align*}
    \ev_{X,\Sigma} \colon &\M_{k, (X, \Sigma)}^*((B; A_1, \dots, A_N); \J_W) \to X \times \Sigma^{2N+1} \\
              &(v, w_1, \dots, w_N) \mapsto (v(0), v(\infty), w_1(0), w_1(\infty), w_2(0), \dots,
    w_N(\infty)),
\end{align*}
where $v$ is the holomorphic sphere in $X$ and the $w_i$ are the spheres in
$\Sigma$.

We have an evaluation map coming from simple collections of spheres in $X$:
\begin{align*}
    \ev^a_\Sigma \colon &\M^*_{X}((B_1, B_2, \dots, B_k); \J^r_W) \to \Sigma^{k} \\
                        &(v_1, \dots, v_k) \mapsto (v_1(\infty), v_2(\infty), \dots, v_k(\infty)).
\end{align*}
For spheres in $\Sigma$, we also obtain evaluation maps at the augmentation punctures  
\[
    \ev^a_\Sigma \colon \M_{k, \Sigma}^*((A_1, \dots, A_N); \J^r_\Sigma) \to
        \Sigma^{k} 
\]
and 
\[
    \ev^a_\Sigma \colon \M_{k, (X, \Sigma)}^*((B; A_1, \dots, A_N); \J^r_W) 
    \to \Sigma^{k}.
\]
We refer to these three maps denoted $\ev^a$ as \defin{augmentation evaluation
maps.}

\end{definition}

\begin{restatable}{proposition}{propJetConditionEvaluation}
\label{prop:jet_condition_evaluation}
Let $B_0, \dots, B_k$ be spherical classes in $H_2(X;\Z)$. Let $$r \ge \max_i B_i
\bullet \Sigma + 2.$$

The universal moduli space 
$\M^*_X( (B_1, \dots, B_k); \J^r_W)$ 
is a Banach manifold and the evaluation maps 
\begin{align*}
    \ev^a_\Sigma \colon &\M^*_X( (B_1, \dots, B_k); \J^r_W)
\to \Sigma^k: (f_1, f_2, \dots, f_k) \mapsto ( f_1(\infty), \dots, f_k(\infty)) \\
\ev_{X, \Sigma} \colon &\M^*_X( (B_0); \J^r_W) \to X \times \Sigma: f \mapsto (f(0), f(\infty))
\end{align*}
are submersions.
\end{restatable}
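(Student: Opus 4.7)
The plan is to adapt the strategy of \cite{CieliebakMohnkeTransversality} for transversality of holomorphic spheres with order-of-contact conditions at a symplectic submanifold, paying attention to the constraint that our almost complex structures in $\J^r_W$ are of a prescribed bundle type on $\varphi(\U)$. For each class $B_i$, let $\ell_i = B_i \bullet \Sigma$ and consider the Banach manifold $\mathcal B_i$ of $C^r$ maps $f \colon \CP^1 \to X$ satisfying $f(\infty) \in \Sigma$ together with the jet condition that $f$ have $\ell_i$-th order contact with $\Sigma$ at $\infty$; the hypothesis $r \ge \ell_i + 2$ makes these conditions well-posed and cuts out $\mathcal B_i$ as a smooth Banach submanifold. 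Let $\mathcal E_i \to \mathcal B_i \times \J^r_W$ denote the Banach bundle with fiber $C^{r-1}(\Hom^{0,1}(T\CP^1, f^*TX))$ over $(f,J)$, and let $\bar\partial$ be the universal Cauchy--Riemann section. The open conditions of somewhere injectivity, image not contained in $\varphi(\U)$, and pairwise non-containment of images then cut out $\M^*_X((B_1,\ldots,B_k); \J^r_W)$ as a smooth locus inside the zero set of $\bar\partial$, provided $\bar\partial$ is transverse to the zero section there.

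The heart of the proof is the surjectivity of the linearization of $\bar\partial$ at any $(f_1,\ldots,f_k,J)$ in the moduli space. I would first treat the case $k=1$, where the linearization at $(f,J)$ takes the form
\[
D \colon T_f \mathcal B_i \oplus T_J \J^r_W \to \mathcal E_i|_{(f,J)},\qquad (\xi, Y) \mapsto D_f \xi + \tfrac{1}{2}\, Y(f) \circ df \circ j,
\]
with $D_f$ Fredholm. Since the image of $f$ is not contained in $\varphi(\U)$ and $f$ is somewhere injective, there exists an injective point $z_0 \in \CP^1$ with $f(z_0) \in \V$; on $\V$ the space $\J^r_W$ imposes no admissibility constraint, so any compactly supported perturbation $Y$ near $f(z_0)$ is admissible. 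The standard $L^q$-duality argument (as in \cite{McDuffSalamon}*{Proposition 3.2.1}, with the jet adaptation of \cite{CieliebakMohnkeTransversality}) then forces any element of $\coker D_f$ to vanish near $z_0$, hence identically by unique continuation, giving surjectivity. Crucially, since $\V \cap \Sigma = \emptyset$, these perturbations never interfere with the jet condition at $\infty$. For general $k$, I would iterate: the hypothesis that no sphere's image is contained in another's, together with the image of each $f_j$ meeting $\V$, lets me find an injective point $z_0$ of $f_j$ with $f_j(z_0) \in \V \setminus \bigcup_{i\neq j} f_i(\CP^1)$, and a $Y$-perturbation localized there affects only the $j$-th equation. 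The Sard--Smale theorem then yields the Banach manifold structure on $\M^*_X((B_1,\ldots,B_k); \J^r_W)$.

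For the submersion property of the evaluation maps, I would augment $D$ by the marked-point evaluation and show the augmented operator is surjective. For $\ev^a_\Sigma$, consider
\[
\Phi_j \colon T_{f_j}\mathcal B_j \oplus T_J \J^r_W \to \mathcal E_j|_{(f_j, J)} \oplus T_{f_j(\infty)}\Sigma,\qquad (\xi, Y) \mapsto \bigl(D(\xi, Y),\ \xi(\infty)\bigr).
\]
Given $v \in T_{f_j(\infty)}\Sigma$, first choose a section $\xi_0 \in T_{f_j}\mathcal B_j$ with $\xi_0(\infty)=v$ (extending $v$ by a smooth section respecting the linearized jet condition); then cancel $D\xi_0 \in \mathcal E_j$ by a correction $(\xi_1, Y)$ with $\xi_1(\infty)=0$ and $Y$ supported in $\V$, using the surjectivity from the previous paragraph applied to the Fredholm operator with the added constraint $\xi(\infty)=0$. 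Such a correction exists because perturbations supported in $\V$ do not alter evaluations at $\infty \in \Sigma$. This establishes surjectivity of $\Phi_j$, and taking the product over $j$ yields the submersion property of $\ev^a_\Sigma$. For $\ev_{X,\Sigma} \colon f \mapsto (f(0), f(\infty))$ (with $k=1$), the argument extends by also evaluating at $0$; since $f(0) \in X \setminus \Sigma$, perturbations of $J$ near $f(0)$ are unconstrained, so the $T_{f(0)}X$ component is handled by a standard perturbation of $f$ together with a localized $Y$ near $f(0)$. The main obstacle throughout is ensuring all perturbations preserve the bundle-type condition on $J|_{\varphi(\U)}$; this is precisely why we require the image of each sphere to leave $\varphi(\U)$, giving the room in $\V$ to localize $Y$-perturbations without violating admissibility or the jet conditions at $\Sigma$.
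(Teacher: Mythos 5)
Your proposal takes a genuinely different route from the paper: instead of the paper's induction on the order of contact, you attempt a direct construction by taking $\xi_0$ to be the constant section $v$ near $\infty$ and then cancelling $D\xi_0$ using a claimed surjectivity of the linearized operator restricted to $\{\xi(\infty)=0\}$. Unfortunately, the step ``cancel $D\xi_0 \in \mathcal E_j$ by a correction $(\xi_1,Y)$ with $\xi_1(\infty)=0$... using the surjectivity from the previous paragraph applied to the Fredholm operator with the added constraint $\xi(\infty)=0$'' contains a genuine gap. The restricted operator is \emph{not} surjective onto the full target $\mathcal E_j$: since $\xi_1$ satisfies the linearized jet conditions with $\xi_1(\infty)=0$, and since admissible $Y$-perturbations are supported in $\V$ (hence vanish in a neighbourhood of $f(\infty)\in\Sigma$), the image $D_f\xi_1 + \tfrac12 Y(f)\circ df\circ j$ necessarily has vanishing normal $(l{-}1)$-jet at $\infty$. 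In the $C^r$ (or $W^{m,p}$ with $m-2/p>l$) setting you are using, these jet evaluations are continuous linear functionals on the target, so the image of the restricted operator lies in a proper closed subspace. The $L^q$-duality argument you invoke only kills cokernel elements representable by $L^q$ forms, and so shows density of the image in a weaker topology; it does not establish surjectivity against these finitely many jet functionals.

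Consequently the correction $(\xi_1,Y)$ exists only if $D\xi_0$ itself lies in that smaller subspace, and verifying this for your constant extension $\xi_0$ is a nontrivial pointwise computation with $A(z)\xi_0 = (D_{\xi_0}J_X(f(z)))f_t(z)$ and its derivatives, using that $J_X$ is a bundle almost complex structure preserving $T\Sigma$ and that $f$ has the prescribed order of contact. This is precisely the work the paper's proof does: the explicit correction $\tilde\xi(z) = -\tfrac{z^k}{k!}\beta(z)\,\pi_\C(\tfrac{\partial^k}{\partial s^k}\xi_1)(0)$ is holomorphic near $0$, so $D_f\tilde\xi$ vanishes to order $k-1$ there — this is recorded explicitly before invoking Cieliebak--Mohnke's Lemma 6.6 — and the bootstrap Lemma (the paper's adaptation of their Corollary 6.2) is then used to upgrade control of the single derivative $\tfrac{\partial^k}{\partial s^k}\xi_2(0)$ to control of the full $k$-jet. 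The induction on $k$ is what allows this scheme to close. Your proposal does not supply a substitute for these steps; the heuristic ``perturbations supported in $\V$ do not alter evaluations at $\infty$'' is true but addresses the wrong quantity — it speaks to the evaluation of $\xi$, not to the jet of $D\xi$ at $\infty$, which is where the obstruction to surjectivity lives. (A minor additional point: for $\ev_{X,\Sigma}$ at $0$, the assertion ``perturbations of $J$ near $f(0)$ are unconstrained'' need not hold, since $f(0)$ may lie in $\varphi(\U)\setminus\Sigma$; the correct route, as the paper notes, is the standard argument of \cite{McDuffSalamon}*{Proposition 3.4.2}, which does not rely on that claim.)
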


Recall that we have chosen a Morse function $f_\Sigma \colon \Sigma \to \R$
and a corresponding gradient-like vector field $Z_\Sigma$, such that
$(f_\Sigma,Z_\Sigma)$ is a Morse--Smale pair. The time-$t$ flow
of $Z_\Sigma$ is denoted by $\varphi^t_{Z_\Sigma}$ and the 
stable (ascending) $W^s_\Sigma(q)$ and unstable (descending) 
manifolds $W^u_\Sigma(p)$ were defined in Equation \eqref{(un)stable}. (Note
that these are the stable/unstable manifolds for the negative gradient flow.)

\begin{definition}
    The {\em{flow diagonal}} in $\Sigma \times \Sigma$ associated to the
    pair $(f_\Sigma, Z_\Sigma)$ is 
    \[
        \Delta_{f_\Sigma} \coloneqq \left \{ (x, y) \in \left ( \Sigma \setminus
            \Crit(f_\Sigma) \right )^2 \, | \, \exists t > 0 \text{ so } y =
        \varphi^t_{Z_\Sigma}(x) \right \}
    \]
 where $\Crit(f_\Sigma)$ is the set of critical points of $f_\Sigma$. 

\label{def:FlowDiagonal}
\end{definition}

We will now establish transversality of the evaluation maps to appropriate
products of stable/unstable manifolds, critical points, diagonals and flow diagonals. 
By \cite{McDuffSalamon}*{Proposition 6.2.8}, the key difficulty will be to deal
with constant spheres. For this, 
we will need the following lemma about evaluation maps 
intersecting with the flow diagonals. 

\begin{lemma} \label{L:flow_diagonal_0}
Suppose $f_0 \colon \M_0 \to \Sigma$ and $f_1 \colon \M_1 \to \Sigma$ are
submersions.

Then,
\begin{align*}
    F \colon \M_0 \times \M_1 &\to \Sigma^3 \\
    (m_0, m_1) &\mapsto (f_0(m_0), f_1(m_1), f_1(m_1))
\end{align*}
is transverse to $\Delta_{f_\Sigma} \times \{ p \}$ for each point $p \in
\Sigma$.
\end{lemma}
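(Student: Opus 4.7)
The plan is to verify transversality pointwise as a direct linear-algebra check, since the lemma is essentially a statement about how the tangent space to the flow diagonal interacts with the ``diagonal'' subspace in the last two factors coming from the map $F$. First I would fix $(m_0, m_1)$ mapping into $\Delta_{f_\Sigma} \times \{p\}$, set $x = f_0(m_0)$, $y = f_1(m_1) = p$, and extract the data defining the intersection: a time $t > 0$ with $y = \varphi^t_{Z_\Sigma}(x)$, together with the key non-degeneracy $Z_\Sigma(y) \neq 0$ (automatic since $y \notin \Crit(f_\Sigma)$ by definition of $\Delta_{f_\Sigma}$).

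Next I would write down the two subspaces whose sum must fill $T_x\Sigma \oplus T_y\Sigma \oplus T_p\Sigma$. The tangent space to $\Delta_{f_\Sigma} \times \{p\}$ at $(x, y, p)$ consists of triples $(v, d\varphi^t_{Z_\Sigma}|_x(v) + s\, Z_\Sigma(y), 0)$ with $v \in T_x\Sigma$ and $s \in \R$. The image of $dF_{(m_0, m_1)}$, using that both $f_0$ and $f_1$ are submersions, is $\{(\alpha, \beta, \beta) : \alpha \in T_x\Sigma,\ \beta \in T_y\Sigma\}$.

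The concluding step is the spanning check: given an arbitrary $(a, b, c) \in T_x\Sigma \oplus T_y\Sigma \oplus T_p\Sigma$, I would choose $\beta = c$ to absorb the third factor, solve $d\varphi^t_{Z_\Sigma}|_x(v) + s\,Z_\Sigma(y) = b - c$ for some $v \in T_x\Sigma$ and $s \in \R$ (trivially possible because $d\varphi^t_{Z_\Sigma}|_x$ is already a linear isomorphism $T_x\Sigma \to T_y\Sigma$), and finally set $\alpha = a - v$. I do not anticipate any real obstacle: the lemma is a packaging of a standard fact about flow diagonals, and the only point worth emphasizing in the writeup is that the argument crucially uses $y \notin \Crit(f_\Sigma)$, which is built into the definition of $\Delta_{f_\Sigma}$ rather than an extra hypothesis one has to check.
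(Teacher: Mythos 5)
Your proposal is correct and follows essentially the same route as the paper: both identify the image of $dF$ via the surjectivity of $df_0, df_1$, and both supplement it with the subspace of $T\Delta_{f_\Sigma}$ coming from the linearized flow $d\varphi^t_{Z_\Sigma}$ (the paper parametrizes this as $E = \{(d\varphi^{-t}_{Z_\Sigma}(p)w, w)\}$, you parametrize it as $\{(v, d\varphi^t_{Z_\Sigma}(v))\}$ — identical subspaces). One small remark on the closing discussion: your spanning step takes $s = 0$, so $Z_\Sigma(y) \neq 0$ is never actually used there; that non-degeneracy is needed only for $\Delta_{f_\Sigma}$ to be a manifold with the stated tangent space, not for the transversality calculation, and the paper's proof likewise dispenses with the $Z_\Sigma$ direction entirely.
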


\begin{proof}
    Suppose $F(m_0, m_1) = (x, p, p) \in \Delta_{f_\Sigma} \times \{ p \}$.
    Then, there exists $t$ so that $
    \phi^t_{Z_\Sigma} ( x ) = \phi^t_{Z_\Sigma} ( f_0(m_0) ) = f_1(m_1)
    = p$. 

    Notice that 
    \[
        E \coloneqq \{ (d\phi_{Z_\Sigma}^{-t}(p) v, v) \, | \, v \in T_p \Sigma
        \} \subset T_{(x, p)}\Delta_{f_\Sigma}.
    \]
    For notational simplicity, we write $\Phi = d\phi_{Z_\Sigma}^{-t}(p)$.

    It follows then that 
    \begin{align*}
        &dF(m_0, m_1)\cdot T(\M_0 \times \M_1) + (E \oplus 0) \\
        &= \{ (df_0|_{m_0} v_0+\Phi w, df_1|_{m_1}v_1 + w, d{f_1}|_{m_1}v_1 ) \,
    | \, v_0 \in T_{m_0} \M_0, v_1 \in T_{m_1} \M_1, w \in T_p \Sigma \}  \\
     &= T\Sigma \oplus T\Sigma \oplus T\Sigma
    \end{align*}
    using the surjectivity of $df_0$, $df_1$.
    This then establishes the result, since $E \subset T_{(x, p)}\Delta_{f_\Sigma}.$
\end{proof}

From this, we now obtain the following:
\begin{lemma} \label{L:flow_diagonal}
Suppose $\M_0$ and $B$ are manifolds and there is a map
\[
\ev = (\ev_-,\ev_+) \colon \M_0 \to B \times \Sigma 
\]
that is transverse to $A\times pt$, for a submanifold $A$ of $B$ and for all points $pt \in \Sigma$. Suppose also that $\M_1$ is a manifold with a submersion $e\colon \M_1 \to \Sigma$.

Then the map
\begin{align*}
\hat \ev \colon \M_0 \times \M_1 &\to B\times \Sigma^3 \\
(m, n) &\mapsto (\ev_-(m), \ev_+(m), e(n), e(n))
\end{align*}
is transverse to $A \times \Delta_{f_\Sigma} \times \pt$, for all points $\pt \in \Sigma$.
\end{lemma}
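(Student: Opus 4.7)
The plan is to verify transversality pointwise by the same linear algebra idea used in Lemma \ref{L:flow_diagonal_0}: the key observation is that at any point $(x,p) \in \Delta_{f_\Sigma}$, the tangent space $T_{(x,p)}\Delta_{f_\Sigma}$ contains the subspace
\[
E = \{(\Phi w, w) \mid w \in T_p\Sigma\} \subset T_x\Sigma \oplus T_p\Sigma,
\]
where $\Phi = d\phi^{-t}_{Z_\Sigma}(p)$ for the (unique) $t > 0$ with $\phi^t_{Z_\Sigma}(x) = p$. Indeed, $E$ is the image of the differential of the local parametrization $w \mapsto (\phi^{-t}_{Z_\Sigma}(w), w)$ of a portion of $\Delta_{f_\Sigma}$.

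Fix a preimage $(m, n) \in \M_0 \times \M_1$ with $\hat\ev(m,n) = (a, x, p, p) \in A \times \Delta_{f_\Sigma} \times \{\pt\}$, so $p = \pt$. Given an arbitrary target vector $(b, \xi_1, \xi_2, \xi_3) \in T_aB \oplus T_x\Sigma \oplus T_p\Sigma \oplus T_p\Sigma$, the goal is to write it as
\[
d\hat\ev(v_0, v_1) + (a', \delta_1, \delta_2, 0)
\]
for some $v_0 \in T_m\M_0$, $v_1 \in T_n\M_1$, $a' \in T_aA$, and $\delta = (\delta_1, \delta_2) \in T_{(x,p)}\Delta_{f_\Sigma}$. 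Unwinding the definition of $\hat\ev$, this amounts to the system
\[
d\ev_-(v_0) + a' = b, \quad d\ev_+(v_0) + \delta_1 = \xi_1, \quad de(v_1) + \delta_2 = \xi_2, \quad de(v_1) = \xi_3.
\]

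The plan is to solve this in three stages. First, since $e$ is a submersion, pick $v_1 \in T_n\M_1$ with $de(v_1) = \xi_3$; the third equation then forces $\delta_2 = \xi_2 - \xi_3$. Second, using $E \subset T_{(x,p)}\Delta_{f_\Sigma}$, take $\delta$ to be the element $\bigl(\Phi(\xi_2 - \xi_3),\, \xi_2 - \xi_3\bigr) \in E$. The remaining two equations then become $d\ev_-(v_0) + a' = b$ and $d\ev_+(v_0) = \xi_1 - \Phi(\xi_2 - \xi_3)$, which together say precisely that the vector $(b, \xi_1 - \Phi(\xi_2 - \xi_3)) \in T_aB \oplus T_p\Sigma$ lies in $d\ev(T_m\M_0) + (T_aA \oplus 0)$. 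This is guaranteed by the hypothesis that $\ev$ is transverse to $A \times \{p\}$ at $m$, so we can choose $v_0$ and $a'$ accordingly.

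There is no real obstacle here; the argument is pure linear algebra once the containment $E \subset T\Delta_{f_\Sigma}$ is recorded. The statement is essentially a mild generalization of Lemma \ref{L:flow_diagonal_0}, where the hypothesis on $\ev$ is weakened from full submersion to transversality against the family $A \times \{\pt\}$, and the stronger-looking conclusion is absorbed by using the graph-like subspace $E$ rather than the full $T\Delta_{f_\Sigma}$.
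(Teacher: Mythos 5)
Your proof is correct and is essentially the same argument the paper uses: the paper applies Lemma \ref{L:flow_diagonal_0} with $f_0 = \ev_+$ and $f_1 = e$ (the hypothesis on $\ev$ forces $\ev_+$ to be a submersion at preimages of $A$), while you have inlined the linear algebra of that lemma using the same graph-like subspace $E \subset T\Delta_{f_\Sigma}$. A mild upside of your explicit version is that it makes clear the precise input is the \emph{joint} transversality of $(\ev_-,\ev_+)$ to $A \times \{\pt\}$; the paper's phrase ``the transversality of $\ev_-$ to $A$'' could be misread as the strictly weaker separate statement, which by itself would not suffice since the same $v_0 \in T\M_0$ must simultaneously control $d\ev_-(v_0)$ and $d\ev_+(v_0)$.
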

\begin{proof}
    We apply the previous Lemma, using $f_0 = \ev_+$ and $f_1 = e$. 
    Then, $\hat \ev(m, n) = (\ev_-(m), F(m, n))$. The transversality to $A
    \times \Delta_{f_\Sigma} \times \pt$ follows by the transversality of $F$
    to $\Delta_{f_\Sigma} \times \pt$ together with the transversality of
    $\ev_-$ to $A$.
\end{proof}

\begin{lemma}
    \label{L:non_trivial_pearls_submersion}

Let $N \ge 1$, and let $A_1, \dots, A_N$ be spherical homology classes in
$\Sigma$ and let $B$ be a spherical homology class in $X$.

Suppose that  $S \subset \Sigma^{2N-2}$ is obtained by taking the product of
some number of copies of $\Delta_{f_\Sigma} \subset \Sigma^2$ and of the
complementary number of copies of 
$\{ (p, p) \, | \, p \in \Crit(f_\Sigma) \} \subset \Sigma^2$, 
in arbitrary order.
Let $\Delta \subset \Sigma^2$ denote the diagonal. 

Then 
if $\sum_{i=1}^N A_i \ne 0$, 
the universal evaluation map 
\[
    \ev_\Sigma 
        \colon \M_{k, \Sigma}^*((A_1, \dots, A_N); \J^r_\Sigma) \to \Sigma^{2N} 
\]
is transverse to the submanifold 
$\{x\} \times S \times \{ y \}$ for all $x, y \in \Sigma$.

If $B \ne 0$, the universal evaluation map 
\begin{equation*}
    \ev_{X, \Sigma} 
        \colon \M_{k, (X, \Sigma)}^*((B; A_1, \dots, A_N); \J^r_W) 
                \to  X \times \Sigma^{2N+1} 
\end{equation*}
is transverse to the submanifold
$\{x\} \times \Delta \times S \times\{y\}$ for any $x \in X$, $y \in \Sigma$.
\end{lemma}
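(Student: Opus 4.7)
I will prove transversality pointwise by showing that, at any preimage point of the universal evaluation map, its linearization together with the tangent space of the constraint spans the whole target. For the first statement, decompose the normal bundle of $\{x\} \times S \times \{y\}$ in $\Sigma^{2N}$ as
\[
T_x\Sigma \ \oplus\ N_1 \oplus \cdots \oplus N_{N-1}\ \oplus\ T_y\Sigma,
\]
where $N_i \subset T\Sigma^2$ is the normal to the $i$-th factor of $S$, which is either $\Delta_{f_\Sigma}$ or a critical point pair $\{(p,p)\}$. The plan is to produce, for each summand, a variation of the data $(w_1, \ldots, w_N, J_\Sigma)$ whose image under $d\ev_\Sigma$ surjects onto that summand, and to arrange the variations for different summands to be superposable without interference.

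Two types of variation are available. First, for each non-constant sphere $w_j$ (i.e.\ $A_j \neq 0$), the simplicity hypothesis guarantees an injective point $z_j^\star \in \CP^1 \setminus \{0, \infty, z_1, \ldots, z_k\}$ not lying on the image of any other sphere. By the standard McDuff--Salamon argument (cf.\ \cite{McDuffSalamon}*{Propositions 3.2.1, 6.2.8}), variations of $J_\Sigma$ compactly supported in a small ball around $w_j(z_j^\star)$ produce variations $\delta w_j$ with $(\delta w_j(0), \delta w_j(\infty))$ filling out all of $T_{w_j(0)}\Sigma \oplus T_{w_j(\infty)}\Sigma$, while leaving the other $w_i$'s and their evaluations unchanged. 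Because distinct spheres have disjoint images (by simplicity), the supports for different non-constant spheres can be chosen pairwise disjoint. Second, for each constant sphere $w_j$, parametrized by a point $p_j \in \Sigma$, direct variation of $p_j$ produces variations $(\delta w_j(0), \delta w_j(\infty)) = (v, v)$ for arbitrary $v \in T_{p_j}\Sigma$, independently of all other data.

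These building blocks handle each normal summand separately: $T_x\Sigma$ (resp. $T_y\Sigma$) by variations of $w_1$ (resp.\ $w_N$); and each $N_i$ by the joint variation of $(w_i, w_{i+1})$. When both $w_i$ and $w_{i+1}$ are non-constant, the two sets of $J_\Sigma$-variations already surject onto all of $T\Sigma^2 \supset N_i$; when both are constant, independent point-variations of $p_i, p_{i+1}$ surject onto $T\Sigma^2 \supset N_i$; the mixed case, in which one of the two is constant and the other is non-constant, is precisely the abstract content of Lemma \ref{L:flow_diagonal}, applied with $\ev$ given by the non-constant sphere's evaluation and $e$ by the constant sphere's parametrization. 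Since the variations for distinct summands use either disjoint supports of $J_\Sigma$-perturbations or independent point-perturbations, they superpose to a single variation of the data whose image equals the full normal bundle, giving transversality.

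For the second statement, since $B \neq 0$ the sphere $v$ in $X$ is non-constant, and by the definition of simple chain of pearls with a sphere in $X$ its image is not contained in $\varphi(\U)$. Hence an injective point of $v$ lies in the open set $\V = X \setminus \overline{\varphi(\U)}$, so Proposition \ref{prop:jet_condition_evaluation} (with $k=1$) yields that $J_W$-variations supported in $\V$ near such a point make $(v(0), v(\infty))$ vary surjectively in $T_{v(0)}X \oplus T_{v(\infty)}\Sigma$ without affecting any $w_i$ or the almost complex structure on $\varphi(\U)$. This covers the $T_x X$ summand of the normal and also the $T\Sigma$ normal to $\Delta \subset \Sigma^2$ (using the independent variations of $v(\infty)$ and $w_1(0)$); the remainder of the argument proceeds exactly as in the first case. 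The main technical obstacle in both parts is to verify that these variations can genuinely be chosen independently across all the constraints simultaneously, which is precisely what the simplicity hypothesis and the containment $v(\CP^1) \not\subset \varphi(\U)$ guarantee, since they force the $J$-perturbations supported near distinct injective points in $\Sigma$ or in $\V$ to be mutually disjoint.
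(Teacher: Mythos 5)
Your strategy — pointwise transversality via enough variations — is sound, and you correctly identify both available kinds of variation ($J_\Sigma$-perturbations with disjoint supports near injective points of non-constant spheres, and point-variations for constant spheres) and cite the right lemmas. But the closing ``superposition'' step has a genuine gap: a constant sphere $w_j$ contributes only the one-parameter family $(\delta w_j(0),\delta w_j(\infty))=(v_j,v_j)$, which simultaneously perturbs \emph{both} adjacent constraints $S_{j-1}$ and $S_j$ (or an endpoint constraint $\{x\}$ or $\{y\}$). The variations that you assign to adjacent normal summands are therefore not independent when a constant sphere sits between them, so the images do not split as a direct sum across summands, and the assertion that the variations ``superpose to a single variation ... whose image equals the full normal bundle'' is not established by the independence claim you make. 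Likewise your appeal to Lemma \ref{L:flow_diagonal} for the mixed case is exactly the right tool, but that lemma takes as a hypothesis that the evaluation map of the remaining pearls is already transverse to $A\times\{\pt\}$, and you do not establish that hypothesis.

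The paper resolves precisely this coupling difficulty by inducting on $N$: using $\sum A_i\neq 0$ it chooses an end sphere so that the remaining $N-1$ classes still satisfy the hypothesis, applies the inductive hypothesis to get transversality of the shorter evaluation map, and then attaches the last sphere via \cite{McDuffSalamon}*{Proposition 3.4.2} if it is non-constant or via Lemma \ref{L:flow_diagonal} if it is constant. In effect, this turns the chain of dependencies created by constant spheres into a sequence of applications of Lemma \ref{L:flow_diagonal}, each with a transversality input supplied by the previous step. Your argument would be closed by running it in that inductive order, or by replacing the superposition assertion with an explicit linear-algebra check that the combined image of your variations plus the tangent space of $\{x\}\times S\times\{y\}$ spans $T\Sigma^{2N}$, propagating from a non-constant sphere outward and using the invertibility of $d\varphi^t_{Z_\Sigma}$.
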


\begin{proof}

    We consider the case of $\M_{k, \Sigma}^*$ in detail, since the argument
    is essentially the same for $\M_{k, (X, \Sigma)}^*$, though notationally
    more cumbersome.

    Suppose that $((v_1, \dots, v_N), J) \in \M_{k, \Sigma}^*((A_1, \dots,
    A_N); \J^r_\Sigma)$ is in the pre-image of $\{x\} \times S \times \{y\}$.
Write $S = S_1 \times S_2 \times \dots \times S_{N-1}$, where each
    $S_i \subset \Sigma^2$ is either the flow diagonal or the set of critical
    points. 

    Notice that the simplicity condition then requires that if some sphere
    $v_i$ is constant, $1 < i < N$, we must have that $S_{i-1}$ and $S_{i}$ are flow
    diagonals. If $v_1$ is constant, then $S_1$ is a flow diagonal and if
    $v_N$ is constant, $S_{N-1}$ is a flow diagonal.

    We will proceed by induction on $N$.  The case $N =1$ follows
    from \cite{McDuffSalamon}*{Proposition 3.4.2}.

    Now, for the inductive argument, we 
    suppose the result holds for any $S \subset \Sigma^{2(N-1)-2}$ of the
    form specified, and for any $k \ge 0$, for any collection of $N-1$ spherical
    classes, not all of which are zero.  

    Let now $A_1, \dots, A_N$ be spherical homology classes, not all of which are
    zero. 
Notice that each of these homology classes is represented by a
        $J_\Sigma$--holomorphic sphere, and thus has $\omega_\Sigma(A_i) \ge 0$
    for each $i$. 
In particular then, for such spherical classes, for any $1 \le a \le b \le N$,  $A_a, A_{a+1}\dots,
A_b$ are not all zero if and only if $\sum_{i=a}^b A_i \ne 0$.
    Then, at least one of $A_1, \dots, A_{N-1}$ or $A_2, \dots, A_N$ is a
    collection of spheres satisfying the hypotheses of the lemma. 
    For simplicity of notation,
    let us assume that $A_1+ \dots+ A_{N-1} \ne 0$. 
    Let $S_0 = S_1 \times S_2 \times \dots \times S_{N-2}$.
    Let $k=k_0+k_N$ where $k_N$ is
    the number of marked points we consider on the last sphere.
    By the induction hypothesis, we have that the evaluation map
    \[
                \M_{k_0, \Sigma}^*((A_1, \dots, A_{N-1}); \J^r_\Sigma) \to \Sigma^{2(N-1)} 
    \]
    is transverse to $\pt \times S_0 \times \pt$. Denote this map by
    $\ev_0$.

    Notice that $\M_{k, \Sigma}^*((A_1, \dots, A_N); \J^r_\Sigma) \subset 
    \M_{k_0, \Sigma}^*((A_1, \dots, A_{N-1}); \J^r_\Sigma) \times \M_{k_N,
    \Sigma}^*(A_N; \J^r_\Sigma)$. Let then 
    $\ev_N \colon \M_{k, \Sigma}^*((A_1, \dots, A_N); \J^r_\Sigma) \to
    \Sigma^{2}$ be the evaluation at $0$ and $\infty$ in the $N$-th sphere.
    We therefore have
    \[
        \ev_{\Sigma} \colon \M_{k, \Sigma}^*((A_1, \dots, A_N); \J^r_\Sigma) \to
        \Sigma^{2N} 
    \]
    given by $\ev_\Sigma = (\ev_0, \ev_N)$. 

    If $A_N \ne 0$, the result follows again from
    \cite{McDuffSalamon}*{Proposition 3.4.2}.

    If, instead, $A_N = 0$, we have from above that $S_{N-1} = \Delta_{f_\Sigma}$.
    Notice that the evaluation map of constant spheres on $\Sigma$ has image
    on the diagonal in $\Sigma \times \Sigma$. The result now follows by applying Lemma
    \ref{L:flow_diagonal}.  

    The case with a sphere in $X$ follows a nearly identical induction argument, 
    though the base case consists of a single sphere in $X$. The required
    submersion to $X \times \Sigma$ now follows from Proposition
    \ref{prop:jet_condition_evaluation}, and the induction proceeds as before.
\end{proof}

\begin{proposition}
Let $N \ge 0$.  
Suppose that  $S \subset \Sigma^{2N-2}$ is obtained by taking the product of
some number of copies of $\Delta_{f_\Sigma} \subset \Sigma^2$ and of the
complementary number of copies of 
$\{ (p, p) \, | \, p \in \Crit(f_\Sigma) \} \subset \Sigma^2$, 
in arbitrary order.

Let $\Delta \subset \Sigma \times \Sigma$ denote the diagonal and let
$\Delta_k$ denote the diagonal $\Sigma^k$ in $\Sigma^k \times \Sigma^k$.

Let $p, q$ be critical points of $f_\Sigma$ and let $x$ be a critical point of
$f_W$.

Then the universal evaluation maps together with augmentation evaluation maps
\begin{align*}
    \ev_\Sigma \times \ev^a_\Sigma \times \ev^a_\Sigma 
    &\colon \M_{k, \Sigma}^*((A_1, \dots, A_N); \J^r_\Sigma) 
    \times \M^*_X((B_1, \dots, B_k); \J^r_W) 
    \to \Sigma^{2N} \times \Sigma^k\times\Sigma^k  \\
    \ev_{X, \Sigma} \times \ev^a_\Sigma \times \ev^a_\Sigma
    &\colon \M_{k, (X, \Sigma)}^*((B; A_1, \dots, A_N); \J^r_W) 
            \times \M^*_X((B_1, \dots, B_k); \J^r_W) 
    \to  X \times \Sigma^{2N+1} \times \Sigma^k \times \Sigma^k
\end{align*}
are transverse to, respectively, 
\begin{align*}
    &W^{s}_\Sigma(q) \times S \times W^u_{\Sigma}(p) \times
    \Delta_k \\
&W^{u}_X(x) \times \Delta \times S \times W^u_\Sigma(p) \times 
    \Delta_k.
\end{align*}
  \label{ev Sigma submersion} 
\end{proposition}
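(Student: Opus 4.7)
The plan is to combine Lemma~\ref{L:non_trivial_pearls_submersion}, which gives transversality of the pearl evaluation to point constraints $\{x\}\times S\times\{y\}$ and $\{x\}\times\Delta\times S\times\{y\}$, with Proposition~\ref{prop:jet_condition_evaluation}, which provides the submersion property of the augmentation evaluation $\ev^a_\Sigma$ on $\M^*_X((B_1,\ldots,B_k);\J^r_W)$. The argument reduces to a piece of linear algebra at each point of the preimage.

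First I would reduce to point-constraint cases. The submanifolds $W^s_\Sigma(q)$, $W^u_\Sigma(p)$ and $W^u_X(x)$ pass through their anchoring critical points with tangent spaces, so the tangent space $T(\{x\}\times S\times\{y\}\times\Delta_k) = 0 \times TS \times 0 \times T\Delta_k$ is contained in $T(W^s_\Sigma(q)\times S\times W^u_\Sigma(p)\times\Delta_k)$ at every common point. Since enlarging the tangent space of the constraint only makes the transversality condition easier, transversality of the combined evaluation map to $\{x\}\times S\times\{y\}\times\Delta_k$ for every $x,y\in\Sigma$ immediately implies transversality to the submanifold constraint $W^s_\Sigma(q)\times S\times W^u_\Sigma(p)\times\Delta_k$. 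The analogous reduction works for the $X$-sphere case, replacing $\{x\}\subset X$ by $W^u_X(x)\subset X$.

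Next I would establish transversality to the point-constraint $\{x\}\times S\times\{y\}\times\Delta_k$ directly. Given a target tangent vector $(a,b,c)\in T\Sigma^{2N}\times T\Sigma^k\times T\Sigma^k$ and a preimage point, Lemma~\ref{L:non_trivial_pearls_submersion} produces a variation $v$ of the chain of pearls (including a variation of $J_\Sigma$) and a vector $\eta_S\in TS$ with $d\ev_\Sigma \cdot v + (0,\eta_S,0) = a$ in $T\Sigma^{2N}$. This $v$ also determines $\beta := d\ev^a_\Sigma|_{\M^*_{k,\Sigma}}\cdot v \in T\Sigma^k$. Setting $\xi := b - \beta$, Proposition~\ref{prop:jet_condition_evaluation} supplies a variation $w$ of the $X$-sphere moduli with $d\ev^a_\Sigma|_{\M^*_X}\cdot w = c - \xi$. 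Then the pair $(v,w)$ together with the constraint-side tangent vector $(0,\eta_S,0,\xi,\xi)$ reproduces $(a,b,c)$, establishing transversality. The case involving a sphere in $X$ is treated identically, invoking the version of Lemma~\ref{L:non_trivial_pearls_submersion} whose constraint already includes the diagonal $\Delta\subset\Sigma\times\Sigma$ encoding $v(\infty)=w_1(0)$.

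The conceptual heart will be that the diagonal $\Delta_k$ pairs the augmentation marked points on the chain of pearls with the $\infty$-evaluations of the $X$-augmentation spheres. The pearl-side augmentation evaluation can fail to be a submersion (for example at a constant sphere carrying an augmentation marked point, where the evaluation at that marked point is forced), but the $X$-side augmentation evaluation \emph{is} a submersion by Proposition~\ref{prop:jet_condition_evaluation}, and this freedom is exactly what absorbs the $\Delta_k$ coupling. No further analytic input is required beyond the two ingredients already cited.
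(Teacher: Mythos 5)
Your reduction to point constraints and your linear-algebra bookkeeping for absorbing $\Delta_k$ via the submersion $\ev^a_\Sigma\colon\M^*_X\to\Sigma^k$ is sound and essentially what the paper does: by Proposition~\ref{prop:jet_condition_evaluation} the $X$-side augmentation evaluation is free, so the proof reduces to transversality of $\ev_\Sigma$ alone to $W^s_\Sigma(q)\times S\times W^u_\Sigma(p)$. However, there is a genuine gap: you invoke Lemma~\ref{L:non_trivial_pearls_submersion}, whose hypothesis is $\sum_{i=1}^N A_i\ne 0$ (or $B\ne 0$ in the $X$-sphere case). You never address the remaining case where every $A_i=0$, i.e.\ the chain of pearls in $\Sigma$ consists entirely of constant spheres (each carrying at least one augmentation marked point, by stability).

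In that all-constant case your reduction to point constraints actually fails, not merely is unavailable: the evaluation at $(0,\infty)$ of a constant sphere lands on the diagonal of $\Sigma\times\Sigma$, so $\ev_\Sigma$ cannot be transverse to $\{x\}\times S\times\{y\}$ for arbitrary $x,y$ — the image of $d\ev_\Sigma$ together with $0\times TS\times 0$ is contained in a proper subspace of $T\Sigma^{2N}$. Transversality to the actual constraint $W^s_\Sigma(q)\times S\times W^u_\Sigma(p)$ still holds, but for a different reason: the evaluation factors through a map to distinct $N$-tuples of points in $\Sigma$, and transversality there is provided by the Morse--Smale condition on $(f_\Sigma,Z_\Sigma)$ — transversality of $W^s_\Sigma(q)\cap W^u_\Sigma(p)$ and the description of $T\Delta_{f_\Sigma}$ in terms of the linearized flow. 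You need to supply this separate argument for the all-constant configuration; without it the proposal does not cover all the moduli spaces in the statement.
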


\begin{proof}
We will consider only the first case, the second being analogous. 
    Notice first that by Proposition \ref{prop:jet_condition_evaluation}
    the augmentation evaluation map 
    \[
        \ev^a_\Sigma \colon \M^*_X((B_1, \dots, B_k); \J^r_W) \to \Sigma^k
    \]
    is a submersion. It suffices therefore to prove that
\begin{align*}
    \ev_\Sigma &\colon \M_{k, \Sigma}^*((A_1, \dots, A_N); \J^r_\Sigma) \to \Sigma^{2N} 
\end{align*}
is transverse to 
$
    W^{s}_\Sigma(q) \times S \times W^u_{\Sigma}(p) $.
    
    The proposition follows immediately if at least one of
    the $A_i, i=1, \dots, N$ is non-zero, or if we are considering the
    case of a chain of pearls with a sphere in $X$, by applying Lemma
    \ref{L:non_trivial_pearls_submersion}. 

    The only case then that must be examined is that of a chain of pearls entirely in
    $\Sigma$ with all spheres constant. In this case, the evaluation map from
    the moduli space $\M_{k, \Sigma}^*((0,0,\dots, 0), \J^r_W)$ factors through the evaluation map
    \[
        \{ (z_1, \dots, z_N ) \in \Sigma^N \, | \, z_i = z_j \implies i=j \}
        \times \J^r_W \to \Sigma^{2N}.
    \]

    Transversality follows from the Morse--Smale condition on the gradient-like vector field 
    $Z_\Sigma$. This gives that the intersection of $W^s_\Sigma(q)$ and
    $W^u_\Sigma(p)$ is transverse, and hence that the diagonal in $\Sigma \times
    \Sigma$ is transverse to $W^s_\Sigma(q) \times W^u_\Sigma(p)$, which is what we need when $N=1$.
    The case of $N \ge 2$ is similar, using the description of the tangent space to the flow diagonal at 
    $(x, y) \in \Delta_{f_\Sigma}$, such that $\varphi^t_{Z_\Sigma}(x) = y$ for some $t > 0$, as
    \[
    T_{(x,y)} \Delta_{f_\Sigma} = \{ (v, d\varphi^t_{Z_\Sigma}(x)v + c Z_\Sigma(y)) \, | \, v \in T_x \Sigma, c \in \R \} 
	\subset T_x \Sigma \oplus T_y \Sigma.
    \]
\end{proof}

Proposition \ref{ev Sigma submersion} can be combined with standard 
Sard--Smale arguments, the fact that $P\colon \J^r_W \to \J^r_\Sigma$ is an
open and surjective map and Taubes's method for passing to smooth almost
complex structures 
(see for instance \cite{McDuffSalamon}*{Theorem 6.2.6}) to give the following proposition:

\begin{proposition} \label{ev Sigma submersion bis}

There exist residual sets of almost complex structures 
$\J_W^{reg}\subset \J_W$ and $\J_\Sigma^{reg} = P( \J_W^{reg})$,
so that for fixed $J_W \in \J_W^{reg}$ and 
$J_\Sigma = P(J_W)$, 
the restrictions of 
the evaluation maps 
$\ev_\Sigma \times \ev^a_\Sigma \times \ev^a_\Sigma$ and $\ev_{X,\Sigma} \times \ev^a_\Sigma \times \ev^a_\Sigma$ 
to 
\begin{align*}
    &\M_{k, \Sigma}^*((A_1, \dots, A_N); J_\Sigma) \times \M_X^*((B_1, \dots, B_k); J_W) \quad \text{ and }\\
    &\M_{k, (X, \Sigma)}^*((B; A_1, \dots, A_N); J_W) \times \M_X^*((B_1, \dots, B_k); J_W),
\end{align*}
respectively,
are transverse to 
the submanifolds of Proposition \ref{ev Sigma submersion}.  

\end{proposition}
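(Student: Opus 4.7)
The plan is to derive this statement from the universal transversality result of Proposition \ref{ev Sigma submersion} by a standard Sard--Smale argument, combined with a Taubes-type trick to pass from $C^r$ to smooth almost complex structures, and then exploiting the openness and surjectivity of $P\colon \J_W \to \J_\Sigma$.

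First, I would observe that Proposition \ref{ev Sigma submersion} says that the universal maps
\[
\widetilde{\ev} \colon \M_{k,\Sigma}^*((A_1,\dots,A_N);\J^r_\Sigma) \times \M_X^*((B_1,\dots,B_k);\J^r_W) \to \Sigma^{2N+2k}
\]
and its analogue with a sphere in $X$ are transverse to the specified submanifolds $W^s_\Sigma(q)\times S\times W^u_\Sigma(p)\times \Delta_k$ (respectively $W^u_X(x)\times\Delta\times S\times W^u_\Sigma(p)\times \Delta_k$). By the implicit function theorem in Banach spaces, the preimage under $\widetilde{\ev}$ of each such submanifold is a Banach submanifold of the universal moduli space. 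The projection from this preimage to $\J^r_W$ is a Fredholm map, since on each fiber it is the finite-dimensional evaluation/intersection problem considered above (Proposition \ref{cascades form manifold}-type index computations apply). Applying the Sard--Smale theorem, for each fixed tuple of homology classes and fixed choice of $(p,q,x,S)$, there is a residual set of $J_W\in \J^r_W$ for which the restricted evaluation map at that $J_W$ is transverse to the corresponding submanifold. We must take $r$ large enough so that the Sard--Smale regularity condition $r>\max\{0,\operatorname{ind}\}$ holds; here we invoke Proposition \ref{prop:jet_condition_evaluation}, which already required $r$ large relative to the orders of contact.

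Next, since the set of tuples of spherical homology classes $(A_1,\dots,A_N)$, $(B;A_1,\dots,A_N)$, $(B_1,\dots,B_k)$, the sets $\Crit(f_\Sigma)$, $\Crit(f_W)$, and the choices of flow-diagonal/critical-point factor for $S$ are all countable, the intersection of the corresponding countable family of residual subsets is still residual in $\J^r_W$. This yields a residual set $\J_W^{r,reg}\subset \J^r_W$ for which \emph{all} of the finitely-many-at-a-time transversality statements hold simultaneously.

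To pass from $C^r$-regularity to $C^\infty$-regularity I would use the Taubes argument exactly as in \cite{McDuffSalamon}*{proof of Theorem 6.2.6 and Theorem 3.1.6(ii)}: define $\J_W^{reg}$ to be the set of smooth $J_W\in\J_W$ at which transversality holds for all of the finitely many moduli spaces involving spheres whose energies are at most some fixed bound $c$; this is a countable intersection (over $c\to\infty$) of sets that are each open and dense in $\J_W$, by openness of transversality on compact sets combined with the $C^r$-result. Finally, Definition \ref{def:J_W} tells us that $P\colon \J_W\to\J_\Sigma$ is open and surjective, so $\J_\Sigma^{reg}\coloneqq P(\J_W^{reg})$ is residual in $\J_\Sigma$, and by construction, for $J_W\in\J_W^{reg}$ with $J_\Sigma=P(J_W)$ the required transversality for the restricted evaluation maps holds. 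The main obstacle is really the bookkeeping for the Taubes passage to smooth almost complex structures while keeping the compatibility under the map $P$; this is handled by applying the Taubes construction in $\J_W$ (so that $P$-compatibility is automatic) rather than separately in $\J_\Sigma$ and $\J_W$.
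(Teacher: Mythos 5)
Your proposal takes essentially the same approach as the paper: the paper itself gives no detailed proof of Proposition \ref{ev Sigma submersion bis}, stating only that it follows from Proposition \ref{ev Sigma submersion} together with ``standard Sard--Smale arguments, the fact that $P\colon \J^r_W \to \J^r_\Sigma$ is an open and surjective map and Taubes's method,'' which is exactly the chain of reasoning you spell out. The one place where both you and the paper are terse is the inference that $P(\J_W^{reg})$ is residual from openness and surjectivity of $P$ alone --- this is not a general topological fact, and properly it rests on the affine (locally product-like) structure of the fibres $P^{-1}(J_\Sigma)$ described in Definition \ref{def:J_W}, which lets one argue fibrewise (Kuratowski--Ulam style); but since the paper's sketch makes the same leap, this is not a discrepancy with the intended argument.
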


The transversality statement of the main result of this section, 
Proposition \ref{necklaces are regular}
now follows. The dimension formulas follow from usual index arguments,
combining Riemann--Roch with contributions from the constraints imposed by 
the evaluation maps.

\subsection{Transversality for spheres in \texorpdfstring{$X$}{X} with order of contact constraints in \texorpdfstring{$\Sigma$}{Sigma}} 
\label {S:transverse contact}

We will now consider transversality for a chain of pearls with a sphere in $X$.
We will extend the results from 
Section 6 in \cite{CieliebakMohnkeTransversality}. In that paper, Cieliebak and Mohnke prove that the moduli space of simple
curves not contained in $\Sigma$,
 with a condition on the order of contact with $\Sigma$,
 can be made transverse by a perturbation of the almost 
complex structure away from $\Sigma$. We will extend this result to show that additionally the 
evaluation map to $\Sigma$ at the point of contact can be made transverse. This can be useful, for instance, 
to define relative Gromov--Witten invariants with constraints on homology classes in $\Sigma$. 

Recall that $\Sigma$ is a symplectic divisor and $N\Sigma$ is its symplectic normal
bundle equipped with a Hermitian structure. 
Keeping in mind the discussion in Section \ref{S:setup}  (in particular the identification of 
$X\setminus \Sigma$ with $W$ in Lemma \ref{planes = spheres}), we will by an abuse of notation 
identify an almost complex struture on $W$ with the corresponding almost complex structure on $X$. 
We have fixed a symplectic neighbourhood
$\varphi \colon \U \to X$ where $\varphi \colon \overline{\U} \to X$ is an embedding. 
From Definition \ref{def:J_W}, we require that all $J_X \in \J_W$ 
have that $J_X$ is standard in the image $\varphi(\U) \subset X$ of this
neighbourhood.

Fix an almost complex structure $J_0 \in \J_W$. We may suppose that $P(J_0) \in
\J_\Sigma$ is an almost complex structure in the residual set $\J_\Sigma^{reg}$ given by
Proposition \ref{necklaces are regular}, though this isn't strictly speaking
necessary.

Let $\V \coloneqq X \setminus \varphi(\overline{\U})$. Following Cieliebak-Mohnke
\cite{CieliebakMohnkeTransversality}, let
$\J(\V)$ be the set of all almost complex structures on $X$ compatible with $\omega$
that are equal to $J_0$ on $\varphi(\U)$. Similarly, we will let $\J^r(\V)$ be
the compatible almost complex structures of $C^r$ regularity.

To define the order of contact, consider an almost complex 
structure $J_X \in \J_W$ and a $J_X$-holomorphic sphere $f \colon \CP^1 \to X$
with $f(0) \in \Sigma$, an isolated intersection.
Choose coordinates $s+it = z\in\C$ on the domain and local coordinates
near $f(0)\in \Sigma$ on the target, such that $f(0)\in \Sigma \subset X$
corresponds to $0\in \C^{n-1} = \C^{n-1}\times \{0\} \subset \C^{n-1}
\times \C$. Write $\pi_\C \colon \C^n\to \C$ for projection onto the last
coordinate (which is to be thought of as normal to $\Sigma$). Assume
also that $J_X(0) = i$.
Then, $f$ has contact of order $l$ at $0$ if the vector of all partial derivatives
of orders 1 through $l$ (denoted by $d^l f(0)$) has trivial projection to $\C$. 
We can write this condition as $d^l f(0) \in T_{f(0)}\Sigma$. 
We define then the order of contact at an arbitrary point in $\CP^1$ by 
precomposing with a M\"obius transformation.
(This is well-defined, by \cite{CieliebakMohnkeTransversality}*{Lemma 6.4}.)

Define the space of simple pseudoholomorphic maps into $X$ that have 
order of contact $l$ at $\infty$ to a point in $\Sigma$ to be
\[
\begin{split}
\M^*_{\infty, l, (X, \Sigma)}(\J_W) 
\coloneqq
    \{(f,J_X) \in\, & W^{m,p}(\CP^1, X) \times \J_W
                \, | \, 
                    \overline\partial_{J_X} f = 0, \\
                    &f(\infty) \in \Sigma, \,
                                            d^l f(\infty) \in T_{f(\infty)}\Sigma, \\
                    & f \text{ simple}, f^{-1} (\V) \neq \emptyset \} 
\end{split}
\]
where 
we require $m \ge l+2$.  
Note that our notation differs somewhat from the notation in
\cite{CieliebakMohnkeTransversality}. 

In this section, we need to have a higher regularity on our Sobolev spaces 
to make sense of the order of contact condition. For the remaining moduli
spaces, for simplicity of notation, we have taken $m=1$, where this is not a
problem. Notice that by elliptic regularity, the moduli spaces themselves are
manifolds of smooth maps, and are independent of the choice of $m$. This only affects the
classes of deformations we consider in setting up the Fredholm theory.

In this section, we will prove Proposition \ref{prop:jet_condition_evaluation}, which was stated and
used above:
\propJetConditionEvaluation*

Notice that it suffices to prove this when considering only pairs $(f, J_X)
\in 
\M^*_X( (B_0); \J^r_W)$ with the additional condition that $J_X \in \J^r(\V)$.

We also observe 
that if $l = B_0 \bullet \Sigma$, we have that 
$\M^*_X((B_0); \J_W) \subset \M^*_{\infty, k, (X, \Sigma)}(\J_W)$ for each $k
\le l$.
Furthermore, 
$\M^*_X((B_0); \J_W)$ is a connected component of  $\M^*_{\infty, l, (X, \Sigma)}(\J_W)$. 
This observation will enable us to obtain the result by inducting on $k$. 

The proposition will follow by a modification of the proof given in 
\cite{CieliebakMohnkeTransversality}*{Section 6}. Instead of reproducing their
proof, we indicate the necessary modifications. In order to be as consistent as possible
with their notation, we consider the point of contact with $\Sigma$ to be at $0$. 

Consider a $J_X$-holomorphic map $f \colon \CP^1\to X$ such that $f(0)\in
\Sigma$ with order of contact $l$. In the notation of
\cite{CieliebakMohnkeTransversality}, we are interested in the case
of only one component $Z=\Sigma$.
We will obtain transversality
of the evaluation map at $0$ by varying $J_X$ freely in the complement
of our chosen neighbourhood of the divisor, $\V = X \setminus
\varphi(\overline \U)$.

The linearized Cauchy--Riemann operator at $f$ with respect to a torsion-free connection is 
$$
(D_f \xi)(z)  = \nabla_s \xi(z) + J_X(f(z))\nabla_t\xi(z) + (\nabla_{\xi(z)} J_X(f(z))) \, f_t(z).
$$
At a coordinate chart around $z=0$, we can specialize to the standard Euclidean connection in $\R^{2n} = \C^n$ 
(which preserves $\C^{n-1}$ along $\C^{n-1}$), we get
$$
(D_f \xi)(z)  = \xi_s(z) + J_X(f(z))\xi_t(z) + A(z) \xi(z)
$$
where 
$$
A(z) \xi(z) = (D_{\xi(z)} J_X(f(z))) \, f_t(z)
$$
(see also page 317 in \cite{CieliebakMohnkeTransversality}). 

We need the following adaptation of Corollary 6.2 in \cite{CieliebakMohnkeTransversality}. 

\begin{lemma}
Suppose 
$(f, J_X) \in \M^*_{\infty, l, (X, \Sigma)}(\J^r_W)$ with $J_X \in \J^r(\V)$, $r \ge m$.

After choosing local coordinates, suppose $f(0) \in \Sigma$ and in coordinates
around $f(0)$, $\Sigma$ is mapped to $\C^{n-1}$ and is thus preserved by
$J_X$.

Denote the unit disk by $D^2$ and let $\xi \colon (D^2,0) \to (\C^n,0)$ be such that $D_f\xi = 0$. 
Given $0<k\leq l$, if $\xi(0)\in \C^{n-1}$, 
$d^{k-1}\xi(0) \in \C^{n-1}$ and $\frac{\partial^k \xi}{ \partial s^k}(0) \in \C^{n-1}$, then $d^{k}\xi(0)\in \C^{n-1}$. 
\label{bootstrap}
\end{lemma}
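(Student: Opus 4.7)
The plan is to induct on $b$ for $0 \le b \le k$, showing at each stage that $\partial_s^{k-b}\partial_t^b \xi(0) \in \C^{n-1}$; the base case $b=0$ is hypothesis (iii), and collecting the cases $b = 0, 1, \ldots, k$ yields $d^k \xi(0) \in \C^{n-1}$.

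For the inductive step with $b \ge 1$, apply the operator $\partial_s^{k-b}\partial_t^{b-1}$ to the linearized Cauchy--Riemann equation
\[
\xi_s + J_X(f(z))\xi_t + A(z)\xi(z) = 0,
\]
evaluate at $z = 0$, and project by $\pi_\C$. The first term becomes $\pi_\C\partial_s^{k-b+1}\partial_t^{b-1}\xi(0)$, which vanishes by the inductive hypothesis at $b-1$. The Leibniz expansion of the middle term isolates the leading contribution $J_X(0)\,\partial_s^{k-b}\partial_t^b \xi(0)$, with the subleading terms of the form $\partial^{\beta'}(J_X(f))(0) \cdot \partial^{\beta''}\xi_t(0)$ for $\beta' \ne 0$ and hence $|\beta''| \le k-2$; each such subleading term therefore involves a derivative of $\xi$ of order at most $k-1$, lying in $\C^{n-1}$ by hypothesis (ii).

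Two structural observations kill the $\pi_\C$-projection of every subleading term. First, the order-of-contact condition gives $\partial^\gamma f(0) \in \C^{n-1}$ for every $|\gamma| \le l$, so any chain-rule expansion of $\partial^{\beta'}(J_X(f))(0)$ with $|\beta'| \le k-1 \le l-1$ only evaluates derivatives of $J_X$ on tangential vectors. Second, since $J_X$ preserves $\C^{n-1}$ along $\Sigma$, the lower-left block of $J_X(p)$ is identically zero on $\Sigma$, and this vanishing is inherited by every tangential derivative of $J_X$ at $0$. Combining these shows each matrix $\partial^{\beta'}(J_X(f))(0)$ preserves $\C^{n-1}$, so paired with $\partial^{\beta''}\xi_t(0) \in \C^{n-1}$ it produces a vector in $\C^{n-1}$. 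A parallel analysis of $A\xi = \sum_i \xi^i\,(\partial_i J_X)(f)\cdot f_t$ kills its $\pi_\C$-projection as well: normal-index contributions vanish because all derivatives of $\xi^i$ of order $\le k - 1$ at $0$ are zero, while tangential-index contributions factor through matrices preserving $\C^{n-1}$ acting on $\partial^\gamma f_t(0) \in \C^{n-1}$ (since $|\gamma| + 1 \le k \le l$).

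The identity $\pi_\C(J_X(0)\,\partial_s^{k-b}\partial_t^b\xi(0)) = 0$ then follows. Since $J_X(0)$ preserves $\C^{n-1}$ and induces an isomorphism on the quotient $\C^n/\C^{n-1}$, this forces $\partial_s^{k-b}\partial_t^b\xi(0) \in \C^{n-1}$, closing the induction. The main obstacle is the combinatorial bookkeeping of the Leibniz expansions at $z=0$: one must verify that every subleading monomial, after chain-rule expansion, factors through a matrix preserving $\C^{n-1}$ applied to a vector in $\C^{n-1}$, which requires simultaneously invoking the order-of-contact condition on $f$ and the $\Sigma$-invariance of $J_X$.
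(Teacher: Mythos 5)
Your proposal is correct and takes essentially the same route as the paper: induct on the number of $t$-derivatives, differentiate the linearized equation, apply Leibniz, and close the induction using that $J_X$ and its tangential derivatives preserve $\C^{n-1}$ together with the order-of-contact condition $d^l f(0) \in T_{f(0)}\Sigma$. The only cosmetic difference is that the paper first solves the equation for $D^{(k-i,i)}\xi$ (so the conclusion reads as ``$J_X$ applied to a tangent vector''), whereas you project the unrearranged identity by $\pi_\C$ and invoke invertibility of $J_X(0)$ on the normal quotient at the end; these are the same argument.
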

\begin{proof} We need to show that $\frac{\partial^k \xi}{\partial s^{k-i} \partial t^i}(0) \in \C^{n-1}$ for all 
$0\leq i \leq k$. It will be convenient to use multi-index notation for partial derivatives, and denote the previous 
expression by $D^{(k-i,i)}\xi(0)$. The case $i = 0$ is part of the hypotheses of the Lemma. 
For the induction step, note that $D_f\xi = 0$ combined with the product rule implies that 
$$
D^{(k-i,i)}\xi(z) = J_X(f(z)) \Big(D^{(k-i+1,i-1)} \xi(z) + \sum_{\alpha,\beta}
D^\alpha(J_X(f(z)) D^{\beta}\xi(z) + \sum_{\alpha',\beta'} D^{\alpha'}A(z)
D^{\beta'}\xi(z) \Big). %
$$
Here, $\alpha$ and $\beta$ are multi-indices such that $\alpha=(a_1,a_2)$ for $0\leq a_1\leq k-i, 0\leq a_2 \leq i-1, \alpha \neq (0,0)$ and $\alpha + \beta = (k-i,i)$. Similarly, $\alpha'$ and $\beta'$ are multi-indices such that $\alpha'=(a_1',a_2')$ for $0\leq a_1'\leq k-i, 0\leq a_2' \leq i-1$ and $\alpha' + \beta' = (k-i,i-1)$. 
The hypotheses of the Lemma and the induction hypothesis implie that the derivatives of $\xi$ on the right hand side take values in $T_{f(0)}\Sigma$. The fact that $J_X$ and $\nabla$ preserve $\C^{n-1}$ along $\C^{n-1}$, and that $d^l f(0)\in T_{f(0)}\Sigma$, implies the induction step. 
\end{proof}

We now prove the key property of the linearized evaluation map: 
\begin{proposition}
 For $m - 2/p>l$, $r \ge m$, the universal evaluation map 
 \begin{align*}
\ev_{X,\Sigma}
\colon  \M^*_{\infty, l, (X, \Sigma)}(\J^r_W)
&\to \Sigma \\
(f,J_X) & \mapsto f(0)
 \end{align*}
is a submersion. 
\end{proposition}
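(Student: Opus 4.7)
The plan is to extend the transversality argument of Cieliebak--Mohnke~\cite{CieliebakMohnkeTransversality}*{Section 6}, which shows that $\M^*_{\infty,l,(X,\Sigma)}(\J^r_W)$ is a Banach manifold, by augmenting the linearized Cauchy--Riemann operator with the evaluation at $0$. For a fixed $(f, J_X)$ in this moduli space, I will consider the augmented operator
\[
    \widetilde L: (\xi, Y) \mapsto \bigl(D_f \xi + \tfrac{1}{2} Y \circ df \circ j,\ \xi(0)\bigr),
\]
where $\xi \in W^{m,p}(\CP^1, f^*TX)$ satisfies the linearized order of contact condition $d^k\xi(0) \in T_{f(0)}\Sigma$ for $0 \le k \le l$ and $Y$ is a $C^r$ perturbation of $J_X$ anti-commuting with $J_X$ and compactly supported in $\V$. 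Surjectivity of $\widetilde L$ onto $L^p(\Lambda^{0,1}\otimes f^*TX) \oplus T_{f(0)}\Sigma$ immediately gives the submersion property of $d\ev_{X,\Sigma}$ at $(f,J_X)$, since one can then find a pair in the kernel of the first component with prescribed value $\xi(0)=v$ for any $v \in T_{f(0)}\Sigma$.

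Surjectivity will be established by showing that the cokernel is trivial. A cokernel element is a pair $(\eta, \alpha) \in L^q(\Lambda^{0,1}\otimes f^*TX)^* \oplus T_{f(0)}\Sigma^*$ satisfying $\langle \eta, L(\xi,Y)\rangle + \alpha(\xi(0)) = 0$ on the whole domain of $\widetilde L$. First I test against pairs $(0, Y)$, which reduces to the standard argument of Cieliebak--Mohnke: since $f^{-1}(\V) \neq \emptyset$ and $f$ is simple, at an injective point $z_0 \in f^{-1}(\V)$ the perturbations $Y$ supported near $f(z_0)$ generate all anti-linear endomorphisms of the tangent space there, forcing $\eta$ to vanish on $f^{-1}(\V)$. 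Then I test against pairs $(\xi, 0)$ with $\xi$ smooth and compactly supported in $\CP^1\setminus\{0\}$ (which trivially satisfies the linearized order of contact since all jets at $0$ vanish), which yields $D_f^*\eta = 0$ weakly on $\CP^1\setminus\{0\}$; by elliptic regularity and unique continuation for the formal adjoint of a Cauchy--Riemann operator, $\eta$ vanishes identically starting from its zero set on the open set $f^{-1}(\V)$.

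With $\eta \equiv 0$, the cokernel equation collapses to $\alpha(\xi(0)) = 0$ for every admissible $\xi$, and it remains to realize arbitrary vectors $v \in T_{f(0)}\Sigma$ as $\xi(0)$ for some admissible $\xi$. For this, I take local coordinates near $f(0)$ in which $\Sigma = \C^{n-1}\times\{0\}$, a chart around $0\in\CP^1$, and a smooth cutoff $\chi$ identically $1$ near $0$ with small support, and set $\xi_v := \chi\cdot v$ (with $v$ viewed as a constant vector in the coordinate chart on $X$, extended by zero). Then $\xi_v(0) = v \in T_{f(0)}\Sigma$ and all higher derivatives of $\xi_v$ at $0$ vanish, so $\xi_v$ is admissible. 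Hence $\alpha(v) = 0$ for arbitrary $v\in T_{f(0)}\Sigma$, forcing $\alpha = 0$. The hardest step in this plan is the unique continuation for $D_f^*\eta$, which must tolerate the jet constraint imposed on $\xi$ near $0$; restricting to test sections vanishing in a neighborhood of $0$ neatly sidesteps this issue, which is consistent since $f^{-1}(\V)$ is open and disjoint from $\{0\}$.
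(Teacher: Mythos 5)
Your proof is correct, but it takes a genuinely different route from the paper's. You run a cokernel/annihilator argument: augment the linearized operator with the evaluation at $0$, show the annihilator $(\eta,\alpha)$ must vanish by (a) using perturbations of $J_X$ in $\V$ to force $\eta = 0$ on $f^{-1}(\V)$, (b) using test sections supported away from $0$ to get $D_f^*\eta = 0$ on the punctured sphere, whence elliptic regularity and unique continuation give $\eta \equiv 0$, and (c) constructing admissible $\xi_v$ with $\xi_v(0) = v$ and vanishing higher jets to kill $\alpha$. The paper instead argues by induction on the jet order $k$: the base case $k=0$ is the standard point-evaluation submersion \cite{McDuffSalamon}*{Proposition 3.4.2}, and the inductive step corrects the $k$-th jet of the section obtained at order $k-1$ by adding a localized polynomial term $\tilde\xi$ and then invoking \cite{CieliebakMohnkeTransversality}*{Lemma 6.6} to solve the resulting inhomogeneous equation with vanishing jets, closing with the bootstrap Lemma \ref{bootstrap} to verify the corrected section satisfies the full order-of-contact condition. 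Both approaches rest on the same Cieliebak--Mohnke Fredholm framework for jet-constrained operators; yours is more uniform in $l$ and avoids the induction, while the paper's construction exhibits the correcting section explicitly and reuses CM's Lemma 6.6 as a black box. Your observation that restricting to test sections supported away from $0$ sidesteps the interference between the adjoint equation and the jet constraint is exactly the right point, and the passage from $\eta = 0$ on $\CP^1\setminus\{0\}$ to $\eta = 0$ a.e. is legitimate since $\{0\}$ has measure zero. One small logical ordering remark: to conclude pointwise vanishing of $\eta$ at an injective point from the $(0,Y)$ test, you should first secure regularity of $\eta$ via the $(\xi,0)$ test; as written you present them in the opposite order, but since both tests are available simultaneously, the combined argument delivers the stated conclusion.
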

\begin{proof}
    We show that for every $0 \le k \le l$, and $(f, J_X) 
    \in \M^*_{\infty, k, (X, \Sigma)}(\J^r(\V))$,
 \begin{align*}
     (d\ev_{X,\Sigma})_{(f,J_X)}\colon T_{(f,J_X)} \M^*_{\infty, k, (X, \Sigma)}(\J^r(\V))
  &\to T_{f(0)}\Sigma \\
  (\xi,Y) &\mapsto \xi(0)
 \end{align*}
is surjective. By Lemma 6.5 in \cite{CieliebakMohnkeTransversality}, 
\begin{align*}
T_{(f,J_X)}
\M^*_{\infty, k, (X, \Sigma)}(\J^r(\V))
= \{(\xi,Y) \in &T_f W^{m,p}(\CP^1, X)\times T_{J_X}\J^r(\V) \, | \, \\
            &D_f\xi + \frac{1}{2}Y(f) \circ df \circ j = 0,\\
            &\xi(0) \in T_{f(0)}\Sigma, d^k \xi(0) \in T_{f(0)}\Sigma \}.
\end{align*}
We argue by induction on $k$. The case $k=0$ is a special case of Proposition 3.4.2 in \cite{McDuffSalamon}. We assume that the claim is true for $k-1$ and prove it for $k$. 

Take any $v\in T_{f(0)}\Sigma$. By induction, there is $(\xi_1,Y_1) \in 
T_{(f,J_X)}
\M^*_{\infty, k-1, (X, \Sigma)}(\J^r(\V))
$ such that $(d\ev_{X,\Sigma})_{(f,J_X)}(\xi_1,Y_1) = v$ and $d^{k-1} \xi_1(0) \in T_{f(0)}\Sigma$. Let now $\tilde \xi \in T_f W^{m,p}(\CP^1, X)$ be given by
$$
\tilde \xi(z) = -\frac{z^k}{k!}\beta(z) \,\pi_{\C} \left(\frac{\partial^{k}}{\partial s^k} \xi_1\right)(0)
$$
where $\beta \colon \C\to [0,1]$ is a smooth function that is identically 1 near 0 and has compact support contained in $\C \setminus f^{-1}(\V)$. Writing 
$$
(D_f \xi)(z)  = \xi_s(z) + i \xi_t(z) + (J_X(f(z)) - i) \xi_t(z) + A(z) \xi(z)
$$
we have $(D_f \tilde \xi)(0) = 0$ and $d^{k-1}(D_f \tilde \xi)(0) = 0$ (this follows the fact that $\tilde \xi_s + i \tilde \xi_t \equiv 0$ near 0). By Lemma 6.6 in \cite{CieliebakMohnkeTransversality}, there is $(\hat \xi,\hat Y) \in T_f W^{m,p}(\CP^1, X)\times T_{J_X}\J(\V)$ such that $\hat \xi(0) = 0$, $d^k(\hat \xi)(0) = 0$ and 
$$
D_f\hat\xi + \frac{1}{2}\hat Y(f) \circ df \circ j = - D_f \tilde \xi .
$$
Let now $\xi_2 = \xi_1 + \tilde \xi + \hat \xi$ and $Y_2 = Y_1 + \hat Y$. We have 
$$
D_f \xi_2 + \frac{1}{2} Y_2(f) \circ df \circ j = 0  
$$
as well as $\xi_2(0) = v$, $d^{k-1}(\xi_2)(0) \in T_{f(0)}\Sigma$ and $\pi_\C \left(\frac{\partial^{k}}{\partial s^k} \xi_2\right)(0) = 0$. Lemma \ref{bootstrap} implies that $d^{k}(\xi_2)(0) \in T_{f(0)}\Sigma$, hence $(\xi_2,Y_2)\in T_{(f,J_X)} \M^*_{\infty, l, (X, \Sigma)}(\J^r_W)$.
This completes the proof. 
\end{proof}

Observe now that by combining this with standard arguments (see, for
instance, \cite{McDuffSalamon}*{Proposition 3.4.2}, which is also used in the proof of Proposition \ref{necklaces are regular} above), 
we obtain the transversality for the evaluation at a point, taking values
in $X$. This finishes the proof of Proposition \ref{prop:jet_condition_evaluation}.

\subsection{Proof of Proposition \ref{cascades form manifold}}
\label{S:proof}

We are now ready to complete the proof of Proposition \ref{cascades form
manifold}. To this end, we will show that the transversality problem for a
cascade reduces to the already solved transversality problem for chains of
pearls. The two key ingredients of this are the splitting of the linearized
operator given by Lemma \ref{L:linearizationFloer} and a careful study of the
flow-diagonal in $Y \times Y$.

Recall from Definition \ref{def:J_W} that 
$\J_Y$ denotes the space of compatible, cylindrical, Reeb--invariant almost
complex structures on $\R \times Y$. These are obtained as lifts of the almost
complex structures in $\J_\Sigma$.  
Let $\J_Y^{reg}$
be the set of almost complex structures on $\R \times Y$ that are lifts of
the almost complex structures in $\J_\Sigma^{reg}$ (see Proposition \ref{ev Sigma submersion bis}).  

Recall from Definition \ref{def:J_W} and from Proposition \ref{prop:Xacs},
if $J_W \in \J_W$ is an almost
complex structure on $W$ that is of the type we consider, it induces an
almost complex structure $P(J_W) = J_\Sigma \in \J_\Sigma$. The restriction of
$J_W$ to the cylindrical end of $W$, $J_Y$, is then a translation and
Reeb-flow invariant almost complex structure on $\R \times Y$ that has
$d\pi_\Sigma J_Y = J_\Sigma d\pi_\Sigma$.

Recall that the biholomorphism $\psi \colon W \to X \setminus \Sigma$ given in 
Lemma \ref{planes = spheres} allows us to identify holomorphic planes in $W$ with 
holomorphic spheres in $X$. In the following, we will suppress the distinction when convenient.

Recall also that by the definition
of an admissible Hamiltonian (Definition \ref{def:J_shaped}), for
each non-negative integer $m$, there exists a unique $b_m$ so that
$h'(\e^{b_m}) = m$. Then $Y_m = \{ b_m \} \times Y \subset \R \times Y$
is the corresponding Morse--Bott family of 1-periodic Hamiltonian orbits
that wind $m$ times around the fibre of $Y \to \Sigma$.

We now define moduli spaces of Floer cylinders, from which we will extract
the moduli spaces of cascades by imposing the gradient flow-line conditions.
First, we define the moduli spaces relevant for the differential connecting two
generators in $\R \times Y$. Then, we will define the moduli spaces relevant for
the differential connecting to a critical point in $W$.

\begin{definition}
Let $N \ge 1$, let $A_1, \dots, A_N \in H_2(\Sigma; \Z)$ be spherical homology
classes. Let $J_Y \in \J_Y$.

Define $\MM_{H, k, \R \times Y; k_-, k_+}^*((A_1, \dots, A_N); J_Y )$ 
to be a set of tuples of 
punctured cylinders $ (\tilde v_1, \ldots, \tilde v_N)$ with the
following properties:

\begin{enumerate}
\item There is a partition of $\Gamma = \Gamma_1 \cup \dots \cup \Gamma_N$ of
    $k$ augmentation marked points 
    with \[
        \tilde v_i \colon \R \times S^1 \setminus \Gamma_i \to \R \times Y
    \]
    so that $\tilde v_i$ is a finite hybrid energy punctured Floer cylinder.
    For each $z_j \in \Gamma$, there is a positive integer
    multiplicity $k(z_j)$. Let $v_i$ denote the projection to $Y$.
\item There is an increasing list of $N+1$ multiplicities from $k_-$ to $k_+$:
    \[ k_- = k_0 < k_1 < k_2 < \dots < k_N = k_+ \]
    such that, for each $i$, the cylinder $\tilde v_i$ has multiplicities
    $k_{i}$ and $k_{i-1}$ at $\pm \infty$, i.e.~ $\tilde v_i(+\infty, \cdot) \in Y_{k_i}, \tilde v_i(-\infty, \cdot) \in Y_{k_{i -1}}$.
\item The Floer cylinders $\tilde v_i$ are simple in the sense that their
    projections to $\Sigma$ are either somewhere injective or constant, if
    constant, they have at least one augmentation puncture, and their images are
    not contained one in the other.
\item For each $i$, and for every puncture $z_j \in \Gamma_i$, 
the augmentation puncture has a limit whose multiplicity is given by $k(z_j)$; 
i.e.~$\lim_{\rho \to -\infty} v_i(z_j + \e^{2\pi(\rho + i\theta)})$ is a Reeb orbit of multiplicity $k(z_j)$.
\item The projections of the Floer cylinders to $\Sigma$ represent the homology
    classes $A_i, i=1, \dots, N$; i.e.~$(\pi_\Sigma( \tilde v_i ) )_{i=1}^N \in \M^*_k( (A_1,
                  \dots, A_N), J_\Sigma)$.
\end{enumerate}

\end{definition}

Let $B \in H_2(X; \Z)$ be a spherical homology class, $B \ne 0$. 
Let $J_W$ be an almost complex structure on $W$ as given by Lemma \ref{planes =
spheres}, matching $J_Y$ on the cylindrical end.
\begin{definition} 
    Define the moduli space
    \[
    \MM_{H, k, W; k_+}^*\left((B; A_1,\ldots,A_N);J_W\right) 
    \]
    to consist of
    tuples
    \[
    (\tilde v_0, \tilde v_1, \ldots, \tilde v_N) 
    \]
    with the properties
    \begin{enumerate}
        \item The map $\tilde v_0 \colon \R \times S^1 \to W$ is a finite energy
            holomorphic cylinder with removable singularity at $-\infty$.
\item There is a partition of $\Gamma = \Gamma_1 \cup \dots \cup \Gamma_N$ of
    $k$ augmentation marked points 
    with \[
        \tilde v_i \colon \R \times S^1 \setminus \Gamma_i \to \R \times Y, \qquad i\geq 1,
    \]
    so that each $\tilde v_i$ is a finite hybrid energy punctured Floer
    cylinder. For each $z_j \in \Gamma$, there is a positive integer
    multiplicity $k(z_j)$. Denote by $v_i$ the projection of $\tilde v_i$
    to $Y$.
\item There is an increasing list of $N+1$ multiplicities:
    \[ k_0 < k_1 < k_2 < \dots < k_N = k_+ \]
\item For each $i\geq 1$, and for every puncture $z_j \in \Gamma_i$, 
the augmentation puncture has a limit whose multiplicity is given by $k(z_j)$; 
i.e.~$\lim_{\rho \to -\infty} v_i(z_j + \e^{2\pi(\rho + i\theta)})$ is a Reeb orbit of multiplicity $k(z_j)$.
\item The Floer cylinders $\tilde v_i$ for $i\geq 1$ are simple, in the strong sense that the projections
    to $\Sigma$ are somewhere injective or constant, and have images not contained one in the
    other. The cylinder $\tilde v_0$ is somewhere injective in $W$.
\item The projections of the Floer cylinders to $\Sigma$ represent the homology
    classes $A_i, i=1, \dots, N$; i.e.~$\pi_\Sigma( \tilde v_i ) )_{i=1}^N \in
    \M^*_k( (B; A_1, \dots, A_N), J_W)$.
\item After identifying $\tilde v_0$ with a holomorphic sphere in $X$, $\tilde v_0$ represents the
    homology class $B \in H_2(X; \Z)$.
\item 
 The cylinder $\tilde v_1$ has multiplicity $k_1$ at $+\infty$ and $\tilde
 v_1(+\infty, \cdot) \in Y_{k_1}$. At $-\infty$, $\tilde v_1$ converges to a
 Reeb orbit in $\{ -\infty \} \times Y$. This Reeb orbit has multiplicity $k_0$.
\item For each $i \ge 2$, the cylinder 
    $\tilde v_i$ has multiplicities $k_{i}$ and $k_{i-1}$ at $\pm \infty$:
 $\tilde v_i(+\infty, \cdot) \in Y_{k_i}, \tilde v_i(-\infty, \cdot) \in Y_{k_{i -1}}$.

 \item The cylinder $\tilde v_0$ converges at $+\infty$ to a Reeb orbit of multiplicity
     $k_0$.
\end{enumerate}

\end{definition}

Observe that these moduli spaces are non-empty only if for each $i =1, \dots, N$, 
\[ K\omega(A_i) = k_{i} - k_{i-1} - \sum_{z \in \Gamma_i} k(z).\]
Furthermore, for $\MM_{H, k, W}^*$, we must also have \[
    B \bullet \Sigma = K \omega(B) = k_0.
\]
Note also that these moduli spaces have a large number of connected components,
where different components have different partitions of $\Gamma$ or 
different intermediate multiplicities.

Identifying holomorphic spheres in $X$ with finite
energy $J_W$-planes in $W$, we consider also the moduli space of
holomorphic planes $\M^*_X( (B_1, \dots, B_k); J_W)$ as in Definition
\ref{D:moduliSpacesPearls}. 

The space $\MM_{H, k, \R \times Y}^*\left((A_1,\ldots,A_N);J_Y\right)$ consists of
$N$-tuples of somewhere injective punctured Floer cylinders in $\R\times Y$.
Similarly, $\MM_{H, k, W}^*$ consist of $N$-tuples of punctured Floer cylinders
in $\R \times Y$ together with a holomorphic plane in $W$ (which we can therefore also 
interpret as a holomorphic sphere in $X$).
The cylinders and the eventual plane have asymptotics with matching multiplicities, 
but are otherwise unconstrained.  
These two moduli spaces, $\MM_{H, k, \R \times Y}^*$ and $\MM_{H, k, W}^*$
fail to be simple split Floer cylinders with cascades (as in Definition \ref{D:simple cascade}) 
in two ways: they are missing the gradient trajectory constraints on their
asymptotic evaluation maps, and they are missing their augmentation planes. In
order to impose these conditions, we will need to study these evaluation maps
and establish their transversality.

\begin{proposition}
For $J_Y \in \J_Y^{reg}$,  
$\MM_{H, k, \R \times Y}^*\left((A_1,\ldots,A_N);J_Y\right)$
is a manifold of dimension 
\begin{equation*}
N(2n-1) + \sum_{i=i}^N 2 \, \langle c_1(T\Sigma), A_i\rangle + 2k.
\end{equation*}

For $J_W \in \J_W^{reg}$, 
    $\MM_{H, k, W}^*\left((B; A_1,\ldots,A_N);J_W\right) $
    is a manifold of dimension
\begin{equation*}
      N(2n-1) + 2n + 1 
     + \sum_{i=i}^N 2 \, \langle c_1(T\Sigma), A_i \rangle
     + 2 ( \langle c_1(TX), B \rangle - B \bullet \Sigma)
     + 2k. 
\end{equation*}
\label{cylinders transverse}
\end{proposition}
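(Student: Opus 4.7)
The plan is to combine the upper-triangular splitting of the linearized Floer operator from Lemma \ref{L:linearizationFloer} with automatic transversality for the vertical block (Lemma \ref{L:verticalOperatorTransverse}) to reduce the transversality problem for each cylinder $\tilde v_i$ to transversality for the projected sphere $w_i = \pi_\Sigma \circ \tilde v_i$ in $\Sigma$; the latter is already established in Proposition \ref{ev Sigma submersion bis}. The plane $\tilde v_0$ will be handled separately via Lemma \ref{planes = spheres} and Proposition \ref{prop:jet_condition_evaluation}. The dimensions will then follow from Riemann--Roch.

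More precisely, for each cylinder $\tilde v_i$ Lemma \ref{L:linearizationFloer} gives
\[
    D_{\tilde v_i} = \begin{pmatrix} D^\C_{\tilde v_i} & M \\ 0 & \dot D^\Sigma_{w_i} \end{pmatrix}.
\]
The block $D^\C_{\tilde v_i}$ is surjective by Lemma \ref{L:verticalOperatorTransverse}, while by Lemmas \ref{L:exponentialGrowth} and \ref{L:projectsToSphere} the block $\dot D^\Sigma_{w_i}$ has the same kernel and cokernel as the smooth Cauchy--Riemann operator $D^\Sigma_{w_i}$ on $\CP^1$. Hence $D_{\tilde v_i}$ is surjective iff $D^\Sigma_{w_i}$ is. Simultaneous surjectivity of all $D^\Sigma_{w_i}$ for $(w_1, \ldots, w_N) \in \M^*_{k,\Sigma}((A_1,\ldots,A_N); J_\Sigma)$ is part of the regularity built into $\J_\Sigma^{reg}$ by Proposition \ref{ev Sigma submersion bis}, and lifts to $J_Y \in \J_Y^{reg}$ and $J_W \in \J_W^{reg}$ via $P$. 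This settles $\MM^*_{H,k,\R\times Y}$. For $\MM^*_{H,k,W}$, the same argument handles $\tilde v_i$ for $i \geq 1$; the plane $\tilde v_0$ is identified via Lemma \ref{planes = spheres} with a somewhere injective $J_X$-holomorphic sphere of class $B$ meeting $\Sigma$ at a single point with order of contact $k_0 = B \cdot \Sigma$, and transversality of its moduli space follows from Proposition \ref{prop:jet_condition_evaluation} and a standard Sard--Smale argument, after enlarging $\J_W^{reg}$ if necessary.

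The dimensions follow from Riemann--Roch (Theorems \ref{T:RiemannRoch} and \ref{T:RiemannRochMB}) together with the indices of Lemma \ref{L:verticalOperatorTransverse}. Each cylinder $\tilde v_i$ with two Hamiltonian ends contributes $\ind(D^\C_{\tilde v_i}) + \ind(D^\Sigma_{w_i}) + 2\#\Gamma_i = 1 + (2(n-1) + 2\langle c_1(T\Sigma), A_i \rangle) + 2\#\Gamma_i$, where the term $2\#\Gamma_i$ accounts for the $2\#\Gamma_i$-parameter family of domain complex structures encoding the augmentation puncture positions, as in Section \ref{S:sobolev_morse_bott}. Summing over $i$ and using $\sum_i \#\Gamma_i = k$ gives the formula for $\MM^*_{H,k,\R\times Y}$. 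For $\MM^*_{H,k,W}$, the bottom cylinder $\tilde v_1$ has a Reeb end at $-\infty$, for which $\ind(D^\C_{\tilde v_1}) = 2$ contributes one extra dimension, and the plane $\tilde v_0$ contributes $2n + 2(\langle c_1(TX), B\rangle - B \cdot \Sigma)$ as the Fredholm index for parametrized $J_X$-spheres of class $B$ in $X$ with order of contact $k_0$ at a single marked point. Summing yields the second formula; note that since no matching condition between the Reeb asymptotics of $\tilde v_0$ and $\tilde v_1$ is imposed (only the multiplicity $k_0$ agrees), the moduli space decomposes essentially as a product and no additional constraint dimension is subtracted.

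The principal technical subtlety will be carefully aligning the weighted Sobolev spaces with Morse--Bott asymptotic kernels at the different puncture types (Hamiltonian, Reeb, and augmentation) so that the splitting of Lemma \ref{L:linearizationFloer} is compatible with the weighted setup and the identification of kernels and cokernels via Lemma \ref{L:exponentialGrowth} with the smooth sphere problem on $\CP^1$ applies uniformly at each puncture.
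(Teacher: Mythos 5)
Your proposal is correct and follows essentially the same route as the paper's proof: upper-triangular splitting of $D_{\tilde v_i}$ via Lemma \ref{L:linearizationFloer}, automatic surjectivity of the diagonal block $D^\C_{\tilde v_i}$ from Lemma \ref{L:verticalOperatorTransverse}, reduction of the horizontal block to the sphere operator $D^\Sigma_{w_i}$, regularity from Proposition \ref{ev Sigma submersion bis}, and index additivity (with the $2\#\Gamma_i$ contribution from the moving augmentation punctures). One small citation note: the identification of the kernel and cokernel of $\dot D^\Sigma_{w_i}$ with those of $D^\Sigma_{w_i}$ on $\CP^1$ is not given by Lemma \ref{L:projectsToSphere} (which only produces the smooth extension to a sphere) but by combining Lemma \ref{L:exponentialGrowth} with the argument of Section \ref{S:linearization at Floer}; and no enlargement of $\J_W^{reg}$ is needed for the sphere $\tilde v_0$, since its regularity is already built into $\J_W^{reg}$ via Propositions \ref{prop:jet_condition_evaluation} and \ref{ev Sigma submersion bis}.
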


\begin{proof}
    
    Consider first the case of cylinders in $\R \times Y$. 
    Let \[
    (\tilde v_1, \dots, \tilde v_N) \in \MM_{H, k, \R \times Y}^*\left((A_1,\ldots,A_N);J_Y\right).
    \]

    Recall from Proposition \ref{necklaces are regular}
    that for $J_\Sigma \in \J_\Sigma^{reg}$, we have transversality for $D_{w_i}^\Sigma$ for 
    each sphere $w_i = \pi_\Sigma( v_i)$.
    
    Let $\delta > 0$ be sufficiently small. For each $i=1, \dots, N$, by Lemma \ref{L:verticalOperatorTransverse}, 
    $D_{\tilde v_i}^\C$ is surjective when considered on $W^{1,p,-\delta}$ (with exponential growth),
    and has Fredholm index $1$. The operator considered instead on the space $W^{1,p,\delta}_{\mathbf V}$, with $V_{-\infty} = V_{+\infty} = i\R $ and $V_P=\C$ for any puncture $P$ on the domain of $\tilde v_i$, 
    has the same 
    kernel and cokernel by Lemma \ref{L:exponentialGrowth}. 
    Thus, the operator, acting on sections free to move in the Morse--Bott family of orbits, is surjective and has index $1$.

    Since the operator $D_{\tilde v_i}$ is upper triangular from Lemma \ref{L:linearizationFloer}, and its diagonal
    components are both surjective, the operator is surjective. 
    Since the Fredholm index is the sum of these, each component $\tilde v_i$ contributes an index of 
    $1 + 2n-2 + 2 \, \langle c_1(T\Sigma), A_i \rangle + 2k_i = 2n-1 + 2 \, \langle c_1(T\Sigma), A_i \rangle + 2k_i$, where $k_i$ is the number of punctures.
  
    We now consider the case of a collection 
    \[
        (\tilde v_0, \tilde v_1, \dots, \tilde v_N) \in 
    \MM_{H, k, W}^*\left((B; A_1,\ldots,A_N);J_W\right).\]
     The same consideration as previously gives that $\tilde v_2, \dots, \tilde v_N$ are transverse and each contributes
     an index of $2n-1+ 2 \, \langle c_1(T\Sigma), A_i \rangle + 2k_i$, where $k_i$ is the number of punctures.
     For the component $\tilde v_1$, again applying Lemma \ref{L:linearizationFloer} and applying Lemma \ref{L:verticalOperatorTransverse} in the 
     case where the $-\infty$ end of the cylinder converges to a Reeb orbit at $\{ -\infty \} \times Y$, we obtain that the vertical Fredholm operator is surjective and has index $2$. The linearized Floer operator at $\tilde v_1$ is then surjective and 
     has index $2n + 2 \, \langle c_1(T\Sigma), A_1 \rangle + 2 k_1$.
          By Lemma \ref{planes = spheres}, the plane $\tilde v_0$ can be identified with a sphere in $X$ with an order of contact $l = B \bullet \Sigma$  
     with $\Sigma$.
     Its Fredholm index is $2n + 2 ( \langle c_1(TX), B \rangle - l)$.     
     The total Fredholm index is therefore
     \[
     (N-1)(2n-1) + 2n + 2n
     + \sum_{i=i}^N 2 \, \langle c_1(T\Sigma), A_i \rangle
     + 2 ( \langle c_1(TX), B \rangle - B \bullet \Sigma) + 2k.
     \]
     
For both cases, the result now follows from the implicit function theorem.
\end{proof}

It now suffices to prove the transversality of evaluation maps to the products of stable/unstable manifolds and flow diagonals,
and also transversality of the augmentation evaluation maps, in order to obtain the constraints coming
from pseudo-gradient flow lines. Indeed, let $(\tilde v_1, \dots, \tilde v_N)$ be a 
collection of $N$ cylinders in $\MM_{H, k, \R \times Y; k_-, k_+}^*\left((A_1,\ldots,A_N);J_Y\right)$.
Write each of the $\tilde v_i \colon \R \times S^1 \to \R \times Y$ as a pair
$\tilde v_i = (b_i, v_i)$. 
We then have asymptotic evaluation maps 
\begin{equation}
\begin{aligned}
\widetilde{\ev}_{Y} &\colon \MM_{H, k, \R \times Y; k_-, k_+}^*\left((A_1,\ldots,A_N);J_Y\right) \to Y^{2N} \\
&(\tilde v_1, \ldots, \tilde v_N)  \mapsto \left(\lim_{s\to - \infty} v_1(s, 1), \lim_{s\to +\infty} v_1(s, 1), 
		\ldots, \lim_{s\to - \infty} v_N(s, 1), \lim_{s\to +\infty}
            v_N(s, 1)\right) .
\end{aligned}
\label{ev_Y}
\end{equation}

If $(\tilde v_0, \tilde v_1, \ldots, \tilde v_N) \in \MM_{H, k, W}^*\left((B;
A_1,\ldots,A_N);J_W\right)$,
we have 
\begin{equation}
\begin{aligned}
\widetilde{\ev}_{W, Y} &\colon
	\MM_{H, k, W; k_+}^*\left((B; A_1,\ldots,A_N);J_W\right) 
\to W \times Y^{2N+1} \\
&(\tilde v_0, \tilde v_1, \ldots, \tilde v_N)  \mapsto \left(\tilde v_0(0), \lim_{r \to +\infty}
\pi_Y \tilde v_0(r + i 0), 
	\lim_{s\to - \infty} v_1(s, 1),  \ldots,  \lim_{s\to +\infty} v_N(s,
    1)\right) .
\end{aligned}
\label{ev_W,Y}
\end{equation}

These maps are $C^1$ smooth, which follows from exploiting the asymptotic expansion of a 
Floer cylinder near its asymptotic limit, as described by \cite{SiefringAsymptotics}. 
Details for this are given in \cite{Fish_Siefring_Connected_sums_2}.

We also have augmentation evaluation maps. For each puncture $z_0 \in \Gamma$,
there exists an index $i \in \{ 1, \dots, N\}$ so that the augmentation 
puncture $z_0$ is a puncture in the domain of $v_i$. For this augmentation puncture, we have
the asymptotic evaluation map $v_i \mapsto \lim_{z \to z_0} \pi_\Sigma( v_i( z)) \in \Sigma$.
Combining all of these evaluation maps over all punctures in $\Gamma$, we obtain 
\begin{align*}
\widetilde{\ev}^a_{\Sigma} &\colon \MM_{H, k, \R \times Y; k_-, k_+}^*\left((A_1,\ldots,A_N);J_Y\right) \to \Sigma^k \\
\widetilde{\ev}^a_{\Sigma} &\colon \MM_{H, k, W; k_+}^*\left((B; A_1,\ldots,A_N);J_W\right) \to \Sigma^k.
\end{align*}
Note that these maps are $C^1$ smooth, either by \cite{Fish_Siefring_Connected_sums_2}
or by combining \cite{WendlSuperRigid}*{Proposition 3.15} with the smoothness
for the evaluation map for closed spheres.

Define the {\em flow diagonal} in $Y \times Y$ to be 
\begin{equation*}
\widetilde{\Delta}_{f_Y} \coloneqq 
\big\{ (x, y) \in
        (Y\setminus \Crit(f_Y))^{2} \, : \,  \exists {t> 0} \text{ s.t. } \varphi^{t}_{Z_Y}(x) = y \big\}
\end{equation*}
where $\Crit(f_Y)$ is the set of critical points of $f_Y$. 

 Let $\tilde p, \tilde q \in Y$ be critical points of $f_Y$ and let $W_Y^u(\tilde p)$, $W_Y^s(\tilde q)$ be 
 the unstable/stable manifolds of $\tilde p, \tilde q$, as in \eqref{(un)stable}.

We may now describe the moduli space of simple split Floer cylinders from $\tilde q_{k_-}$ to $\tilde p_{k_+}$ as 
the unions of the fibre products of these moduli spaces under the asymptotic evaluation maps 
and augmentation evaluation maps. For notational convenience, we write
\begin{equation}
\begin{aligned}
\widetilde{\ev}  \colon 
	\MM_{H, k, \R \times Y; k_-, k_+}^*\left( (A_1,\ldots,A_N);J_Y\right) 
	&\times 
	\M^*_X( (B_1, \dots, B_k); J_W) 
	\to 
	Y^{2N} \times \Sigma^{k} \times \Sigma^k \\
&(\mathbf{\tilde v}, \mathbf{v}) \mapsto \left ( \widetilde \ev_Y( \mathbf{\tilde v}), \widetilde{\ev}_\Sigma^a(\mathbf{\tilde v}), \ev_\Sigma^a(\mathbf{v}) \right ).
\end{aligned}
\label{ev_Y aug}
\end{equation}

Write $\Delta_{\Sigma^k} \subset \Sigma^k \times \Sigma^k$ to denote the diagonal $\Sigma^k$.
Then, define
\begin{equation*}
\MM_{H}^*(\tilde q_{k_-},\tilde p_{k_+};  (A_1,\ldots,A_N), (B_1, \dots, B_k)  ; J_W) 
	= \widetilde{\ev}^{-1} \left ( 
W_Y^s(\tilde q) \times \left ( \tilde \Delta_{f_Y} \right )^{N-1} \times W^u_{Y}(\tilde p)
\times
\Delta_{\Sigma^k}		
	\right ). 
\end{equation*}

From this, we have 
\begin{equation}
\begin{split}
\MM_{H, N}^*(\tilde q_{k_-},\tilde p_{k_+};J_W)& = \\
	 \bigcup_{(A_1,\ldots,A_N)} \bigcup_{k \ge 0} \bigcup_{(B_1, \dots, B_k)} 
&\MM_{H}^*(\tilde q_{k_-},\tilde p_{k_+};  (A_1,\ldots,A_N), (B_1, \dots, B_k)
         ; J_W) .
\end{split}
\label{union} 
\end{equation}

Similarly, if $x \in W$ is a critical point of $f_W$, 
and letting $W_W^u(x)$ be the descending manifold of $x$ in $W$ 
for the gradient-like vector field $-Z_W$, we define
\begin{align*}
\widetilde{\ev}  \colon 
	\MM_{H, k, W;  k_+}^*\left( (B; A_1,\ldots,A_N);J_W \right) 
	&\times 
	\M^*_X( (B_1, \dots, B_k); J_W) 
	\to 
	W \times Y^{2N+1} \times \Sigma^{k} \times \Sigma^k \\
&((\tilde v_0, \mathbf{\tilde v}), \mathbf{v}) \mapsto \left (\widetilde \ev_{W,Y}(\tilde v_0, \mathbf{\tilde v}), \widetilde{\ev}_\Sigma^a(\mathbf{\tilde v}), \ev^a_\Sigma(\mathbf{v}) \right ).
\end{align*}
Then, define
\begin{equation*}
\begin{split}
\MM_{H}^*(x,\tilde p_{k_+};  (B; A_1,\ldots,A_N),& (B_1, \dots, B_k)  ; J_W) 
	=\\
	 \widetilde{\ev}^{-1} &\left (
W_W^u(x) \times \widetilde{\Delta} \times \left ( \widetilde \Delta_{f_Y} \right )^{N-1} \times W^u_{Y}(\tilde p)
\times
\Delta_{\Sigma^k}
	\right ). 
\end{split}
\end{equation*}

Finally, we obtain 
\begin{equation}
\begin{split}
\MM_{H,N}^*(x,\tilde p_{k_+};J_W) &= \\
\bigcup_{(B; A_1,\ldots,A_N)} \bigcup_{k \ge 0} &\bigcup_{(B_1, \dots, B_k)}
\MM_{H}^*(x,\tilde p_{k_+};  (B; A_1,\ldots,A_N), (B_1, \dots, B_k)  ; J_W) .
\end{split}
\label{union2} 
\end{equation}

In order to establish transversality for our moduli spaces, it then becomes
necessary to show transversality of the evaluation maps to these products of descending/ascending manifolds, diagonals
and flow diagonals.
Recall the space of almost complex structures $\J_W^{reg}$ given in Proposition \ref{ev Sigma submersion bis}. 
We denoted by $\J_Y^{reg}$ the space of cylindrical almost complex structures on $\R\times Y$ obtained from 
restrictions of elements in $\J_W^{reg}$. The following result will provide the final step in the proof of Proposition \ref{cascades form manifold}. 

\begin{proposition}
\label{prop:TransverseLiftedEvaluation}
Let $J_W \in \J_W^{reg}$ and let $J_Y \in \J_Y^{reg}$ be the induced almost complex structure on $\R \times Y$. 

Let $\tilde q, \tilde p$ denote critical points of $f_Y$, and let $x$ be a critical point of $f_W$ in $W$.
Let $k_+$ and $k_-$ be non-negative multiplicities, $k_+ > k_-$.

Let $A_1, \dots, A_N$ be spherical homology classes in $\Sigma$,
 let $B, B_1, \dots, B_k$ be spherical homology classes in $X$, $k \ge 0$.

Let $\tilde \Delta \subset Y \times Y$ and
$\Delta_{\Sigma^k} \subset \Sigma^k \times \Sigma^k$ be the diagonals.

Then, 
\begin{enumerate}
 \item the evaluation map
 \[
\widetilde{\ev}_{Y}
\times
\widetilde{\ev}_\Sigma^a \times \ev^a_\Sigma 
\colon 
\MM_{H, k, \R \times Y; k_-, k_+}^*\left((A_1,\ldots,A_N);J_Y\right) 
\times 
\M^*_X( (B_1, \dots, B_k); J_W) 
\to 
Y^{2N} 
\times
\Sigma^k \times \Sigma^k
\]
is transverse to the submanifold
\[
    W_Y^s(\tilde q) \times \left ( \tilde \Delta_{f_Y} \right )^{N-1} \times W^u_{Y}(\tilde p) 
    \times \Delta_{\Sigma^k}.
\]
 \item the evaluation map
     \[
 \widetilde{\ev}_{W,Y} 
 \times 
\widetilde{\ev}_\Sigma^a \times \ev^a_\Sigma 
 \colon \MM_{H, k, W; k_+}^*\left((B; A_1,\ldots,A_N);J_W\right) 
\times 
\M^*_X( (B_1, \dots, B_k); J_W) 
 \to W \times Y^{2N+1}
 \times
\Sigma^{k} \times \Sigma^k 
 \]
is transverse to the submanifold
\[
    W_W^u(x) \times \tilde \Delta \times \left ( \tilde \Delta_{f_Y} \right )^{N-1} \times W^u_{Y}(\tilde p)
    \times \Delta_{\Sigma^k}.
\]

\end{enumerate}

\end{proposition}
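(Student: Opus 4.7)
The plan is to lift the $\Sigma$-level transversality already established in Proposition \ref{ev Sigma submersion bis} to the required $Y$-level transversality, by exploiting the projection $\pi_\Sigma\colon Y\to \Sigma$ together with the upper-triangular splitting of the linearized Floer operator from Lemma \ref{L:linearizationFloer}. Since $Z_Y-\pi_\Sigma^*Z_\Sigma$ is vertical by construction, flow lines of $Z_Y$ cover flow lines of $Z_\Sigma$, so $\pi_\Sigma(W^s_Y(\tilde q))\subseteq W^s_\Sigma(q)$, $\pi_\Sigma(W^u_Y(\tilde p))\subseteq W^u_\Sigma(p)$, and $(\pi_\Sigma\times\pi_\Sigma)(\tilde\Delta_{f_Y})\subseteq \Delta_{f_\Sigma}$. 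By Lemma \ref{L:projectsToSphere}, each $w_i\coloneqq \pi_\Sigma\circ v_i$ extends to a $J_\Sigma$-holomorphic sphere, and projecting $\widetilde{\ev}_Y$ by $\pi_\Sigma^{2N}$ recovers exactly the $\Sigma$-evaluation map on the resulting chain of pearls (respectively, chain of pearls with a sphere in $X$). Hence for $J_W\in \J_W^{reg}$, Proposition \ref{ev Sigma submersion bis} immediately yields transversality of $\pi_\Sigma^{2N}\circ\widetilde{\ev}_Y\times \widetilde{\ev}^a_\Sigma\times \ev^a_\Sigma$ to the $\Sigma$-image
\[
W^s_\Sigma(q)\times (\Delta_{f_\Sigma})^{N-1}\times W^u_\Sigma(p)\times \Delta_{\Sigma^k},
\]
with the analogous statement in case (2).

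Next I would use vertical flexibility to lift to $Y$. The splitting $\tilde v^*T(\R\times Y)\cong (\R\oplus \R R)\oplus w^*T\Sigma$ of Lemma \ref{L:linearizationFloer} puts $D_{\tilde v_i}$ in upper-triangular form with diagonal blocks $D^\C_{\tilde v_i}$ and $\dot D^\Sigma_{w_i}$. By Lemma \ref{L:verticalOperatorTransverse}, each $D^\C_{\tilde v_i}$ is surjective, and its kernel contains the constant section $i$, which under the splitting is identified with the Reeb vector field $R$, i.e.\ with the vertical direction of $\pi_\Sigma\colon Y\to \Sigma$. Consequently, independently at each asymptotic limit of each $\tilde v_i$, one can add vertical variations that shift the asymptotic limit in $Y$ freely along its fibre over $\Sigma$, without altering any projected asymptotic data. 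The augmentation evaluation maps $\widetilde{\ev}^a_\Sigma$ already take values in $\Sigma$, so they impose no vertical constraint and the $\Delta_{\Sigma^k}$ factor is taken care of at the $\Sigma$-level.

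Combining the two ingredients, the target submanifold is obtained from its $\Sigma$-image by adjoining a vertical constraint at each factor: a fibrewise Morse index $i(\tilde q), i(\tilde p)\in\{0,1\}$ on the two stable/unstable manifold factors, and one ``matching of fibre coordinates under the time-$t$ flow of $Z_Y$'' constraint for each $\tilde\Delta_{f_Y}$ factor. The vertical kernel contributions of distinct cylinders are independent of one another and, by the upper-triangular structure, independent of the horizontal variations used to establish $\Sigma$-transversality; since each contributes a surjection onto the fibrewise direction at each puncture, the full collection of codimension-$1$ fibre constraints is hit transversally. In case (2), the only additional constraint is $(\tilde v_0(+\infty),\tilde v_1(-\infty))\in\tilde\Delta$: multiplicities already match by the moduli definition, the $\Sigma$-projections agree by $\Sigma$-transversality, and the remaining $S^1$-parameter of the Reeb orbit at the breaking is synchronized using the Reeb direction in $\ker D^\C_{\tilde v_1}$. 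Transversality to the $W^u_W(x)$ factor is immediate from Proposition \ref{prop:jet_condition_evaluation} applied to $\tilde v_0$ viewed as a sphere in $X$.

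The main technical obstacle is the bookkeeping in the combination step: one must check that the horizontal and vertical pieces of the evaluation map genuinely surject onto the normal direction to the $Y$-target submanifold without any hidden coupling through the off-diagonal block $M$ of Lemma \ref{L:linearizationFloer}. Since $M$ is a zeroth-order multiplication and $D^\C_{\tilde v_i}$ is already surjective, any desired vertical variation at the punctures can be realized in the kernel-complement of $D_{\tilde v_i}$ independently of the horizontal variation produced by the $\Sigma$-level argument, and the claimed transversality follows.
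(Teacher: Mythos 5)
Your proposal correctly identifies the overall strategy — reduce to $\Sigma$-transversality via Proposition \ref{ev Sigma submersion bis} and then handle the vertical (Reeb) directions using the kernel of $D^\C_{\tilde v_i}$ — but the central step in the vertical part contains a genuine error. You assert that because $D^\C_{\tilde v_i}$ is surjective with $R$ in its kernel, one can ``independently at each asymptotic limit'' shift the limit vertically. That is not correct: by Lemma \ref{L:verticalOperatorTransverse}, the operator $D^\C_{\tilde v_i}$ (acting on $W^{1,p,\delta}_{\mathbf V_0}$) is surjective \emph{and has Fredholm index $1$}, so its kernel is exactly one-dimensional, spanned by the constant section $i \cong R$. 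This single degree of freedom rotates \emph{both} asymptotic ends of $\tilde v_i$ simultaneously by the same amount; it does not allow independent variations at $+\infty$ and $-\infty$. Consequently the Reeb action alone only contributes vectors of the form $(R,R)$ in the pair $T_{\tilde v_i(-\infty)}Y \oplus T_{\tilde v_i(+\infty)}Y$, giving $N$ vertical directions, not $2N$. Your argument, as written, collapses at exactly the point where you claim that ``each contributes a surjection onto the fibrewise direction at each puncture.''

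The paper bridges this gap with two ingredients you do not invoke. First, Lemma \ref{different multiplicities} guarantees that each $\tilde v_i$ has distinct multiplicities $k_i^- \ne k_i^+$ at its two ends, so \emph{domain rotation} on any one cylinder produces the vector $(k_i^- R, k_i^+ R)$, which together with the Reeb action $(R,R)$ spans the full $2$-dimensional vertical space at that cylinder's pair of ends (the paper uses this for $\tilde v_1$ only). Second, the lemma immediately preceding the paper's proof computes $T_{(\tilde x_i,\tilde y_{i+1})}\widetilde\Delta_{f_Y}$ and shows it contains a vector $(R, g_iR)$ with $g_i > 0$; these $N-1$ vectors from the flow diagonals, together with the $N+1$ from the evaluation map, give $2N$ vectors that span $(\R R)^{2N}$. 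Your proposal omits both the $k_+ \ne k_-$ trick and the explicit analysis of $T\widetilde\Delta_{f_Y}$, and without them the vertical spanning claim does not follow. (In case (2) the paper plays the analogous card by rotating the domain of the plane $\tilde v_0$, using that it converges to a Reeb orbit of positive multiplicity $l = B\bullet\Sigma$.) You should incorporate both steps; the remainder of your outline — projecting to $\Sigma$, invoking Proposition \ref{ev Sigma submersion bis}, and noting that the augmentation evaluation factors through the $\Sigma$-level maps — is sound.
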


In order to prove this proposition, we will need a better description of the relationship
between the moduli spaces of spheres in $\Sigma$,
and the moduli spaces of Floer cylinders in $\R \times Y$ (or in $W$).
\begin{lemma} \label{L:proj_to_Sigma_submersion}
The maps
\begin{align*}
\pi_\Sigma^\M &\colon \MM_{H, k, \R \times Y; k_-,
k_+}^*\left((A_1,\ldots,A_N);J_Y\right) \to \M_{k, \Sigma}^*((A_1, \dots, A_N);
J_\Sigma)\\
\pi_\Sigma^\M &\colon \MM_{H, k, W; k_+}^*\left((B; A_1,\ldots,A_N);J_W\right)
\to \M_{k, X, \Sigma}^*((B; A_1, \dots, A_N); J_W)
\end{align*}
induced by $\pi_\Sigma \colon \R \times Y \to \Sigma$ are submersions. The
fibres have a locally free $(S^1)^N$ torus action by constant rotation by the
action of the Reeb vector field.
\end{lemma}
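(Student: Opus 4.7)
The plan is to reduce the submersion claim to a fibrewise linear problem using the upper triangular decomposition of Lemma \ref{L:linearizationFloer} together with the surjectivity of the vertical operator $D^\C_{\tilde v}$ from Lemma \ref{L:verticalOperatorTransverse}, and then to exhibit the $(S^1)^N$ action explicitly and check local freeness by a homotopy argument.

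First I would identify the tangent spaces. Since the definition of $\MM^*_{H,k,\R\times Y;k_-,k_+}((A_1,\ldots,A_N);J_Y)$ does not impose any matching at the intermediate asymptotic limits (these matchings are only imposed later by the flow diagonal conditions of \eqref{union}), the tangent space at $(\tilde v_1,\dots,\tilde v_N)$ is the direct product $\prod_{i=1}^N \ker D_{\tilde v_i}$, where each $D_{\tilde v_i}$ is the operator \eqref{E:FloerLinearization} with $\mathbf{V}$ decorating each puncture by the full tangent space to the corresponding Morse--Bott family. The tangent space of $\M^*_{k,\Sigma}((A_1,\ldots,A_N);J_\Sigma)$ at $(w_1,\ldots,w_N)$ is analogously $\prod_i \ker D^\Sigma_{w_i}$; by Lemma \ref{L:exponentialGrowth}, restriction to the punctured cylinder identifies $\ker D^\Sigma_{w_i} \cong \ker \dot D^\Sigma_{w_i}$. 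Under the splitting $\tilde v_i^*T(\R\times Y) \cong (\R\oplus \R R) \oplus w_i^*T\Sigma$ of Lemma \ref{L:linearizationFloer}, $d\pi_\Sigma^\M$ is the product of the projections $\zeta_i=(\zeta_{i,a},\zeta_{i,b}) \mapsto \zeta_{i,b}$.

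Second I would prove fibrewise surjectivity. Given $\eta_i\in\ker \dot D^\Sigma_{w_i}$, the upper triangular form
\[
D_{\tilde v_i} = \begin{pmatrix} D^\C_{\tilde v_i} & M \\ 0 & \dot D^\Sigma_{w_i}\end{pmatrix}
\]
shows that a lift $\zeta_i=(\zeta_{i,a},\eta_i)\in \ker D_{\tilde v_i}$ exists iff $D^\C_{\tilde v_i}\zeta_{i,a}=-M\eta_i$ is solvable, which holds by the surjectivity statement of Lemma \ref{L:verticalOperatorTransverse} on the Morse--Bott weighted space $W^{1,p,\delta}_{\mathbf{V_0}}$. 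Taking the product over $i$ gives surjectivity of $d\pi_\Sigma^\M$. For the second map, the extra factor $\tilde v_0$ corresponds by Lemma \ref{planes = spheres} to a somewhere injective sphere in $X$, and $\pi_\Sigma^\M$ sends it to itself in the target $\M^*_{k,(X,\Sigma)}$, so surjectivity in this factor is tautological; the remaining factors $\tilde v_i$, $i\geq 1$, are handled exactly as before, and the matching of the asymptotic Reeb orbit between $\tilde v_0$ and $\tilde v_1$ is preserved automatically since Reeb rotation acts only on parametrizations of the shared orbit.

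Finally, for the $(S^1)^N$ action: the $i$-th factor acts on the $i$-th cylinder by post-composition with the time-$\theta$ Reeb flow $\phi^\theta_R$ on $\R\times Y$. Because $J_Y$ is Reeb--invariant and $H$ is radial, $\phi^\theta_R$ carries Floer solutions to Floer solutions and preserves asymptotic orbit multiplicities, while $\pi_\Sigma\circ\phi^\theta_R=\pi_\Sigma$ shows the action is fibrewise for $\pi_\Sigma^\M$. For local freeness, if a one-parameter subfamily of $\phi^\theta_R$ fixed some $\tilde v_i$, then the image of $\tilde v_i$ would lie in a single fibre $\pi_\Sigma^{-1}(\mathrm{pt})\cong\R\times S^1$, and $\tilde v_i$ would factor through a map $\R\times S^1\to\R\times S^1$; but the winding numbers at the two ends are dictated by the covering multiplicities $k_{i-1}\ne k_i$ (or, in the $W$-plane case, by the order of contact of $\tilde v_0$ with $\Sigma$), which are incompatible with such a factorization. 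Hence the stabilizers are discrete. The main technical obstacle is bookkeeping the decorations $\mathbf{V}$ at the Morse--Bott punctures so that the splitting of Lemma \ref{L:linearizationFloer} matches on both sides of $d\pi_\Sigma^\M$; once this is done, the surjectivity statement of Lemma \ref{L:verticalOperatorTransverse} does all the real work.
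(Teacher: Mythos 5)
Your argument for the submersion claim follows exactly the paper's route: identify the tangent space of the source with $\prod_i \ker D_{\tilde v_i}$, use the upper-triangular decomposition of Lemma \ref{L:linearizationFloer}, and invoke the surjectivity of $D^{\C}_{\tilde v_i}$ from Lemma \ref{L:verticalOperatorTransverse} to lift an arbitrary element of $\ker \dot D^{\Sigma}_{w_i}$. The reduction to one cylinder at a time and the tautological treatment of the $\tilde v_0$ factor are likewise what the paper does. That part of the proposal is sound.

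The argument for local freeness of the $(S^1)^N$ action, however, contains a genuine logical gap. You assert that if a one-parameter subfamily $\phi^{\theta}_R$ fixed some $\tilde v_i$, then the image of $\tilde v_i$ would lie in a single fibre $\pi_\Sigma^{-1}(\mathrm{pt})$; this implication is neither justified nor correct as reasoning. What actually follows from $\phi^{\theta}_R \circ \tilde v_i = \tilde v_i$ for $\theta$ in an interval is, upon differentiating in $\theta$, that $R(\tilde v_i(z)) = 0$ for every $z$ in the domain --- which is already a contradiction, since the Reeb vector field is nowhere vanishing. In other words, because the $S^1$ action on $Y$ is free, post-composition by $\phi^{\theta}_R$ can fix a parametrized map into $\R\times Y$ only when $\theta \equiv 0$, so the torus action is in fact free, and no geometric argument about factorizations through a fibre or winding numbers is needed. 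Your winding-number reasoning also does not hold up on its own terms: when $\Gamma_i \ne \emptyset$ the domain of $\tilde v_i$ is a punctured cylinder, so the multiplicities at $\pm\infty$ need not agree even for a map landing in a single Reeb fibre, and the claimed incompatibility with $k_{i-1}\ne k_i$ evaporates. The fix is the one-line argument above; drop the factorization step entirely.

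A minor remark: you say that the matching of the asymptotic Reeb orbit between $\tilde v_0$ and $\tilde v_1$ is ``preserved automatically''; in fact, in the moduli space $\MM^*_{H,k,W;k_+}$ as defined, the Reeb orbits of $\tilde v_0$ and $\tilde v_1$ are only required to have the same multiplicity $k_0$, not to coincide --- the matching is imposed later via the fibre product over the diagonal $\tilde\Delta$ in equation \eqref{union2}. This is harmless for your proof but worth keeping straight, since it means there is no extra constraint on the $\tilde v_0$ factor that would threaten the submersion statement.
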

\begin{proof}
We will study the case of 
\[
\pi_\Sigma^\M \colon \MM_{H, k, \R \times Y}^*\left((A_1,\ldots,A_N);J_Y\right) \to \M_{k, \Sigma}^*((A_1, \dots, A_N); J_\Sigma)
\]
in detail. The case with a sphere in $X$ follows by the same argument with a small notational change.
It also suffices to consider the case with $N=1$, since moduli spaces with more
spheres are open subsets of products of these.

Suppose $\pi_\Sigma( \tilde v) = w$ with $\tilde v \in \MM_{H, k, \R\times Y; k_-, k_+}^*(A; J_Y)$ and $w \in \M_{k, \Sigma}^*(A; J_\Sigma)$. 
Recall the splitting of the linearized Floer operator at $\tilde v$, given in Lemma \ref{L:linearizationFloer} as 
\[
D_{\tilde v} = \begin{pmatrix} D_{\tilde v}^\C & M \\ 0 & D_{w}^\Sigma
\end{pmatrix}.
\]

By definition, $T_{w} \M_{k, \Sigma}^*(A) = \ker D_{w}^\Sigma$ and 
$T_{\tilde v} \MM_{H, k, \R\times Y; k_-, k_+}^*(A; J_Y) = \ker D_{\tilde v}$.
By Lemma \ref{L:verticalOperatorTransverse}, $D_{\tilde v}^\C$ is surjective. It follows then that any section $\zeta_0$ of $w^*T\Sigma$ that is in the kernel of $D_w^\Sigma$ can be lifted to a section $(\zeta_1, \zeta_0)$ of $\tilde v^*T\Sigma \cong (\R \oplus \R R) \oplus w^*T\Sigma$ that is in the kernel of $D_{\tilde v}$. 

Notice now that $d\pi_\Sigma(\zeta_1, \zeta_0) = \zeta_0$, establishing that the
evaluation map is a submersion.

Also observe that $S^1$ acts on the curve $\tilde v$ by the Reeb flow. 
By the Reeb invariance of $J_Y$ and of the admissible Hamiltonian $H$,
the rotated curve is in the same fibre of $\pi^\M_\Sigma$. Furthermore, for
small rotation parameter, the curve will be distinct (as a parametrized curve)
from $\tilde v$.
\end{proof}

The next result justifies why it was reasonable to assume $k_+ > k_-$ in Proposition \ref{prop:TransverseLiftedEvaluation}. The fact that $k_+\neq k_-$ will also be used below.

\begin{lemma} \label{different multiplicities}
Let $A \coloneqq [w] \in H_2(\Sigma;\Z)$, where $w\colon \CP^1 \to \Sigma$ is
the continuous extension of $\pi_\Sigma \circ \tilde v$. 
Assume that either $A\neq 0$ or $\Gamma \neq \emptyset$.  
Then, $k_+ > k_-$. 

\end{lemma}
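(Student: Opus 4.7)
The plan is to derive a single identity expressing $k_+ - k_-$ as a sum of manifestly non-negative terms, one coming from $\omega_\Sigma(A)$ and one from the Reeb covering multiplicities at the punctures in $\Gamma$, and then observe that the hypothesis forces at least one term to be strictly positive. The argument is a Stokes computation for the contact primitive $\alpha$ on the punctured cylinder, combined with the fact that $\pi_\Sigma \circ \tilde v$ is $J_\Sigma$-holomorphic.

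First, write $\tilde v = (b,v)$, so $v \colon \R \times S^1 \setminus \Gamma \to Y$. Since $d\alpha$ descends to $\omega_\Sigma$, we have $d\alpha = \pi_\Sigma^*\omega_\Sigma$; and since $H$ depends only on $r$, the projection $\pi_\Sigma \circ \tilde v$ is $J_\Sigma$-holomorphic and, by Lemma \ref{L:projectsToSphere}, extends to the smooth sphere $w \colon \CP^1 \to \Sigma$ representing $A$. As the punctures form a removable set of measure zero,
\begin{equation*}
\int_{\R \times S^1 \setminus \Gamma} v^*d\alpha = \int_{\CP^1} w^*\omega_\Sigma = \omega_\Sigma(A).
\end{equation*}

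Next, I would evaluate the same integral by Stokes, exhausting the domain by $[-R,R] \times S^1$ minus small cylindrical neighborhoods of each $z_i \in \Gamma$ and letting $R \to \infty$. Standard asymptotic estimates at Hamiltonian orbits, and at Reeb punctures (e.g.~\cite{SiefringAsymptotics}), guarantee that $v^*\alpha$ converges on each boundary loop to $\alpha$ pulled back by the corresponding limit orbit. At $+\infty$, the limit lies in $Y_{k_+}$ with velocity $h'(e^{b_{k_+}})R = k_+T_0\,R$, contributing $+k_+T_0$; at $-\infty$ the analogous contribution is $-k_-T_0$; at each negative Reeb puncture $z_i$ the asymptotic orbit is an $m_i$-fold cover ($m_i$ a positive integer) of a simple Reeb orbit of period $T_0$, and the induced boundary orientation of the removed disk produces $-m_iT_0$. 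Assembling,
\begin{equation*}
(k_+ - k_-)\, T_0 = \omega_\Sigma(A) + T_0 \sum_{z_i \in \Gamma} m_i.
\end{equation*}

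To conclude, recall that $J_\Sigma$ is $\omega_\Sigma$-tame and $w$ is $J_\Sigma$-holomorphic, so $\omega_\Sigma(A) \geq 0$, with equality if and only if $w$ is constant, i.e.~$A = 0$; and each $m_i \geq 1$. Hence the right-hand side is strictly positive whenever $A \neq 0$ or $\Gamma \neq \emptyset$, forcing $k_+ > k_-$. The only point requiring care is the sign bookkeeping at the negative Reeb punctures, where the conformal parametrization sends $s \to -\infty$ to the puncture and the outward normal of the removed disk is $-\partial_s$, yielding the $-\partial_t$ boundary orientation; this is routine SFT bookkeeping and poses no essential obstacle.
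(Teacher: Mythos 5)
Your argument is correct and gives the same identity as the paper, but by a genuinely different route. The paper's proof is topological: it views $\tilde v$ as a section of the pulled-back circle bundle $w^*Y \to \CP^1$, defined away from $\Gamma\cup\{0,\infty\}$, and uses the fact that the Euler number of $w^*Y$ is the sum of the local degrees of this section at the punctures (Bott--Tu, Theorem 11.16). These local degrees are precisely $\pm k_\pm$ and $-k_i$, while the Euler number is computed as $\int_{\CP^1} w^*e(N\Sigma)$ via the Thom class and Poincar\'e duality to equal $K\omega(A)$. Your proof replaces the degree theory with a direct Stokes computation of $\int v^*d\alpha$ over the punctured cylinder, reading off the boundary contributions from the asymptotic limits. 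In effect you are proving the Euler-number identity from scratch by integration rather than citing it; the topological route is cleaner as a formal step but requires the cited black box, while your analytic route is more self-contained and surfaces exactly where the sign bookkeeping and the asymptotic convergence estimates matter.

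Two small points to tighten. First, your identity conflates two normalizations: the paper's convention (see the proof of Lemma \ref{L:linearizationFloer}) has $d\alpha(\tilde V,\tilde W) = K\omega_\Sigma(V,W)$ on horizontal lifts, so your left-hand side should be $K\omega_\Sigma(A)$ rather than $\omega_\Sigma(A)$, and the factors of $T_0$ need to match the paper's identity $k_+-k_- -\sum k_i = K\omega(A)$; none of this affects the sign conclusion, since $K>0$ and $T_0>0$, but it is worth fixing. Second, you should justify why the boundary integrals converge to the stated values: for the Floer ends at $\pm\infty$ this is the standard exponential convergence to the Hamiltonian orbit in the Morse--Bott setting, while at the punctures in $\Gamma$ it follows from the properness and negative asymptotics of the hybrid-energy condition (Definition \ref{D:hybridEnergy}) plus the asymptotic analysis you cite. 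With those refinements the argument is complete and correct.
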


\begin{proof}
Denote by $w^*Y$ the pullback under $w$
of the $S^1$-bundle $Y\to \Sigma$.
 The map $\tilde v$ gives a section $s$ of $w^*Y$, defined in
 the complement of $\Gamma\cup\{0,\infty\}$. By 
 \cite{BottTu}*{Theorem 11.16}, the Euler number $\int_{\CP^1}e(w^*Y)$ (where $e$ is
 the Euler class) is the sum of the local degrees of the section $s$
 at the points in $\Gamma\cup\{0,\infty\}$.

Denote the multiplicities of the periodic $X_H$-orbits $x_\pm (t) =
\lim_{s\to \pm\infty} v(s,t)$ by $k_\pm$, respectively, and denote
the multiplicities of the asymptotic Reeb orbits at the punctures
$z_1,\ldots,z_m \in \Gamma$ by $k_1,\ldots,k_m$, respectively.
The positive integers  $k_\pm$ and $k_i$ are the absolute values of the
degrees of $s$ at the respective points. Taking signs into account, we get 
$$
\int_{\CP^1}e(w^*Y) =  k_+ - k_-  - k_1 - \ldots - k_m.  $$ 
We will show that this quantity is non-negative. We have 
$$ 
\int_{\C
P^1}e(w^*Y) = \int_{\CP^1}w^*e(Y\to \Sigma) = \int_{\CP^1}w^*e(N\Sigma)
$$ 
where
$N\Sigma$ is the normal bundle to $\Sigma$ in $Y$. Now, $e(N\Sigma) =
s^* \Th(N\Sigma)$, where $s \colon \Sigma \to N\Sigma$ is the zero section and
$\Th(N\Sigma)$ is the Thom class of $N\Sigma$ \cite{BottTu}*{Proposition 6.41}. If $j\colon N\Sigma \to X$ is a tubular neighborhood, then $j_*\Th(N\Sigma) = PD([\Sigma]) = [K\omega] \in H^2(X;\R)$ \cite{BottTu}*{Equation (6.23)}. 
If $\iota\colon \Sigma \hookrightarrow X$ is the inclusion, then 
\begin{align*} 
\int_{\CP^1}w^*e(N\Sigma) &= \int_{\CP^1}w^*s^*\Th(N\Sigma) = \int_{\CP^1}w^*\iota^* j_* \Th(N\Sigma) = \\
&= \int_{\CP^1}w^*\iota^* K \omega = K \omega(A) \geq 0 
\end{align*}
since $K>0$ and $w$ is a $J_\Sigma$-holomorphic sphere. We
conclude that $$ k_+ - k_- -k_1 - \ldots - k_m = K \omega(A) \geq 0.$$
If $A\neq 0$, we get a strict inequality. If $A=0$, we get an equality, but the assumptions of the Lemma imply that $\sum_{i=1}^m
 k_i > 0$. In either case, we get $k_+ > k_-$, as wanted.
\end{proof}

Recall that the gradient-like vector field $Z_Y$ has the property that $d\pi_\Sigma Z_Y = Z_\Sigma$.
Also recall that we may use the contact form $\alpha$ as a connection to lift vector fields from $\Sigma$ to 
vector fields on $Y$, tangent to $\xi$. If $V$ is a vector field on $\Sigma$, we write 
$\pi_\Sigma^*V \coloneqq \widetilde V $ to be the vector field on $Y$ uniquely determined 
by the conditions $\alpha(V) = 0$, $d\pi_\Sigma \widetilde V = V$.
This extends as well to lifting vector fields on $\Sigma \times \Sigma$ to vector fields on $Y \times Y$.
\begin{lemma}
The flow diagonal in $Y$ satisfies
\[
    \pi_\Sigma( \widetilde \Delta_{f_Y} ) \subset \Delta_{f_\Sigma} \cup  \{
    (p, p)\, | \, p \in \Crit(f_\Sigma) \}.
\]

Let $(\tilde x, \tilde y) \in \widetilde \Delta_{f_Y}$ and $x = \pi_\Sigma(\tilde x)$, $y=\pi_\Sigma(\tilde y)$. Let $t>0$ be so that $\tilde y = \varphi^{t}_{Z_Y}(\tilde x)$.
Then, if $x=y$, we have $x \in \Crit(f_\Sigma)$ and
\begin{equation} \label{E:tangent_to_flow_diagonal_1}
\begin{aligned}
    T_{(\tilde x, \tilde y)} \widetilde \Delta_{f_Y} &= \{ (a R + v, bR+ 
    \pi_\Sigma^*{ d\varphi^t_{Z_\Sigma} d\pi_\Sigma v})  \in TY \oplus TY \, |
\, a, b \in \R \text{ and } \alpha(v) = 0 \}.
\end{aligned}
\end{equation}

If $x \ne y$, then $(x, y) \in \Delta_{f_\Sigma}$.
Then, there exists a positive $g = g(\tilde x, \tilde y) > 0$ so that 
\begin{equation} \label{E:tangent_to_flow_diagonal_2}
\begin{aligned}
T_{(\tilde x, \tilde y)} \widetilde \Delta_{f_Y} 
&= \R (R, gR) \oplus H 
\end{aligned}
\end{equation}
where the subspace $H$ is such that $d\pi_\Sigma|_H \colon H \to  T\Delta_{f_\Sigma}$ induces a linear
isomorphism.
\end{lemma}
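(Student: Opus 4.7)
The plan is to establish each claim using only the defining properties of $Z_Y$ (namely $d\pi_\Sigma Z_Y = Z_\Sigma$ and $Z_Y - \pi_\Sigma^* Z_\Sigma$ vertical, so $Z_Y = \pi_\Sigma^* Z_\Sigma + \eta R$ for some function $\eta \colon Y \to \R$) together with the standard parametrization of the flow diagonal by $(\tilde x, t) \mapsto (\tilde x, \varphi^t_{Z_Y}(\tilde x))$.

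First I would verify the inclusion $\pi_\Sigma(\widetilde \Delta_{f_Y}) \subset \Delta_{f_\Sigma} \cup \{(p,p)\mid p\in \Crit(f_\Sigma)\}$. Given $(\tilde x, \tilde y) \in \widetilde\Delta_{f_Y}$, so that $\tilde y = \varphi^t_{Z_Y}(\tilde x)$ with $t>0$ and both $\tilde x, \tilde y \notin \Crit(f_Y)$, the $\pi_\Sigma$-equivariance of the flows gives $y = \varphi^t_{Z_\Sigma}(x)$. If $x=y$ then $x$ is a positive-time fixed point of the gradient-like flow, forcing $x \in \Crit(f_\Sigma)$; if $x \ne y$, neither $x$ nor $y$ can be critical (else $\varphi^t_{Z_\Sigma}(x)=x\ne y$), so $(x,y) \in \Delta_{f_\Sigma}$.

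For the tangent space formulas I start from
\[
T_{(\tilde x,\tilde y)}\widetilde\Delta_{f_Y} = \{(v,\, d\varphi^t_{Z_Y}(\tilde x)v + s\,Z_Y(\tilde y)) \mid v \in T_{\tilde x}Y,\ s \in \R\}.
\]
In case $x=y=p \in \Crit(f_\Sigma)$: since $Z_\Sigma(p)=0$, the identity $Z_Y = \pi_\Sigma^*Z_\Sigma + \eta R$ shows $Z_Y$ is purely vertical along $\pi_\Sigma^{-1}(p)$, so $\varphi^t_{Z_Y}$ preserves this fibre. Writing $v = aR + w$ with $\alpha(w)=0$ and projecting $d\varphi^t_{Z_Y}(\tilde x)v$ via $d\pi_\Sigma$ identifies its horizontal component as $\pi_\Sigma^*(d\varphi^t_{Z_\Sigma}(p)\,d\pi_\Sigma w)$. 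Since $\tilde y \notin \Crit(f_Y)$ forces $\eta(\tilde y)\ne 0$, the term $sZ_Y(\tilde y)=s\eta(\tilde y)R(\tilde y)$ contributes an arbitrary vertical coefficient, yielding \eqref{E:tangent_to_flow_diagonal_1}. In case $x\ne y$: the map $d\pi_\Sigma \oplus d\pi_\Sigma$ sends $T_{(\tilde x,\tilde y)}\widetilde\Delta_{f_Y}$ onto $T_{(x,y)}\Delta_{f_\Sigma}$ (by surjectivity of $d\pi_\Sigma$ on $TY$ and the $\pi_\Sigma$-equivariance of flows). A tangent vector lies in the kernel precisely when $d\pi_\Sigma v = 0$ (so $v = aR$) and $sZ_\Sigma(y)=0$ (so $s=0$, since $y$ is non-critical); hence the kernel is spanned by $(R(\tilde x),\, d\varphi^t_{Z_Y}(\tilde x)R(\tilde x))$, and the second component, having zero horizontal projection, is some multiple $gR(\tilde y)$. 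Choosing any complement $H$ projecting isomorphically onto $T\Delta_{f_\Sigma}$ gives \eqref{E:tangent_to_flow_diagonal_2}.

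The main obstacle is justifying $g > 0$. My plan is to observe that $\varphi^t_{Z_Y}$ maps the circle fibre $\pi_\Sigma^{-1}(x)$ diffeomorphically onto $\pi_\Sigma^{-1}(y)$ (immediate from $\pi_\Sigma$-equivariance), and this family of diffeomorphisms deforms continuously as $t$ varies: for $t'$ close to $0$ the flow-line from $\tilde x$ stays in the tubular neighbourhood where $\varphi^{t'}_{Z_Y}$ is $C^0$-close to the identity (hence orientation-preserving on fibres), and for any $t''>0$ such that $\varphi^{t''}_{Z_Y}(\tilde x)\notin \Crit(f_Y)$ the restriction to fibres remains a diffeomorphism. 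By continuity of the degree, $\varphi^t_{Z_Y}$ restricts to an orientation-preserving diffeomorphism of fibres for all such $t$. Its derivative on the positively-oriented Reeb direction is therefore a positive multiple of the Reeb direction at $\tilde y$, giving $g > 0$.
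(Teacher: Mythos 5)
Your proof is correct and takes essentially the same approach as the paper: both derive the inclusion and the tangent-space formulas from the $\pi_\Sigma$-equivariance of the flows and the standard parametrization of $T\widetilde\Delta_{f_Y}$, and both establish $g>0$ by observing that $\varphi^t_{Z_Y}$ restricts to an orientation-preserving diffeomorphism between the $S^1$-fibres. The paper deduces fibre orientation-preservation directly from the fact that $\varphi^t_{Z_Y}$ and $\varphi^t_{Z_\Sigma}$ are orientation-preserving diffeomorphisms of $Y$ and $\Sigma$ respectively, whereas you use a continuity-of-degree argument starting from $t=0$; both work, though your caveat about avoiding $\Crit(f_Y)$ is unnecessary since $\varphi^{t}_{Z_Y}$ always restricts to a diffeomorphism of fibres by $\pi_\Sigma$-equivariance.
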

\begin{proof}
Observe first that if $x = \pi_\Sigma \tilde x$, we have
\[
    \pi_\Sigma \varphi^t_{Z_Y}( \tilde x) = \varphi^t_{Z_\Sigma}( x ).
\]
This gives $d\pi_\Sigma \d \varphi^t_{Z_Y}(\tilde x) = d \varphi_{Z_\Sigma}^t
d\pi_\Sigma(\tilde x)$. From this, it follows that $d\varphi_{Z_Y}^t(\tilde x)R$
is a multiple of the Reeb vector field.
Observe also that $\varphi^t_{Z_\Sigma}$ and $\varphi^t_{Z_Y}$ are both
orientation-preserving diffeomorphisms for all $t$. We therefore obtain 
that if $y = \varphi^t_{Z_\Sigma}(x)$, 
$\varphi^t_{Z_Y}$ induces a diffeomorphism 
between the fibres $\pi_\Sigma^{-1}(x) \to \pi_\Sigma^{-1}(y)$.
Additionally, we must have then that $d\varphi_{Z_Y}^t(\tilde x)R$ is a positive
multiple of the Reeb vector field. Let $g(\tilde x, \tilde y) > 0$ such that 
$d\varphi_{Z_Y}^t(\tilde x)R = g(\tilde x, \tilde y) R$. 

In general, if $\tilde y = \varphi^t_{Z_Y}(\tilde x)$, we have
\begin{equation} \label{E:tangent_space_tilde_Delta}
T_{(\tilde x, \tilde y)} \widetilde \Delta_{f_Y} 
= 
\{ (v, d\varphi^t_{Z_Y}(\tilde x)v + c Z_Y(\tilde y) ) \, | \, v \in T_x Y, c \in \R \}.
\end{equation}

Consider first the case of $x=y$. 
	Then, both $\tilde x$ and $\tilde y$ are in the same fibre of $Y \to \Sigma$. By definition of the flow diagonal, there exists $t > 0$ so that $\varphi^t_{Z_Y}(\tilde x) = \tilde y$, and hence $Z_Y$ is vertical, $Z_\Sigma(x) = 0$. It follows 
that $x \in \Crit_{f_\Sigma}$.  
From this, it now follows that $\pi_\Sigma( \widetilde \Delta_{f_Y}) \subset
\Delta_{f_\Sigma} \cup \{ (p, p)\, | \, p \in \Crit(f_\Sigma) \}$.

We now consider the consequences of Equation
\eqref{E:tangent_space_tilde_Delta} in this case of $x=y$. Any $v \in T_xY$ may be written as $v_0 +
aR$ where $\alpha(v_0) = 0$. Furthermore, since $x=y \in \Crit(f_\Sigma)$, and
by definition, neither $\tilde x$ nor $\tilde y$ are critical points of $f_Y$, we
obtain that $Z_Y(\tilde y)$ is a non-zero multiple of the Reeb vector field. 
Equation \eqref{E:tangent_to_flow_diagonal_1} now follows from the fact that
$d\pi_\Sigma \varphi^t_{Z_Y}(\tilde x) = d\varphi^t_{Z_\Sigma}(x) d\pi_\Sigma$.

We now consider when $x \ne y$. Let $H = \{ (v, d\varphi^t_{Z_Y}(\tilde x)v + c
    Z_Y(\tilde y)\, | \, \alpha(v) = 0 \}$. Then, \[
    d\pi_\Sigma(H) = \{ (v, d\varphi^t_{Z_\Sigma}(x) v + c Z_\Sigma ) \, | \, v
    \in T_x \Sigma\} = T \Delta_{f_\Sigma}.
    \]
    By assumption, $y$ is not a critical point of $f_\Sigma$, so $d \pi_\Sigma$
    induces an isomorphism.
The decomposition of $T \widetilde \Delta_{f_Y}$ now follows immediately from
the definition of $g$ and from Equation \eqref{E:tangent_space_tilde_Delta}.
\end{proof}

\begin{proof}[Proof of Proposition \ref{prop:TransverseLiftedEvaluation}]

    We consider first the case of 
$$\widetilde{\ev}_{Y} \colon \MM_{H, k, \R \times Y; k_-,
k_+}^*\left((A_1,\ldots,A_N);J_Y\right) \to Y^{2N}. $$

    Suppose that $\tilde{\mathbf{v}} = (\tilde v_1, \dots, \tilde v_N) \in 
\MM_{H, k, \R \times Y; k_-, k_+}^*\left((A_1,\ldots,A_N);J_Y\right)$.
For each $i=1, \dots N$,
let $\tilde y_{i} = \tilde v_i(-\infty, 0) \in Y$ and $\tilde x_i = \tilde
    v_i(+\infty, 0) \in Y$, with 
    \begin{align*}
        &\tilde y_1 \in W^s_Y(\tilde q), \tilde x_N \in W^u_Y(\tilde p)\\
        &(\tilde x_i, \tilde y_{i+1}) \in \widetilde \Delta_{f_Y} \quad \text{ for } 1 \le i \le
        N-1.
    \end{align*}
    Let $w_i = \pi_\Sigma(\tilde v_i)$ and $x_i = \pi_\Sigma(\tilde x_i)$, $y_i
    = \pi_\Sigma(\tilde y_i)$. Then, it follows that
    \begin{align*}
        &y_1 \in W^s_\Sigma(q), x_N \in W^u_\Sigma(p)\\
        &(x_i, y_{i+1}) \in \Delta_{f_\Sigma} \cup \{ (p, p) \, | \, p \in
    \Crit(f_\Sigma) \} \quad \text{ for } 1 \le i \le N-1.
    \end{align*}
Let $S \subset \Sigma^{2N-2}$ be the appropriate product of a number of copies
of $\Delta_{f_\Sigma}$ and of $\{ (p,p) \, | \, p \in \Crit(f_\Sigma) \}$. By
Proposition \ref{ev Sigma submersion bis}, the evaluation map on
$\M_\Sigma((A_1, \dots, A_N); J_Y)$ is transverse to $S$.

Then, by the previous Lemma,
    \[
    TS \subset d\pi_\Sigma \left ( 
    T_{\tilde y_0} W^s_Y(\tilde q) 
        \times 
        T_{(\tilde x_1, \tilde y_2)} \widetilde{\Delta}_{f_Y} \times 
        \dots 
        \times T_{(\tilde x_{N-1}, \tilde y_{N})} \widetilde{\Delta}_{f_Y} \times 
        T_{\tilde x_N} W^u_Y(\tilde p)
        \right ).
    \]

It suffices therefore to obtain transversality in the vertical direction. Notice
that by rotating by the action of the Reeb vector field on $\tilde v_i$, we
obtain that the image of $d\tilde \ev$ contains the subspace 
\[
    \{ (a_1R, a_1R, a_2R, a_2R, \dots, a_{N}R, a_{N}R )\, | \, (a_1, \dots, a_N)
    \in \R^{N} \} \subset (TY)^{2N}.
\]

In the case of the chain of pearls in $\Sigma$, each of the spheres $w_i, i=1,
\dots, N$ must either be non-constant or have a non-trivial collection of
augmentation punctures. Then, 
by Lemma \ref{different multiplicities}, each punctured cylinder $\tilde v_i$
has different multiplicities $k_i^+, k_i^-$ at $\pm \infty$, and thus the action
of rotating the domain marker gives that the image of $d\tilde \ev_Y(\tilde
v_i)$ contains $(k_-R, k_+R) \in T_{\tilde y_{i}} Y \oplus T_{\tilde x_i} Y$.
While this holds for each $i=1, \dots, N$, we only require such a vector for one
cylinder. Then, by taking this in the case of $i=1$, we see that the following
$N+1$ vertical vectors in $(\R R)^{2N} \subset TY^{2N}$ 
are in the image of the linearized evaluation map (the first two obtained by
combining the two Reeb actions on $\tilde v_1$, the remainder by the Reeb action
on $\tilde v_i$, $i\ge 2$):
\begin{align*}
    &(R, 0, 0, \dots, 0),\\
    &(0, R, 0, \dots, 0),\\
    &(0, 0, R, R, 0, 0, \dots, 0), \\
    &(0, 0, 0, 0, R, R, 0, \dots, 0),\\
    & \dots \\
    &(0, 0, \dots, 0, R, R).
\end{align*}

By the previous Lemma, the tangent space $T_{(\tilde x_i, \tilde y_{i+1})}
\widetilde \Delta_{f_Y}$ contains at least the vertical vector $(R, g_iR)$, where
$g_i \coloneqq g(\tilde x_i, \tilde y_{i+1}) > 0$, for each $1 \le i \le N-1$. In the vertical
direction, this then contains the following $N-1$ vectors:
\begin{align*}
    &(0, R, g_1R, 0, \dots, 0),\\
    &(0, 0, 0, R, g_2R, 0, \dots, 0) \\
    &\dots \\
    &(0, \dots, 0, R, g_{N-1}R, 0).
\end{align*}

We now observe that this collection of $2N$ vectors spans 
$(\R R)^{2N}$. This establishes that $\tilde\ev_Y$ defined on 
$\MM_{H, k, \R \times Y; k_-, k_+}^*\left((A_1,\ldots,A_N);J_Y\right)$ is transverse to 
$W^s_Y(\tilde q) \times \widetilde{\Delta}_{f_Y}^N \times W^u_Y(\tilde p)$.

We now consider 
the case of 
\[
   \widetilde{\ev}_{W,Y} \colon \MM_{H, k, W; k_+}^*\left((B;
   A_1,\ldots,A_N);J_W\right) \to W \times Y^{2N+1}.
   \]
We will show this evaluation map is transverse to 
$$\tilde S \coloneqq W_W^u(x) \times \tilde \Delta \times \left ( \tilde \Delta_{f_Y} \right )^{N-1} \times W^u_{Y}(\tilde p).$$

As before, it suffices to show transversality in a vertical direction, since, by
Proposition \ref{ev Sigma submersion bis}, the projections to $X$, $\Sigma$ are
transverse. More precisely, let $S \subset W \times \Sigma \times \Sigma^{2N}$ be of the form
$S = W_W^u(x) \times \Delta \times S' \times W^u_\Sigma(p)$, where 
$S' \subset \Sigma^{2N-2}$ is a product of some number of
$\Delta_{f_\Sigma}$ and of $\{ (p, p) \, | \, p \in \Crit(f_\Sigma) \}$ so that 
$TS \subset T d\pi_\Sigma( \tilde S)$.
Proposition \ref{ev Sigma submersion bis} gives transversality of $\widetilde{\ev}_{W,Y}$ to $S$.

Notice that the tangent space $T \tilde S$ contains at least the following
vertical vectors (we put $0$ in the first component since $TW$ has no vertical
direction):
\begin{align*}
    &(0, R, R, 0, 0, \dots, 0) \\
    &(0, 0, 0, R, g_1R, 0, \dots, 0) \\
    &\dots \\
    &(0, \dots, 0, R, g_{N-1}R, 0).
\end{align*}

Let $(\tilde v_1, \tilde v_1, \dots, \tilde v_N) \in \MM_{H, k, W; k_+}^*$. 
The plane $\tilde v_1$ converges to a Reeb orbit of multiplicity $l = B \bullet \Sigma$.
Observe that domain rotation on the plane $\tilde v_1$ then gives that $(0, lR, 0, \dots,
0) \in TW \oplus TY \oplus TY^{2N}$ is in the image of $d\widetilde{\ev}_{W,Y}$. 

As before, the Reeb rotation on each of the punctured cylinders $\tilde v_1,
\dots, \tilde v_N$ gives that the following vertical vectors are in the image of
$d\widetilde{\ev}_{W,Y}$:
\begin{align*}
   &(0,0, R, R, 0, 0, \dots, 0, 0) \\
   &(0,0, 0, 0, R, R, \dots, 0, 0) \\
   &(0,0, 0, 0, \dots, 0, R, R).
\end{align*}
We notice then that these vectors span $0 \oplus (\R R)^{2N-1}$, so it 
follows that the evaluation map is transverse to $\tilde S$.

Finally, the transversality of the evaluation maps at augmentation punctures
comes from the fact that 
the augmentation evaluation maps
\begin{align*} 
\widetilde{\ev}_\Sigma^a \times \ev^a_\Sigma \colon
\MM_{H, k, \R \times Y; k_-, k_+}^*\left( (A_1,\ldots,A_N);J_Y\right) 
\times 
\M^*_X( (B_1, \dots, B_k); J_W) 
&\to
\Sigma^{k} \times \Sigma^k \\
\widetilde{\ev}_\Sigma^a \times \ev^a_\Sigma \colon
\MM_{H, k, W; k_+}^*\left( (B; A_1,\ldots,A_N);J_W\right) 
\times 
\M^*_X( (B_1, \dots, B_k); J_W) 
&\to
\Sigma^{k} \times \Sigma^k 
\end{align*}
factor through the evaluation maps 
\begin{align*}
&\ev^a\times \ev^a  \colon \M^*_{k, \Sigma}((A_1, \dots, A_N); J_\Sigma) \times \M^*_X((B_1, \dots, B_k); J_W) \to \Sigma^{2k} \quad \text{ and }\\
&\ev^a \times \ev^a \colon \M^*_{k, (X, \Sigma)}((B; A_1, \dots, A_N); J_W) \times
\M^*_X((B_1, \dots, B_k); J_W) \to \Sigma^{2k}.
\end{align*}
The required transversality for these maps is given by Proposition \ref{ev Sigma
submersion bis}.
Furthermore, these
evaluation maps are invariant under the domain and Reeb rotations used
to obtain transversality for $\widetilde{\ev}_Y$ and for
$\widetilde{\ev}_{W,Y}$
in the vertical directions, so the transversality follows immediately.  
\end{proof}

\section{Monotonicity and the differential}

The results of the previous section show that the moduli spaces of
Floer cylinders with cascades that project to \textit{simple} chains of pearls are
transverse. 

We now impose monotonicity conditions on $(X, \omega)$ and on $(\Sigma,
\omega_\Sigma)$ in order to show that these moduli spaces are sufficient for
the purposes of defining the split Floer differential.
We suppose $(X, \omega)$ is spherically monotone, so there exists a constant
$\tau_X > 0$ with $\langle c_1(TX), A \rangle = \tau_X \omega(A)$ for every
spherical homology class $A$.
Also, if we let $K > 0$ such that $A \bullet \Sigma = K\omega(A)$, then we
require $\tau_X - K > 0$. Observe that $\Sigma$ must be spherically monotone
if it has a spherical homology class, with 
monotonicity constant $\tau_\Sigma = \tau_X - K$.

\subsection{Index inequalities from monotonicity and transversality}
\label{sec:index_inequalities}

First, we consider the Fredholm index contributions of a plane in $W$ that could
appear as an augmentation plane, to obtain some bounds on the possible indices.

\begin{lemma} \label{L:indexAugmentationNonNeg}
If $v \colon \C \to W$ is a $J_W$ holomorphic plane asymptotic to a 
given closed Reeb orbit $\gamma$ in $Y$,
the Fredholm index for the deformations of $v$ 
(as an unparameterized curve) keeping $\gamma$ fixed is $|\gamma|_0$ and it is non-negative.
Furthermore, if $v$ is multiply covered, this Fredholm index is at least $2$.
\end{lemma}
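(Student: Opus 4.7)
My plan is to reduce the statement to a dimension count for an auxiliary moduli space of holomorphic spheres in $X$ and then handle simple and multiply-covered planes separately.

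\emph{Step 1 (Reduction to spheres in $X$).} I would first invoke Lemma \ref{planes = spheres} to identify $v$ with a $J_X$--holomorphic sphere $\tilde v \colon \CP^1 \to X$ meeting $\Sigma$ only at $\infty$, with order of contact equal to the covering multiplicity $k$ of $\gamma$ over its underlying simple fibre. Write $p_0 := \pi_\Sigma(\gamma) \in \Sigma$ and $B := [\tilde v] \in H_2(X;\Z)$, so that $B\bullet \Sigma = K\omega(B) = k$ and, by spherical monotonicity, $\langle c_1(TX), B\rangle = \tau_X k/K$. Keeping $\gamma$ fixed as a geometric Reeb orbit amounts to fixing $\tilde v(\infty) = p_0$ together with the order-of-contact $k$ condition at $\infty$.

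\emph{Step 2 (Computation of $|\gamma|_0$).} Riemann--Roch on $\CP^1$ gives virtual dimension $2n + 2\langle c_1(TX),B\rangle$ for parametrized spheres in class $B$. The condition $\tilde v(\infty) = p_0$ is codimension $2n$, and (given $\tilde v(\infty) \in \Sigma$) the condition that the first $k-1$ normal derivatives vanish is codimension $2(k-1)$; by Proposition \ref{prop:jet_condition_evaluation} these conditions are transverse. Quotienting by the $4$-dimensional reparametrization group $\Aut(\CP^1,\infty)$ therefore yields the unparametrized virtual dimension
\[
2n + 2\langle c_1(TX),B\rangle - 2n - 2(k-1) - 4 = -2 + 2\langle c_1(TX),B\rangle - 2k = -2 + 2k\tfrac{\tau_X-K}{K} = |\gamma|_0,
\]
where the middle equality uses $\langle c_1(TX),B\rangle = \tau_X k/K$.

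\emph{Step 3 (Non-negativity, simple case).} Suppose $v$ (equivalently $\tilde v$) is somewhere injective. By Proposition \ref{prop:jet_condition_evaluation}, the universal moduli space of simple $J_X$--spheres in class $B$ with tangency of order $\geq k$ to $\Sigma$ is a Banach manifold, containing the pair $(\tilde v, J_X)$. Standard persistence of somewhere injective curves under small perturbations of $J$, combined with the Sard--Smale theorem, produces a $J'_X$ near $J_X$ for which this moduli space is a smooth manifold of dimension equal to the Fredholm index $|\gamma|_0$ and contains a sphere close to $\tilde v$. Non-emptiness forces $|\gamma|_0 \geq 0$.

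\emph{Step 4 (Multiply covered case).} If $v$ is multiply covered, write $v = v_0 \circ \phi$ where $v_0$ is a simple plane and $\phi \colon \C \to \C$ is a holomorphic branched cover of degree $d \geq 2$; equivalently $\tilde v = \tilde v_0 \circ \Phi$ for a degree $d$ map $\Phi \colon \CP^1 \to \CP^1$ fixing $\infty$. Then $v_0$ is asymptotic to $\gamma_0 := \gamma_{k/d}$, and by Step 3 $|\gamma_0|_0 \geq 0$. A direct computation from \eqref{E:grading_Reeb_orbits} with $\tau_\Sigma = \tau_X - K$ and $dk_0 = k$ gives
\[
|\gamma|_0 = -2 + 2k\tau_\Sigma/K = d\bigl(-2 + 2k_0\tau_\Sigma/K\bigr) + 2(d-1) = d|\gamma_0|_0 + 2(d-1) \geq 2(d-1) \geq 2,
\]
which is the claimed lower bound in the covered case (and in particular implies non-negativity).

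The main obstacle is the persistence argument of Step 3: the given $J_W$ need not be a regular value of the projection from the universal moduli, so one cannot just read off $|\gamma|_0 \geq 0$ from a smooth moduli at $J_W$. One must either appeal to the open mapping theorem for a submersion of Banach manifolds (available from Proposition \ref{prop:jet_condition_evaluation}) or argue directly via the implicit function theorem and Gromov compactness that the simple curve $\tilde v$ persists under a small perturbation of $J$ preserving the tangency condition at $\Sigma$.
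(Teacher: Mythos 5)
Your Steps 1, 2, and 4 are sound and essentially match the paper, but Step~3 has a genuine gap that the paper avoids entirely. The persistence argument you propose does not work: to conclude that a somewhere-injective curve persists under a perturbation of $J$ you need the curve to be cut out transversally for the original $J$, which is precisely what you cannot assume. If the Fredholm index were negative, the projection from the universal moduli space to $\J^r$ would be a Fredholm map of negative index at $(\tilde v, J_X)$, hence not locally surjective there, so Sard--Smale provides no nearby regular $J'_X$ for which a nearby curve exists. Invoking an ``open mapping theorem'' cannot fix this, since the relevant projection is not a submersion. If your argument worked as stated it would prove that somewhere-injective curves never occur with negative Fredholm index, which is false in general.

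The paper's proof of non-negativity is purely arithmetic and much shorter. From your own Step~2 you already have $|\gamma|_0 = -2 + 2(\tau_X - K)\omega(B)$, and $(\tau_X - K)\omega(B) = \langle c_1(TX),B\rangle - B\bullet\Sigma$ is a \emph{difference of two integers}, hence an integer. It is positive because $\omega(B)>0$ (the sphere is nonconstant holomorphic) and $\tau_X > K$ by the monotone triple assumption. A positive integer is at least $1$, so $|\gamma|_0 \ge 0$. No moduli-space or perturbation argument is needed, and once you have this, your Step~4 identity $|\gamma|_0 = d\,|\gamma_0|_0 + 2(d-1)$ with $d \ge 2$ immediately gives the multiply-covered lower bound, exactly as in the paper.
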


\begin{proof}
The fact that the Fredholm index $\Ind(v)$ in the statement is given by $|\gamma|_0$ as in 
\eqref{E:grading_Reeb_orbits} can be seen using Theorem \ref{T:RiemannRochMB}. On the other hand,
thinking of $v$ as giving a $J_X$-holomorphic sphere in homology class $B\in H_2(X;\Z)$, with an order 
of contact $B\bullet \Sigma$ with $\Sigma$, we see that
\[
\Ind(v) = 2( \langle c_1(TX),B\rangle - B\bullet \Sigma -1 ) = 2( \tau_X \omega(B) - K \omega(B) - 1).
\]
Since the plane is holomorphic, the class $B$ has $\omega(B) > 0$. 
By our monotonicity assumptions, we have
$$\tau_X \omega(B) - K \omega(B) = (\tau_X - K) \, \omega(B) > 0.$$
Finally, since $\tau_X\omega(B)=\langle c_1(TX),B\rangle\in \Z$ and $K\omega(B)= B\bullet \Sigma \in \Z$, 
we have that $\tau_X \omega(B) - K \omega(B)>0$ is an integer, and is thus at least 1.

It therefore follows that $\Ind(v) \ge 0$.

Suppose now that $v$ is a $k$-fold cover of an underlying simple holomorphic
plane $v_0$, representing classes $B = kB_0$ and $B_0$, respectively. 
Then, 
\[
    \Ind(v)+2 = 2(\tau_X -K) \omega(kB_0) = k ( \Ind(v_0) + 2).
\]
Hence, $\Ind(v) \ge 2(k-1)$.  
\end{proof}

\begin{figure}
  \begin{center}
    \def\svgwidth{0.3\textwidth}

\begingroup
  \makeatletter
  \providecommand\color[2][]{%
    \errmessage{(Inkscape) Color is used for the text in Inkscape, but the package 'color.sty' is not loaded}
    \renewcommand\color[2][]{}%
  }
  \providecommand\transparent[1]{%
    \errmessage{(Inkscape) Transparency is used (non-zero) for the text in Inkscape, but the package 'transparent.sty' is not loaded}
    \renewcommand\transparent[1]{}%
  }
  \providecommand\rotatebox[2]{#2}
  \ifx\svgwidth\undefined
    \setlength{\unitlength}{153.65167524pt}
  \else
    \setlength{\unitlength}{\svgwidth}
  \fi
  \global\let\svgwidth\undefined
  \makeatother
  \begin{picture}(1,0.52879344)%
    \put(0,0){\includegraphics[width=\unitlength]{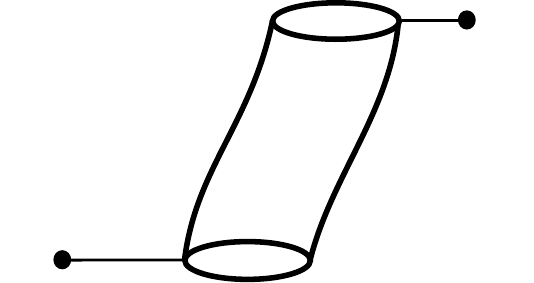}}%
    \put(-0.0634575,0.02310476){\color[rgb]{0,0,0}\makebox(0,0)[lb]{\smash{$\widehat q_{k_-}$}}}%
    \put(0.95149337,0.47648333){\color[rgb]{0,0,0}\makebox(0,0)[lb]{\smash{$\widecheck p_{k_+}$}}}%
    \put(0.52131919,0.24979405){\color[rgb]{0,0,0}\makebox(0,0)[lb]{\smash{$\tilde v$}}}%
    \put(0.902131919,0.164979405){\color[rgb]{0,0,0}\makebox(0,0)[lb]{\smash{$\R\times Y$}}}%
  \end{picture}%
\endgroup

   \end{center}
  \caption[Case 1]{Option (1) in Proposition \protect{\ref{P:Floer_in_R_times_Y}}, call it Case 1}
  \label{case_1_fig}
\end{figure}

\begin{figure}
  \begin{center}
    \def\svgwidth{0.25\textwidth}

\begingroup
  \makeatletter
  \providecommand\color[2][]{%
    \errmessage{(Inkscape) Color is used for the text in Inkscape, but the package 'color.sty' is not loaded}
    \renewcommand\color[2][]{}%
  }
  \providecommand\transparent[1]{%
    \errmessage{(Inkscape) Transparency is used (non-zero) for the text in Inkscape, but the package 'transparent.sty' is not loaded}
    \renewcommand\transparent[1]{}%
  }
  \providecommand\rotatebox[2]{#2}
  \ifx\svgwidth\undefined
    \setlength{\unitlength}{155.47384928pt}
  \else
    \setlength{\unitlength}{\svgwidth}
  \fi
  \global\let\svgwidth\undefined
  \makeatother
  \begin{picture}(1,1.34900323)%
    \put(0,0){\includegraphics[width=\unitlength]{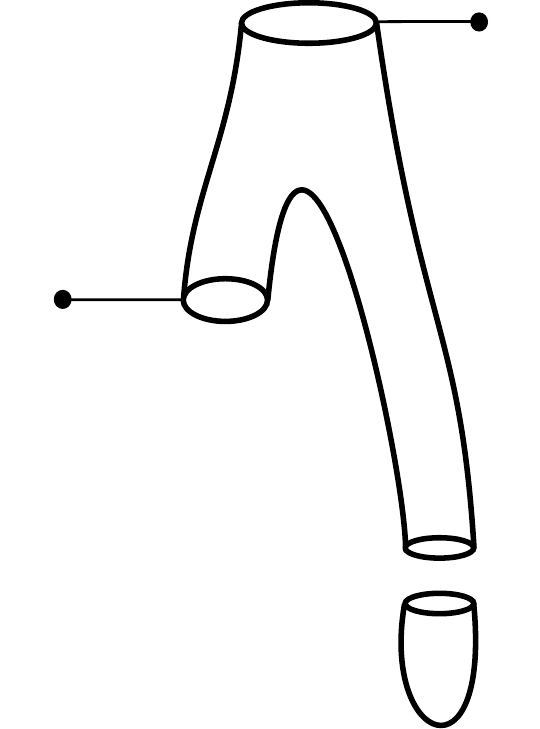}}%
    \put(-0.08341697,0.77649313){\color[rgb]{0,0,0}\makebox(0,0)[lb]{\smash{$\widehat p_{k_-}$}}}%
    \put(0.95206187,1.29539538){\color[rgb]{0,0,0}\makebox(0,0)[lb]{\smash{$\widecheck p_{k_+}$}}}%
    \put(0.54043248,1.11827007){\color[rgb]{0,0,0}\makebox(0,0)[lb]{\smash{$\tilde v$}}}%
    \put(0.6052953,0.09792872){\color[rgb]{0,0,0}\makebox(0,0)[lb]{\smash{$U$}}}%
    \put(0.95206187,0.77649313){\color[rgb]{0,0,0}\makebox(0,0)[lb]{\smash{$\R\times Y$}}}%
    \put(1.04206187,0.097649313){\color[rgb]{0,0,0}\makebox(0,0)[lb]{\smash{$W$}}}%
  \end{picture}%
\endgroup

   \end{center}
  \caption[Case 2]{Option (2) in Proposition \protect{\ref{P:Floer_in_R_times_Y}}, call it Case 2}
  \label{case_2_fig}
\end{figure}

\begin{proposition} \label{P:Floer_in_R_times_Y}
Any Floer cascade appearing in the differential, connecting two periodic orbits in $\R \times Y$,  
must be one of the following configurations:
\begin{enumerate}
\item[(0)] An index $1$ gradient trajectory in $Y$ without any (non-constant) holomorphic components and without any augmentation punctures.
\item A smooth cylinder in $\R \times Y$ without any augmentation punctures 
and a non-trivial projection to $\Sigma$. 
The positive puncture converges to an orbit $\widecheck p_{k_+}$ and the negative puncture 
converges to an orbit $\widehat q_{k_-}$. The difference in multiplicites of the orbits
is given by $k_+-k_-= K \omega(A)$, where $A \in H_2(\Sigma;\Z)$ is the homology class represented 
by the projection of the cylinder to $\Sigma$. See Figure \ref{case_1_fig}. 
\item A cylinder with one augmentation puncture and whose projection 
to $\Sigma$ is trivial. The positive puncture converges to an orbit $\widecheck p_{k_+}$ 
and the negative puncture converges to an orbit $\widehat q_{k_-}$. The augmentation
plane has index $0$.
If $B \in H_2(X;\Z)$ is the class represented by the augmentation plane, then the
difference in multiplicities is given by $k_+-k_-=K \omega(B)$. Furthermore, $\widecheck
p$ and $\widehat q$ are critical points of $f_Y$ contained in the same fibre of
$Y \to \Sigma$, which we can write as $q=p$. See Figure \ref{case_2_fig}.
\end{enumerate}

\end{proposition}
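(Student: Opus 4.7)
The plan is to apply the dimension formula from Proposition \ref{cascades form manifold} together with monotonicity to impose strong index restrictions on rigid cascades. For a rigid simple cascade contributing to $\partial$, $|\widetilde p_{k_+}|-|\widetilde q_{k_-}| = 1$, which by Definition \ref{def:grading} combined with the multiplicity relation $k_+-k_- = \sum_i K\omega(A_i) + \sum_j l_j$ (where the $A_i \in H_2(\Sigma;\Z)$ are the homology classes of the projections of the sublevels to $\Sigma$ and the $l_j = B_j\bullet\Sigma$ are the orders of contact of the augmentation planes) expands, using the identities $2\langle c_1(T\Sigma),A_i\rangle = 2(\tau_X-K)\omega(A_i)$ and $|\gamma_j|_0 + 2 = 2\tau l_j$, into the key index identity
\[
(M(p)-M(q)) + \Delta i + \sum_{i=1}^N 2\langle c_1(T\Sigma),A_i\rangle + \sum_{j=1}^k (|\gamma_j|_0 + 2) \;=\; 1,
\]
where $\Delta i = i(\widetilde p) - i(\widetilde q)\in\{-1,0,1\}$. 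Monotonicity of $X$ and $\Sigma$ ensures $\langle c_1(T\Sigma),A_i\rangle \geq 1$ whenever $A_i\neq 0$, while Lemma \ref{L:indexAugmentationNonNeg} together with the monotonicity of $X$ shows $|\gamma_j|_0$ is a non-negative even integer. Hence every non-trivial term on the left contributes at least $2$.

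For $N=0$ the identity reduces to $\tilde M(\widetilde p)-\tilde M(\widetilde q)=1$ (since then $k_+=k_-$), which is exactly case (0). For $N\geq 1$, let $a$ be the number of indices $i$ with $A_i\neq 0$ and $k$ the number of augmentation planes. The simplicity condition built into Proposition \ref{cascades form manifold}, via Definition \ref{D:simple chain pearls}, requires that each constant sphere in the projected chain of pearls carries at least one augmentation puncture, giving $a+k\geq N$. Substituting the lower bounds into the identity yields
\[
2(a+k) \;\leq\; 1-\Delta i-(M(p)-M(q)) \;\leq\; 2 + (M(q)-M(p)),
\]
which, combined with $a+k\geq N\geq 1$ and a finer vertical analysis ruling out $a+k\geq 2$ (see below), forces $N=1$ and $a+k=1$.

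The remaining case analysis splits into two sub-cases. If $a=1, k=0$, we obtain case (1): a single smooth Floer cylinder whose projection represents a non-trivial class $A=A_1$, and the multiplicity relation $k_+-k_-=K\omega(A)$ follows directly from the definitions. If $a=0,k=1$, then the projection of the cylinder to $\Sigma$ is constant, so $w_1(0)=w_1(\infty)$ is a single point lying in both $W^s(q)$ and $W^u(p)$. Since $M(p)\leq M(q)$ is forced by the index identity together with $|\gamma_1|_0\geq 0$, the Morse--Smale transversality of $(f_\Sigma, Z_\Sigma)$ shows that $W^s(q)\cap W^u(p)=\emptyset$ unless $p=q$, in which case this intersection is the single point $\{p\}$. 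Hence $p=q$, and then the residual equation $|\gamma_1|_0 = -\Delta i - 1 \geq 0$ (a non-negative even integer) forces $|\gamma_1|_0 = 0$ and $\Delta i = -1$, which is precisely case (2).

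The main obstacle in this program is ruling out the mixed and higher-$N$ configurations with $a+k\geq 2$: the crude index inequality above still allows such configurations when $M(q)-M(p)\geq 2$, so dimensional counting on $\M^a$ alone is not sufficient. The finer argument exploits the automatic transversality of the vertical operator $D^\C_{\tilde v}$ from Lemma \ref{L:verticalOperatorTransverse} (Fredholm index $1$ with the Reeb direction in its kernel), together with the structure of the flow diagonal $\widetilde\Delta_{f_Y}$ developed in the proof of Proposition \ref{prop:TransverseLiftedEvaluation}. The fibrewise codimensions of the ascending/descending manifolds of fibrewise-maximal and fibrewise-minimal critical points of $f_Y$, coupled with the Reeb-rotation kernels of each sublevel and the matching conditions at the intermediate flow diagonals, encode the vertical part of the index budget and ultimately pin down the specific vertical form $\widetilde p = \widecheck p$, $\widetilde q = \widehat q$ stated in the proposition.
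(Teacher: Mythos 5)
Your derivation of the index identity matches the paper's Equation \eqref{E:indexDifferential}, and your observation that stability forces $a + k \geq N$ (with $a = N_1$ in the paper's notation) is correct. However, you stop short of the crucial inequality that makes the whole argument close, and the repair you sketch will not work.

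The step you are missing is the lower bound
\[
I_\Sigma \coloneqq M(p) + 2\langle c_1(T\Sigma), A\rangle - M(q) + N - 1 + 2k \;\geq\; 2N_1 + 2k,
\]
which comes from the transversality of the \emph{constrained} chain of pearls in $\Sigma$ (Proposition \ref{necklaces are regular}): the constrained moduli space is a manifold of dimension $I_\Sigma$ and contains a $2N_1$-parameter family of domain reparametrizations of the non-constant spheres plus a $2k$-parameter family of moving augmentation marked points. This bound is what absorbs the Morse index difference $M(p) - M(q)$: without it, you cannot control $M(p) - M(q)$ from below, which is exactly the obstacle you yourself notice. Your idea that the fix lies in a ``finer vertical analysis'' via $D^\C_{\tilde v}$ and $\widetilde\Delta_{f_Y}$ points in the wrong direction: the Morse indices $M(p)$, $M(q)$ are horizontal data in $\Sigma$, and $D^\C_{\tilde v}$ controls only the rank-one $\R\partial_r \oplus \R R$ piece, so no vertical estimate can recover what is lost by discarding the $\Sigma$-dimension count. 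Once $I_\Sigma \geq 2N_1 + 2k$ is in hand, the paper's chain of inequalities
\[
1 \;\geq\; (i(\widetilde p) - i(\widetilde q) + 1) + N_1 + k + (k - N_0) + \sum_{j=1}^k |\gamma_j|_0
\]
has every summand non-negative (using $N_0 \leq k$), and the case analysis forcing $N_1 + k \leq 1$ and $\widetilde p = \widecheck p$, $\widetilde q = \widehat q$ falls out immediately.

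A secondary gap: in your Case (1) ($a = 1$, $k = 0$) you only conclude $(M(p)-M(q)) + \Delta i \leq -1$, which does not by itself force $\Delta i = -1$, i.e.\ $\widetilde p = \widecheck p$ and $\widetilde q = \widehat q$. Again, this requires the $I_\Sigma$ bound so that both $M(p) - 2 - M(q) + N - 1 + 2k \geq 0$ (transversality of the horizontal chain of pearls) and your index identity can be combined. Your observation in Case (2) that Morse--Smale transversality for $(f_\Sigma, Z_\Sigma)$ forces $p = q$ when the projection is constant and $M(p) \leq M(q)$ is valid and is a nice supplementary remark, but it does not substitute for the missing dimension count.
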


\begin{proof}

Consider a cascade with $N$ levels and $k$ augmentation planes appearing in the
differential $d \widetilde p_{k_+} = \cdots + \widetilde q_{k_-} + \cdots$.
Let $A_1, \dots, A_N \in H_2(\Sigma)$ denote the homology classes of the projections to
$\Sigma$, let $B_1, \dots, B_k \in H_2(X)$ denote the homology classes
corresponding to the augmentation planes. Let $\gamma_i, i=1, \dots, k$ denote
the limits at the augmentation punctures, and let $k_i$ denote their
multiplicites. Let $A = \sum_{i=1}^N A_i$.

We therefore have $k_+ - k_- - \sum_{j=1}^k k_j = K \omega(A) = K \frac{\langle c_1(T\Sigma), A \rangle}{\tau_X - K}$. 
We also have $k_j = B_j \bullet \Sigma = K\omega(B_j)$. 
Notice then that $|\gamma_i|_0 = 
                            2 \langle c_1(TX), B_j \rangle 
                            - 2 B_j \bullet \Sigma 
                            -2$.

We therefore have
\begin{equation} \label{E:indexDifferential}
\begin{aligned}
    1 &= | \widetilde p_{k_+} | - | \widetilde q_{k_-} | \\
      &= i(\widetilde p) + M(p) - i(\widetilde q) - M(q) 
                    + 2\frac{\tau_X -K}{K} (k_+ - k_- - \sum_{j=1}^k k_j) 
                    + 2\frac{\tau_X -K}{K} \sum_{j=1}^k k_j \\
      &=  i(\widetilde p) + M(p) - i(\widetilde q) - M(q) 
                    + 2 \langle c_1(T\Sigma), A \rangle 
                    + 2k + \sum_{j=1}^k |\gamma_j|_0.
\end{aligned}
\end{equation}

By Lemma \ref{L:indexAugmentationNonNeg}, we have that for each $j=1, \dots, k$, 
$| \gamma_j|_0 \ge 0$.

Consider the chain of pearls in $\Sigma$ obtained by projecting the upper level
of this split Floer trajectory to $\Sigma$.
By Proposition \ref{necklaces are regular}, if this is a simple chain of pearls, it has Fredholm index 
\[
    I_\Sigma \coloneqq M(p) + 2 \langle c_1(T\Sigma), A \rangle - M(q) + N - 1 + 2k.
\]
If the chain of pearls is not simple, by monotonicity, we have that the index is at
least as large as the index of the underlying simple chain of pearls. 

Now let $N_0$ be the number of sub-levels that project to constant curves in $\Sigma$ and let $N_1$ be the 
number of sub-levels that project to non-constant curves in $\Sigma$, $N = N_0 + N_1$. 
Note that by the stability condition,
each cylinder that projects to a constant curve in $\Sigma$ must have at least one augmentation puncture, 
so $N_0 \le k$.

By transversality for simple chains of pearls (Proposition \ref{necklaces are
regular}), we obtain the inequality 
\[
    I_\Sigma \ge 2 N_1 + 2k
\]
by considering the 2-dimensional automorphism group for the $N_1$ non-constant
spheres and by considering the $2k$-parameter family of moving augmentation
marked points on the domains.

Combining with Equation \eqref{E:indexDifferential}, we obtain
\begin{align*}
    1 &= i(\widetilde p) - i(\widetilde q) + (I_\Sigma -N + 1)+ 
                     \sum_{j=1}^k |\gamma_j|_0 \\ 
    1 &\ge (i(\widetilde p) - i(\widetilde q)+1) + 2N_1  + 2k - N +
                     \sum_{j=1}^k |\gamma_j|_0 \\
    1 &\ge (i(\widetilde p) - i(\widetilde q)+1) + N_1  + k + (k - N_0) +
                     \sum_{j=1}^k |\gamma_j|_0.
\end{align*}
Observe now that each term on the right-hand-side of the inequality is
non-negative. In particular, there is at most one augmentation plane ($k\leq 1$)
and if there is one, it 
must have $ |\gamma_1|_0 = 2 \langle c_1(TX), B_1 \rangle - 2 B_1 \bullet \Sigma -2  = 0$
(so the augmentation plane cannot be multiply covered, by Lemma \ref{L:indexAugmentationNonNeg}).

We can further write 
\[
    1 \ge  (i(\widetilde p) - i(\widetilde q)+1) + N_1  + k + (k - N_0).
\]
Notice that $N_1 + 2k - N_0 \ge N$.

This inequality can be satisfied in one of the following ways:
\begin{enumerate}
    \item[(0)] $N=0$. Then, either $i(\widetilde p) = i(\widetilde q)$ 
        or  $\widetilde p = \check p$ and $\widetilde q = \hat q$. Since $N =0$,
        this is a pure Morse differential term.
    \item $N_1 = 1$, $N_0 = k = 0$ and $\widetilde p = \widecheck p$,
        $\widetilde q = \widehat q$.
        This case corresponds to a non-constant sphere in $\Sigma$ without any
        augmentation punctures.
    \item $N_1 = 0$, $k=1$, $N_0 = 1$, and $\widetilde p = \widecheck p$, $\widetilde q = \widehat q$.
        In this case, the Floer cylinder has one augmentation puncture, and
        projects to a constant in $\Sigma$, so $q=p \in \Sigma$.
\end{enumerate}
\end{proof}

\begin{figure}
  \begin{center}
    \def\svgwidth{0.2\textwidth}

\begingroup
  \makeatletter
  \providecommand\color[2][]{%
    \errmessage{(Inkscape) Color is used for the text in Inkscape, but the package 'color.sty' is not loaded}
    \renewcommand\color[2][]{}%
  }
  \providecommand\transparent[1]{%
    \errmessage{(Inkscape) Transparency is used (non-zero) for the text in Inkscape, but the package 'transparent.sty' is not loaded}
    \renewcommand\transparent[1]{}%
  }
  \providecommand\rotatebox[2]{#2}
  \ifx\svgwidth\undefined
    \setlength{\unitlength}{146.34692076pt}
  \else
    \setlength{\unitlength}{\svgwidth}
  \fi
  \global\let\svgwidth\undefined
  \makeatother
  \begin{picture}(1,1.81416906)%
    \put(0,0){\includegraphics[width=\unitlength]{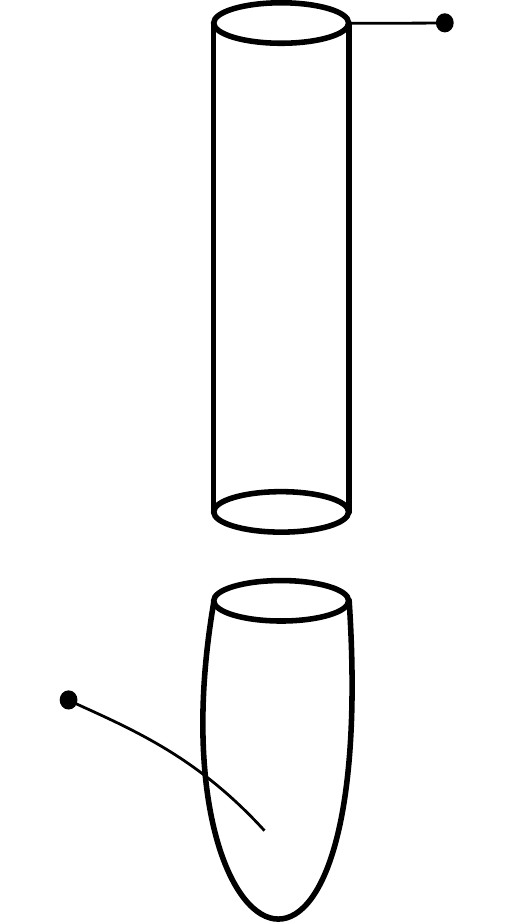}}%
    \put(-0.00144135,0.41693098){\color[rgb]{0,0,0}\makebox(0,0)[lb]{\smash{$x$}}}%
    \put(0.94907221,1.75491512){\color[rgb]{0,0,0}\makebox(0,0)[lb]{\smash{$\widecheck p_{k_+}$}}}%
    \put(0.5192222,1.27663133){\color[rgb]{0,0,0}\makebox(0,0)[lb]{\smash{$\tilde v_1$}}}%
    \put(0.5192222,0.3684972){\color[rgb]{0,0,0}\makebox(0,0)[lb]{\smash{$\tilde v_0$}}}%
    \put(0.94907221,1.12663133){\color[rgb]{0,0,0}\makebox(0,0)[lb]{\smash{$\R\times Y$}}}%
    \put(1.04907221,0.3284972){\color[rgb]{0,0,0}\makebox(0,0)[lb]{\smash{$W$}}}%
  \end{picture}%
\endgroup

   \end{center}
  \caption[Case 3]{Configuration as in Proposition \protect{\ref{P:Floer_also_in_W}}, call it Case 3} 
  \label{case_3_fig}
\end{figure}

We now consider the possible terms in the differential that connect non-constant Hamiltonian 
trajectories in $\R \times Y$ to Morse critical points in $X$.

\begin{proposition} \label{P:Floer_also_in_W}
Any Floer cascade appearing in the differential, 
connecting a non-constant Hamiltonian orbit $\tilde p_{k_+}$ in $ \R \times Y$ to a Morse critical
point $x$ in $W$, consists of two levels. 
The upper level, in $ \R \times Y $, projects to a point in $\Sigma$ and 
is a cylinder asymptotic at $+\infty$ to an orbit $\widecheck p_{k_+}$ and at $-\infty$ to a Reeb orbit 
$\gamma$ in $\{ -\infty \} \times Y$. This $\gamma$ is the parametrized Reeb orbit associated to $\widecheck p_{k_+}$.

The lower level is a holomorphic plane in $W$ converging to the parametrized orbit $\gamma$ at $\infty$
and with $0$ mapping to the descending manifold of the critical point $x$. As a parametrized curve,
this has Fredholm index $1$. See Figure \ref{case_3_fig}.
\end{proposition}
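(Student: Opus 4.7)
I would mirror the strategy used in the proof of Proposition \ref{P:Floer_in_R_times_Y}, combining the grading identity $|\widecheck p_{k_+}| - |x| = 1$ with the dimension formula for the associated chain-of-pearls moduli space (Proposition \ref{necklaces are regular}) and a transversality lower bound. Write the cascade with $N\ge 1$ cylinders in $\R\times Y$ of projected classes $A_1,\dots,A_N\in H_2(\Sigma;\Z)$, a plane in $W$ corresponding (under Lemma \ref{planes = spheres}) to a sphere in $X$ of class $B$ with $l=B\bullet\Sigma$, and $k$ augmentation planes of classes $B_1,\dots,B_k$. The multiplicity identity $k_+ = K\omega\bigl(B + \sum A_i + \sum B_j\bigr)$, combined with Definition \ref{def:grading} and the relation $2(\tau_X - K)\omega(B_j) = |\gamma_j|_0 + 2$, turns $|\widecheck p_{k_+}| - |x| = 1$ into
\[
0 = M(p) + i(\widetilde p) + M(x) - 2n + 2\bigl(\langle c_1(TX), B\rangle - l\bigr) + \sum_i 2\langle c_1(T\Sigma), A_i\rangle + \sum_j |\gamma_j|_0 + 2k,
\]
where each $|\gamma_j|_0\ge 0$ by Lemma \ref{L:indexAugmentationNonNeg}. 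Subtracting the formula for $I\coloneqq \dim \M^*_{k,(X,\Sigma)}\bigl((B;A_1,\dots,A_N); x,p; J_W\bigr)$ from Proposition \ref{necklaces are regular} yields the compact identity $I = N+1 - i(\widetilde p) - \sum_j |\gamma_j|_0$.

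Let $N_1$ denote the number of non-constant $w_i$ in $\Sigma$ and $N_0 = N - N_1$. For a simple configuration, the moduli space carries a free $\C^*$ reparametrization action fixing $0$ and $\infty$ on each of the $N_1$ non-constant spheres in $\Sigma$ \emph{and} on the parametrized sphere $v$ in $X$, while each augmentation marked point contributes $2$ real dimensions of freedom; this forces the transversality bound
\[
I \ge 2(N_1 + 1) + 2k.
\]
The stability requirement in Definition \ref{D:chain of pearls w sphere in X} (only $w_1$ may be constant without an augmentation marked point) gives $k \ge N_0 - 1$. Substituting $I$ and using $N = N_0 + N_1$, the transversality inequality becomes $N_0 \ge N_1 + 1 + i(\widetilde p) + \sum|\gamma_j|_0 + 2k$; applying $2k \ge 2(N_0 - 1)$ and simplifying collapses this to
\[
1 - N \ge i(\widetilde p) + \sum_j |\gamma_j|_0 \ge 0,
\]
so $N \le 1$. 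Since $N \ge 1$ by definition, $N = 1$, and equality throughout forces $i(\widetilde p) = 0$ (so $\widetilde p = \widecheck p$), $\sum_j |\gamma_j|_0 = 0$, $k = 0$, and then from $N_0 - N_1 - 1 = 2k = 0$ combined with $N_0 + N_1 = 1$, we obtain $N_0 = 1$ and $N_1 = 0$.

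This leaves a unique combinatorial type: one cylinder $\widetilde v_1$ in $\R\times Y$ projecting to a constant in $\Sigma$ and hence contained in the fibre over $p$, asymptotic to $\widecheck p_{k_+}$ at $+\infty$ and to a Reeb orbit $\gamma$ at $\{-\infty\}\times Y$, together with a plane $v_0$ in $W$ asymptotic to $\gamma$ at $+\infty$ whose origin lies on $W^u(x)$; the multiplicity balance $A_1 = 0$ and the matching of parametrizations at $-\infty$ of $\widetilde v_1$ identify $\gamma$ as the $k_+$-fold cover, starting at $\widecheck p$, of the simple Reeb orbit over $p$, with $k_+ = B\bullet \Sigma$. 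The hard part of this argument is bookkeeping: the extra $\C^*$ action on $v$ (relative to the analogous argument of Proposition \ref{P:Floer_in_R_times_Y}) is what pushes the transversality lower bound up by $2$, allowing the conclusion $N_1 = 0$ and forcing the upper level to fall into a fibre. Finally, the Fredholm index $1$ of $v_0$ as a parametrized plane with fixed parametrized asymptotic $\gamma$ and constraint $v_0(0) \in W^u(x)$ is obtained by applying Theorem \ref{T:RiemannRochMB} with $V_+$ chosen to match the single remaining $\R$-reparametrization of the domain, or equivalently by subtracting the cylinder's contribution $1 + M(p)$ and the matching codimension from the total cascade dimension $|\widecheck p_{k_+}| - |x| + N = 2$.
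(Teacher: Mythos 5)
Your argument is essentially the paper's own proof: you set up the same grading identity $|\widecheck p_{k_+}|-|x|=1$, invoke the same dimension formula for $\M^*_{k,(X,\Sigma)}$ from Proposition \ref{necklaces are regular}, use the same transversality lower bound $I\ge 2(N_1+1)+2k$ coming from the $\C^*$-automorphisms of the $N_1+1$ non-constant spheres and the freedom of the $k$ marked points, and the same stability constraint $N_0\le k+1$. The only difference is cosmetic: the paper reorganizes the inequality as $0\ge i(\widetilde p)+N_1+k+(k+1-N_0)+\sum_j|\gamma_j|_0$, a sum of manifestly non-negative terms, so all the vanishing conclusions drop out in one step; you instead substitute $2k\ge 2(N_0-1)$ into the $I$-inequality to get $1-N\ge i(\widetilde p)+\sum|\gamma_j|_0$, which also works but requires going back to extract $k=0$ and $N_0=1,N_1=0$ from the forced equalities (your exposition asserts $k=0$ a sentence before the equalities that actually justify it, but the logic is there). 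For the final Fredholm-index-$1$ claim, the paper argues that the fibre-confined upper cylinder converts the $\widecheck p$-marker into a parametrization marker on $\gamma$ for $v_0$, dropping its index from $2$ to $1$; your ``subtracting the cylinder's contribution $1+M(p)$'' is not clearly justified, but your first suggestion (applying Theorem \ref{T:RiemannRochMB} with the appropriate $V_+$) is the right idea.
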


\begin{proof}
Suppose such a cascade occurs in the differential, connecting the
non-constant orbit $\widetilde p_{k_+}$ to the critical point $x$ in the
filling $W$.

Let $N$ be the number of cylinders in $\R \times Y$ that appear in the split
Floer cylinder. Let $A_i \in H_2(\Sigma), i=1, \dots, N$, denote the spherical classes
represented by the projections of these cylinders to $\Sigma$. Let $A =
\sum_{i=1}^N A_i$.

Let $k$ be the number of augmentation planes, and let $B_j \in H_2(X), j=1, \dots, k$ be
the corresponding spherical homology classes in $X$. Let $\gamma_j, j=1, \dots,
k$, be the corresponding Reeb orbits with multiplicities $k_j = B_j \bullet \Sigma
= K \omega(B_j)$.

Let $B \in H_2(X)$ be the spherical homology class in $X$ represented by the
lower level $v_0$ in $W$, connecting to the critical point $x$.
Let $k_- = B \bullet \Sigma$ be the multiplicity of the orbit
to which the plane $v$ converges.
As before, we have 
\[
    k_+ - k_- - \sum_{j=1}^k k_j = K \omega(A).
\]

We then have 
\begin{equation} \label{E:differentialWithW}
\begin{aligned}
    1 &= | \widetilde p_{k_+}| - |x|  \\
      &= i(\widetilde p) + M(p) + 1-2n + M(x) 
            + 2\frac{\tau_X-K}{K} \left ( k_+ - k_- - \sum_{j=1}^k k_j 
            + k_- +  \sum_{j=1}^k k_j  \right ) \\ 
      &= i(\widetilde p) + M(p) + 1 - 2n + M(x) 
            + 2\langle c_1(T\Sigma), A \rangle 
            + 2\langle c_1(TX), B \rangle - 2 B \bullet \Sigma  
            + 2k + \sum_{j=1}^k |\gamma_j|_0 
\end{aligned}
\end{equation}

Projecting to $\Sigma$, we obtain a chain of pearls with a sphere in $X$.
Let $N_0$ be the number of constant spheres in $\Sigma$ and let $N_1$ be the
number of non-constant spheres in $\Sigma$, $N = N_0 + N_1$.
Notice that each non-constant sphere in $\Sigma$ has a 2-parameter family of
automorphisms, and each augmentation marked point can be moved in a 2-parameter
family. Furthermore, the holomorphic sphere $v_0$ also has a 2-parameter family of
automorphisms. 
By passing to a simple underlying chain of pearls as necessary, and applying
monotonicity and Proposition \ref{necklaces are regular} (to $\M^*_{k,(X,\Sigma)}((B;A_1,\ldots,A_N);x,p,J_W)$), we obtain
\begin{align*}
    I_X &\coloneqq M(p) + 2 \langle c_1(T\Sigma), A \rangle + 2 \left ( \langle
    c_1(TX), B \rangle - B \bullet \Sigma \right ) + M(x) - 2n +1 + N + 2k  \\
    &\ge 2N_1 + 2k + 2.
\end{align*}

We now combine the inequality with Equation \eqref{E:differentialWithW}:
\begin{align*}
    1 &= i(\widetilde p) + I_X - N +  \sum_{j=1}^k |\gamma_j|_0 \\
    1 &\ge i(\widetilde p) + 2N_1 + 2k + 2 - N_0 - N_1 + \sum_{j=1}^k
        |\gamma_j|_0\\
    0 &\ge i(\widetilde p) + N_1 + k + (k+1-N_0) + \sum_{j=1}^k |\gamma_j|_0.  
\end{align*}
Notice that we have $N_0 \le k+1$ since the first sphere in the chain of pearls
with a sphere in $X$ is allowed to be constant without any marked points. 
This observation together with Lemma \ref{L:indexAugmentationNonNeg} gives that
each term on the right-hand-side of the inequality is non-negative. 
It follows therefore that each term must vanish: 
$N_1 = 0$, $N_0 =1$, $k=0$ and $\widetilde p = \widecheck p$.
Notice that the Floer cylinder in $\R \times Y$ is contained in a single fibre
of $\R \times Y \to \Sigma$, so the marker condition coming from $\widecheck p$
can be interpreted as a marker condition on the holomorphic plane $v_0$ (via the 
parametrized Reeb orbit $\gamma$ in the statement). Without the marker condition, 
$v_0$ has Fredholm index 2, and thus with the marker constraint, it has index $1$.  

\end{proof}

\begin{remark}
    Similar analysis applied to continuation maps gives that our construction doesn't depend on the choices
    of almost complex structure $J_Y$, $J_W$ or of the auxiliary Morse functions and gradient-like vector fields. 

    In general, $\partial^2 = 0$ is obtained through analyzing gluing and
    considering the boundary of $1$--dimensional moduli spaces. 
        In our situation, if additionally $f_\Sigma$ and $f_W$ are assumed to
        be lacunary
        (i.e.~have no critical points of consecutive indices), 
     all contributions to the differential of an orbit $\widecheck p$ 
    are either of the form $\widehat q$ or constant orbits. 
    This automatically gives that $\partial^2 = 0$ for split symplectic homology.
 \end{remark}

Case (2) in Proposition \ref{P:Floer_in_R_times_Y} allows for the
existence of augmented configurations contributing to the symplectic
homology differential.  We will now adapt an argument originally due
to Biran and Khanevsky \cite{BiranKhanevsky} to show that
if $\overline W$ is a Weinstein domain (or equivalently, if $W$ is a Weinstein
manifold of finite-type), and $\Sigma$ has minimal Chern number at least $2$, 
then there can only be rigid augmentation planes
 if the isotropic skeleton has codimension at most 2 (in particular,
 $\dim_\R X = 2n\leq 4$).

\begin{lemma} \label{L:BiranKhanevskySkeleton}
If $W$ is a Weinstein domain with isotropic skeleton of real codimension
at least 3, then $X$ is symplectically aspherical if and only if $\Sigma$ is.

Furthermore, any symplectic sphere in $X$ is in the image of the inclusion
\[
\imath_* \colon \pi_2(\Sigma) \to \pi_2(X).
\]
\end{lemma}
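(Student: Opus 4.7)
The plan is to use the Weinstein structure on $\overline{W}$ to deformation retract $X\setminus \mathrm{Sk}$ onto $\Sigma$, and then push any $2$-sphere off the skeleton by a dimension count, yielding surjectivity of $\imath_*\colon\pi_2(\Sigma)\to\pi_2(X)$.

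First I would appeal to the Weinstein structure: the Liouville vector field $Z_\lambda$ on $\overline{W}$ is gradient-like for a Morse function whose descending manifolds are isotropic, and whose union is the skeleton $\mathrm{Sk}$. The forward Liouville flow therefore defines a deformation retract $\overline{W}\setminus\mathrm{Sk}\to Y$. Using the realization of $X$ as the symplectic cut of $\overline{W}$ in Proposition~\ref{prop:X}, so that $X$ is the union of $\overline{W}$ with the closed disk-bundle neighbourhood $\overline{\varphi(\mathcal{U})}$ of $\Sigma$, glued along $Y$, I would extend this flow by the identity on $\overline{\varphi(\mathcal{U})}$ to obtain a deformation retract $X\setminus\mathrm{Sk}\to\overline{\varphi(\mathcal{U})}$, and then use the radial retraction of the disk bundle to conclude that $\Sigma\hookrightarrow X\setminus\mathrm{Sk}$ is a homotopy equivalence.

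Next, since $\mathrm{Sk}$ is a finite union of smooth isotropic submanifolds of real dimension at most $2n-3$, a generic perturbation of any smooth map $u\colon S^2\to X$ can be made transverse to each stratum. By the dimension count $2 + (2n-3) < 2n$, such a perturbation misses $\mathrm{Sk}$ altogether, so $u$ is homotopic to a map into $X\setminus\mathrm{Sk}$. Composing with the retract onto $\Sigma$ then gives the surjectivity of $\imath_*\colon\pi_2(\Sigma)\to\pi_2(X)$. Both conclusions of the lemma now follow formally: since $\omega_\Sigma=\imath^*\omega$, the implication ``$X$ aspherical $\Rightarrow$ $\Sigma$ aspherical'' is immediate from $\omega_\Sigma([v])=\omega(\imath_*[v])$; conversely, if $\Sigma$ is aspherical, every $[u]\in\pi_2(X)$ is of the form $\imath_*[v]$, so $\omega([u])=\omega_\Sigma([v])=0$. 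The assertion about symplectic spheres is just the surjectivity of $\imath_*$ applied to the homotopy class of such a sphere.

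The main technical point will be to state transversality to $\mathrm{Sk}$ cleanly, since the skeleton is stratified rather than smooth; this is handled by making $u$ transverse stratum by stratum, using that the Weinstein handle decomposition makes $\mathrm{Sk}$ a finite CW complex built from isotropic cells. I note that the codimension $\geq 3$ hypothesis is exactly what is needed to push maps of $S^2$ off $\mathrm{Sk}$; it is \emph{not} enough to push homotopies $S^2\times I\to X$ off $\mathrm{Sk}$ (which would require codimension $\geq 4$), but only surjectivity and not injectivity of $\imath_*$ is required for either conclusion of the lemma.
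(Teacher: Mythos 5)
Your proposal is correct and follows essentially the same route as the paper: use the Liouville flow to identify $X\setminus\mathrm{Sk}$ with a (piece of a) disk bundle over $\Sigma$ with projection homotopic to the inclusion $X\setminus\mathrm{Sk}\hookrightarrow X$, push spheres off the skeleton by the codimension $\geq 3$ dimension count, and then read off both conclusions from surjectivity of $\imath_*$ together with $\omega_\Sigma=\imath^*\omega$. Your added remark that codimension 3 gives only surjectivity (not injectivity) of $\imath_*$, and that this suffices, is a nice clarification the paper leaves implicit.
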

\begin{proof}
The trivial direction is that if there exists a spherical class $A \in \pi_2(\Sigma)$ with $\omega(A) > 0$, then
$\imath_* A \in \pi_2( X)$ and still has positive area. 

We will now prove that any symplectic sphere in $X$ is in the image
of the inclusion. Let $C \subset W$ be the isotropic skeleton of $W$.
Notice that by following the flow of the Liouville vector field on $W$,
we obtain that $W \setminus C$ is symplectomorphic to a piece of the
symplectization $(-\infty, a) \times Y$. Thus, we have that $X \setminus
C$ is an open subset of a symplectic disk bundle over $\Sigma$ (the normal bundle to $\Sigma$ in $X$). We denote this
bundle's projection map by $\pi \colon X \setminus C \to \Sigma$.

Suppose $A \in \pi_2(X)$ is a spherical class with $\omega(A) > 0$.
By hypothesis, the skeleton $C$ is of codimension at least 3.  We may
therefore perturb $A$ in a neighbourhood of the skeleton so that it
does not intersect the skeleton $C$.  If $\iota\colon \Sigma \to X$ and
$j\colon X\setminus C \to X$ are the inclusion maps, then $\omega_\Sigma =
\iota^*\omega$ and $\iota\circ \pi$ is homotopic to $j$.  This implies
that $\omega_X(A) = \omega_\Sigma( \pi_* A)$, and the result follows.

\end{proof}

\begin{lemma} \label{L:BiranKhanevsky c1>1}
Suppose $W$ is a Weinstein domain with isotropic skeleton of real codimension at least 3 and $\Sigma$ has minimal Chern number at least $2$. Then, there do not exist any augmentation planes.
\end{lemma}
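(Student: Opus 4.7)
My plan is to reduce the claim to ruling out the rigid augmentation planes that actually contribute to the split Floer differential. By Proposition \ref{P:Floer_in_R_times_Y}, the only configuration in the differential that can carry an augmentation plane is Case (2), and there the unparametrized plane must have Fredholm index $0$. Translating via Lemma \ref{planes = spheres}, the plane corresponds to a $J_X$-holomorphic sphere in $X$ of some class $B \in H_2(X;\Z)$ with a single intersection with $\Sigma$, and (as in the proof of Lemma \ref{L:indexAugmentationNonNeg}) the index-$0$ condition is equivalent to
\[
\langle c_1(TX), B\rangle - B\bullet \Sigma = 1.
\]

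Next I would invoke Lemma \ref{L:BiranKhanevskySkeleton} — this is precisely the step where the codimension-at-least-$3$ skeleton hypothesis enters — to write $B = \imath_* A$ for some $A \in \pi_2(\Sigma)$. A short Chern-class computation using the normal bundle short exact sequence $0 \to T\Sigma \to TX|_\Sigma \to N\Sigma \to 0$, together with $c_1(N\Sigma) = K[\omega_\Sigma]$ (coming from $[\Sigma]$ being Poincar\'e dual to $K[\omega]$), collapses the left-hand side above to
\[
\langle c_1(TX), B \rangle - B \bullet \Sigma \;=\; \langle c_1(T\Sigma), A \rangle,
\]
so the index-$0$ constraint becomes $\langle c_1(T\Sigma), A \rangle = 1$. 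This directly contradicts the hypothesis that the minimal Chern number of $\Sigma$ is at least $2$.

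The only subtlety worth flagging is multiply covered augmentation planes, but they are automatically non-rigid under our monotonicity assumptions: by the argument at the end of the proof of Lemma \ref{L:indexAugmentationNonNeg}, a $k$-fold cover has Fredholm index at least $2(k-1) \ge 2$ for $k \ge 2$. Hence it suffices to rule out simple augmentation planes, which is exactly what the Chern-class computation above accomplishes. I do not anticipate a serious obstacle — the topological lifting has already been done in Lemma \ref{L:BiranKhanevskySkeleton}, and what remains is essentially bookkeeping combining the monotonicity constants $\tau_X$ and $K$ with the adjunction-type decomposition of $c_1(TX)|_\Sigma$.
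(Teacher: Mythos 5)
Your proof matches the paper's argument essentially step for step: translate the index-$0$ constraint from Case (2) of Proposition \ref{P:Floer_in_R_times_Y} into $\langle c_1(TX),B\rangle - B\bullet\Sigma = 1$, apply Lemma \ref{L:BiranKhanevskySkeleton} to write $B=\imath_*A$, and then use $c_1(TX)|_\Sigma = c_1(T\Sigma) + c_1(N\Sigma)$ to collapse the left side to $\langle c_1(T\Sigma),A\rangle$, contradicting the minimal Chern number hypothesis. Your closing remark about multiply covered planes is correct but already handled upstream — Proposition \ref{P:Floer_in_R_times_Y} (via Lemma \ref{L:indexAugmentationNonNeg}) has already concluded that an augmentation plane in the differential cannot be multiply covered, so the index-$0$ statement you start from applies directly.
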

\begin{proof}
Recall from Proposition \ref{P:Floer_in_R_times_Y} that an augmentation 
plane in the class $B$ must have index $0$, 
so $0 = 2(\langle c_1(TX),B\rangle - B\bullet \Sigma - 1)$. Now, $\langle c_1(TX),B\rangle - B\bullet \Sigma = (\tau_X - K)\,\omega(B) \ge 1$.
Thus, the augmentation plane can only exist if there is a spherical class $B$ with $(\tau_X - K ) \,\omega(B) = 1$. 

By applying Lemma \ref{L:BiranKhanevskySkeleton}, 
we have $B = \imath_*A$, where $A \in \pi_2(\Sigma)$ is a spherical class in $\Sigma$. 

Now observe that $\langle c_1(T\Sigma), A \rangle + \langle c_1(N\Sigma), A
\rangle = \langle c_1(TX), A \rangle$, so we have $\langle c_1(T\Sigma), A
\rangle = (\tau_X - K) \omega_\Sigma(A).$
Hence, 
$1 = (\tau_X-K)\,\omega(A) = \langle c_1(T\Sigma),A\rangle$.
This contradicts the assumption that the minimal Chern number of $\Sigma$ is at least 2, so the augmentation plane cannot exist.
\end{proof}

\begin{remark}\label{rem:augmentation planes not near Sigma}
    Observe that this lemma applies more generally: if $\Sigma$ has minimal
    Chern number at least 2, then an augmentation plane cannot represent a
    spherical class in the image of $\imath_* \colon \pi_2(\Sigma) \to \pi_2(X)$. 

    Additionally, we have that an augmentation plane cannot have image entirely contained in $\varphi(\overline{\U})$. 
    Indeed, any holomorphic sphere contained in $\varphi(\overline{\U})$ will have index
    too high to be an augmentation plane: the $J_X$-holomorphic sphere
    with image in $\varphi(\overline{\U})$ automatically comes in a 2-parameter family
    (corresponding to the $\C^*$ action on the normal bundle to $\Sigma$).
    To make this argument more precise, we use our index computations. Suppose
    a sphere in $\varphi(\overline{\U})$ is an augmentation plane. 
    It then represents a class $\imath_*A$ with $A \in H_2(\Sigma)$.
    By the same index argument as in Lemma \ref{L:BiranKhanevsky c1>1}, 
    $1 = \langle c_1(T\Sigma), A \rangle$. 
    Since the image is assumed to be in $\varphi(\overline {\U})$, 
    the projection of the curve to $\Sigma$ is $J_\Sigma$-holomorphic. The index of this
    projection is given by $-4 + 2 \langle c_1(T\Sigma), A \rangle = -2$. This must
    be non-negative, however, since the projection is $J_\Sigma$-holomorphic, 
    and represents an indecomposable homology class. This contradiction then rules this
    possibility out.  
\end{remark}

\begin{remark}
The dichotomy between $\Sigma$ with minimal Chern number equal to 1 and bigger than 1 
is also explored in 
upcoming joint work of the first named author
with D.~Tonkonog, R.~Vianna and W.~Wu, studying the effect of the
Biran circle bundle construction on superpotentials of monotone
Lagrangian submanifolds \cite{DTVW}.
\end{remark}

\section{Orientations}
\label{sec:orientations}

In order to orient our moduli spaces, we will take the
point of view of coherent orientations, which is implemented in the Morse--Bott setting in 
\citelist{\cite{BourgeoisThesis},\cite{BOSymplecticHomology}}.
Some authors
\citelist{\cite{ZapolskyOrientations},\cite{SchmaeschkeOrientations}}
have used the alternative approach of canonical orientations. We
find it more straightforward to use coherent orientations in our
computations, especially since there are very few
choices involved. Notice also that if one has a canonical orientation scheme,
it is possible to extract a coherent orientation from this by making choices
of preferred orientations of certain capping operators. 

The geometry of our specific situation allows us to avoid some of the technical
difficulties present in the general Morse--Bott situation. In particular, we
have two key features that make our analysis more straightforward. First of all,
we do not have any ``bad'' orbits appearing in our setting
(recall from Section \ref{sec:gradings}
that, if we take a ``constant'' trivialization, the Conley--Zehnder index does not depend on covering
multiplicity).
For another, the
manifolds of orbits are all orientable, and are even oriented quite naturally by
the symplectic/contact structures that exist on them.

We now recall the general method for obtaining signs in Floer homology, as
first introduced in \cite{FloerHofer} and since generalized. First of all,
over the space of all Fredholm Cauchy--Riemann operators, there is a
determinant bundle. A choice of a section of this bundle then induces an
orientation on moduli spaces of holomorphic curves. This (together with some
additional choices in the Morse--Bott situation) allows us to orient 
all moduli spaces that occur in Floer
homology. On the other hand, configurations that are counted in the
differential have a natural $\R$--action on them by reparametrization, which
also induces an orientation on these moduli spaces. The sign of such a term in
the differential is positive if they agree and negative if they disagree.

\subsection{Orienting the moduli spaces of curves}

We now explain the first part of this method: how to orient the moduli spaces
of Floer punctured cylinders, but without considering their constraints coming from evaluation maps. 
We begin by sketching the situation for the non-degenerate case and then
discuss the modifications needed for the Morse--Bott situation.

First, consider all
Cauchy--Riemann operators on Hermitian vector bundles over  punctured spheres
with fixed trivializations near the punctures $E \to \dot S$,  
as described in Section \ref{S:sobolev_morse_bott}.
For a given Hermitian vector bundle with fixed
trivializations near the punctures and fixed, non-degenerate, asymptotic
operators, the space of all Cauchy--Riemann operators with these asymptotic
operators is contractible.
Each such operator induces a Fredholm operator $D \colon W^{1,p}(\dot S, E)
\to L^p(\dot S, \Lambda^{0,1}T^*\dot S \otimes E)$.
There exists a line bundle over this space of Fredholm operators called the 
\defin{determinant line bundle} and its fibre over an operator $D$ is given by
\[
\det D = (\Lambda^\text{max} \ker D ) \otimes_\R ( \Lambda^\text{max} \coker D)^* .
\]
(See for instance \cite{Zinger_determinant}.)
An orientation corresponds to a nowhere vanishing continuous section of this
determinant bundle over the space of Cauchy--Riemann operators (topologized in
a way compatible with the discrete topology on the space of asymptotic
operators). 
We then extend this to the case of Cauchy--Riemann operators with possibly
degenerate asymptotic operators that are Fredholm when acting on spaces of sections with exponential
weights $D \colon W^{1,p,\delta}(\dot S, E) \to L^{p,\delta}(\dot S,
\Lambda^{0,1}T^*\dot S\otimes E)$ by means of the conjugation
to an operator $D^\delta \colon W^{1,p}(\dot S, E) \to L^p(\dot S,
\Lambda^{0,1}T^*\dot S \otimes E)$ given in Definition \ref{def:delta_perturbed}.

In the case of non-degenerate operators, 
an orientation is \textit{coherent} if it respects the gluing operation on
Cauchy--Riemann operators, considered as operators $D \colon W^{1,p}(\dot S, E) \to
L^p(\dot S, \Lambda^{0,1}T^*\dot S \otimes E)$. 
Indeed, given two such operators 
\begin{align*}
    &D \colon W^{1,p}(\dot S, E) \to L^{p}(\dot S, \Lambda^{0,1}T^*\dot S
    \otimes E) \\
    \intertext{and}
    &D' \colon W^{1,p}(\dot S', E') \to L^{p}(\dot S', \Lambda^{0,1}T^*\dot S'
\otimes E')
\end{align*}
that have a matching  asymptotic operator at a positive puncture for $D$ and a
negative puncture for $D'$, we may form a glued surface $\dot S \hash
\dot S'$, a glued bundle $E \hash E' \to \dot S\hash \dot S'$, 
and a glued operator 
\[
    D \hash D' \colon W^{1,p}(\dot S \hash \dot S', E\hash E') \to L^{p}(\dot S\hash \dot S', \Lambda^{0,1}T^*(\dot S \hash \dot S') \otimes (E \hash E')).
\]
This
operator is not unique, but depends on a  contractible family of choices, in
particular on a gluing parameter. 
If the operators $D$ and $D'$ are both surjective, this is explicitly
constructed by a map $\ker D \oplus \ker D' \to \ker (D \hash D')$, which we
take to be orientation preserving.
After stabilizing operators that are not surjective, we obtain a map 
$\det D \otimes \det D' \to \det (D \hash D')$, which we require to be
orientation preserving in a coherent orientation scheme.
(See, for instance,
\cite{FloerHofer}*{Section 3} and \cite{BourgeoisMohnke}.) 
Thus, an orientation of $D$ and an orientation of $D'$ induce an orientation
of $D \hash D'$.

In our setting, we also require the coherent orientation to have the following two properties:
\begin{itemize}
\item the orientation of the direct sum of two operators is the tensor product of their orientations,  
\item the orientation of a complex linear operator is its canonical orientation.  
\end{itemize}

Finally, we extend this coherent orientation to operators with degenerate
asymptotics acting on weighted spaces 
$D \colon W^{1,p,\delta}(\dot S, E) \to L^{p,\delta}(\dot S, \Lambda^{0,1}T^*\dot S \otimes E)$ 
by means of the
conjugation to 
$D^\delta \colon W^{1,p}(\dot S, E) \to L^{p}(\dot S, \Lambda^{0,1}T^*\dot S
\otimes E)$ as in Definition  \ref{def:delta_perturbed}. Recall that for
fixed $\delta$, this conjugation is not unique, but depends on a contractible family of choices (of cut-off functions), so
the orientation of the determinant bundle does not depend on the choices.

From \citelist{\cite{FloerHofer} \cite{BourgeoisMohnke}
\cite{EGH}*{Section 1.8}}, a coherent orientation of the determinant
bundle over non-degenerate Cauchy--Riemann operators exists and may be
specified by choosing a preferred section of the determinant bundle over
certain \defin{capping operators}. These are operators whose domain is
the once punctured sphere $\C$ (where the puncture is positive), 
with a trivial Hermitian vector bundle
over them and specified asymptotic operator. In order to achieve the two
properties listed above, it suffices to enforce them on these capping
operators since the linear gluing operation described in \cite{FloerHofer}
respects direct sums and complex linearity.

We now describe how we orient capping operators for the relevant asymptotic
operators.
By Lemma \ref{L:linearizationFloer}, the linearized operator associated to 
a Floer cylinder $\tilde v$ is a compact perturbation of a split
operator $D^\C_{\tilde v} \oplus \dot D^\Sigma_w$, where $w = \pi_\Sigma \circ \tilde v$. There is also a
corresponding splitting of the asymptotic operators at the asymptotic limits. 
In particular, $\dot D^\Sigma_{ w}$ has complex linear asymptotic operators, and thus is a
compact perturbation of a complex linear Cauchy--Riemann operator. Hence,
its orientation is induced by the canonical one, and is independent of
choice of trivialization or of capping operator (which may always be taken to be
complex linear).

We are left with the task of orienting operators with the same asymptotic
operators as $D^\C_{\tilde v}$. 
By Lemma \ref{L:verticalOperatorTransverse}, if $\tilde v$ converges at both
$\pm \infty$ to a closed Hamiltonian orbit, with $\delta > 0$ sufficiently
small (see Remark \ref{delta small}), the operator
\[
    D^\C_{\tilde v} \colon W^{1,p, \delta}_{\mathbf V_0}(\R \times S^1, \C) \to
    L^{p, \delta}(\Hom^{0,1}(T(\R \times S^1), \C))
\]
has Fredholm index 1, is surjective and its kernel 
contains an element that can be identified with the Reeb vector field. 
We may identify the kernel (and cokernel) of this operator with those of
\[
    D^\C_{\tilde v} \colon W^{1,p, -\delta}(\R \times S^1, \C) \to
    L^{p, -\delta}(\Hom^{0,1}(T(\R \times S^1), \C)).
\]

At $\pm \infty$, the $-\delta$-perturbed asymptotic operators (see Definition \ref{def:delta_perturbed})
associated to $D^\C_{\tilde v}$ are  
\begin{equation}
\mathbf{A}_\pm := - \left ( J \frac{d}{dt} 
	+ \begin{pmatrix} h''(\e^{b_\pm}) \e^{b_\pm} \pm \delta &0 \\ 0 & \pm \delta \end{pmatrix} \right ) 
\label{asymptop}
\end{equation}
(The asymptotic operator at a Reeb orbit at $-\infty$ is just $-J\frac{d}{dt}$
and is $-\delta$ perturbed to give $-(J\frac{d}{dt} - \delta)$.)

We now choose capping operators for the $\mathbf{A}_\pm$, which determines an orientation of 
$D^\C_{\tilde v}$ by the coherent orientation scheme.  

\begin{lemma}
    Let $\delta > 0$ be sufficiently small.
 There is a choice of capping operators with orientations for the asymptotic operators
 $\mathbf{A}_\pm$ above, such that the orientation induced on 
 \[
     D^\C_{\tilde v} \colon W^{1,p, -\delta}(\R \times S^1, \C) 
     \to L^{p, -\delta}(\Hom^{0,1}(T(\R \times S^1), \C))
 \]
 identifies the Reeb vector field as positively oriented. (Recall that we have
 identified $\R \partial_r \oplus \R R$ with $\C$.)
\label{Dv orient Reeb}
\end{lemma}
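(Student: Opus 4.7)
The plan is to exploit the remaining freedom in the coherent orientation scheme after the asymptotic operators $\mathbf{A}_\pm$ are fixed, combined with the explicit description of $\ker D^\C_{\tilde v}$ provided by Lemma \ref{L:verticalOperatorTransverse}. By Lemma \ref{L:exponentialGrowth}, the kernel of $D^\C_{\tilde v}$ on $W^{1,p,-\delta}(\R \times S^1, \C)$ coincides with the kernel on the Morse--Bott space $W^{1,p,\delta}_{\mathbf{V_0}}$, and Lemma \ref{L:verticalOperatorTransverse} tells us this kernel is one-dimensional and is spanned by the constant section $i$, i.e.~by the Reeb direction under the identification $\C \cong \R \partial_r \oplus \R R$. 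Since $D^\C_{\tilde v}$ is surjective, $\det D^\C_{\tilde v} \cong \Lambda^1 \ker D^\C_{\tilde v}$, so an orientation is precisely a choice of sign for the class $[i]$.

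Now I would recall the coherent orientation scheme: for each non-degenerate asymptotic operator one chooses a capping operator (a Fredholm Cauchy--Riemann operator on the once-punctured plane with that asymptotic at its positive puncture) together with an orientation of its determinant line. For capping operators $C^\pm$ with asymptotics $\mathbf{A}_\pm$, linear gluing at the negative puncture of $D^\C_{\tilde v}$ produces a capping operator for $\mathbf{A}_+$ that is homotopic to $C^+$, and the Floer--Hofer gluing isomorphism
\[
\det C^- \otimes \det D^\C_{\tilde v} \;\xrightarrow{\;\cong\;}\; \det(C^- \# D^\C_{\tilde v}) \;\cong\; \det C^+
\]
forces the orientation of $D^\C_{\tilde v}$ once the orientations of $C^\pm$ have been chosen.

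The argument then finishes by exploiting this freedom: pick any orientation of $C^+$ and any tentative orientation of $C^-$; if under the resulting induced orientation the generator $[i]$ comes out negative, replace $o(C^-)$ by $-o(C^-)$, which by the gluing isomorphism reverses the orientation of $D^\C_{\tilde v}$. After at most one such flip, the Reeb vector field is positively oriented, as desired. The main (and essentially only) subtlety is that this choice must not depend on the particular $\tilde v$ selected among those with the same asymptotic multiplicities $k_\pm$; but because $\ker D^\C_{\tilde v}$ varies continuously with $\tilde v$, is everywhere one-dimensional, and everywhere contains the constant section $i$, the sign of $[i]$ in the induced orientation is a locally constant $\{\pm 1\}$--valued function on each connected component of the relevant moduli space, so a single check per component suffices to propagate the correct choice uniformly.
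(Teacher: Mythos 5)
Your proposal correctly identifies that $\ker D^\C_{\tilde v}$ is one-dimensional, spanned by the constant section $i$ (the Reeb direction), and that the orientation of $D^\C_{\tilde v}$ is determined by the capping operators at $\pm\infty$ via Floer--Hofer gluing. Your continuity observation—that the sign of $[i]$ is locally constant in $\tilde v$ among cylinders with fixed asymptotic multiplicities—is also fine (and in fact the family of such operators is contractible, so this is automatic). However, your sign-flip argument only addresses a \emph{single} pair of asymptotic operators, and misses the real content of the lemma, which is a \emph{coherent} choice across all multiplicities $k$.

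The asymptotic operators $\mathbf{A}_\pm$ in (5.3) depend on $b_\pm = b_{k_\pm}$, so there is a whole family $\{\Phi_k^-, \Phi_k^+\}_{k \geq 1}$ of capping operators to orient. Flipping $o(\Phi_{k_-}^-)$ simultaneously flips the induced orientation on $D^\C_{\tilde v}$ for \emph{every} cylinder whose negative end lies at multiplicity $k_-$, regardless of $k_+$. Your procedure—fix all $o(\Phi_k^+)$ arbitrarily, then flip each $o(\Phi_k^-)$ to fix one cylinder—gives no guarantee that a cylinder $k_- \to k_+'$ (with the same $k_-$ but a different $k_+'$) will also come out Reeb-positive, since its sign is governed by the \emph{pair} $(o(\Phi_{k_-}^-), o(\Phi_{k_+'}^+))$ and the $o(\Phi_k^+)$ have been chosen without constraint. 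This cross-multiplicity compatibility is precisely the nontrivial point.

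The paper's proof handles this by a single inductive gluing construction: fix one arbitrary capping operator $\Phi_1^-$, then define $\Phi_k^- = \Phi_1^- \# \Psi_k$ and $\Phi_k^+ = \Phi_k^- \# \Xi_k$, where $\Psi_k$ is an \emph{isomorphism} (index $0$, hence canonically oriented) interpolating between the $b_1$- and $b_k$-asymptotics, and $\Xi_k$ is an index-$1$, surjective operator whose kernel carries the Reeb direction, interpolating between the $-\delta$- and $+\delta$-shifted asymptotics at fixed $k$. The coherence across all $k_\pm$ then follows automatically from the associativity of gluing and the fact that the auxiliary operators are canonically or naturally oriented; there is nothing to check case by case. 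Your approach, by contrast, posits an ad hoc flip at each multiplicity without establishing that these flips are mutually consistent, and this is a genuine gap.
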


\begin{proof}
Recall that for each $b_k > 0$ satisfying $h'(\e^{b_k}) = k \in \Z_+$, we have a $Y$-parametric family of 1-periodic Hamiltonian orbits.
We can associate to each of these orbits two operators $A_\pm$, as in \eqref{asymptop}. We will define capping operators 
$$
\Phi^\pm_k \colon W^{1,p}(\C,\C) \to L^{p}(\Hom^{0,1}(T(\C),\C))
$$
with these asymptotic operators.

We first define two families of auxiliary Fredholm operators. For each $k>0$,
$$
\Psi_k \colon W^{1,p}(\R\times S^1,\C) \to L^{p}(\Hom^{0,1}(T(\R \times S^1),\C))
$$
is an operator given by 
$$
\Psi_k (F)(\partial_s) = F_s + i F_t + 
             \begin{pmatrix} a(s) - \delta  &0 \\ 0 & -\delta \end{pmatrix} F
$$
where the function $a\colon \R \to \R$ is such that $\lim_{s\to -\infty} a(s) = h''(e^{b_1}) e^{b_1}$ and $\lim_{s\to +\infty} a(s) = h''(e^{b_k}) e^{b_k}$. 
Let now 
$$
\Xi_k \colon W^{1,p}(\R\times S^1,\C) \to L^{p}(\Hom^{0,1}(T(\R \times S^1),\C))
$$
be an operator given by 
$$
\Xi_k (F)(\partial_s) = F_s + i F_t + 
             \begin{pmatrix} h''(e^{b_k}) e^{b_k} + \delta(s)  &0 \\ 0 & \delta(s) \end{pmatrix} F
$$
where $\delta\colon \R \to \R$ is such that $\lim_{s\to -\infty} \delta(s) = -\delta <0$ and $\lim_{s\to +\infty} \delta(s) = \delta$.

The operators $\Psi_k$ are isomorphisms (in particular, they are canonically oriented). This follows from an argument analogous to the proof of Lemma \ref{L:verticalOperatorTransverse}. 
A version of the same argument implies that the operators $\Xi_k$ are Fredholm of index 1 and surjective, and that their kernels contain elements that can be identified with the Reeb vector field. 

Now, pick an arbitrary capping operator $\Phi_1^-$. Define $\Phi_k^-$ for $k>1$ by gluing $\Phi_1^- \# \Psi_k$. Define $\Phi_k^+$ for all $k>0$ by gluing $\Phi_k^- \# \Xi_k$. For these choices of capping operators, $D^\C_{\tilde v}$ 
are oriented in the direction of the Reeb flow, as wanted.
\end{proof}

We now analyze how, for $\delta > 0$ and small, a coherent orientation scheme relates
the orientations of $D \colon W^{1,p,-\delta}(\dot S, \C) \to
L^{1,p,-\delta}(\dot S, T^*\dot S \otimes \C)$
and of $D \colon W^{1,p,\delta}_{\mathbf{V}}(\dot S,
\C) \to 
L^{1,p,-\delta}(\dot S, T^*\dot S \otimes \C)$ (where $\mathbf{V}$ is the collection of 
kernels of the asymptotic operators at the punctures). Recall from Lemma \ref{L:exponentialGrowth}
that their determinant bundles are isomorphic. 
\begin{lemma} \label{L:kernel asymptotic operator}
 Let $\mathbf{A}$ be a degenerate asymptotic operator, let $V$ be its kernel and let
 $\delta > 0$ be chosen small enough that $[-\delta, \delta] \cap \sigma(\mathbf{A}) = \{
 0 \}$. 

 Then, the kernel of $\frac{\partial}{\partial s} - \mathbf{A} \colon
 W^{1,p,-\delta}(\R \times S^1, \C^n) \to L^{p,-\delta}(\R \times S^1, \C^n)$
 consists of constant maps with values in $V$.
\end{lemma}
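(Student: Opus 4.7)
The plan is to exploit the spectral decomposition of the self-adjoint asymptotic operator $\mathbf{A}$ acting on $L^2(S^1, \C^n)$. Let $\{e_k\}_{k \in \Z}$ be an $L^2$-orthonormal basis of smooth eigenfunctions of $\mathbf{A}$ with corresponding eigenvalues $\lambda_k$. By elliptic regularity, any $u$ in the kernel is smooth, and on the cylinder $\R \times S^1$ we may decompose it pointwise in $s$ as a Fourier series
\[
u(s, t) = \sum_{k \in \Z} c_k(s)\, e_k(t).
\]
The equation $\partial_s u = \mathbf{A}u$ then decouples into independent ODEs $c_k'(s) = \lambda_k c_k(s)$, whose solutions are $c_k(s) = c_k(0) \e^{\lambda_k s}$. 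So the entire problem reduces to identifying which combinations of these modes are compatible with membership in $W^{1,p,-\delta}(\R \times S^1, \C^n)$.

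Next, I would argue that the weighted Sobolev condition controls the growth of each Fourier coefficient. By definition, $u \in W^{1,p,-\delta}$ means $u \cdot \e^{-\delta(\beta_+ + \beta_-)} \in W^{1,p}$, so in particular (using Sobolev embedding for $p>2$ after differentiating once more if needed, or a standard trace argument on cross-sectional circles) the function $s \mapsto c_k(s)$ must grow at most like $\e^{\delta|s|}$ at $\pm\infty$. Feeding this into $c_k(s) = c_k(0)\e^{\lambda_k s}$: as $s \to +\infty$ we need $\lambda_k \leq \delta$, and as $s \to -\infty$ we need $\lambda_k \geq -\delta$. Combined with the spectral gap hypothesis $[-\delta, \delta] \cap \sigma(\mathbf{A}) = \{0\}$, this forces $\lambda_k \ne 0 \implies c_k(0) = 0$, so only the $\lambda_k = 0$ modes survive. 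For those, $c_k(s) = c_k(0)$ is constant in $s$, and $e_k \in V = \ker \mathbf{A}$. Hence $u(s,t) = \sum_{\lambda_k = 0} c_k(0) e_k(t)$ is a constant (in $s$) section taking values in $V$, as required. The reverse inclusion is immediate since such constants solve the equation and lie in $W^{1,p,-\delta}$ (they are bounded, hence integrable against the weight $\e^{-\delta(\beta_+ + \beta_-)}$).

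The main technical obstacle is justifying that the pointwise/Fourier bound on each $c_k$ really follows from the $W^{1,p,-\delta}$ condition on $u$, since we are not in a Hilbert space setting. I would handle this by noting that on each finite cylinder $[s_0-1, s_0+1] \times S^1$, local elliptic regularity gives $\|u\|_{C^0} \leq C \|u\|_{W^{1,p}}$, and the weighted norm bound then yields the uniform estimate $|u(s,t)| \leq C \e^{\delta|s|}$. Applying the $L^2(S^1)$ inner product with $e_k(\cdot)$ at each $s$ gives the same pointwise bound on $c_k(s)$, which is enough to rule out any exponentially-growing mode outside the interval $[-\delta, \delta]$. Alternatively, one can invoke the standard linear asymptotic analysis used for punctured Cauchy--Riemann operators (as cited earlier in the paper via \cite{HWZ3,SiefringAsymptotics}) to directly conclude that kernel elements decay or grow to eigenfunctions at rates dictated by the spectrum, with the spectral gap selecting only the $0$-eigenspace.
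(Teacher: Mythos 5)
Your proof takes essentially the same route as the paper's: expand in the eigenbasis of the self-adjoint asymptotic operator $\mathbf{A}$ on $L^2(S^1,\C^n)$, observe that each mode solves $c_k'(s)=\lambda_k c_k(s)$ and hence is $c_k(0)\e^{\lambda_k s}$, and then use the $\e^{\delta|s|}$ growth allowance of $W^{1,p,-\delta}$ together with the spectral gap $[-\delta,\delta]\cap\sigma(\mathbf{A})=\{0\}$ to kill every nonzero mode. Your extra paragraph on extracting the pointwise growth bound $|u(s,t)|\le C\e^{\delta|s|}$ from the weighted Sobolev norm (via local elliptic regularity and the $C^0$ Sobolev embedding on unit cylinders, then pairing against $e_k$ on each circle) makes explicit a step the paper leaves implicit, but the underlying idea is identical.
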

\begin{proof}
    The proof follows from expanding $L^2(S^1, \C^n)$ in a Hilbert
    basis given by eigenvectors of the asymptotic operator $A$ seen as an
    elliptic self-adjoint unbounded operator on $L^2(S^1, \C^n)$. Then, the
    kernel of $\frac{\partial}{\partial s} - \mathbf{A}$ is spanned by solutions of the
    form $\e^{\lambda s}v(t)$, where $v(t)$ is an eigenfunction for the
    eigenvalue $\lambda$. Since we require exponential growth of rate $\delta$,
    this forces $-\delta < \lambda < \delta$. The result now follows since
    $0$ is the only such eigenvalue.

    We thank Chris Wendl and Richard Siefring for suggesting this argument. See
    also \cite{Nicolaescu_Lectures_geometry_manifolds}*{Theorem 10.4.19}.
\end{proof}

From this, we are able to revisit Lemma \ref{L:exponentialGrowth}. A more
general result is true, but we only need this case of the following lemma:
\begin{lemma}
Let $D$ be a Cauchy--Riemann operator on a punctured cylinder $\dot S = \R \times
S^1 \setminus \Gamma$. Fix a puncture $z_0 \in \{ \pm \infty \}$.  Assume the
asymptotic operators at the punctures in $\Gamma$ are complex linear.

Let $\mathbf{\delta}$ and $\mathbf{\delta'}$ be vectors of sufficiently small 
weights so that the differential operator induces a Fredholm operator on $W^{1,p,\mathbf{\delta}}$
and on $W^{1,p,\mathbf{\delta'}}$, and $\delta_{z_0} > 0 $ and $\delta_{z_0}' < 0$,
the interval $[\delta_{z_0}', \delta_{z_0}] \cap \sigma(\mathbf{A}_{z_0}) = \{ 0 \}$,
and for each $z \in \Gamma \cup \{ \pm \infty \}$ with $z \ne z_0$, the weights $\delta_z = \delta'_z$.

Let $\mathbf{V}$ be the trivial vector space at each puncture other than $z_0$ and let $V_{z_0}$ be the kernel of the asymptotic operator $\mathbf{A}_{z_0}$ at $z_0$.

Then, we may identify the determinant bundles of the operators
\begin{align*}
D_\delta &\colon W^{1,p, \mathbf \delta}_{\mathbf{V}}( \dot S, E)  \to L^{p, \mathbf{\delta}}(\dot S, \Lambda^{0,1} T^*\dot S \otimes E)\\
D_{\delta'} &\colon W^{1,p, \mathbf \delta'}( \dot S, E)  \to L^{p, \mathbf{\delta'}}(\dot S, \Lambda^{0,1} T^*\dot S \otimes E)
\end{align*}
by Lemma \ref{L:exponentialGrowth}. 
This choice of orientation of the determinant bundle of $D_{\delta'}$ then
induces an orientation on 
\[ 
    D_\delta \colon W^{1,p, \mathbf \delta}_{\mathbf{V}}( \dot S, E)  \to L^{p,
    \mathbf{\delta}}(\dot S, \Lambda^{0,1} T^*\dot S \otimes E)  \ .
\]
\end{lemma}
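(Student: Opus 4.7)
The plan is to construct canonical isomorphisms between the kernels and between the cokernels of $D_\delta$ and $D_{\delta'}$, induced by inclusion of the relevant function spaces, and then observe that these identifications of finite-dimensional spaces assemble into an isomorphism of determinant lines that varies continuously with the underlying Cauchy--Riemann operator.

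To begin, $D_\delta$ and $D_{\delta'}$ are defined by the same differential expression acting on different Banach spaces. Since $V_{z_0} \subset \ker \mathbf{A}_{z_0}$ and $\delta_{z_0}' < 0$, the natural inclusions $W^{1,p,\mathbf \delta}_{\mathbf V}(\dot S, E) \hookrightarrow W^{1,p,\mathbf \delta'}(\dot S, E)$ and $L^{p,\mathbf \delta}(\dot S, \Lambda^{0,1}T^*\dot S \otimes E) \hookrightarrow L^{p,\mathbf \delta'}(\dot S, \Lambda^{0,1}T^*\dot S \otimes E)$ intertwine the two operators, yielding a commutative square. By Lemma \ref{L:exponentialGrowth}, the induced map $\ker D_\delta \to \ker D_{\delta'}$ is an isomorphism.

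Next I would establish the analogous isomorphism on cokernels. The inclusions descend to a well-defined linear map $\coker D_\delta \to \coker D_{\delta'}$. To prove injectivity, suppose $\eta \in L^{p,\mathbf \delta}$ can be written as $D_{\delta'}\xi$ for some $\xi \in W^{1,p,\mathbf \delta'}$. By the same linear asymptotic analysis of \citelist{\cite{HWZ3}\cite{SiefringAsymptotics}} used in the proof of Lemma \ref{L:exponentialGrowth}, the section $\xi$ admits an asymptotic expansion at $z_0$ governed by the eigenvalues of $\mathbf{A}_{z_0}$; the hypothesis that $D_{\delta'}\xi$ itself decays at rate $\delta_{z_0}$, together with the spectral gap $[\delta_{z_0}',\delta_{z_0}] \cap \sigma(\mathbf{A}_{z_0}) = \{0\}$, forces $\xi$ to converge exponentially to an element of $V_{z_0}$. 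Hence $\xi \in W^{1,p,\mathbf \delta}_{\mathbf V}$ and $\eta \in \image D_\delta$, so the map is injective. Combined with the equality of Fredholm indices from Lemma \ref{L:exponentialGrowth}, it is an isomorphism.

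Combining these, we obtain a canonical isomorphism of determinant lines $\det D_\delta \cong \det D_{\delta'}$, which sends any orientation on one side to an orientation on the other, providing the required transfer. Because the inclusion maps of function spaces and the resulting identifications of the finite-dimensional kernels and cokernels depend continuously on the Cauchy--Riemann operator, the line isomorphisms assemble into an isomorphism of determinant bundles. The main technical point is the asymptotic analysis underlying the cokernel argument: one must verify that the exponential convergence result from \citelist{\cite{HWZ3}\cite{SiefringAsymptotics}} applies to solutions of the inhomogeneous linear equation $D_{\delta'}\xi = \eta$ with right-hand side decaying at rate $\delta_{z_0}$, a standard variation on the homogeneous case but one that must be handled carefully to ensure no spurious leading-order term in $\xi$ is introduced by the inhomogeneity.
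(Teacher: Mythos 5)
Your argument constructs the determinant-line isomorphism directly from inclusion-induced maps on kernels and cokernels, and it does establish the literal assertion, but it takes a genuinely different route and misses the real content of the paper's proof. The paper does not re-derive the kernel/cokernel isomorphism: that is taken for granted from Lemma~\ref{L:exponentialGrowth}. Instead it identifies $V_{z_0}$ with the kernel of the orbit-cylinder operator $\partial_s - \mathbf{A}_{z_0}$ acting on $W^{1,p,-\delta}(\R\times S^1, \C^n)$ (Lemma~\ref{L:kernel asymptotic operator}), notes that this operator---and hence $V_{z_0}$---is oriented by the coherent orientation scheme, and then invokes the gluing property of coherent orientations to produce the canonical splitting $\det\big(D_\delta|_{W^{1,p,\delta}_V}\big) \cong \det\big(D_\delta|_{W^{1,p,\delta}}\big) \otimes \Lambda^{\text{max}}V_{z_0}$. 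What this buys, and what your proof does not address, is that the induced orientation on $D_\delta$ is the one compatible with the gluing and breaking structure of the cascade moduli spaces; that compatibility is precisely what the discussion around Equation~\eqref{eqn:matching asymptotics linearization} relies on. Your construction produces \emph{some} isomorphism of determinant lines, but whether it coincides with the gluing-compatible one is exactly the question your argument leaves open.

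A secondary remark: your cokernel argument requires extending the asymptotic analysis of \cite{SiefringAsymptotics} to the inhomogeneous equation $D_{\delta'}\xi = \eta$, which you flag as needing care but do not carry out. Both Lemma~\ref{L:exponentialGrowth} and the present proof avoid any inhomogeneous estimates by pairing the kernel isomorphism with the equality of Fredholm indices.
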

\begin{proof}

    By Lemma \ref{L:kernel asymptotic operator}, $V_{z_0}$ may be identified
    with the kernel of the Cauchy--Riemann operator on the cylinder given by
    $\partial_s - \mathbf{A}_{z_0} \colon W^{1,p,-\delta}(\R \times S^1, \C^n)
    \to L^{p,-\delta}(\R \times S^1, \C^n)$. 
    By the coherent orientation, this operator is oriented and hence induces an
    orientation on the space of sections $V_{z_0}$. 

    The result now follows by the gluing property of coherent orientations. 
    
    In particular, if $z_0$ is a positive puncture, this identifies 
the determinant bundle of 
\[ 
    D_\delta \colon W^{1,p, \mathbf \delta}_{\mathbf{V}}( \dot S, E)  \to L^{p, \mathbf{\delta}}(\dot S, \Lambda^{0,1} T^*\dot S \otimes E) 
\]
with the tensor product of the determinant bundle of 
\[
    D_\delta \colon W^{1,p, \mathbf \delta}(\dot S, E)  \to L^{p, \mathbf{\delta}}(\dot S, \Lambda^{0,1} T^*\dot S \otimes E)  
\]
with the determinant bundle of the kernel of $\mathbf{A}_{z_0}$.
The order is reversed if the puncture is negative. 
\end{proof}

We remark here that our asymptotic operators are either complex linear or have a
kernel that is naturally identified with the Reeb vector field. We have already
verified that our capping operators are oriented in a way that is consistent
with this.

From this, 
we have now oriented the operators $D^\C_{\tilde v}$ and $\dot D^\Sigma_w$
acting on spaces of sections free to move in their Morse--Bott families. 
Since the linearized Floer operator is a compact perturbation of their direct sum, 
we get induced orientations on the transverse moduli spaces of Floer cylinders 
with punctures. 

\subsection{Orientations with constraints}

We have now explained how to orient all of the moduli spaces of punctured
cylinders with ends free to move in the corresponding Morse--Bott families of
orbits. This is not yet sufficient to orient our moduli spaces of cascades.  
The additional ingredient
necessary is to orient moduli spaces of holomorphic curves with
\textit{constraints} on their asymptotic evaluation maps, with the asymptotic
evaluation map constrained to lie in stable/unstable manifolds of critical
points of the auxiliary Morse functions, or, in the case of multilevel cascades,
constrained to lie in [flow] diagonals in manifolds of orbits.

Let us begin by stating the convention in \cite{BOSymplecticHomology} for how to orient a fibre sum (which agrees with 
\cite{JoyceCorners}*{Convention 7.2.(b)}). 

\begin{definition}
Given linear maps between oriented vector spaces $f_i\colon V_i \to W$, $i = 1,2$, such that $f_1 - f_2 \colon V_1\oplus V_2 \to W$ is surjective, the {\em fibre sum orientation} on $V_1 \oplus_{f_i} V_2 = \ker (f_1 - f_2)$ is such that 
\begin{enumerate}
 \item $f_1 - f_2$ induces an isomorphism $(V_1 \oplus V_2) / \ker (f_1 - f_2) \to W$ which changes orientations by $(-1)^{\dim V_2 . \dim W}$,
 \item where a quotient $U/V$ of oriented vector spaces is oriented in such a way that the isomorphism $V \oplus (U/V) \to U$ (associated to a section of the quotient short exact sequence) preserves orientations. 
\end{enumerate}
\label{fibre sum orient}
\end{definition}

One key property of this orientation convention for fibre sums is that it is associative (this property 
specifies the orientation convention almost uniquely, as explained in \cite{JoyceCorners}*{Remark 7.6.iii} 
and \cite{RamshawBasch}; this was pointed out to us by Maksim Maydanskiy). 

To orient our constrained moduli spaces, we follow the point of view
in the literature \citelist{\cite{BourgeoisThesis}
    \cite{BOSymplecticHomology}\cite{BiranCornea} \cite{FOOO}
\cite{SchmaeschkeOrientations} }. Specifically, we begin by orienting  
moduli spaces of unconstrained Floer cylinders as in the previous section, 
by the chosen coherent orientation of
Cauchy--Riemann operators with free asymptotics. We also fix orientations on all stable and unstable 
manifolds of the relevant manifolds of orbits (see the next section for more details), as well as 
on the relevant diagonals and flow-diagonals. Then, we orient the moduli
spaces of Floer cylinders with cascades by the rule that the asymptotic
constraints are obtained as fibre products over descending and ascending
manifolds of the Morse functions in the manifolds of orbits $Y_k$ and $W$ and
as fibre products over flow diagonals and diagonals in $Y_k \times Y_k$ and in $Y \times Y$. 
The fiber products are oriented using Definition \ref{fibre sum orient}. 
For this scheme to induce a differential, we then need the orientations of the various
boundary components of these moduli spaces of cascades to be consistent.

    Observe that in a general Morse--Bott situation, there are additional orientation
    difficulties that are not present in our problem. Specifically, 
    \citelist{\cite{BourgeoisThesis} \cite{BOSymplecticHomology}\cite{SchmaeschkeOrientations}} have
    to deal with parametric families of asymptotic operators that move in the
    space of asymptotic operators of fixed degeneracy. In our problem, the
    asymptotic operators of $D^\C_{\tilde v}$ are constant on each Morse--Bott
    family of orbits, dramatically simplifying the problem to consider. 

    We also notice another key feature regarding cascades: suppose that 
    $\tilde v_-$ and $\tilde v_+$ are two (punctured) cylinders so the
    asymptotic limit of $\tilde v_-$ at $+\infty$ matches the asymptotic limit
    of $\tilde v_+$ at $-\infty$, as in the center of Figure \ref{fig:cascades orientation}. Let $x$ denote the limit of $\tilde v_-$ at
    $-\infty$ and let $y$ denote the limit of $\tilde v_+$ at $+\infty$. 
    This configuration can arise in two ways. It appears in the compactification of the
    space of cylinders with negative end in the Morse--Bott family of orbits
    that includes $x$ and positive end in the Morse--Bott family of orbits that
    includes $y$ (right in the figure). It also appears as the limit of a two level cascade as the
    length of the finite length flow line goes to $0$ (left in the figure). The key point of a
    Morse--Bott orientation scheme is that the orientations should be such that
    the broken configuration of $(\tilde v_-, \tilde v_+)$ should appear as an
    interior point of the moduli space.

    We now sketch the key point that is developed in greater detail in
    \cite{SchmaeschkeOrientations}*{Section 8.4} (and in greater generality,
    and with totally real boundary conditions). 
    The linearized operator that describes the tangent space to the moduli space
    of pairs $(\tilde v_-, \tilde v_+)$ with matching asymptotic $\tilde
    v_-(+\infty) = \tilde v_+(-\infty) \in S_0$ is naturally given by:
    \begin{equation}\label{eqn:matching asymptotics linearization}
    \begin{split} 
        D_{\tilde v_-} \oplus D_{\tilde v_+} \colon 
        W^{1,p,\delta}&(\dot S_-, E_-)
        \oplus \Delta \oplus 
        W^{1,p,\delta}(\dot S_+, E_+)\\
        &\longrightarrow
        L^{p, \delta}(\dot S_-, \Lambda^{0,1}T^*\dot S_- \otimes E_-)
        \oplus 
        L^{p, \delta}(\dot S_+, \Lambda^{0,1}T^*\dot S_+ \otimes E_+)
    \end{split}
\end{equation}
    where $\delta > 0$ imposes exponential decay (for a weight chosen 
    smaller than the spectral gap, as in Remark \ref{delta small}) 
    and where $\Delta \subset TS_0 \oplus TS_0$ is the diagonal. 
    Notice that $\Delta$ is naturally isomorphic to $TS_0$ and can be oriented
    as the image of the map $x \to (x,x)$. Let $\mathbf{A}$ be the (degenerate)
    asymptotic operator at the shared orbit\footnote{Technically, to define
        this requires a trivialization of $\xi$ along this orbit. In our setting 
       the asymptotic operator is complex linear in the $\xi$ direction, so
    this choice does not matter.}.
    After the conjugation described in Definition \ref{def:delta_perturbed}, 
    we obtain non-degenerate operators $\hat D_{\tilde v_-}$ and $\hat D_{\tilde v_+}$ that have
    asymptotic operators $\mathbf{A}+\delta$ and $\mathbf{A}-\delta$ respectively. If we consider
    now a $\delta$--perturbed Cauchy--Riemann operator coming from the
    linearization at a trivial orbit cylinder, $T \coloneqq \partial_s -
    \mathbf{A} + f(s)$, with $f(s) = +\delta$ near $-\infty$ and $f(s) = -\delta$ near
    $+\infty$, we obtain an operator with trivial cokernel and whose kernel is
    identified with $TS_0$ (by Lemma \ref{L:kernel asymptotic operator}). This allows us to identify 
    the $D_{\tilde v_-} \oplus D_{\tilde v_+}$ in \eqref{eqn:matching asymptotics linearization} 
    with the triple $(\hat D_{\tilde v_-}, T, \hat D_{\tilde v_+})$. This
    triple can be glued to obtain the linearization of the space of cylinders
    with one fewer cascade, so this is naturally oriented as the boundary.

\begin{figure}
  \begin{center}
    \def\svgwidth{.6\textwidth}
\begingroup%
  \makeatletter%
  \providecommand\color[2][]{%
    \errmessage{(Inkscape) Color is used for the text in Inkscape, but the package 'color.sty' is not loaded}%
    \renewcommand\color[2][]{}%
  }%
  \providecommand\transparent[1]{%
    \errmessage{(Inkscape) Transparency is used (non-zero) for the text in Inkscape, but the package 'transparent.sty' is not loaded}%
    \renewcommand\transparent[1]{}%
  }%
  \providecommand\rotatebox[2]{#2}%
  \ifx\svgwidth\undefined%
    \setlength{\unitlength}{426.53707394bp}%
    \ifx\svgscale\undefined%
      \relax%
    \else%
      \setlength{\unitlength}{\unitlength * \real{\svgscale}}%
    \fi%
  \else%
    \setlength{\unitlength}{\svgwidth}%
  \fi%
  \global\let\svgwidth\undefined%
  \global\let\svgscale\undefined%
  \makeatother%
  \begin{picture}(1,0.45628681)%
    \put(0,0){\includegraphics[width=\unitlength,page=1]{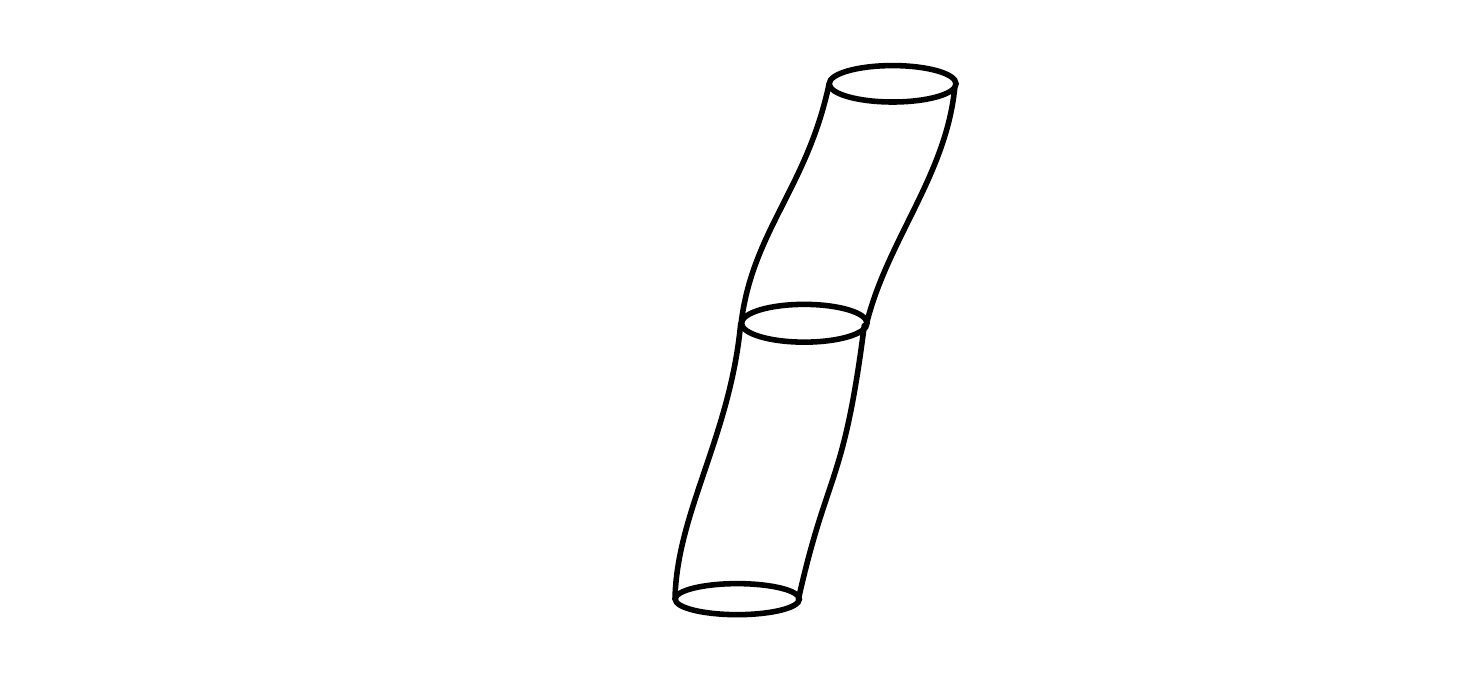}}%
    \put(0.4980138,0.11899484){\color[rgb]{0,0,0}\makebox(0,0)[lb]{\smash{$\tilde v_-$}}}%
    \put(0.55046118,0.30521519){\color[rgb]{0,0,0}\makebox(0,0)[lb]{\smash{$\tilde v_+$}}}%
    \put(0.3488769,0.22302299){\color[rgb]{0,0,0}\makebox(0,0)[lb]{\smash{$\rightsquigarrow$}}}%
    \put(0.47799011,0.00285731){\color[rgb]{0,0,0}\makebox(0,0)[lb]{\smash{$x$}}}%
    \put(0.5885062,0.42936213){\color[rgb]{0,0,0}\makebox(0,0)[lb]{\smash{$y$}}}%
    \put(0,0){\includegraphics[width=\unitlength,page=2]{cascade_orientation.pdf}}%
    \put(0.70148403,0.22302299){\color[rgb]{0,0,0}\makebox(0,0)[lb]{\smash{$\leftsquigarrow$}}}%
  \end{picture}%
\endgroup%
   \end{center}
  \caption{An interior point of the moduli space of cascades.} 
   \label{fig:cascades orientation}
\end{figure}

    Similarly, by taking the fibre product over the diagonal, we see this as
    oriented with the opposite orientation of the boundary of the flow diagonal,
    which is oriented as $[0, \infty) \times \Delta$. This allows us to conclude
    that our orientation scheme is coherent with respect to the additional
    breakings that appear in the Morse--Bott setting.

\subsection{A calculation of signs}

Having now explained the general framework of our orientations, let us now give
an explicit description of the signs associated to a Floer cylinder with
cascades contributing to the differential. 
By Propositions
\ref{P:Floer_in_R_times_Y} and \ref{P:Floer_also_in_W}, there are
four types of contributions to the differential, referred to as
Cases 0 through 3.  We will explain how to determine the signs in each
case.

In Case 0, we have gradient flow lines of Morse--Smale pairs $(f,Z)$ on manifolds of orbits, which can either be 
$(f_Y,Z_Y)$ on $Y$ or $(-f_W,-Z_W)$ on $W$ (see Definition \ref{def:FloerCylinderWithCascades}). Let us stipulate the 
orientation conventions for Morse homology that we will use. The Morse complex of a Morse--Smale pair $(f,Z)$ on a manifold $S$ is 
generated by critical points of $f$ and the differential $\partial_{f}$ is such that, given $p\in \Crit(f)$, 
\begin{equation}
\partial_{f}(p) = \sum_{\substack{q \in \Crit(f)\\ \ind_f(p) - \ind_f(q) = 1}} \#\left(\big(W^s_S(q) \cap W^u_S(p) \big) / \R \right) \, q.
\label{Morse diff}
\end{equation}
In this formula, we use the notation of \eqref{(un)stable} for critical manifolds of $Z$. Note that they intersect transversely, 
by the Morse--Smale assumption. 

We need to make sense of the signed count in the formula. 
We will be interested in the cases where $S$ is $\Sigma$, $Y$ or $W$,
all of which are oriented (by their chosen symplectic, contact and symplectic forms, respectively). If we fix an orientation on a 
critical manifold at a critical point $p$, then we get an induced orientation on the other critical manifold, by imposing that
the splitting 
\begin{equation}
 T_p W^s_S(p) \oplus T_p W^u_S(p) \cong T_p S
 \label{orient stable and unstable}
\end{equation}
preserves orientations. 
Pick orientations on all unstable manifolds of $\Sigma$ and $W$. For all $p\in \Crit(f_\Sigma)$, we will assume that the 
orientations on critical submanifolds of $\Sigma$ and $Y$
are such that the 
restrictions of $\pi_\Sigma\colon Y \to \Sigma$ to
\begin{equation}
 W^u_Y(\widecheck p) \to W^u_\Sigma(p) \qquad \text{and} \qquad W^s_Y(\widehat p) \to W^s_\Sigma(p)
 \label{orient point lift}
\end{equation}
are orientation-preserving diffeomorphisms. 

If $\gamma\colon \R\to S$ is a rigid flow line from $q$ to $p$ (critical
points of consecutive indices), then it induces a diffeomorphism onto
its image 
$$ \gamma(\R) \subset W^s_S(q) \cap W^u_S(p).  $$ 
The source $\R$ has its usual orientation, corresponding to increasing values
of time in the orbit $\gamma$, and the image 
$W^s_S(q) \cap W^u_S(p)$
is a transverse intersection, and can be oriented in such a way that
the splitting 
\begin{equation} 
    T W^u_S(p) \cong T( W^s_S(q) \cap W^u_S(p)) \oplus T W^u_S(q) 
\end{equation} 
is orientation-preserving
(see \cite{HutchingsMorse}*{Equation (2)} for a similar convention).
The flow line $\gamma$ contributes to 
$\#\left(\big(W^s_S(q) \cap W^u_S(p) \big) / \R \right)$ 
in \eqref{Morse diff} positively iff it
preserves orientations. The Case 0 contribution of a flow line to the
symplectic homology differential is the same as its contribution to the
Morse differential.

\ 

Let us now consider Case 1, which is more interesting. Recall that such configurations consist of a 
Floer cylinder without augmentation punctures, together with two flow lines of $Z_Y$ at the ends. 
Suppose that the Floer cylinder converges to orbits of multiplicities $k_\pm$ at $\pm\infty$. 
Such a cylinder is an element of the space 
$\MM^*_{H,0,\R\times Y;k_-,k_+}(A;J_Y)$, for some $A\in H_2(\Sigma;\Z)$. 

Cylinders with cascades that contribute to the differential in Case 1 are elements of spaces 
$\MM^*_{H,1}(\widehat q_{k_-},\widecheck p_{k_+};J_W)$, for $p\neq q\in \Crit(f_\Sigma)$ (recall the notation in \eqref{union}). These spaces are unions of fibre products
\begin{equation}
 W^s_{Y}(\widehat q) \times_{\ev} \MM^*_{H,0,\R\times Y;k_-,k_+}(A;J_Y) \times_{\ev} W_{Y}^u(\widecheck p)
\label{fib prod}
\end{equation}
defined with respect to the inclusion maps
\begin{align*}
W^s_{Y}(\widehat q), W_{Y}^u(\widecheck p) &\to Y
\end{align*}
and the evaluation maps from \eqref{ev_Y}
\begin{align*}
\tilde{\ev}_Y \colon \MM^*_{H,0,\R\times Y;k_-,k_+}(A;J_Y) &\to Y\times Y, %
\end{align*}
for appropriate $A\in H_2(\Sigma;\Z)$. 
Recall the discussion of Case 0 above, which included a specification of orientations on all 
critical submanifolds of $Y$.
We use the fibre sum convention (in Definition \ref{fibre sum orient}) to 
orient the fibre product \eqref{fib prod}.

Observe now that if $\MM^*_{H,1}(\widehat q_{k_-},\widecheck p_{k_+};J_W)$ is one-dimensional, 
then its tangent space at every point is generated by the infinitesimal translation in the 
$s$-direction on the domain of the Floer cylinder. 
This induces an orientation on $\MM^*_{H,1}(\widehat q_{k_-},\widecheck p_{k_+};J_W)$. Comparing this 
orientation with the one defined above with the fibre sum rule, we get the sign of such a contribution 
to the split symplectic homology differential. 

\

We adapt the argument above to associate a sign to a contribution to the differential in Case 2. 
Such cascades are elements of spaces 
$\MM^*_{H,1}(\widehat p_{k_-},\widecheck p_{k_+};J_W)$, for $p\in \Crit(f_\Sigma)$.
The analogue of \eqref{fib prod} is now (using the notation of \eqref{ev_Y aug}):
\begin{equation}
 W^s_{Y}(\wh p) \times_{\ev} \big(\M_X^*(B;J_W) 
    \times_{\tilde \ev} \MM^*_{H,1,\R\times Y;k_-,k_+}(0;J_Y) \big) 
    \times_{\ev} W_{Y}^u(\wc p).
\label{fib prod2}
\end{equation}
Notice that in this case we also need capping operators for augmentation punctures. 
The asymptotic operators at such punctures are
$$
\mathbf{A} = - J \frac{d}{dt},
$$
which are complex linear. We may therefore take (canonically oriented) complex linear capping
operators at these punctures. 
We can now orient the fibre product \eqref{fib prod2} using the fibre sum convention. 
The sign of such a contribution to the differential
is obtained by comparing this orientation with the one induced 
by $s$-translation on the domain of the punctured Floer
cylinder in $\MM^*_{H,1,\R\times Y;k_-,k_+}(0;J_Y)$.

\ 

Finally, Case 3 Floer cylinders with cascades that contribute to the differential are elements of 
$\MM^*_{H,1}(x,\widecheck p_{k_+};J_W)$, 
which are unions of fibre products 
\begin{equation*}
 W^u_{W}(x) \times_{\ev} \left(\MM^*_{H,0,W;k_+}((B;0);J_W) \times_{\ev} \widetilde \Delta \right) \times_{\ev} W_{Y}^u(\wc p).
\end{equation*}
The relevant evaluation maps are given by factors of $\tilde \ev_{W,Y}$ in \eqref{ev_W,Y}.
It will be useful to write an alternative description of this fibre product. 
Recall that $\MM^*_{H,0,W;k_+}((B,0);J_W)$ is a space of pairs 
$(\tilde v_0,\tilde v_1)$ with the following properties. The simple $J_W$-holomorphic cylinder $\tilde v_0\colon \R \times S^1 \to W$ has a 
removable singularity at $-\infty$, defining a pseudoholomorphic sphere in $X$ in class $B\in H_2(X;\Z)$,
with order of contact $k_+=B\bullet \Sigma$ at $\infty$. Denote the space of such cylinders by $\M^*_{H}(B;J_W)$. 
The Floer cylinder $\tilde v_1\colon \R \times S^1 \to \R \times Y$ converges at $+\infty$ to a Hamiltonian 
orbit of multiplicity $k_+$ and at $-\infty$ to a Reeb orbit of the same multiplicity in $\{-\infty\} \times Y$. 
It projects to a constant in $\Sigma$. Denote the space of such cylinders by $\M^*_{H,k_+}(0;J_Y)$.
We have evaluation maps 
$$
(\ev^1_-,\ev^1_+) \colon \M^*_{H}(B;J_W) \to W \times Y \qquad \text{and} \qquad  (\ev^2_-,\ev^2_+) \colon \M^*_{H,k_+}(0;J_Y) \to Y \times Y
$$
and can write 
\begin{equation*} 
\MM^*_{H,0,W;k_+}((B,0);J_W) = \M^*_{H}(B;J_W) \times \M^*_{H,k_+}(0;J_Y)
\end{equation*}
and 
$$
\MM^*_{H,0,W;k_+}((B,0);J_W) \times_{\ev} \widetilde \Delta = \M^*_{H}(B;J_W)  \prescript{}{\ev^1_+\,}\times\prescript{}{\ev^2_-}{}  \M^*_{H,k_+}(0;J_Y).
$$
We now rewrite the Case 3 contributions to the differential as 
\begin{equation}
 W^u_{W}(x) \times_{\ev^1_-} \left(\M^*_{H}(B;J_W)  \prescript{}{\ev^1_+\,}\times\prescript{}{\ev^2_-}{}  \M^*_{H,k_+}(0;J_Y) \right) \times_{\ev^2_+} W_{Y}^u(\wc p),
\label{fib prod3}
\end{equation}
which is oriented using coherent orientations on the spaces of cylinders and the fibre sum orientation convention.

The space $\M^*_{H}(B;J_W)  \prescript{}{\ev^1_+\,}\times\prescript{}{\ev^2_-}{}  \M^*_{H,k_+}(0;J_Y)$ has an action of $\R_1 \times \R_2$, where the 1-dimensional real 
vector space $\R_1$ acts by $s$-translation on the domain of the cylinder in $W$ and $\R_2$ acts by $s$-translation on the domain of the cylinder in $\R\times Y$.  
The sign of a Case 3 contribution to the differential is obtained by comparing the coherent/fibre product orientation on 
\eqref{fib prod3} with the usual orientation on $\R_1 \times \R_2$, corresponding to $s$-translation on the domain of $\tilde v_0$
followed by $s$-translation on the domain of $\tilde v_1$.


\def\cprime{$'$}
\begin{bibdiv}
\begin{biblist}

\bib{ACHplanar}{article}{
      author={Abbas, Casim},
      author={Cieliebak, Kai},
      author={Hofer, Helmut},
       title={The {W}einstein conjecture for planar contact structures in
  dimension three},
        date={2005},
        ISSN={0010-2571},
     journal={Comment. Math. Helv.},
      volume={80},
      number={4},
       pages={771\ndash 793},
}

\bib{AbreuMacarini_dynamically_convex_elliptic}{article}{
      author={Abreu, Miguel},
      author={Macarini, Leonardo},
       title={Multiplicity of periodic orbits for dynamically convex contact
  forms},
        date={2016},
        ISSN={1661-7746},
     journal={Journal of Fixed Point Theory and Applications},
       pages={1\ndash 30},
         url={http://dx.doi.org/10.1007/s11784-016-0348-2},
}

\bib{BiranCornea}{incollection}{
      author={Biran, Paul},
      author={Cornea, Octav},
       title={A {L}agrangian quantum homology},
        date={2009},
   booktitle={New perspectives and challenges in symplectic field theory},
      series={CRM Proc. Lecture Notes},
      volume={49},
   publisher={Amer. Math. Soc., Providence, RI},
       pages={1\ndash 44},
      review={\MR{2555932}},
}

\bib{SFTcompactness}{article}{
      author={Bourgeois, Fr{\'e}d{\'e}ric},
      author={Eliashberg, Yakov},
      author={Hofer, Helmut},
      author={Wysocki, Kris},
      author={Zehnder, Eduard},
       title={Compactness results in symplectic field theory},
        date={2003},
        ISSN={1465-3060},
     journal={Geom. Topol.},
      volume={7},
       pages={799\ndash 888 (electronic)},
}

\bib{BiranKhanevsky}{article}{
      author={Biran, Paul},
      author={Khanevsky, Michael},
       title={A {F}loer-{G}ysin exact sequence for {L}agrangian submanifolds},
        date={2013},
        ISSN={0010-2571},
     journal={Comment. Math. Helv.},
      volume={88},
      number={4},
       pages={899\ndash 952},
         url={http://dx.doi.org.umiss.idm.oclc.org/10.4171/CMH/307},
      review={\MR{3134415}},
}

\bib{BourgeoisMohnke}{article}{
      author={Bourgeois, Fr{\'e}d{\'e}ric},
      author={Mohnke, Klaus},
       title={Coherent orientations in symplectic field theory},
        date={2004},
        ISSN={0025-5874},
     journal={Math. Z.},
      volume={248},
      number={1},
       pages={123\ndash 146},
         url={http://dx.doi.org/10.1007/s00209-004-0656-x},
      review={\MR{2092725 (2005g:53173)}},
}

\bib{BOExactSequence}{article}{
      author={Bourgeois, Fr{\'e}d{\'e}ric},
      author={Oancea, Alexandru},
       title={An exact sequence for contact- and symplectic homology},
        date={2009},
        ISSN={0020-9910},
     journal={Invent. Math.},
      volume={175},
      number={3},
       pages={611\ndash 680},
         url={http://dx.doi.org/10.1007/s00222-008-0159-1},
      review={\MR{2471597 (2010e:53149)}},
}

\bib{BOSymplecticHomology}{article}{
      author={Bourgeois, Fr{\'e}d{\'e}ric},
      author={Oancea, Alexandru},
       title={Symplectic homology, autonomous {H}amiltonians, and
  {M}orse-{B}ott moduli spaces},
        date={2009},
        ISSN={0012-7094},
     journal={Duke Math. J.},
      volume={146},
      number={1},
       pages={71\ndash 174},
         url={http://dx.doi.org/10.1215/00127094-2008-062},
      review={\MR{2475400 (2010e:53147)}},
}

\bib{BourgeoisThesis}{book}{
      author={Bourgeois, Fr{\'e}d{\'e}ric},
       title={A {M}orse-{B}ott approach to contact homology},
   publisher={ProQuest LLC, Ann Arbor, MI},
        date={2002},
        ISBN={978-0493-62828-8},
  url={http://gateway.proquest.com/openurl?url_ver=Z39.88-2004&rft_val_fmt=info:ofi/fmt:kev:mtx:dissertation&res_dat=xri:pqdiss&rft_dat=xri:pqdiss:3048497},
        note={Thesis (Ph.D.)--Stanford University},
      review={\MR{2703292}},
}

\bib{BourgeoisHomotopyContact}{article}{
      author={Bourgeois, Fr{\'e}d{\'e}ric},
       title={Contact homology and homotopy groups of the space of contact
  structures},
        date={2006},
        ISSN={1073-2780},
     journal={Math. Res. Lett.},
      volume={13},
      number={1},
       pages={71\ndash 85},
         url={http://dx.doi.org.umiss.idm.oclc.org/10.4310/MRL.2006.v13.n1.a6},
      review={\MR{2200047}},
}

\bib{BottTu}{book}{
      author={Bott, Raoul},
      author={Tu, Loring~W.},
       title={Differential forms in algebraic topology},
      series={Graduate Texts in Mathematics},
   publisher={Springer-Verlag, New York-Berlin},
        date={1982},
      volume={82},
        ISBN={0-387-90613-4},
      review={\MR{658304}},
}

\bib{CieliebakMohnkeTransversality}{article}{
      author={Cieliebak, Kai},
      author={Mohnke, Klaus},
       title={Symplectic hypersurfaces and transversality in {G}romov-{W}itten
  theory},
        date={2007},
        ISSN={1527-5256},
     journal={J. Symplectic Geom.},
      volume={5},
      number={3},
       pages={281\ndash 356},
         url={http://projecteuclid.org/getRecord?id=euclid.jsg/1210083200},
      review={\MR{2399678 (2009j:53120)}},
}

\bib{DiogoLisiComplements}{unpublished}{
      author={Diogo, Lu\'is},
      author={Lisi, Samuel},
       title={Symplectic homology of complements of smooth divisors},
        date={2018},
        note={arXiv:1804.08014},
}

\bib{DragnevTransversality}{article}{
      author={Dragnev, Dragomir~L.},
       title={Fredholm theory and transversality for noncompact
  pseudoholomorphic maps in symplectizations},
        date={2004},
        ISSN={0010-3640},
     journal={Comm. Pure Appl. Math.},
      volume={57},
      number={6},
       pages={726\ndash 763},
         url={http://dx.doi.org/10.1002/cpa.20018},
      review={\MR{2038115 (2004m:53153)}},
}

\bib{DTVW}{misc}{
      author={Diogo, Lu\'is},
      author={Tonkonog, Dmitry},
      author={Vianna, Renato},
      author={Wu, Weiwei},
        note={In preparation},
}

\bib{EGH}{article}{
      author={Eliashberg, Yakov},
      author={Givental, Alexander},
      author={Hofer, Helmut},
       title={Introduction to symplectic field theory},
        date={2000},
        ISSN={1016-443X},
     journal={Geom. Funct. Anal.},
      number={Special Volume, Part II},
       pages={560\ndash 673},
        note={GAFA 2000 (Tel Aviv, 1999)},
}

\bib{FloerHofer}{article}{
      author={Floer, Andreas},
      author={Hofer, Helmut},
       title={Coherent orientations for periodic orbit problems in symplectic
  geometry},
        date={1993},
        ISSN={0025-5874},
     journal={Math. Z.},
      volume={212},
      number={1},
       pages={13\ndash 38},
         url={http://dx.doi.org/10.1007/BF02571639},
      review={\MR{1200162}},
}

\bib{FOOO}{book}{
      author={Fukaya, Kenji},
      author={Oh, Yong-Geun},
      author={Ohta, Hiroshi},
      author={Ono, Kaoru},
       title={Lagrangian intersection floer theory: Anomaly and obstruction},
   publisher={American Mathematical Society},
        date={2009},
}

\bib{Frauenfelder_Arnold_Givental_conjecture}{article}{
      author={Frauenfelder, Urs},
       title={The {A}rnold-{G}ivental conjecture and moment {F}loer homology},
        date={2004},
        ISSN={1073-7928},
     journal={Int. Math. Res. Not.},
      number={42},
       pages={2179\ndash 2269},
         url={https://doi-org.umiss.idm.oclc.org/10.1155/S1073792804133941},
      review={\MR{2076142}},
}

\bib{Fish_Siefring_Connected_sums_2}{unpublished}{
      author={Fish, Joel},
      author={Siefring, Richard},
       title={Connected sums and finite energy foliations ii: 2--surgery},
        date={2017},
        note={(in preparation)},
}

\bib{Giroux_remarks_Donaldson}{article}{
      author={{Giroux}, E.},
       title={{Remarks on Donaldson's symplectic submanifolds}},
        date={2018-03},
     journal={ArXiv e-prints},
      eprint={1803.05929},
}

\bib{Gutt_generalized_CZ}{article}{
      author={Gutt, Jean},
       title={Generalized {C}onley-{Z}ehnder index},
        date={2014},
        ISSN={0240-2963},
     journal={Ann. Fac. Sci. Toulouse Math. (6)},
      volume={23},
      number={4},
       pages={907\ndash 932},
         url={http://dx.doi.org.umiss.idm.oclc.org/10.5802/afst.1430},
      review={\MR{3270429}},
}

\bib{HoferKriener}{incollection}{
      author={Hofer, Helmut},
      author={Kriener, Markus},
       title={Holomorphic curves in contact dynamics},
        date={1999},
   booktitle={Differential equations: La pietra 1996 (florence)},
      series={Proc. Sympos. Pure Math.},
      volume={65},
   publisher={Amer. Math. Soc.},
     address={Providence, RI},
       pages={77\ndash 131},
}

\bib{Hutchings_Nelson_S1_Morse_Bott}{article}{
      author={{Hutchings}, M.},
      author={{Nelson}, J.},
       title={{Axiomatic S\^{}1 Morse-Bott theory}},
        date={2017-11},
     journal={ArXiv e-prints},
      eprint={1711.09996},
}

\bib{HutchingsMorse}{unpublished}{
      author={{Hutchings}, Michael},
       title={{Lectures notes on {M}orse homology (wuth an eye towards {F}loer
  theory and pseudoholomorphic curves)}},
        date={2002},
}

\bib{HWZ2}{article}{
      author={Hofer, Helmut},
      author={Wysocki, Kris},
      author={Zehnder, Eduard},
       title={Properties of pseudoholomorphic curves in symplectisations. {II}.
  {E}mbedding controls and algebraic invariants},
        date={1995},
        ISSN={1016-443X},
     journal={Geom. Funct. Anal.},
      volume={5},
      number={2},
       pages={270\ndash 328},
}

\bib{HWZ3}{incollection}{
      author={Hofer, Helmut},
      author={Wysocki, Kris},
      author={Zehnder, Eduard},
       title={Properties of pseudoholomorphic curves in symplectizations.
  {III}. {F}redholm theory},
        date={1999},
   booktitle={Topics in nonlinear analysis},
      series={Progr. Nonlinear Differential Equations Appl.},
      volume={35},
   publisher={Birkh\"auser},
     address={Basel},
       pages={381\ndash 475},
}

\bib{JoyceCorners}{incollection}{
      author={Joyce, Dominic},
       title={On manifolds with corners},
        date={2012},
   booktitle={Advances in geometric analysis},
      series={Adv. Lect. Math. (ALM)},
      volume={21},
   publisher={Int. Press, Somerville, MA},
       pages={225\ndash 258},
      review={\MR{3077259}},
}

\bib{McLean_discrepancy}{article}{
      author={McLean, Mark},
       title={Reeb orbits and the minimal discrepancy of an isolated
  singularity},
        date={2016},
        ISSN={0020-9910},
     journal={Invent. Math.},
      volume={204},
      number={2},
       pages={505\ndash 594},
         url={http://dx.doi.org.umiss.idm.oclc.org/10.1007/s00222-015-0620-x},
      review={\MR{3489704}},
}

\bib{McDuffSalamon}{book}{
      author={McDuff, Dusa},
      author={Salamon, Dietmar},
       title={{$J$}-holomorphic curves and symplectic topology},
      series={American Mathematical Society Colloquium Publications},
   publisher={American Mathematical Society},
     address={Providence, RI},
        date={2004},
      volume={52},
        ISBN={0-8218-3485-1},
}

\bib{Nicolaescu_Lectures_geometry_manifolds}{book}{
      author={Nicolaescu, Liviu~I.},
       title={Lectures on the geometry of manifolds},
     edition={Second},
   publisher={World Scientific Publishing Co. Pte. Ltd., Hackensack, NJ},
        date={2007},
        ISBN={978-981-277-862-8; 981-277-862-4},
         url={https://doi-org.umiss.idm.oclc.org/10.1142/9789812770295},
        note={see also https://www3.nd.edu/~lnicolae/Lectures.pdf},
      review={\MR{2363924}},
}

\bib{OhChainsOfPearls}{incollection}{
      author={Oh, Yong-Geun},
       title={Relative {F}loer and quantum cohomology and the symplectic
  topology of {L}agrangian submanifolds},
        date={1996},
   booktitle={Contact and symplectic geometry ({C}ambridge, 1994)},
      series={Publ. Newton Inst.},
      volume={8},
   publisher={Cambridge Univ. Press, Cambridge},
       pages={201\ndash 267},
}

\bib{PSS}{incollection}{
      author={Piunikhin, Sergey},
      author={Salamon, Dietmar},
      author={Schwarz, Matthias},
       title={Symplectic {F}loer-{D}onaldson theory and quantum cohomology},
        date={1996},
   booktitle={Contact and symplectic geometry (cambridge, 1994)},
      series={Publ. Newton Inst.},
      volume={8},
   publisher={Cambridge Univ. Press},
     address={Cambridge},
       pages={171\ndash 200},
}

\bib{RamshawBasch}{incollection}{
      author={Ramshaw, Lyle},
      author={Basch, Julien},
       title={Orienting transverse fibre products},
        date={2009},
   booktitle={Hewlett {P}ackard {T}echnical {R}eport {HPL}-2009-144},
}

\bib{Robbin_Salamon_spectral}{article}{
      author={Robbin, Joel},
      author={Salamon, Dietmar},
       title={The spectral flow and the {M}aslov index},
        date={1995},
        ISSN={0024-6093},
     journal={Bull. London Math. Soc.},
      volume={27},
      number={1},
       pages={1\ndash 33},
         url={https://doi-org.umiss.idm.oclc.org/10.1112/blms/27.1.1},
      review={\MR{1331677}},
}

\bib{SchmaeschkeOrientations}{unpublished}{
      author={Schm\"aschke, Felix},
       title={Floer homology of {L}agrangians in clean intersection},
        date={2016},
        note={arXiv:1606.05327},
}

\bib{SchwarzThesis}{thesis}{
      author={Schwarz, Matthias},
       title={Cohomology operations from {$S^1$}--cobordisms in {F}loer
  {H}omology},
        type={Ph.D. Thesis},
        date={1995},
         url={http://FILL.ME.IN},
}

\bib{SeidelBiasedView}{incollection}{
      author={Seidel, Paul},
       title={A biased view of symplectic cohomology},
        date={2008},
   booktitle={Current developments in mathematics, 2006},
   publisher={Int. Press, Somerville, MA},
       pages={211\ndash 253},
      review={\MR{2459307 (2010k:53153)}},
}

\bib{SiefringAsymptotics}{article}{
      author={Siefring, Richard},
       title={Relative asymptotic behavior of pseudoholomorphic
  half-cylinders},
        date={2008},
        ISSN={0010-3640},
     journal={Comm. Pure Appl. Math.},
      volume={61},
      number={12},
       pages={1631\ndash 1684},
         url={http://dx.doi.org/10.1002/cpa.20224},
      review={\MR{2456182 (2009k:32029)}},
}

\bib{WendlThesis}{thesis}{
      author={Wendl, Chris},
       title={Finite energy foliations and surgery on transverse links},
        type={Ph.D. Thesis},
        date={2005},
}

\bib{WendlAutomatic}{article}{
      author={Wendl, Chris},
       title={Automatic transversality and orbifolds of punctured holomorphic
  curves in dimension four},
        date={2010},
        ISSN={0010-2571},
     journal={Comment. Math. Helv.},
      volume={85},
      number={2},
       pages={347\ndash 407},
         url={http://dx.doi.org/10.4171/CMH/199},
}

\bib{WendlLecturesSFT}{unpublished}{
      author={Wendl, Chris},
       title={Lectures on symplectic field theory},
        date={2016},
        note={arXiv:1612.01009},
}

\bib{WendlSuperRigid}{misc}{
      author={Wendl, Chris},
       title={{Transversality and super-rigidity for multiply covered
  holomorphic curves}},
        date={2016},
        note={arXiv:1609.09867},
}

\bib{ZapolskyOrientations}{unpublished}{
      author={Zapolsky, Frol},
       title={The {L}agrangian {F}loer-quantum-{PSS} package and canonical
  orientations in {F}loer theory},
        date={2015},
        note={arXiv:1507.02253},
}

\bib{Zinger_determinant}{article}{
      author={Zinger, Aleksey},
       title={The determinant line bundle for {F}redholm operators:
  construction, properties, and classification},
        date={2016},
        ISSN={0025-5521},
     journal={Math. Scand.},
      volume={118},
      number={2},
       pages={203\ndash 268},
         url={https://doi-org.umiss.idm.oclc.org/10.7146/math.scand.a-23687},
      review={\MR{3515189}},
}

\end{biblist}
\end{bibdiv}

\end{document}